\theoremstyle{plain}
\newtheorem{theorem}{\bf Theorem}[section]
\newtheorem{lemma}[theorem]{\bf Lemma}
\newtheorem{proposition}[theorem]{\bf Proposition}
\newtheorem{corollary}[theorem]{\bf Corollary}
\newtheorem{fact}[theorem]{\bf Fact}
\newtheorem{thmx}{Theorem}
\theoremstyle{definition}
\newtheorem{definition}[theorem]{\bf Definition}
\newtheorem{remark}[theorem]{\bf Remark}
\newcommand{\disp}{\displaystyle}
\newcommand{\eqa}[1]{
\begin{align*}
#1
\end{align*}}
\newcommand{\cel}[1]{
{\rm{cel}}(#1)
}
\newcommand{\bel}[1]{
{\rm el}_{#1}
}
\newcommand{\rbel}[1]{
{\rm rel}_{#1}
}
\newcommand{\Nat}{\mathbb{N}}
\newcommand{\Rea}{\mathbb{R}}
\newcommand{\Int}{\mathbb{Z}}
\newcommand{\Com}{\mathbb{C}}
\newcommand{\Hil}{\mathcal{H}}
\newcommand{\ri}{{\rm i}}
\newcommand{\Inv}{\rm{Inv}}
\newcommand{\LA}{\mathfrak{g}}
\newcommand{\Uni}{\mathcal{U}}
\newcommand{\U}{\mathcal{U}}
\newcommand{\T}{\mathbb{T}}
\newcommand{\Span}{\mathrm{span}}
\newcommand{\re}{\mathrm{Re}}
\newcommand{\Sl}{\langle}
\newcommand{\Sr}{\rangle}
\newcommand{\SL}{\mathrm{SL}}
\newcommand{\GL}{\mathrm{GL}}
\newcommand{\PM}{\mathcal{P}}
\newcommand{\norm}{\|\cdot\|}
\newcommand{\E}{\mathrm{E}}
\newcommand{\R}{\mathbb{R}}
\newcommand{\C}{\mathbb{C}}
\newcommand{\N}{\mathbb{N}}
\newcommand{\cer}{\mathrm{cer}}
\title{Large scale geometry of Banach-Lie groups}
\author[H. Ando]{Hiroshi Ando}
\address{Department of Mathematics and Informatics\\
Chiba University\\
1-33 Yayoi-cho, Inage, Chiba\\
263-8522 Japan}
\email{hiroando@math.s.chiba-u.ac.jp}
\author[M. Doucha]{Michal Doucha}
\address{Institute of Mathematics\\
Czech Academy of Sciences\\
\v Zitn\'a 25\\
115 67 Praha 1\\
Czech Republic}
\email{doucha@math.cas.cz}
\author[Y. Matsuzawa]{Yasumichi Matsuzawa} 
\address{Department of Mathematics\\
Faculty of Education\\
Shinshu University\\
6-Ro, Nishi-nagano, Nagano\\
380-8544 Japan}
\email{myasu@shinshu-u.ac.jp}
\subjclass[2020]{22E65, 51F30, 22D55}
\keywords{Banach-Lie groups, large scale geometry, unitary groups, Property (T), Haagerup property}
\thanks{H. Ando is supported by JSPS KAKENHI 16K17608 and 20K03647.  M. Doucha is supported by the GA\v{C}R project 19-05271Y and RVO: 67985840. Y. Matsuzawa
is supported by JSPS KAKENHI 26800055 and 26350231.}
\begin{document}
\begin{abstract}
We initiate the large scale geometric study of Banach-Lie groups, especially of linear Banach-Lie groups. We show that the exponential length, originally introduced by Ringrose for unitary groups of $C^*$-algebras, defines the quasi-isometry type of any connected Banach-Lie group. As an illustrative example, we consider unitary groups of separable abelian unital $C^*$-algebras with spectrum having finitely many components, which we classify up to topological isomorphism and up to quasi-isometry, in order to highlight the difference. The main results then concern the Haagerup property, and Properties (T) and (FH). We present the first non-trivial non-abelian and non-localy compact groups having the Haagerup property, most of them being non-amenable. These are the groups $\U_2(M,\tau)$, where $M$ is a semifinite von Neumann algebra with a normal faithful semifinite trace $\tau$. Finally, we investigate the groups $\E_n(A)$, which are closed subgroups of $\GL(n,A)$ generated by elementary matrices, where $A$ is a unital Banach algebra. We show that for $n\geq 3$, all these groups have Property (T) and they are unbounded, so they have Property (FH) non-trivially. On the other hand, if $A$ is an infinite-dimensional unital $C^*$-algebra, then $\E_2(A)$ does not have the Haagerup property. If $A$ is moreover abelian and separable, then $\SL(2,A)$ does not have the Haagerup property.
\end{abstract}

\maketitle

\tableofcontents

\section{Introduction}
A principal topic of geometric group theory is to study groups geometrically as metric spaces, mainly from the large scale point of view, and to study group actions on geometrically interesting metric spaces, where proper actions, resp. actions with fixed points are of main interest. Although by the Birkhoff-Kakutani theorem, every first-countable Hausdorff topological group admits a compatible left-invariant metric, not every such a metric is geometrically interesting. There are two traditionally well studied classes of groups, finitely generated groups and connected Lie groups, that however admit certain canonical distances that have been the central objects of the geometric investigation of these groups. These are the word metrics on finitely generated groups and the left-invariant Riemannian distances on connected Lie groups. Both classes of distances are considered, for sound reasons, to define the quasi-isometry types of the corresponding groups, and make the notions of proper group actions meaningful.

It has been recently observed that large scale geometry can be well extended to bigger classes of groups. Cornulier and de la Harpe in \cite{CdH16} present a systematic large scale geometric study of locally compact groups, in particular clarify which locally compact groups admit metrics that well-define their quasi-isometry type, resp. their coarse type. More or less at the same time, Rosendal in \cite{Ro18} suggested to study coarse geometry of general topological groups. He discovered that the notion of a \emph{coarse structure}, as defined by Roe (see \cite{Roe03}), can be defined on any topological group. This in particular made possible to investigate not necessarily locally compact groups which have well-defined quasi-isometry type in a similar vein as for locally compact groups.

We remark that many important notions connecting group actions with analysis, such as the Haagerup property (also known as, or equivalent to, a-T-menability), i.e. having a metrically proper continuous action on a Hilbert space by affine isometries, or analogous notions for other, mainly $L^p$, Banach spaces, are now meaningful for all topological groups. The opposite properties, i.e. fixed point properties on various metric spaces (in particular on Hilbert spaces which is for $\sigma$-compact locally compact groups equivalent to the famous Property (T), or other $L^p$-spaces) can be formulated and investigated even without considering a coarse structure on a group, but when it is at our disposal, they can be appreciated somewhat better, as we shall see. \\

The goal of this paper is to initiate the study of these problems on Banach-Lie groups, especially on linear Banach-Lie groups. Considering the prominence of Lie groups in geometric group theory and realizing that they have been the main and most natural source of examples of groups having properties mentioned above, e.g. the Haagerup property, the fixed point properties on $L^p$-spaces, Property (T) (indeed, the first examples of non-locally finite groups with Property (T) were Banach-Lie groups \cite{Sha99}), we believe that Banach-Lie groups should play a similar prominence in the non-locally compact case. The task of this paper is to justify this belief. Mostly we will be concerned with linear Banach-Lie groups and their closed subgroups, i.e. closed subgroups of $\GL(n,A)$, where $A$ is a unital Banach algebra. Although in the general theory of infinite-dimensional Lie groups, this is a very special class, for our purposes it is already quite rich and it connects our research with operator algebras and topological $K$-theory, where these groups naturally appear. We refer to \cite{Rob19}, \cite{Va86}, and references therein for the research on the (mostly normal subgroup) structure of linear Banach-Lie groups.

First we realize that every connected Banach-Lie group has a well-defined quasi-isometry type. We show that there are several candidates for distances realizing the large scale geometry (all of them of course quasi-isometrically the same). The standard distance on Banach-Lie groups, generalizing the Riemanninan distance, is the Finsler distance, and metric geometry of Banach-Lie groups with this distance is an established topic of research (see e.g. \cite{Lar19b} and \cite{ALR10}, and references therein). Indeed, the Finsler distance defines the quasi-isometry type (it is a \emph{maximal metric} in the sense of Rosendal \cite{Ro18}). However, in this paper we will promote another distance that has its origin in $C^*$-algebra theory and was defined by Ringrose in \cite{Ri92} as a \emph{$C^*$-exponential length} for unitary groups of $C^*$-algebras. He also proved that for unitary groups it actually coincides with the Finsler distance. On the other hand, the connection between the exponential length and the large scale geometry of the groups $\mathcal{U}_0(A)$ has not been found until recently. 
In \cite{AM20}, the authors found the first connection that $\mathcal{U}_0(A)$ is bounded if and only if $A$ has finite exponential length. Thus establishing the (in)finiteness of exponential length is of importance for our purpose. After the first version of the paper appeared, C. Rosendal asked us whether there exists a \emph{minimal} metric on every Banach-Lie group (see \cite{Ro18-2}). The answer is positive and the next theorem summarizes our findings in this direction.
\begin{thmx}
Let $G$ be a connected Banach-Lie group with a Banach-Lie algebra $\LA$. Then the \emph{exponential length} of $G$, given by the  following formula
\[\bel{G}(g):=\inf \left \{\sum_{i=1}^n\|X_i\|\,\middle|\, g=\exp (X_1)\cdots \exp (X_n),\,X_i\in \mathfrak{g}\,(1\le i\le n)\right \}\]
is a compatible length function on $G$, where it defines its quasi-isometry type and simultaneously it induces a minimal metric. Thus it defines a global Lipschitz geometric structure on $G$.
\end{thmx}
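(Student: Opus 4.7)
The plan is to verify three properties of $\bel{G}$: (a) it is a compatible length function taking finite values; (b) it is quasi-isometric to the Finsler distance $d_F$, so it defines the quasi-isometry type of $G$; and (c) it is minimal in Rosendal's sense, so that combined with (b) one obtains a canonical global Lipschitz geometric structure.

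The length-function axioms in (a) are immediate from the definition: subadditivity by concatenating factorizations of $g$ and $h$; symmetry from $g^{-1}=\exp(-X_n)\cdots\exp(-X_1)$, which has the same total norm; and $\bel{G}(e)=0$ from the empty product. Finiteness of $\bel{G}$ uses connectedness of $G$: the subgroup generated by $\exp(\LA)$ is open (since $\exp$ is a local diffeomorphism at $0$) and hence equals $G$, so every $g$ admits some finite factorization. For compatibility with the topology, the pointwise bound $\bel{G}(\exp Y)\leq\|Y\|$ together with $\exp$ being a local homeomorphism at $0$ gives upper semi-continuity at $e$. For the reverse inequality, each $t\mapsto\exp(tX_i)$ on $[0,1]$ is a Finsler path of length $\|X_i\|$, so $d_F(e,g)\leq\bel{G}(g)$; since $d_F$ is already known to generate the topology of $G$, so does $\bel{G}$.

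For (b) it remains to bound $\bel{G}$ from above by $d_F$ (one in fact expects equality, extending Ringrose's theorem for unitary groups). Given $\epsilon,\delta>0$, I would take a Finsler near-geodesic $\gamma$ from $e$ to $g$ of length at most $d_F(e,g)+\epsilon$ and partition $[0,1]$ finely enough that each translated arc $\gamma(t_{i-1})^{-1}\gamma([t_{i-1},t_i])$ lies in a neighborhood of $e$ on which $\exp$ is a bi-Lipschitz diffeomorphism with constants within $1\pm\delta$. The endpoints then give a factorization of $g$ whose total norm is at most $(1+\delta)(d_F(e,g)+\epsilon)$; letting $\delta,\epsilon\to 0$ yields $\bel{G}(g)\leq d_F(e,g)$. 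Since $d_F$ is known to be a maximal metric in Rosendal's sense and therefore to define the quasi-isometry type of $G$, the same conclusion transfers to $\bel{G}$.

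For (c), given any compatible left-invariant metric $d'$ on $G$ in the appropriate class, one shows a control function $\phi$ with $\bel{G}\leq\phi\circ d'$, using that $\exp(\{X\in\LA:\|X\|\leq 1\})$ is bounded in any such $d'$ (by continuity of $d'$ at $e$) and that iterated products together with Rosendal's criterion for minimal metrics via bounded generating sets then yield the global comparison. The main technical obstacle is the uniform bi-Lipschitz control of $\exp$ on translates of a neighborhood of $e$ used in (b); this rests on the analyticity of $\exp$ and locally uniform Baker-Campbell-Hausdorff expansions, both standard in Banach-Lie theory but requiring careful patching along a near-geodesic.
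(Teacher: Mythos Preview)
Your (a) is essentially the paper's argument. The problems are in (b) and especially (c).

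For (b), you aim to prove $\bel{G}\le d_F$ (hence equality). The paper does \emph{not} establish this in general: equality $\bel{G}=d_F$ is shown only in the SIN case, using Larotonda's result that short one-parameter arcs are Finsler geodesics; without SIN only $d_F\le\bel{G}$ and bi-Lipschitz equivalence are obtained, and the authors remark that they know no counterexample to equality. Your partition argument needs the sharp local estimate $\|X\|\le(1+\delta)\,d_F(e,\exp X)$ for small $X$ with $\delta$ arbitrarily small; this is not obvious when the Finsler structure is only left-invariant (the Lie exponential need not be the metric exponential), and you have not justified it. The paper's route to maximality is more direct and avoids this issue: it shows $(G,\bel{G})$ is a length space (the concatenated one-parameter paths you already use give exactly this), and any compatible left-invariant length metric on a connected group is automatically maximal.

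For (c) there is a genuine gap. Minimality means that for every compatible left-invariant metric $\partial$, the identity $(G,\partial)\to(G,\bel{G})$ is \emph{Lipschitz on a neighbourhood of} $e$; a control function $\bel{G}\le\phi\circ d'$, or any argument via bounded generating sets, gives only coarse domination and does not yield local Lipschitz control. The paper instead uses Rosendal's equivalent characterisation: there exist a neighbourhood $U$ of $e$ and $K\ge1$ such that whenever $g,g^2,\dots,g^n\in U$ one has $n\,\bel{G}(g)\le K\,\bel{G}(g^n)$. The key lemma, proved by a short contradiction argument from compatibility of $\bel{G}$, is that on some $V\ni 0$ in $\LA$ one has $\|X\|\le K\,\bel{G}(\exp X)$. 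Taking $U=\exp V$ and $X=\log g$, the hypothesis $g^i\in U$ forces $\log(g^i)=iX$ for $1\le i\le n$, whence
\[
n\,\bel{G}(g)\le n\|X\|=\|nX\|\le K\,\bel{G}(\exp(nX))=K\,\bel{G}(g^n).
\]
Your sketch does not use this one-parameter-subgroup structure, which is the heart of the minimality argument.
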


The advantage of the exponential length to the Finsler distance, in our view, is that it is more convenient in computations, especially for linear Banach-Lie groups, which will be demonstrated several times throughout the paper. Indeed, in some cases the exponential length can be computed explicitly. Since having a well-defined quasi-isometry type does not mean that this type is not trivial, i.e. that the group is not quasi-isometric to a point, we provide some criteria for verifying that a given Banach-Lie group is unbounded. This is an important research topic in $C^*$-algebra theory for unitary groups (see e.g. \cite{Ph94}, \cite{Zhang93}, \cite{Lin14}, and references therein).

These new concepts are then illustrated on our `toy examples', the \emph{unitary groups of unital abelian $C^*$-algebras} (in the sequel, we shall use the shorthand \emph{abelian unitary groups} for them). We explicitly compute their exponential length. For separable algebras, we show that whenever the Gelfand spectrum of the algebra is not totally disconnected (otherwise, the unitary group is bounded, see \cite{AM20}), then the corresponding unitary group is quasi-isometrically universal for separable metric spaces. For separable abelian unital $C^*$-algebras whose Gelfand spectrum has finitely many components, we provide a classification of their unitary groups up to topological isomorphism and up to quasi-isometry, highlighting the difference.

Next we focus on Properties (T) and (FH), and the Haagerup property. We provide, to the best of our knowledge, the first non-trivial examples of non-abelian non-locally compact groups with the Haagerup property, most of them even being non-amenable (we note that for non-locally compact groups, amenability does not imply the Haagerup property, see \cite{Ros17}, where the Haagerup property for non-locally compact groups was considered for the first time).\medskip

\begin{thmx} Let $M$ be a semifinite von Neumann algebra acting on a separable Hilbert space with infinite direct summand and let $\tau$ be a normal faithful semifinite trace $\tau$ on $M$. Let $1\leq p<\infty$ and let $\U_p(M,\tau)$ be the $p$-Schatten unitary group associated to $M$. Then $\U_p(M,\tau)$ is an unbounded Polish group acting properly on $L^p(M,\tau)$.

In particular, $\U_2(M,\tau)$ has the Haagerup property.\\

Moreover, if $M$ is a factor, then $\U_p(M,\tau)$ is a Banach-Lie group if and only if $M$ is of type {\rm I}. 
If $M$ is moreover a type ${\rm{II}}_{\infty}$ factor, then the following three conditions are equivalent. 
\begin{itemize}
    \item[{\rm{(i)}}] $\U_p(M,\tau)$ is amenable for some $1\le p<\infty$.
    \item[{\rm{(ii)}}] $\U_p(M,\tau)$ is amenable for every $1\le p<\infty$.
    \item[{\rm{(iii)}}] $M$ is hyperfinite. 
\end{itemize}
\end{thmx}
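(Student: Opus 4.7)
My plan is to address the four assertions in sequence, exploiting the interplay between the $L^p$-metric on $\U_p(M,\tau)$ and the trace on $M$.

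First, for the Polish structure, unboundedness, and proper action (including the Haagerup property at $p=2$): the group $\U_p(M,\tau)=\{u\in\U(M):u-1\in L^p(M,\tau)\}$ equipped with $d_p(u,v):=\|u-v\|_p$ is complete and separable by routine arguments. The natural affine isometric action on $L^p(M,\tau)$ is $\alpha(u)\xi:=u\xi+(u-1)$, whose linear part $\xi\mapsto u\xi$ is an $L^p$-isometry by the tracial identity, and whose cocycle $b(u)=u-1$ satisfies $\|b(u)\|_p=d_p(u,1)$, rendering the action metrically proper for free. The infinite-direct-summand hypothesis supplies, via comparison, finite-trace projections $p_n$ with $\tau(p_n)\to\infty$; the self-adjoint symmetries $v_n:=1-2p_n\in\U_p(M,\tau)$ satisfy $\|v_n-1\|_p=2\tau(p_n)^{1/p}\to\infty$, so the group is unbounded. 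The Haagerup property for $p=2$ is then automatic since $L^2(M,\tau)$ is a Hilbert space.

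For the Banach-Lie dichotomy, when $M\cong B(\Hil)$ is type I, $L^p(M,\tau)=S^p(\Hil)$ consists of compact (hence bounded) operators; the two-sided ideal property makes the power series $\exp(X)=\sum_n X^n/n!$ converge absolutely in $L^p$ for every skew-adjoint $X\in L^p$, yielding an analytic local chart. In the type $\mathrm{II}_\infty$ case, by a careful choice of spectral scale one finds unbounded self-adjoint $H$ affiliated with $M$ satisfying $\tau(|H|^p)<\infty$ but $\tau(|H|^{2p})=\infty$; the one-parameter group $t\mapsto e^{itH}$ lies in $\U_p(M,\tau)$ by functional calculus, yet its putative generator $iH$ cannot sit in any Banach subspace of $L^p$ on which the exponential power series converges, since already $H^2\notin L^p$. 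This obstruction to smoothness of multiplication in the $L^p$-norm is the key technical point precluding the Banach-Lie structure.

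For the amenability equivalence, the implication (iii)$\Rightarrow$(ii) comes from writing the hyperfinite $\mathrm{II}_\infty$ factor as $M\cong R\,\overline{\otimes}\,B(\ell^2)$, the weak closure of an ascending union of type I subfactors $M_n$; the groups $\U_p(M_n,\tau|_{M_n})$ are amenable (as inductive limits of compact unitary groups of matrix algebras) and densely embedded in $\U_p(M,\tau)$, so a Reiter-sequence approximation transfers amenability. For (i)$\Rightarrow$(iii), I would follow de la Harpe's strategy from the $\mathrm{II}_1$ setting: given an invariant mean $m$, define $\varphi(x):=m(u\mapsto\langle u^*xu\,\widehat{1},\widehat{1}\rangle)$ on $B(L^2(M,\tau))$, where $\widehat{1}$ is the tracial cyclic vector. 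Then $\varphi|_M=\tau$ and $\varphi$ is invariant under conjugation by every $v\in\U_p(M,\tau)$.

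The main obstacle is upgrading $\U_p(M,\tau)$-invariance of $\varphi$ to full $\U(M)$-invariance, which is required to invoke Connes's hypertrace characterization of hyperfiniteness. I would establish the requisite strong-operator density of $\U_p(M,\tau)$ in $\U(M)$ by compressing a generic $u=e^{iH}\in\U(M)$ (with $H$ bounded self-adjoint) along finite-trace projections $p_n\nearrow 1$: the truncations $u_n:=(1-p_n)+e^{ip_nHp_n}p_n$ lie in $\U_p(M,\tau)$ and converge strongly to $u$. A continuity argument for the functional $v\mapsto\varphi(v^*xv)-\varphi(x)$, which is SOT-continuous on bounded subsets of $\U(M)$ for fixed $x$, then passes the invariance from $\U_p(M,\tau)$ to $\U(M)$, produces the desired hypertrace, and completes the proof.
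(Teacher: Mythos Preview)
Your treatment of the proper affine action and unboundedness matches the paper's, and your (iii)$\Rightarrow$(ii) follows the same density-of-finite-dimensional-pieces strategy (the paper spends several lemmas making the $\|\cdot\|_p$-approximation precise, but the skeleton is yours).

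There are, however, two genuine gaps.

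\textbf{Banach--Lie dichotomy.} Your obstruction---finding $H\in L^p_{\mathrm{sa}}$ with $H^2\notin L^p$ so that the exponential series for the one-parameter group $t\mapsto e^{itH}$ fails---does not rule out an \emph{abstract} Banach--Lie structure on $\U_p(M,\tau)$. Being Banach--Lie only requires \emph{some} Banach manifold chart with smooth operations; nothing forces the Lie algebra to be $L^p_{\mathrm{sa}}$ with the operator exponential, so the fact that one candidate chart fails is inconclusive. The paper instead shows that for type~II factors $\U_p(M,\tau)$ has \emph{small subgroups}: for any nonzero finite projection $e$, the subgroup $\{u+e^\perp:u\in\U(eMe)\}$ is a copy of a II$_1$ unitary group in its strong topology, which has small subgroups. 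Since Banach--Lie groups are NSS, this is a clean obstruction independent of any candidate chart.

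\textbf{(i)$\Rightarrow$(iii).} Your hypertrace construction breaks down at the outset: in a II$_\infty$ factor $\tau(1)=\infty$, so there is no tracial cyclic vector $\widehat{1}\in L^2(M,\tau)$, and the formula $\varphi(x)=m\bigl(u\mapsto\langle u^*xu\,\widehat{1},\widehat{1}\rangle\bigr)$ is undefined. Even setting this aside, the claim that $v\mapsto\varphi(v^*xv)$ is SOT-continuous for a state built from an invariant mean is unjustified---invariant means are highly discontinuous objects. The paper avoids both issues: for arbitrary $T\in B(L^2(M,\tau))$ it lets $\U_p(M,\tau)$ act by conjugation on the weakly compact convex set $\overline{\mathrm{co}}\{uTu^*:u\in\U(M)\}$, uses amenability to get a fixed point $y$, and then observes directly that $1-2p\in\U_p(M,\tau)$ for every \emph{finite} projection $p$. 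Thus $y$ commutes with all finite projections, hence with $M$ (they generate), giving Schwartz's property~P and hyperfiniteness via Connes. No density or continuity upgrade is needed.
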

\medskip

After the first version of the paper appeared, B. Duchesne suggested to us other natural candidates of non-locally compact groups with the Haagerup property: the isometry groups of the separable infinite-dimensional Hilbert and real hyperbolic spaces (with their point-wise convergency topology or topology of uniform convergence on bounded sets). Let us comment that the former, denoted by $G$, with either topology, can be written as $(\Hil,+)\rtimes O(\Hil)$, where $O(\Hil)$ is the orthogonal group and as a subgroup of $G$ it is equipped either with the SOT or the norm topology, depending on the topology of $G$. It is easy to check that the canonical action of $G$ on $\Hil$ is metrically proper if and only if the subgroup $O(\Hil)$ is bounded. In that case, we moreover get that $G$ is coarsely equivalent to the additive group $(\Hil,+)$. Since it is known that $O(\Hil)$ with both topologies is indeed bounded, we get that $G$ with both topologies is a group with the Haagerup property, however the action is only a mild twist of the translation action of $(\Hil,+)$.

We do not know about the coarse geometry of the isometry group of the infinite-dimensional real hyperbolic space (we refer the reader to \cite{Duch20} and its sequel in preparation where this group is investigated).\medskip

If $A$ is any unital (real or complex) Banach algebra and $n\geq 2$, denote by $\E(n,A)$ the closed subgroup of $\GL(n,A)$, which is the Banach-Lie group of invertible elements of the Banach algebra $M_n(A)$ with the norm topology, generated by the elementary matrices.\medskip

\begin{thmx} Let $A$ be a unital Banach algebra, not necessarily abelian and separable, and let $n\geq 3$. Then $\E_n(A)$ is an unbounded group with Properties (T) and (FH).

Moreover, if $A$ is abelian, then $E_n(A)$ is the connected component of the identity of $\SL(n,A)$, thus $\SL(n,A)$ has Property (T) if and only if $\SL(n,A)/\E_n(A)$ does. In particular, if $A$ is a $C^*$-algebra and $X$ is its Gelfand spectrum, then $\SL(n,A)$ has Property (T) if and only if the group of homotopy classes of maps $[X,\mathrm{SU}(n)]$ does.
\end{thmx}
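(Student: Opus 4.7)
The proof splits into two essentially independent parts: the abstract group-theoretic statement for an arbitrary unital Banach algebra, and the structural refinement when $A$ is abelian.

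\emph{Unboundedness of $\E_n(A)$.} I would exhibit explicit elements of arbitrarily large exponential length. For $t>0$, the diagonal matrix $g_t:=\mathrm{diag}(e^t,e^{-t},1,\ldots,1)$ lies in $\SL(n,\R)=\E_n(\R)$ (by the standard Gauss-elimination identity in $\SL(2,\R)$), hence in $\E_n(A)$ via the unital embedding $\R\hookrightarrow A$. Submultiplicativity of any Banach-algebra norm on $M_n(A)$ together with $\|\exp(X)\|\le e^{\|X\|}$ gives, for every factorisation $g_t=\exp(X_1)\cdots\exp(X_m)$,
\[\sum_{i=1}^{m}\|X_i\|\;\ge\;\log\|g_t\|\;=\;t.\]
Thus $\bel{\E_n(A)}(g_t)\ge t$, and so $\E_n(A)$ is unbounded.

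\emph{Properties (T) and (FH) for $\E_n(A)$, $n\ge 3$.} I would adapt the continuous version of the Shalom--Ershov--Jaikin-Zapirain strategy. The group $\E_n(A)$ is topologically generated by the elementary one-parameter subgroups $H_{ij}(A):=\{I+aE_{ij}:a\in A\}\cong (A,+)$ (for $i\ne j$), obeying the Steinberg commutator relation $[I+aE_{ij},I+bE_{jk}]=I+(ab)E_{ik}$ whenever $i,j,k$ are pairwise distinct. For $n\ge 3$ and each ordered pair $i\ne j$, pick a third index $k$ and look inside the resulting copy of $\E_3(A)$: one finds there a subgroup of the form $\SL(2,\R)\ltimes A^2$ acting on $H_{ik}(A)\oplus H_{jk}(A)$, through which a continuous Burger--Mautner argument produces relative Property (T) for the pair $(\E_n(A),H_{ij}(A))$. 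Property (T) of $\E_n(A)$ then follows via Shalom's criterion from this topological generation by relatively-(T) subgroups. For Property (FH), exhibit a Kazhdan set consisting of elementary matrices of bounded norm (coarsely bounded in $\E_n(A)$) and apply a Bruhat-type centre-of-mass averaging to promote the approximate fixed points furnished by the Kazhdan constant into exact fixed points of any continuous affine isometric action on a Hilbert space.

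\emph{The abelian refinement.} When $A$ is abelian each $H_{ij}(A)\cong (A,+)$ is pathwise connected, so $\E_n(A)$ is connected. For openness in $\SL(n,A)$, apply Gauss elimination to any $g\in \SL(n,A)$ with $\|g-I\|$ small enough that its successive leading minors are invertible in $A$: such a $g$ factors as a short product of elementary matrices, each close to $I$. Hence $\E_n(A)=\SL(n,A)^{0}$ is open, the quotient $\SL(n,A)/\E_n(A)$ is discrete, and in the resulting short exact sequence Property (T) behaves tautologically: it passes to the quotient, and it is recovered in the middle from Property (T) of $\E_n(A)$ together with Property (T) of the discrete quotient. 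Finally, when $A=C(X)$, identify $\SL(n,C(X))=C(X,\SL(n,\C))$; combining the deformation retraction $\SL(n,\C)\simeq \mathrm{SU}(n)$ with path-lifting of Gauss moves identifies $\SL(n,C(X))/\E_n(C(X))$ with the set of homotopy classes $[X,\mathrm{SU}(n)]$.

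\emph{Main obstacle.} The most delicate step is the relative Property (T) for $(\E_n(A),H_{ij}(A))$: the classical Shalom and Burger arguments are written for discrete or locally compact groups, and their adaptation to a general unital Banach algebra requires careful control of the operator-norm topology on $(A,+)$ and of the continuity of the representations along directions in $A$. Bridging (T) to (FH) outside the locally compact category is a secondary, but also non-routine, ingredient.
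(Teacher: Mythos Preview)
Your unboundedness argument and the $C^*$-algebra identification $\SL(n,C(X))/\E_n(C(X))\cong [X,\mathrm{SU}(n)]$ match the paper. For the abelian case the paper uses a Lie-algebra argument (showing that the closed Lie subalgebra of $\E_n(A)$ inside $\mathfrak{sl}(n,A)$ already contains all $e_{i,j}(a)$ and $f_{i,j}(a)=[e_{i,j}(a),e_{j,i}(1)]$, hence is all of $\mathfrak{sl}(n,A)$) rather than your Gauss-elimination argument; both are valid routes to $\E_n(A)=\SL(n,A)_0$.

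The substantive divergence is in the proof of Property (T). You propose establishing relative Property (T) for the pairs $(\E_n(A),H_{ij}(A))$ with $H_{ij}(A)\cong (A,+)$, and you correctly identify this as the main obstacle, since the Burger--Shalom spectral-measure machinery is designed for locally compact groups and does not obviously adapt to a general Banach algebra $A$. The paper sidesteps this obstacle entirely. It invokes \emph{only} the classical finite-dimensional relative Property (T) for $(\SL(2,\Com)\ltimes\Com^2,\Com^2)$, embedded in $\E_3(A)$ via the scalar inclusion $\Com\hookrightarrow A$. This yields a vector $\xi$ invariant under $E_{1,3}(\Com)$ and $E_{2,3}(\Com)$. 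An explicit Mautner-type matrix computation then shows $\xi$ is invariant under the scalar diagonal tori $\{\mathrm{diag}(\lambda,1,\lambda^{-1})\}$ and $\{\mathrm{diag}(1,\lambda,\lambda^{-1})\}$; conjugating any $E_{i,j}(a)$, $a\in A$, by these tori contracts it to the identity, forcing $\xi$ to be $E_{i,j}(A)$-invariant for all off-diagonal pairs. No relative (T) statement over the infinite-dimensional group $(A,+)$ is ever needed, so the obstacle you flag simply does not arise.

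Your passage from (T) to (FH) is also over-engineered. The Delorme direction of Delorme--Guichardet (Property (T) $\Rightarrow$ Property (FH)) holds for arbitrary topological groups with no hypotheses; only the Guichardet converse requires $\sigma$-compactness. Once (T) is established, (FH) is a one-line citation, and no centre-of-mass construction is required.
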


This provides also a large influx of new examples of non-locally compact groups having Property (FH) `non-trivially'. Indeed, most of the known examples of non-locally compact groups having Property (FH) are bounded, i.e. they only have bounded orbits when acting on any metric space.
\begin{thmx}
On the other hand, if $n=2$ and $A$ is an infinite-dimensional unital $C^*$-algebra, then (contrary to the finite-dimensional case) $\E_2(A)$ does not have the Haagerup property. If $A$ is moreover separable and abelian, then $\SL(2,A)$ does not have the Haagerup property.
\end{thmx}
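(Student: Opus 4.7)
My plan is to prove failure of the Haagerup property for $\E_2(A)$ by exhibiting an unbounded closed subgroup $H\leq \E_2(A)$ such that the pair $(\E_2(A),H)$ has relative property~(T); this rules out every metrically proper continuous affine isometric action of $\E_2(A)$ on a Hilbert space. The second assertion will then follow from the first: if $A$ is separable and abelian, then by Theorem~C, $\E_2(A)$ is the identity component of $\SL(2,A)$, hence a closed Banach--Lie subgroup. Since both share the same exponential map, the inclusion is a coarse embedding and the Haagerup property descends to closed coarsely embedded subgroups, so the failure for $\E_2(A)$ forces the failure for $\SL(2,A)$.

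For the construction of $H$, I use infinite-dimensionality of $A$ to pick a self-adjoint element $a\in A$ with infinite spectrum; inside $C^*(1,a)\cong C(\sigma(a))$ I then choose two disjointly supported non-zero elements $f,g$. The relation $fg=0$ forces $E_{12}(sf)$ and $E_{21}(tg)$ to commute, so the assignment $(s,t)\mapsto E_{12}(sf)E_{21}(tg)$ defines an injective continuous homomorphism $\Z^2\hookrightarrow\E_2(A)$ whose closed image $H$ is an abelian Banach--Lie subgroup. Its unboundedness in the exponential length of $\E_2(A)$ can be established by pairing with a Lipschitz quasimorphism built from a bounded linear functional on $A$ separating $\Z f$ and $\Z g$. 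The crux is then the relative property~(T) of $(\E_2(A),H)$: following Burger's strategy for $(\SL(2,\Z)\ltimes\Z^2,\Z^2)$, one exhibits an auxiliary subgroup $L\leq\E_2(A)$ (generated by diagonal elements $\mathrm{diag}(u,u^{-1})$ with $u\in C^*(1,a)^{\times}$ together with further elementary matrices) whose conjugation action induces an action on $\widehat{H}\cong\T^2$ with enough spectral gap away from the trivial character to preclude almost invariant vectors without $H$-fixed vectors.

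The main technical obstacle is the non-normality of $H$ in $\E_2(A)$: the copy of $\SL(2,\Z)$ embedded via integer scalars does \emph{not} normalize $H$ (its conjugates of $E_{12}(sf)E_{21}(tg)$ do not retain that form), and the subgroup of $\E_2(A)$ that does normalize $H$ is essentially solvable, hence amenable, carrying no relative Kazhdan pair on its own. Overcoming this requires replacing Burger's classical spectral-gap argument by an inducing procedure from a non-normal subgroup, with a measure-rigidity statement for the full $\E_2(A)$-conjugation action on $\widehat{H}$ together with its $\E_2(A)$-translates as the key new ingredient. A secondary difficulty is that we work in the Polish, non-locally compact setting of $\E_2(A)$, so that the unboundedness of $H$ must be verified in the intrinsic exponential-length coarse structure rather than merely in any weaker quotient, which is where the Lipschitz quasimorphism above enters.
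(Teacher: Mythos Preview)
Your proposal has a genuine gap at its core: you never establish the relative property~(T) for the pair $(\E_2(A),H)$. You correctly identify the obstacle yourself---no copy of $\SL(2,\Z)$ (or anything with spectral gap on $\widehat{H}\cong\T^2$) normalizes your $H$, and the actual normalizer is amenable---but the proposed remedy (``an inducing procedure from a non-normal subgroup'' together with an unspecified ``measure-rigidity statement for the full $\E_2(A)$-conjugation action on $\widehat H$'') is not a proof. Burger's argument requires a group acting on $\widehat H$ so that the only invariant probability measure is the Dirac mass at the trivial character; absent a normalizing action, there is no action on $\widehat H$ to speak of, and it is unclear what object your ``measure rigidity'' is even about. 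In the Polish, non-locally compact setting of $\E_2(A)$ the spectral-measure machinery behind relative~(T) is already delicate; without a concrete mechanism there is no reason to expect the pair $(\E_2(A),H)$ to have relative~(T), and in fact your $H\cong\Z^2$ sits inside the abelian subgroup $E_{12}(Bf)\times E_{21}(Bg)$ for which nothing in $\E_2(A)$ provides the required mixing.

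The paper's proof takes an entirely different route and avoids relative~(T) altogether. One picks a separable infinite-dimensional unital abelian $C^*$-subalgebra $B\subseteq A$ and considers the diagonal subgroup $D_+=\{\mathrm{diag}(d,d^{-1})\mid d\in\GL(B)_+\}$. An elementary computation with the exponential length shows $\bel{\GL(2,A)}(\mathrm{diag}(d,d^{-1}))=\|\log d\|$, so the inclusion $D_+\hookrightarrow\E_2(A)$ is a coarse embedding. But $D_+$ is topologically isomorphic (via $h\mapsto\mathrm{diag}(e^h,e^{-h})$) to the additive group of the real Banach space $B_{\mathrm{sa}}\cong C(X,\Rea)$ with $X$ infinite. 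If $\E_2(A)$ had the Haagerup property, then $C(X,\Rea)$ would coarsely embed into a Hilbert space, which is impossible by the Aharoni--Maurey--Mityagin\,/\,Johnson--Randrianarivony characterization. For the $\SL(2,A)$ statement, the paper does use that $\E_2(A)$ is the identity component, but the coarse embedding of $\E_2(A)$ into $\SL(2,A)$ is deduced from countability of the quotient $\SL(2,A)/\E_2(A)$ (using separability of $A$), not from ``sharing the same exponential map'' as you wrote.
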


The paper is organized as follows. In Section~\ref{sect:prelim}, we review the background material on large scale geometry (of groups), on Properties (T), (FH), and the Haagerup property, on Banach-Lie groups, and on von Neumann algebras. The subsections on Property (T) and the Haagerup property contain
several new observations, e.g. equating Property (FH) with a \emph{weak Property (T)} for certain class of topological groups.

In Section~\ref{section:explength}, we introduce the main new notions of the paper, the exponential length on Banach-Lie groups. We present its basic properties, compare it with other distances on Lie groups, and connect it with the $C^*$-exponential length. Generalizing certain results of Phillips (mostly from \cite{Ph95}) on $C^*$-exponential length, we introduce methods for showing that the exponential length is unbounded. These new notions are then illustrated in Section~\ref{section:abelian} on abelian unitary groups, for which we compute the exponential length explicitly. We also classify unitary groups of separable abelian unital $C^*$-algebras whose Gelfand spectrum has finitely many components, up to topological isomorphism and up to quasi-isometry.

Then in Section~\ref{section:Up} we take up the study of the $p$-Schatten unitary groups of a semifinite von Neumann algebras and prove the theorem stated for them above. Finally, in Section~\ref{section:En} we study the groups $\E_n(A)$. We show that they are always unbounded groups. We do not know whether they are in general Banach-Lie, we do know it when $A$ is abelian or when $K_0(A)$ is finitely generated. We then prove the second theorem stated above.

\section{Preliminaries}\label{sect:prelim}
This section mostly reviews some known notions and facts that will be crucial for the rest of the paper, but it also contains several new observations (about Properties (T) and (FH) for Polish groups). We cover here large scale geometry of topological groups, Banach-Lie groups, Properties (T) and (FH), the Haagerup property, and von Neumann algebras and noncommutative integration theory.
\subsection{Large scale geometry of topological groups}
Let us start with some basic notions of large scale geometry. The reader is referred to \cite{Roe03} or \cite{NoYu12} for more information.

Let $(X,d_X)$ and $(Y,d_Y)$ be metric spaces. A map $f:X\rightarrow Y$ is called a \emph{quasi-isometric embedding} if there are constants $K,L>0$ such that for all $x,y\in X$ $$\frac{1}{K}d_X(x,y)-L\leq d_Y(f(x),f(y))\leq Kd_X(x,y)+L.$$
If moreover the image $f[X]\subseteq Y$ is a \emph{net}, i.e. there is $\varepsilon>0$ such that for all $y\in Y$ there is $x\in f[X]$ with $d_Y(x,y)<\varepsilon$, then $f$ is called a \emph{quasi-isometry} and the spaces $X$ and $Y$ are \emph{quasi-isometric}.

A strict weakening of the notion of quasi-isometry is the notion of \emph{coarse equivalence}. A map $f:X\rightarrow Y$ between metric spaces as above is called a \emph{coarse embedding} if there exist moduli $\rho_1:[0,\infty)\rightarrow [0,\infty)$ and $\rho_2:[0,\infty)\rightarrow [0,\infty)$, which are non-decreasing and satisfying $\lim_{t\to\infty} \rho_1(t)=\infty$, and such that for all $x,y\in X$ $$\rho_1(d_X(x,y))\leq d_Y(f(x),f(y))\leq \rho_2(d_X(x,y)).$$ If in addition again the image $f[X]\subseteq Y$ is a net, then $f$ is called a \emph{coarse equivalence} and the spaces $X$ and $Y$ are called \emph{coarse equivalent}.

It is clear that the notions of quasi-isometry and coarse equivalence make sense also for pseudometric spaces (i.e. spaces for which the distance function $d$ may vanish also outside of the diagonal), an observation that will be useful below.\medskip

Next we review some key ideas of the recent monograph \cite{Ro18} of Rosendal on large scale geometry of topological groups. Based on the ideas of Roe (\cite{Roe03}), Rosendal notices that it is possible to define a coarse structure for all topological groups. Indeed, a subset $A\subseteq G$ of a topological group is \emph{coarsely bounded} if it is bounded with respect to every continuous left-invariant pseudometric on $G$. Otherwise, it is \emph{coarsely unbounded}.

In this vein, the most important metrizable groups are those for which there exists a compatible left-invariant metric which metrizes its coarse structure. This is well known to be the case e.g. for countable discrete groups as one can always find a proper left-invariant metric on such groups. Even better, there are groups for which one can well-define their quasi-isometry type by finding a compatible left-invariant metric which is maximal (in a sense defined below). This is well known to be the case for finitely generated groups (with their word metrics) and for connected real Lie groups (with their left-invariant Riemannian metrics). Rosendal's discovery is that there are actually also many non-locally compact groups which admit such a metric. We start with precise definitions.
\begin{definition}{\cite[Definition 2.49]{Ro18}}
Let $G$ be a topological group and let $\PM$ be the set of all continuous left-invariant pseudometrics on $G$. Define a partial order relation $\ll$ on $\PM$ by setting, for $d,p\in\PM$, $d\ll p$ if there exist constants $K,L>0$ such that $d\leq Kp+L$.

A left-invariant pseudometric $d\in\PM$ is \emph{maximal} if it is maximal element in $(\PM,\ll)$. 
\end{definition}
It is clear that if $d,p\in\PM$ are both maximal, then they are quasi-isometric (i.e. the spaces $(G,d)$ and $(G,p)$ are quasi-isometric). Moreover, if $\PM$ admits a maximal element $d\in\PM$, then $d$ defines the quasi-isometry type of $G$ (see \cite[Section 2.7]{Ro18} for details).

We state the following criterion that is useful in verifying that a given metric is maximal.
\begin{proposition}{\cite[Proposition 2.52]{Ro18}}\label{prop:maximalmetric}
Let $G$ be a topological group and $d$ a continuous left-invariant pseudometric on $G$. Then the following are equivalent.
\begin{enumerate}
    \item $d$ is maximal.
    \item $d$ is coarsely proper and large scale geodesic.
\end{enumerate}
\end{proposition}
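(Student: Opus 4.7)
My plan is to establish the two implications separately; both rest on comparing $d$ with auxiliary continuous left-invariant pseudometrics built from chain constructions at a fixed scale $K > 0$.

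For $(2) \Rightarrow (1)$, let $p$ be an arbitrary continuous left-invariant pseudometric on $G$; I aim to show $p \ll d$. Let $K > 0$ be the scale witnessing large scale geodesicity of $d$, so that any $x, y \in G$ are joined by a chain $x = x_0, x_1, \ldots, x_n = y$ with $d(x_i, x_{i+1}) \leq K$ and $n \leq C d(x,y) + C$ for some uniform $C$. Coarse properness of $d$ makes the ball $B_K = \{g \in G : d(e, g) \leq K\}$ coarsely bounded, so $p$ is bounded on $B_K$ by some $M$. Left-invariance of $p$ gives $p(x_i, x_{i+1}) \leq M$ along the chain, and the triangle inequality yields $p(x, y) \leq nM \leq MC (d(x,y) + 1)$, which is exactly $p \ll d$.

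For $(1) \Rightarrow (2)$, coarse properness of $d$ is immediate: for any $d$-bounded set $B$ and any continuous left-invariant pseudometric $p$, maximality gives $p \leq \alpha d + \beta$, forcing $p$ to be bounded on $B$. For large scale geodesicity, the plan is to construct from $d$ a continuous left-invariant pseudometric that encodes the chain structure at scale $K$, and then to invoke maximality. A natural first candidate is the chain pseudometric
\[ \tilde d_K(x, y) = \inf\Bigl\{ \sum_{i=0}^{n-1} d(x_i, x_{i+1}) : x = x_0, \ldots, x_n = y, \ d(x_i, x_{i+1}) \leq K \Bigr\}, \]
defined on the open subgroup generated by $B_K$ (which we may take to be $G$ after enlarging $K$, since the existence of a maximal pseudometric forces $G$ to be generated by every coarsely bounded neighborhood of $e$). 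Then $\tilde d_K$ is left-invariant, satisfies $\tilde d_K \geq d$, and is continuous because it coincides with $d$ on $\{(x,y) : d(x,y) \leq K\}$ and hence on a neighborhood of the diagonal. Maximality then forces $\tilde d_K \leq \alpha d + \beta$, yielding chains whose total length is linearly controlled by $d(x, y)$.

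The step I expect to be most delicate is passing from this control on the total chain length to a control on the number of chain steps, which is the more standard phrasing of large scale geodesicity: no continuous left-invariant pseudometric can assign a uniformly positive weight to every non-trivial step without violating continuity at the identity. To bridge this gap, one can work with a refined family of chain pseudometrics $\rho_{K, \varphi}$ using a continuous weight $\varphi \colon [0, \infty) \to [0, \infty)$ satisfying $\varphi(0) = 0$ and $\varphi(t) \approx 1$ for $t$ comparable to $K$, and extract the step-count bound by tuning $\varphi$ and applying maximality to each $\rho_{K,\varphi}$ separately; alternatively, in Rosendal's formulation one may bypass this issue entirely if the large scale geodesicity condition is stated in length rather than step-count form.
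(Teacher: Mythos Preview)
The paper does not prove this proposition at all: it is stated purely as a citation of \cite[Proposition 2.52]{Ro18} and is used as a black box (notably in Lemma~\ref{lem:maximallengthmetric} and in the proof of Theorem~\ref{thm: properaction of U_p(M)}). There is therefore no ``paper's own proof'' to compare against.

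On the substance of your sketch: the outline is the standard one and is essentially correct, but two points deserve care. First, in your $(2)\Rightarrow(1)$ argument you assert a bound $n\le C\,d(x,y)+C$ on the \emph{number} of chain steps, whereas the definition of large scale geodesicity recorded in this paper (Definition~\ref{def:largescalegeodesic}) controls the \emph{total length} $\sum d(g_i,g_{i+1})$ rather than $n$; with the length formulation, your argument still goes through but via $p(x,y)\le \sum p(x_i,x_{i+1})$ and a bound $p(x_i,x_{i+1})\le M$ together with $\sum d(x_i,x_{i+1})\le K\,d(x,y)$, provided you also secure a uniform lower bound on $d(x_i,x_{i+1})$ or refine the chain to have steps bounded \emph{below} as well as above. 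Second, in $(1)\Rightarrow(2)$ you assume that for large enough $K$ the ball $B_K$ generates $G$; this does follow from maximality (a maximal pseudometric implies $G$ is generated by a coarsely bounded set, since otherwise one builds an unbounded word-type pseudometric), but you should make that step explicit rather than assert it. Your remark that the length-versus-step-count issue may disappear under Rosendal's formulation is exactly right: with Definition~\ref{def:largescalegeodesic} as stated here, the ``delicate'' passage you flag is not needed, and your chain pseudometric $\tilde d_K$ already does the job.
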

Now we own the definitions of \emph{coarse properness} and \emph{large scale geodesicity}.
\begin{definition}\label{def:coarselyproper}
A continuous left-invariant pseudometric on a topological group $G$ is \emph{coarsely proper} if it generates the left-coarse structure on $G$ (see \cite[Definition 2.4]{Ro18}). By \cite[Lemma 2.40]{Ro18}, this is equivalent, assuming that $d$ is actually a compatible metric and that $G$ has no proper open subgroups, to the requirement that for all $\Delta,\delta>0$ there is $k\in\Nat$ such that for any $d(g,e)<\Delta$ there are elements $g_0=e,\ldots,g_k=g\in G$ satisfying $d(g_i,g_{i+1})<\delta$, for all $0\leq i<k$
\end{definition}
\begin{definition}\label{def:largescalegeodesic}
$(G,d)$ as above is \emph{large scale geodesic} if there is $K>0$ such that for all $g,h\in G$ there are $g_0=g,\ldots,g_n=h$ satisfying $d(g,h)\leq K\sum_{i=0}^{n-1} d(g_i,g_{i+1})$, and for all $0\leq i<n$, $d(g_i,g_{i+1})\leq K$.
\end{definition}

Let $(M,d)$ be a metric space (or in general, just a pseudometric space). Recall that an \emph{intrinsic pseudometric} $d_I$ is defined by $$d_I(x,y):=\inf\left \{L(\gamma)\mid \gamma:[0,1]\rightarrow M\text{ continuous}, \gamma(0)=x,\gamma(1)=y\right \},$$ for $x,y\in M$, where $$L(\gamma):=\sup\left \{\sum_{i=1}^n d(\gamma(x_i),\gamma(x_{i+1})\,\middle|\, x_1=0<x_2<\ldots<x_{n+1}=1\right \}.$$
Notice that $d_I$ is symmetric and satisfies the triangle inequality, however it can attain infinity. In general, it follows from triangle inequalities that $d\leq d_I$. If we have $d=d_I$, then we say that $M$ is a length space.

The following simple lemma will be very useful.
\begin{lemma}\label{lem:maximallengthmetric}
Let $G$ be a topological group without proper open subgroups and let $d$ be a left-invariant compatible metric such that $(G,d)$ is a length space. Then $d$ is maximal.
\end{lemma}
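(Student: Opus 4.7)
The plan is to invoke Proposition~\ref{prop:maximalmetric} and verify both of its conditions: that $d$ is coarsely proper and that $(G,d)$ is large scale geodesic. Since $d$ is a compatible metric and $G$ has no proper open subgroups, coarse properness can be checked via the combinatorial criterion stated in Definition~\ref{def:coarselyproper}. The engine driving both verifications will be the length space hypothesis: for every pair $g,h\in G$ and every $\varepsilon>0$, there exists a continuous path $\gamma\colon[0,1]\to G$ from $g$ to $h$ with $L(\gamma)\le d(g,h)+\varepsilon$.

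For large scale geodesicity, I would simply take $K=1$. Given $g,h\in G$, pick such a near-minimising path $\gamma$ and observe that the function $s\mapsto L(\gamma|_{[0,s]})$ is continuous and non-decreasing from $0$ to $L(\gamma)$ (a standard fact about arc-length functions on length spaces). Hence one can partition $[0,1]$ as $0=t_0<t_1<\cdots<t_n=1$ so that $L(\gamma|_{[t_i,t_{i+1}]})\le 1$ for every $i$, and set $g_i:=\gamma(t_i)$. Then $d(g_i,g_{i+1})\le L(\gamma|_{[t_i,t_{i+1}]})\le 1$, while the triangle inequality automatically gives $d(g,h)\le \sum_{i=0}^{n-1}d(g_i,g_{i+1})$, verifying Definition~\ref{def:largescalegeodesic} with $K=1$.

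For coarse properness, fix $\Delta,\delta>0$; I must produce $k\in\N$ working for all $g$ with $d(g,e)<\Delta$. Take a path $\gamma$ from $e$ to $g$ of length at most $\Delta+1$, and use the same arc-length subdivision to cut $[0,1]$ into $n$ subintervals with $L(\gamma|_{[t_i,t_{i+1}]})<\delta$; this forces each $d(g_i,g_{i+1})<\delta$. The key point is that $n$ can be chosen to depend only on $\Delta$ and $\delta$, namely any $n>(\Delta+1)/\delta$ suffices, so $k:=\lceil(\Delta+1)/\delta\rceil+1$ is a uniform bound. This verifies the criterion of Definition~\ref{def:coarselyproper}.

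The only conceptual step requiring a little care is the existence of the continuous non-decreasing arc-length function $s\mapsto L(\gamma|_{[0,s]})$ used to produce partitions of prescribed size; this is a standard property of rectifiable paths that follows from the supremum definition of $L(\gamma)$. Apart from this, nothing beyond the definitions and the triangle inequality is needed, so I expect no real obstacle.
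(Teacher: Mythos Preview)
Your proposal is correct and follows essentially the same approach as the paper's proof: verify coarse properness and large scale geodesicity via subdivisions of near-minimising paths, then invoke Proposition~\ref{prop:maximalmetric}. The paper's proof is merely a terse version of yours, recording only the resulting constants ($k=\lfloor\Delta/\delta\rfloor+1$ and $K$ arbitrarily close to~$1$) without spelling out the path-subdivision argument or the continuity of the arc-length function that you correctly identify as the one point needing justification.
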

We note that \cite[Example 2.54]{Ro18} states that every compatible left-invariant geodesic metric is maximal. Lemma~\ref{lem:maximallengthmetric} is just a mild generalization.
\begin{proof}
It is easy to check that $d$ satisfies the condition from Definition~\ref{def:coarselyproper}, for any $0<\delta<\Delta$ with $k=\lfloor \frac{\Delta}{\delta}\rfloor+1$. Since $G$ has no proper open subgroups and $d$ is compatible, we get that $d$ is coarsely proper. It is also easy to see $d$ satisfies Definition~\ref{def:largescalegeodesic} with $K$ arbitrarily close to $1$. It follows from Proposition~\ref{prop:maximalmetric} that $d$ is maximal. 
\end{proof}
In the sequel, when working with topological groups and their subsets, we shall simply say that the groups (and their subsets) are \emph{bounded}, resp. \emph{unbounded}, instead of the more precise coarsely bounded, resp. coarsely unbounded. We will also use in the sequel the basic facts (see \cite{Ro18}) that bounded sets are closed under products and continuous homomorphic images.\medskip

Before proceeding further, let us note that whereas maximality of a metric means maximality on the large scale, Rosendal in \cite{Ro18-2} also defines local minimality of metrics. The precise definition is as follows.
\begin{definition}{\cite[Definition 1]{Ro18-2}}\label{def:minimalmetric}
Let $G$ be a topological group. A left-invariant metric $d$ on $G$ is called \emph{minimal} if it is compatible and for any compatible left-invariant metric $\partial$ on $G$ the identity map $(G,\partial)\rightarrow (G,d)$ is Lipschitz on an neighborhood of the identity.
\end{definition}
The interesting point for us is that if $d$ is a left-invariant metric on $G$ that is both maximal and minimal, then it defines a \emph{global Lipschitz geometric structure} on $G$. We refer to \cite[Section 3]{Ro18-2}.\medskip

Finally we note that (continuous) left-invariant metrics, resp. pseudometrics on groups are in one-to-one correspondence with (continuous) \emph{length functions}, resp. \emph{pseudo-length functions} on groups, sometimes also called norms, resp. pseudonorms. Since the latter are sometimes more convenient to define and work with, we shall work with these two notions interchangeably. Here we recall that for a (topological) group $G$, a length function is a (continuous) function $\ell:G\rightarrow [0,\infty)$ satisfying for all $g,h\in G$, $\ell(g)=0$ if and only if $g=1_G$, $\ell(g)=\ell(g^{-1})$, and $\ell(gh)\leq \ell(g)+\ell(h)$. If the first condition is relaxed to $\ell(1_G)=0$, i.e. we allow $\ell$ to vanish on non-trivial elements of $G$, then $\ell$ is a pseudo-length function.
\subsection{Infinite-dimensional Lie groups}
Let $M$ be a topological space. We say that $M$ is a \emph{Banach manifold} if there exists a (real or complex) Banach space $Z$ such that 
\begin{itemize}
    \item $M$ is locally homeomorphic to $Z$.
    \item If $U,V\subseteq M$ are two open sets, with non-empty intersection, homeomorphic to open sets in $Z$ via the maps $p_U:U\rightarrow Z$, resp. $p_V:V\rightarrow Z$, then the map $p_u\circ p_V^{-1}: p_V[U\cap V]\rightarrow p_U[U\cap V]$ is smooth.
\end{itemize}

\begin{definition}
A Hausdorff topological group $G$ is a \emph{Banach-Lie group} if $G$ is topologically a Banach manifold and the group operations are smooth.
\end{definition}

We briefly review some basic properties of Banach-Lie groups that will be useful for us. We refer to \cite[Section 6]{Up85} and \cite[Section IV]{Neeb04} for details. To each Banach-Lie group $G$, a \emph{Banach-Lie algebra} $\LA$ can be assigned, as in the finite-dimensional case, as the tangent space at $1_G$. Since it can be canonically identified with the Banach space on which $G$ is modelled, $\LA$ is a Banach space.

There exists, as in the finite-dimensional case, the \emph{exponential function} $\exp: \LA\rightarrow G$, whose most important property for us is that it is a local diffeomorphism - in fact, being a local homeomorphism is sufficient for our purposes here. This has the following important consequences:
\begin{itemize}
    \item The connected component of the identity $G_0$ in $G$ is path-connected and open (therefore clopen).
    \item For each $X\in\LA$, $\exp(X)\in G_0$, and for each $g\in G_0$ there are $X_1,\ldots,X_n\in\LA$ such that $g=\prod_{i=1}^n \exp(X_i)$.
    \item $G/G_0$ is discrete.
\end{itemize}
The exponential $\exp(a)$ will be occasionally denoted just by $e^a$, when working with linear groups, whenever convenient.

We present few examples below. All of them, and many others, will appear later in the text, so we postpone the reference or verification that these groups are Banach-Lie to the corresponding sections.\\ \\
{\bf Examples.}
\begin{itemize}
    \item Every Banach space is an abelian Banach-Lie group.
    \item Let $A$ be a unital real or complex Banach algebra and let $\GL(A)$ denote the group of all invertible elements of $A$ equipped with the norm topology. Then $\GL(A)$ is a Banach-Lie group.
    \item Let $A$ be a unital $C^*$-algebra and let $\U(A)$ denote its unitary group with the norm topology. Then $\U(A)$ is a Banach-Lie group. Moreover, if $I$ is a closed two-sided ideal in $A$, then $\U_I(A):=\{1+a\in \U(A)\mid a\in I\}$ is a Banach-Lie group.
    \item Let $1\leq p\leq\infty$, let $\Hil$ be a Hilbert space and let $\|\cdot\|_p$ be the $p$-Schatten norm on $\mathbb{B}(\Hil)$. Then $\U_p(\Hil)=\{1+a\mid \|a\|_p<\infty\}$, equipped with the topology coming from the $p$-norm $\|\cdot\|_p$, is a Banach-Lie group.
    \item Let $H$ be a matrix finite-dimensional Lie group and let $X$ be a compact Hausdorff space. Then the group $C(X,H)$ of all continuous maps with point-wise group operations is a Banach-Lie group.
\end{itemize}

We remark that not all the finite-dimensional Lie theory generalizes - even to Banach-Lie groups. One particular instance, which will be of concern for us, is that not every closed subgroup of a Banach-Lie group is a Banach-Lie group. This will have a consequence that sometimes we will work with certain closed subgroups of Banach-Lie groups, which we do not know whether they are Banach-Lie. Some of our techniques will still apply to them. Also, we shall work with the $p$-unitary groups $\U_p(M,\tau)$, where $M$ is a semifinite von Neumann algebra. In that case, $\U_p(M,\tau)$ usually is not a Banach-Lie group, still some large scale geometric properties can be investigated similarly as for $\U_p(\Hil)$ though. Second we note that there are far more general infinite-dimensional Lie groups besides Banach-Lie groups; we refer the reader to \cite{Neeb05}. It is not clear to us though how to apply the ideas from Section~\ref{section:explength} to these more general classes of Lie groups.

\subsection{Properties (T) and (FH)}\label{subsection:prelim-T}
Here we review the basics of the Kazhdan property (T) and Property (FH). We also offer some new observations which relate Property (T) and Property (FH) for Polish (or more general topological) groups. These will be relevant later in Section~\ref{section:En}. Our main reference is \cite{BdHV} and we refer there for any unexplained notion and for more details.

Recall that a unitary representation $\pi:G\rightarrow \Uni(\Hil)$ of a topological group $G$ \emph{almost has invariant vectors} if for every compact set $Q\subseteq G$ and $\varepsilon>0$ there exists a unit vector $\xi\in\Hil$ satisfying $\max_{g\in Q} \|g\xi-\xi\|<\varepsilon$. We say that $G$ has \emph{Property (T)} (see \cite[Definition 121.3]{BdHV} for more details) if every strongly continuous unitary representation of $G$ almost having invariant vectors has a non-zero invariant vector. We also say that $G$ has \emph{strong Property (T)} if we can replace the compact set $Q$ in the definition for a finite set.

Next, we say that a topological group $G$ has \emph{Property (FH)} if every continuous action of $G$ on any Hilbert space $\Hil$ by affine isometries has a fixed point.

Property (T) and Property (FH) are related by the Delorme-Guichardet theorem (see \cite[Theorem 2.12.4]{BdHV}): If a topological group $G$ has Property (T), then it has Property (FH). The converse is not true in general, we shall later see counterexamples in the class of Polish groups, however, if $G$ is locally compact and $\sigma$-compact (in particular, locally compact and Polish), then Property (FH) implies Property (T).\medskip

Apparently, the first example of a non-locally compact group having Property (T) was provided by Shalom in \cite{Sha99} as the group of continuous homomorphism from the circle group $\T$ into $\SL(n,\Com)$, for $n\geq 3$, which can be identified with $\SL(n,C(\T))$. To the best of our knowledge, it was therefore also the first example of an unbounded non-locally compact group having Property (FH). Since then, the research on Property (T) for non-locally compact Polish groups has undergone a significant development, see e.g. \cite{Bek03}, \cite{Tsa12}, \cite{Pes18}, \cite{Iba19}. Curiously, the same cannot be said about Property (FH). Almost all the new examples of non-locally compact Polish groups with Property (T) are bounded, therefore they have Property (FH) for trivial reasons (although it is not always easy to verify that these groups are bounded).

So as far as we know, the only `non-trivial' examples of non-locally compact and unbounded Polish groups having Property (FH) are the examples by Shalom, whose list has been later much expanded by Cornulier in \cite{Cor06}. Among trivial examples are direct products of bounded non-locally compact groups and non-compact locally compact groups with property (T).\medskip

Our first goal is to find a weakening of Property (T) that is equivalent to Property (FH) for much larger class of topological groups. The new examples of unbounded topological groups having Property (T) and Property (FH) will be provided in Section~\ref{section:En}.

We start with a new definition.
\begin{definition}
Let $G$ be a Polish group and let $\pi:G\rightarrow \Uni(\Hil)$ be a strongly continuous unitary representation. We say that $\pi$ \emph{almost has invariant vectors in the bounded sense} if for every $\varepsilon>0$ and every bounded set $Q\subseteq G$ there exists a unit vector $\xi\in\Hil$ such that $\sup_{g\in Q} \|g\xi-\xi\|<\varepsilon$.

We say that $G$ has \emph{weak Property (T)} if every strongly continuous unitary representation of $G$ almost having invariant vectors in the bounded sense has a non-zero invariant vector.
\end{definition}

\begin{remark}
Notice that if $G$ is a locally compact Polish group, then the weak Property (T) for $G$ is equivalent to the standard Property (T) - as bounded is equivalent to pre-compact in this case.
\end{remark}
The following proposition is verified exactly as in the locally compact case.
\begin{proposition}
Let $G$ be a topological group. Then $G$ has the weak Property (T) if and only if there exists a bounded Kazhdan set $Q\subseteq G$; that is, a bounded set $Q\subseteq G$ and $\varepsilon$ such that if $\pi$ is a strongly continuous unitary representation that has $(Q,\varepsilon)$-invariant vector, then it has a non-zero invariant vector.
\end{proposition}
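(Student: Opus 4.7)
The plan is to mimic the classical characterization of Property (T) via Kazhdan sets (as in \cite[Proposition 1.2.5]{BdHV}), with ``bounded'' replacing ``compact'' throughout. One direction is essentially a tautology from the definitions; the other requires the standard construction of a universal counterexample representation as a direct sum.

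For the easy implication, suppose $(Q,\varepsilon)$ is a bounded Kazhdan pair, and let $\pi:G\to\Uni(\Hil)$ be a strongly continuous unitary representation which almost has invariant vectors in the bounded sense. Applying the definition to this particular bounded set $Q$ and this particular $\varepsilon$ yields a unit vector $\xi\in\Hil$ with $\sup_{g\in Q}\|\pi(g)\xi-\xi\|<\varepsilon$, i.e.\ a $(Q,\varepsilon)$-invariant vector. By the Kazhdan property of $(Q,\varepsilon)$, $\pi$ then has a non-zero invariant vector, verifying weak Property (T).

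For the converse, I would argue by contraposition. Suppose no bounded Kazhdan set exists. Then for every pair $(Q,n)$, where $Q$ ranges over bounded subsets of $G$ and $n\in\Nat$, there is a strongly continuous unitary representation $(\pi_{Q,n},\Hil_{Q,n})$ which admits a $(Q,\tfrac{1}{n})$-invariant unit vector $\xi_{Q,n}$ but possesses no non-zero invariant vector. (The class of bounded subsets of $G$ is a set, being contained in $\mathcal{P}(G)$, so this choice is legitimate.) Form the Hilbert direct sum $\pi:=\bigoplus_{(Q,n)}\pi_{Q,n}$ on $\Hil:=\bigoplus_{(Q,n)}\Hil_{Q,n}$. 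Any invariant vector of $\pi$ has invariant coordinates in each summand, so it is zero. Given any bounded $Q_0\subseteq G$ and any $\varepsilon>0$, pick $n>1/\varepsilon$; then the unit vector $\xi_{Q_0,n}$, viewed as a vector in $\Hil$ supported on the $(Q_0,n)$-summand, satisfies $\sup_{g\in Q_0}\|\pi(g)\xi_{Q_0,n}-\xi_{Q_0,n}\|<\tfrac{1}{n}<\varepsilon$, showing that $\pi$ almost has invariant vectors in the bounded sense. This contradicts weak Property (T).

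The only point requiring slight care, and the one I expect to be the main (mild) obstacle, is checking that the direct sum $\pi$ is strongly continuous; this is a standard $\varepsilon/3$-argument, approximating a general vector of $\Hil$ by one supported on finitely many summands and using strong continuity coordinate-wise, and it goes through without modification from the locally compact case. Set-theoretically, one could equivalently restrict the indexing family to a cofinal subfamily of bounded sets to keep the sum genuinely small, but this is inessential.
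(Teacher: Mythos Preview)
Your proof is correct and is precisely the standard argument from \cite[Proposition 1.2.5]{BdHV} with ``bounded'' substituted for ``compact'' throughout; the paper does not spell out a proof at all, merely remarking that it ``is verified exactly as in the locally compact case.'' Your care about strong continuity of the direct sum and the set-theoretic size of the indexing family is warranted but, as you note, routine.
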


\begin{lemma}
Let $G$ be a Polish group having weak Property (T). Then
\begin{enumerate}
    \item if $Q\subseteq G$ is a bounded Borel generating set, then it is a Kazhdan set;
    \item if $Q\subseteq G$ is a Kazhdan set with non-empty interior, then it is generating.
\end{enumerate}
\end{lemma}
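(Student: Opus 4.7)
The plan is to contradict weak Property (T) in both parts: in (1) via a diagonal sum of witness representations, in (2) via the quasi-regular representation on the discrete coset space $G/\langle Q\rangle$. The subtle step is in (1), where a Baire--Pettis argument is needed to translate the combinatorial hypothesis ``$Q$ bounded Borel generating'' into the topological notion of coarse boundedness appearing in the definition of weak Property (T); part (2) is essentially algebraic.

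For (1), set $Q':=Q\cup Q^{-1}\cup\{1_G\}$, still bounded, Borel, symmetric, and generating. The first step is to establish that every coarsely bounded $A\subseteq G$ is contained in $(Q')^M$ for some $M\in\Nat$. Since $G=\bigcup_n (Q')^n$ is Polish and each $(Q')^n$, being analytic, has the property of Baire, some $(Q')^n$ is non-meager, so the Pettis lemma yields an open identity neighborhood $U\subseteq (Q')^{2n}$, which is bounded by inheritance. Rosendal's characterization of coarse boundedness (every coarsely bounded $A$ lies in $(FU)^k$ for some finite $F\subseteq G$ and $k\in\Nat$), combined with $Q'$-generation (pick $K$ with $F\subseteq (Q')^K$), then gives $A\subseteq (Q')^{(K+2n)k}$. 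Now, assuming $Q$ is not Kazhdan, for each $k\in\Nat$ pick a strongly continuous unitary representation $\pi_k$ on $\Hil_k$ with no non-zero invariant vector but a $(Q,1/k)$-invariant unit vector $\xi_k$; a telescoping estimate gives $\|\pi_k(g)\xi_k-\xi_k\|\le\ell/k$ for $g\in (Q')^\ell$. Form $\pi:=\bigoplus_k\pi_k$ on $\Hil:=\bigoplus_k\Hil_k$. For any bounded $A\subseteq G$ and $\delta>0$, choose $M$ with $A\subseteq (Q')^M$ and then $k>M/\delta$; the image of $\xi_k$ in $\Hil$ is $(A,\delta)$-invariant, so $\pi$ almost has invariant vectors in the bounded sense but has no non-zero invariant vector, contradicting weak Property (T).

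For (2), suppose $Q$ is Kazhdan with non-empty interior but $H:=\langle Q\rangle\neq G$. Since $Q$ has non-empty interior, $H$ is open, hence clopen, and $G/H$ is discrete with $|G/H|\ge 2$. Let $\pi$ be the quasi-regular representation of $G$ on $\ell^2(G/H)$ when $G/H$ is infinite, and on $\ell^2_0(G/H):=\{f\in\ell^2(G/H):\sum_{x\in G/H}f(x)=0\}$ when $|G/H|$ is finite; both are strongly continuous since $G/H$ is discrete. The vector $\delta_{eH}$ (respectively its normalized orthogonal projection onto $\ell^2_0$) is $H$-fixed, hence $Q$-invariant, and so $(Q,\varepsilon)$-invariant for every $\varepsilon>0$; yet any $G$-invariant vector would be constant on $G/H$, and neither target space contains a non-zero constant function, so $\pi$ has no non-zero invariant vector, contradicting $Q$ being Kazhdan. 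The hard part throughout is the Baire--Pettis step of (1); once coarsely bounded subsets are controlled by $Q'$-word length, the remaining almost-invariant-vectors and quasi-regular-representation arguments are routine.
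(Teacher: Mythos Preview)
Your proof is correct. For part (1), both arguments rest on the same Baire--Pettis step to get an open bounded $U\subseteq (Q')^{2n}$ and then Rosendal's machinery to control bounded sets by $Q'$-word length; the difference is packaging. The paper invokes the equivalent formulation of weak Property (T) as the existence of a bounded Kazhdan set $K$, shows $K\subseteq \bar Q^{\,m}$ for some $m$, and deduces directly that $Q$ is Kazhdan with constant $\varepsilon/2m$. You instead argue by contradiction, building the diagonal sum $\bigoplus_k\pi_k$ to produce a representation almost having invariant vectors in the bounded sense without invariant vectors. Your route is slightly longer but has the minor advantage of not needing to quote the ``bounded Kazhdan set'' reformulation separately.

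For part (2), your argument is the same quasi-regular representation idea as the paper's, but you are more careful: the paper asserts that $\ell^2(G/H)$ has no non-zero invariant vector whenever $H\neq G$, which is literally false when $G/H$ is finite (the constant function is invariant). Your split into the infinite case and the finite case with $\ell^2_0(G/H)$ closes this gap cleanly.
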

\begin{proof}
First, if $Q\subseteq G$ is a Borel set generating $G$, then $\bar Q=Q\cup Q^{-1}$ is also Borel and generating, i.e. $G=\bigcup_{n\in\Nat} \bar Q^n$. Since each $\bar Q^n$ has the Baire property, there must be $n\in\Nat$ such that $\bar Q^n$ is non-meager and therefore by the Pettis' theorem, $B=\bar Q^{2n}$ contains an open neighborhood $U$ of the identity which we suppose to be symmetric. Since $G$ has the weak Property (T), let $K$ be a bounded Kazhdan set. Now $\bar QU$ is an open symmetric generating set. Let $d'$ be an arbitrary compatible left-invariant metric on $G$. Since $\bar QU$ is bounded (as $Q$ is bounded), it has a finite diameter in $d'$ and therefore, by \cite[Lemma 2.51]{Ros17}, there exists a compatible left-invariant metric $d$ on $G$ that is quasi-isometric to the word metric $\rho$ with $\bar QU$ as a generating set. Since $K$ is bounded and so $d$ is bounded on $K$, and $d$ is quasi-isometric to $\rho$, there exists $k\in\Nat$ such that  $K\subseteq (\bar QU)^k$, so $K\subseteq \bar Q^{2nk+k}$. By a standard computation using triangle inequalities, one gets that every $\varepsilon/(2nk+k)$-invariant vector for $\bar Q$ is $\varepsilon$-invariant for $K$, so every $\varepsilon/2(2nk+k)$-invariant vector for $Q$ is $\varepsilon$-invariant for $K$, showing that $Q$ is Kazhdan.\medskip

If $Q$ is Kazhdan with non-empty interior, then we can argue as in the locally compact case: if $Q$ is not generating, then the subgroup $H$ generated by $Q$ is a proper open subgroup, therefore clopen, so $G/H$ is discrete. The quasi-regular representation of $G$ on $\ell^2(G/H)$ has $H$-invariant vector, in particular $Q$-invariant vector, however no invariant vector, contradicting that $Q$ is Kazhdan.
\end{proof}

We shall call a topological group \emph{$\sigma$-bounded} if it can be covered by countably many bounded sets with the Baire property. Note that a locally compact group is $\sigma$-bounded if and only if it is $\sigma$-compact. We note that even not all Polish groups are $\sigma$-bounded. Indeed, a simple diagonalization argument shows that the Polish group $\prod_{n\in\Nat} \Int$ is not $\sigma$-bounded. If it were covered by countably many bounded sets $(B_n)_{n=1}^{\infty}$, then using the basic observation that the projection on each coordinate of each $B_n$ is bounded, we could pick elements $(z_n)_{n=1}^{\infty}$ such that $z_n\notin P_n(B_n)$, where $P_n$ is the projectioon on the $n$-th coordinate, and so the element $(z_n)_{n=1}^{\infty}\in \prod_{i\in\Nat} \Int\setminus \bigcup_{n\in\Nat} B_n$. Moreover, a topological group is \emph{locally bounded} (see \cite[Definition 2.22]{Ro18}) if it has a bounded neighborhood of the identity.
\begin{lemma}\label{lem:sigma-bounded-loc-bounded}
A Polish group is $\sigma$-bounded if and only if it is locally bounded. In general, a $\sigma$-bounded Baire group is locally bounded.
\end{lemma}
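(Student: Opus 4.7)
The plan is to treat the two implications separately. The harder direction, ``$\sigma$-bounded Baire $\Rightarrow$ locally bounded,'' uses only the Baire category theorem and Pettis' theorem on non-meager sets with the Baire property; the reverse direction uses only separability of $G$. This is precisely why the ``in general'' part of the statement extends beyond the Polish setting.

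For the easier direction, suppose $G$ is Polish and $U$ is a bounded neighborhood of the identity. Since any subset of a bounded set is bounded, we may shrink $U$ to a bounded \emph{open} neighborhood of the identity. Fix a countable dense set $\{g_n\}_{n\in\Nat}\subseteq G$. The family $\{g_nU\}_{n\in\Nat}$ covers $G$, because for each $g\in G$ the open set $gU^{-1}$ meets $\{g_n\}$, so $g\in g_nU$ for some $n$. Each $g_nU$ is open (hence has the Baire property) and is bounded as a left translate of the bounded set $U$, so $G$ is $\sigma$-bounded.

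For the harder direction, write $G=\bigcup_{n\in\Nat}B_n$ with each $B_n$ bounded and having the Baire property. Since $G$ is Baire, some $B_n$ is non-meager, and hence by Pettis' theorem the set $B_n^{-1}B_n$ contains an open neighborhood of the identity. It remains to verify that $B_n^{-1}B_n$ is itself bounded. For any continuous left-invariant pseudometric $d$ on $G$, left-invariance and symmetry give $d(x^{-1},e)=d(e,x)=d(x,e)$ for every $x\in G$, so $B_n^{-1}$ has the same finite $d$-diameter as $B_n$; combined with the basic fact (recalled in the paragraph preceding Definition~\ref{def:minimalmetric}) that finite products of bounded sets are bounded, this shows $B_n^{-1}B_n$ is bounded, and thus contains the desired bounded neighborhood of the identity. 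The only genuinely non-routine ingredient is Pettis' theorem, which is what locks the hypothesis at being Baire rather than merely Polish; everything else is bookkeeping with left-invariant pseudometrics, so I do not anticipate any significant obstacle.
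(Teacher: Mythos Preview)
Your proof is correct and follows essentially the same route as the paper: for the harder direction, both apply Baire category to find a non-meager $B_n$ and then invoke Pettis' theorem to obtain a bounded neighborhood of the identity inside $B_n^{-1}B_n$ (the paper writes $B_nB_n^{-1}$, a cosmetic difference); for the converse, both cover a Polish group by countably many translates of a bounded neighborhood. Your write-up is in fact slightly more explicit than the paper's in justifying why $B_n^{-1}$ is bounded.
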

\begin{proof}
Suppose that a Baire group $G$ is covered by the sequence of bounded sets $(B_n)_{n=1}^{\infty}$ which we may suppose are increasing in inclusion. Therefore there is $n$ so that $B_n$ is non-meager. By the Pettis' theorem $B_nB_n^{-1}$ contains a neighborhhod of the identity $U$ which is therefore also bounded.

The converse for Polish groups is clear since countably many translates of the bounded neighborhood of the identity cover the group.
\end{proof}

\begin{proposition}\label{prop:equiv-FH-wT}
If a Polish group has the weak Property (T), then it has Property (FH). Conversely, if a Polish group is $\sigma$-bounded and has Property (FH), then it has weak Property (T).
\end{proposition}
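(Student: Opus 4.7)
The plan is to adapt both directions of the classical Delorme-Guichardet theorem to this weaker, ``bounded-sense'' setting. For the forward direction (weak (T) implies (FH)), let $\alpha$ be a continuous affine isometric action of $G$ on a Hilbert space $\Hil$, with linear part $\pi$ and cocycle $b(g) := \alpha(g)(0)$. The crucial bridge between the bounded and compact flavors of almost invariance is that $b$ sends bounded subsets of $G$ to norm-bounded subsets of $\Hil$; this follows from the fact that $d(g,h) := \|b(g^{-1}h)\|$ is a continuous left-invariant pseudometric on $G$, where the triangle inequality uses the cocycle identity $b(g^{-1}h) = b(g^{-1}k) + \pi(g^{-1}k) b(k^{-1}h)$ together with unitarity of $\pi$. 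With this in hand I would run the standard Delorme-Guichardet proof: if $\alpha$ had no fixed point, then $\psi := \|b\|^2$ would be a continuous, unbounded, conditionally negative definite function that is nevertheless bounded on bounded sets; the GNS representation $\pi_t$ of $\varphi_t := e^{-t\psi}$ would then almost have invariant vectors in the bounded sense as $t \to 0^+$ (since $\varphi_t \to 1$ uniformly on bounded sets), while $\inf_G \varphi_t = 0$ precludes any non-zero invariant vector, contradicting weak (T).

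For the converse, assume $G$ is $\sigma$-bounded and has (FH) but fails weak (T), and construct an affine action without fixed point. Let $\pi : G \to \Uni(\Hil)$ be a witnessing representation: it almost has invariant vectors in the bounded sense but has no non-zero invariant vector. By Lemma~\ref{lem:sigma-bounded-loc-bounded}, applicable since Polish groups are Baire, $G$ is locally bounded, so there is a symmetric bounded open neighborhood $U$ of the identity with $G = \bigcup_{n \in \N} U^n$. For each $n$ pick a unit vector $\xi_n \in \Hil$ satisfying $\sup_{g \in U^n} \|\pi(g)\xi_n - \xi_n\| < 2^{-n}$, and set $\Hil' := \bigoplus_{n \in \N} \Hil$, $\pi' := \bigoplus_n \pi$, and $b(g) := (\pi(g)\xi_n - \xi_n)_{n \in \N}$. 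A head-plus-tail splitting (tail controlled by the $2^{-n}$ estimate on all $n$ with $g \in U^n$, head controlled by strong continuity of the finitely many remaining coordinates) shows that $b$ takes values in $\Hil'$ and is continuous at the identity, hence continuous everywhere by the cocycle relation.

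A hypothetical fixed point $\xi = (\eta_n)_n \in \Hil'$ of the associated affine action would force $\pi(g)(\xi_n + \eta_n) = \xi_n + \eta_n$ for every $g \in G$ and every $n$; since $\pi$ has no non-zero invariant vectors this forces $\eta_n = -\xi_n$ for all $n$, which is incompatible with $\xi \in \Hil'$ since $\sum_n \|\xi_n\|^2 = \infty$. Hence the constructed action has no fixed point, contradicting (FH). The subtlest point I anticipate is keeping track of the two continuity requirements: in the forward direction, that $b$ is bounded on bounded sets (the upgrade from ``compact'' to ``bounded'' almost-invariance), and in the converse, that the constructed cocycle is genuinely continuous, which is exactly where $\sigma$-boundedness, channelled through local boundedness, is essential.
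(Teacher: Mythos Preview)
Your approach matches the paper's in both directions. One small expository gap in the forward direction: to contradict weak Property (T) you need a \emph{single} representation that almost has invariant vectors in the bounded sense, so you must either pass to the direct sum $\bigoplus_{n} \pi_{1/n}$ (which is what the paper does) or invoke the bounded-Kazhdan-set characterization to select one sufficiently small $t$; as written, ``$\pi_t$ almost has invariant vectors as $t\to 0^+$'' describes a family, not one representation. In the converse, the paper uses the weighted cocycle $b(g)=\bigoplus_n n(\pi(g)\xi_n-\xi_n)$ and cites the argument of \cite[Proposition~2.4.5]{BdHV} that it is unbounded; your unweighted version together with the direct no-fixed-point computation (a fixed point forces $\eta_n=-\xi_n$, impossible since $\sum\|\xi_n\|^2=\infty$) is a clean equivalent that sidesteps the unboundedness check.
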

\begin{proof}
Our proof mimics and slightly modifies the standard proofs of the aforementioned equivalence for locally compact Polish groups (or more generally, $\sigma$-compact locally compact groups).

First we show that weak Property (T) implies Property (FH). Suppose by contradiction that $\alpha: G\curvearrowright \Hil$ is a continuous action of $G$ on a Hilbert space $\Hil$ by affine isometries without a fixed point. Let $b:G\rightarrow \Hil$ be the associated continuous cocycle, which is therefore unbounded on $G$. For every $n\in\Nat$ we define a normalised continuous positive definite function $\phi_n: G\rightarrow \Com$ by setting \[\phi_n(g):=\exp\big(-\tfrac{1}{n}\|b(g)\|^2\big).\] 
That $\phi_n$ is indeed positive definite follows from the Schoenberg's theorem (\cite{Sch38}). Let $\pi_n$ be the unitary representation of $G$ on $\Hil_n$ produced by the GNS-construction from $\phi_n$, with the cyclic unit vector $\xi_n$, and let $\pi_\infty:=\bigoplus_{n\in\Nat} \pi_n:G\rightarrow \Uni(\Hil_\infty)$, where $\Hil_\infty=\bigoplus_{n\in\Nat} \Hil_n$. We claim that the representation $\pi_\infty$ almost has invariant vectors in the bounded sense. Pick bounded $Q\subseteq G$ and $\varepsilon>0$. Since $\delta(g,h):=\|b(g)-b(h)\|$ is a continuous left-invariant pseudometric on $G$, and thus is bounded on $Q$, for any sufficiently large $n\in\Nat$ we have \[\sup_{g\in Q} \|\pi(g)\xi_n-\xi_n\|^2=2-2\re\Big(\exp\big(-\tfrac{1}{n}\|b(g)\|^2\big)\Big)<\varepsilon^2,\] so $\xi_n$ is the desired almost invariant vector. Finally, we claim that $\pi$ has no invariant vector. Assume otherwise that $\chi\in\Hil_\infty$ is an invariant vector.

There must exist $n\in\Nat$ such that the orthogonal projection of $\chi$ onto $\Hil_n$ is a non-zero invariant vector $\chi_n\in\Hil_n$ for $\pi_n$. Since the cocycle $b$ is unbounded, there exists a sequence $(g_i)_i\subseteq G$ with $\|b(g_i)\|\to\infty$, so $(g_i\xi_n)_i$ converges weakly to $0$ in $\Hil_n$. Indeed, since $\Span\{g\xi_n\mid g\in G\}$ is dense this follows from $\Sl g_i\xi_n,g\xi_n\Sr=\Sl g^{-1}g_i\xi_n,\xi_n\Sr=\exp(-\tfrac{1}{n}\|b(g^{-1}g_i\|^2)\to 0$, for any $g\in G$, as $\|b(g^{-1}g_i)\|\to\infty$.

Then we have \[\Sl \xi_n,\chi_n\Sr=\lim_{i\to\infty} \Sl \xi_n, g_i^{-1}\chi_n\Sr=\lim_{i\to\infty}\Sl g_i\xi_n,\chi_n\Sr=0,\]
from which we deduce that for any $g\in G$ \[\Sl g\xi_n,\chi_n\Sr=\Sl \xi_n,g^{-1}\chi_n\Sr=\Sl \xi_n,\chi_n\Sr=0.\] Since $\Span\{g\xi_n\mid g\in G\}$ is dense in $\Hil_n$, we conclude that $\chi_n=0$, a contradiction.
\medskip

Conversely, suppose that $G$ has Property (FH) and it is $\sigma$-bounded, therefore locally bounded. So it is covered by the sequence of bounded open sets $(B_n)_{n=1}^{\infty}$ which we may suppose are increasing in inclusion. We proceed exactly as in \cite[Proposition 2.4.5]{BdHV}, just replacing the compact sets covering the $\sigma$-compact locally compact group by bounded open subsets $(B_n)_{n=1}^{\infty}$ covering $G$. Suppose that there exists $\pi:G\rightarrow\Uni(\Hil)$, a strongly continuous unitary representation almost having invariant vectors in the bounded sense, without invariant vectors. It follows that for every $n\in\Nat$ we can find $(B_n,1/2^n)$-invariant unit vector $\xi_n\in\Hil$. We define a cocycle $b:G\rightarrow \bigoplus_{n\in\Nat} \Hil$ by setting \[b(g):=\bigoplus_{n\in\Nat} n(\pi(g)\xi_n-\xi_n).\]
The same argument as in \cite[Proposition 2.4.5]{BdHV} shows that $b$ is a continuous unbounded cocycle for the unitary representation $\bigoplus_{n\in\Nat} \pi$, which implies that the corresponding affine isometric action on $\bigoplus_{n\in\Nat} \Hil$ does not have a fixed point. This finishes the proof.
\end{proof}

\begin{proposition}
Let $G$ be a $\sigma$-bounded topological group, in particular a locally bounded Polish group, with the weak Property (T). Then $G$ is generated by a bounded neighborhood of the identity and if $G$ is moreover Baire (in particular Polish), it admits a maximal compatible left-invariant metric.
\end{proposition}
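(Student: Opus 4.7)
The plan is to split the proof into two parts: first, bounded generation of $G$ by a neighborhood of the identity via a quasi-regular representation argument, and then an invocation of Rosendal's framework to obtain a maximal metric when $G$ is Baire.

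For the first part, I would use Lemma \ref{lem:sigma-bounded-loc-bounded} together with $\sigma$-boundedness (and the Baire property, which holds when $G$ is Polish) to obtain a bounded open symmetric neighborhood $V_0$ of $e$, and use the earlier characterization of weak Property (T) to get a bounded Kazhdan set $Q$ with constant $\varepsilon > 0$. Replacing $V_0$ by the still-bounded open symmetric neighborhood $V := V_0 \cup V_0 (Q \cup Q^{-1}) V_0$, I may assume $Q \cup Q^{-1} \subseteq V$. Let $H := \langle V \rangle$, which is clopen since $V$ is open. The key claim is that $[G : H] < \infty$. If not, the quasi-regular representation $\pi : G \to \U(\ell^2(G/H))$, which is strongly continuous as $G/H$ is discrete, satisfies $\pi(q)\delta_H = \delta_{qH} = \delta_H$ for every $q \in Q$, since $Q \subseteq H$; hence $\delta_H$ is a $(Q, \varepsilon)$-invariant unit vector. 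The Kazhdan property of $Q$ then yields a nonzero $G$-invariant vector in $\ell^2(G/H)$, but the transitive $G$-action on $G/H$ forces such a vector to be constant on $G/H$, contradicting square-summability when $G/H$ is infinite. Hence $[G : H] < \infty$, and if $g_1, \ldots, g_N$ are coset representatives, the bounded open neighborhood $V \cup \{g_1, \ldots, g_N\}$ generates $G$.

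For the second assertion, assume $G$ is Baire; then $G$ is locally bounded, and by the first part it is generated by a bounded neighborhood of $e$. These are precisely the conditions in Rosendal's characterization (see \cite[Section 2.7]{Ro18}) guaranteeing a compatible left-invariant metric on $G$ that is coarsely proper and large-scale geodesic, hence maximal by Proposition \ref{prop:maximalmetric}. I expect the main obstacle to be the absorption step of enlarging $V_0$ so that $Q$ lies inside the open subgroup $H$ generated by $V$: this innocuous algebraic move is what allows the Kazhdan property of $Q$ to act trivially on $\delta_H$, converting weak Property (T) into the finite-index conclusion. The remaining steps -- continuity of the quasi-regular representation, triviality of square-summable $G$-invariants on an infinite coset space, and the appeal to Rosendal's existence theorem -- are then routine.
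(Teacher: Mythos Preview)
Your argument is correct and reaches the same conclusion, but the paper takes a different route for the first part. Rather than fixing a single bounded Kazhdan set, the paper lets $\mathcal{B}$ denote the family of \emph{all} bounded subsets of $G$ with nonempty interior, and for each $B \in \mathcal{B}$ forms the quasi-regular representation $\pi_B$ on $\ell^2(G/H_B)$ with $H_B = \langle B \rangle$. The direct sum $\pi = \bigoplus_{B \in \mathcal{B}} \pi_B$ then almost has invariant vectors in the bounded sense (any bounded $Q$ lies in some $B \in \mathcal{B}$, and $\delta_{H_B}$ is fixed by $Q$); if no $B$ generated $G$ then no $\pi_B$ would have a nonzero invariant vector, contradicting weak Property~(T). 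This yields $H_B = G$ for some $B$ directly. Your approach is more economical---one Kazhdan set, one representation---but it relies on the Kazhdan-set characterization established just before, whereas the paper's argument works straight from the definition of weak~(T). Incidentally, since your $V$ contains $Q$ it is itself a Kazhdan set with nonempty interior, so by the preceding lemma you in fact get $H = G$ and could skip the finite-index step; as written, $V \cup \{g_1, \ldots, g_N\}$ is a neighborhood of $e$ but not open (harmless here). For the second assertion both proofs simply invoke \cite[Theorem~2.53]{Ro18}.
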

\begin{proof}
Let $G$ be a $\sigma$-bounded topological group with the weak Property (T), or equivalently with Property (FH) by Proposition~\ref{prop:equiv-FH-wT}. Let $\mathcal{B}$ be the set of all bounded subsets of $G$ with non-empty interior. For each $B\in\mathcal{B}$, let $H_B$ be the subgroup generated by $B$. Note that it is open, so the quotient $G/H_B$ is discrete and we have available the quasi-regular unitary representation $\pi_B:G\rightarrow \ell_2(G/H_B)$. $\pi_B$ has $(B,\varepsilon)$-invariant vectors for every $\varepsilon>0$ - the element $\delta_{H_B}\in\ell_2(G/H_B)$. On the other hand, unless $H_B=G$, $\pi_B$ has no invariant vectors. It follows that the unitary representation $\pi:=\bigoplus_{B\in\mathcal{B}} \pi_B$ has almost invariant vectors in the bounded sense and, assuming that $G$ is not generated by a bounded neighborhood of the identity, $\pi$ has no invariant vectors since no $\pi_B$ does. This is a contradiction.

It follows that $G$ is generated by a bounded neighborhood of the identity, and if it is Baire it admits a maximal compatible left-invariant metric by \cite[Theorem 2.53]{Ro18}.
\end{proof}

\begin{proposition}
Let $G$ be a Polish group and let $H$ be a closed cobounded subgroup. If $H$ has Property (FH), then $G$ does as well.
\end{proposition}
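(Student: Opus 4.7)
The plan is to adapt the classical argument that Property (FH) passes from a cocompact subgroup to the ambient group in the locally compact setting, with coboundedness (in the coarse sense) playing the role of cocompactness. Fix a continuous affine isometric action $\alpha:G\curvearrowright \Hil$; we wish to produce a fixed point. By Property (FH) for $H$, the restricted action $\alpha\restriction_H$ has a fixed point $\xi_0\in\Hil$. The aim is to show that the full $G$-orbit $\alpha(G)\xi_0$ is bounded in $\Hil$; once this is done, a standard argument using the circumcenter (Chebyshev center) of the closed convex hull of the orbit produces a $G$-fixed point.

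First I would observe that the function $d_{\xi_0}(g,g'):=\|\alpha(g)\xi_0-\alpha(g')\xi_0\|$ is a continuous left-invariant pseudometric on $G$: continuity follows from strong continuity of $\alpha$, and left-invariance from $\alpha(k)$ being an affine isometry for every $k\in G$. Now I invoke coboundedness of $H$: there exists a bounded set $B\subseteq G$ with $G=BH$. For $g=bh$ with $b\in B$ and $h\in H$, since $\alpha(h)\xi_0=\xi_0$, we have $\alpha(g)\xi_0=\alpha(b)\xi_0$. Hence
\[\alpha(G)\xi_0=\alpha(B)\xi_0.\]
By the definition of boundedness in Rosendal's sense, the set $B$ has finite diameter with respect to every continuous left-invariant pseudometric on $G$, in particular with respect to $d_{\xi_0}$. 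Therefore $\sup_{b\in B}\|\alpha(b)\xi_0-\xi_0\|<\infty$, which means $\alpha(G)\xi_0$ is bounded in $\Hil$.

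Once the orbit is bounded, let $C$ be the closed convex hull of $\alpha(G)\xi_0$ in $\Hil$; it is $G$-invariant (each $\alpha(g)$ is affine), bounded, closed, and convex. Its unique circumcenter is then a $G$-fixed point, completing the proof that $G$ has Property (FH).

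I do not expect a substantive obstacle here: all ingredients are standard, and the essential new input beyond the locally compact case is that Rosendal's coarse structure has precisely the property we need, namely that coarsely bounded sets have finite diameter under every continuous left-invariant pseudometric. The only point one must be careful about is the tacit definition of ``closed cobounded subgroup''; I would read this as a closed subgroup $H\le G$ for which there is a bounded $B\subseteq G$ with $G=BH$ (equivalently, the $G$-set $G/H$ is bounded in the induced coarse structure), which is exactly what the argument uses.
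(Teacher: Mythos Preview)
Your proof is correct and follows essentially the same approach as the paper: define the continuous left-invariant pseudometric $d_{\xi_0}(g,g')=\|\alpha(g)\xi_0-\alpha(g')\xi_0\|$, use coboundedness of $H$ together with the fact that $H$ fixes $\xi_0$ to conclude the $G$-orbit of $\xi_0$ is bounded, and then invoke the circumcenter argument. The paper phrases the boundedness step as a contradiction (choosing $h_i\in H$ close to $g_i$ in $d_{\xi_0}$ and noting $d_{\xi_0}(g_i,h_i)=d_{\xi_0}(g_i,e)$), while you argue directly via $G=BH$ and $\alpha(G)\xi_0=\alpha(B)\xi_0$; these are the same idea in slightly different packaging.
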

\begin{proof}
Let $\alpha: G\curvearrowright \Hil$ be a continuous action of $G$ on a Hilbert space by affine isometries. By assumption, $\alpha\upharpoonright H$ has a fixed point $\xi\in\Hil$. We claim that the orbit $G.\xi$ is bounded, which implies that $G$ has a fixed point. Indeed, set $\delta(g,h):=\|\alpha(g)\xi-\alpha(h)\xi\|$. It is a continuous left-invariant pseudometric on $G$. If $G.\xi$ were unbounded, there would exist a sequence $(g_i)_i\subseteq G$ with $\delta(g_i,e)\to\infty$. Since $H$ is cobounded in $G$, there exists a sequence $(h_i)_i\subseteq H$ such that $\delta(g_i,h_i)\leq K$, for some uniform constant $K$. However, $\delta(g_i,h_i)=\delta(g_i,e)\to\infty$, which is a contradiction.
\end{proof}
We were informed by C. Rosendal that the converse does not hold. One can take e.g. $S_\infty$ or $\U(\ell^2)$ with pointwise convergence topologies. These groups both contain $\Int$ as a discrete subgroup and both are bounded, so they have (FH), and $\Int$ is therefore their closed cobounded subgroup.

\subsection{The Haagerup property}
The Haagerup property is in a sense a strong converse to Property (T) and Property (FH). There are several equivalent (under some conditions) definitions and we refer to the monograph \cite{CCJJV} for a detailed treatment. The definition we choose to work with here is commonly called \emph{a-T-menability} and was actually introduced by Gromov (\cite{Gro93}). We say that a topological group $G$ has the \emph{Haagerup property (or is a-T-menable)} if it has a continuous metrically proper action by affine isometries on a Hilbert space. We recall that by a metrically proper action $G\curvearrowright \Hil$ by isometries it is usually meant an action such that for some (or equivalently for any) $\xi\in\Hil$, $\lim_{g\to\infty} \|g\xi\|=\infty$, where $g\to\infty$ is interpreted as leaving every compact subset of $G$. This definition is suited for locally compact groups and indeed it is easy to check that only locally compact groups may be a-T-menable with this definition of $g\to\infty$. A straightforward modification asking $g\to\infty$ to mean that $g$ leaves every bounded set, or equivalently $d(g,e)\to\infty$ for every coarsely proper continuous pseudometric, has been considered by Rosendal in \cite[Definition 5]{Ros17}. Improving on the existing results for locally compact groups, Rosendal provided several equivalent conditions for an amenable Polish group to have the Haagerup property. In particular, all Banach spaces that coarsely embeds into a Hilbert space have the Haagerup property as abelian groups. To the best of our knowledge, these Banach spaces are the only known non-trivial examples of non-locally compact groups having the Haagerup property. Trivial examples are again the bounded groups and one can also trivially obtain unbounded and non-locally compact group with the Haagerup property by taking a direct product of a locally compact group with the Haagerup property with some bounded non-locally compact group.

We will present the first non-trivial examples, some of them even not amenable, in Section~\ref{section:Up}. On the other hand, we shall show that some natural candidates among Banach-Lie groups for the Haagerup property actually fail it, in Section~\ref{section:En}.

The usual characterization of the Haagerup property for locally compact groups carry over to the general setting without essential change in the statement and its proofs. 

\begin{proposition}\label{prop: coarse Haagerup}
Let $G$ be a locally bounded Polish group. Then the following conditions are equivalent.
\begin{list}{}{}
\item[{\rm{(i)}}] $G$ has the Haagerup property. 
\item[{\rm{(ii)}}] $G$ admits a sequence $(\varphi_n)_{n=1}^{\infty}$ of normalized ($\varphi_n(1)=1$) continuous positive definite functions such that $\lim_{n\to \infty}\varphi_n=1$ uniformly on bounded subsets and that for each $n\in \mathbb{N}$, $\varphi_n$ vanishes at infinity\footnote{for every $\varepsilon>0$, there exists a bounded set $B\subset G$ such that $|\varphi|<\varepsilon$ on $G\setminus B$.}. 
\end{list}
\end{proposition}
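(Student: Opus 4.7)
The plan is to adapt the classical proof of this equivalence for $\sigma$-compact locally compact groups (e.g. \cite[Theorem 2.1.1]{CCJJV}) to the locally bounded Polish setting, the essential modification being to replace ``compact'' by ``coarsely bounded'' throughout. Schoenberg's theorem will handle the easy implication (i)$\Rightarrow$(ii), while the converse will be obtained by building a proper continuous conditionally negative definite function as a suitable sum $\psi=\sum_k(1-\re\varphi_{n_k})$ and invoking the standard GNS-type reconstruction of an affine isometric action from a continuous c.n.d.\ function.

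For (i)$\Rightarrow$(ii), I would fix a continuous affine isometric action $\alpha:G\curvearrowright \Hil$ satisfying the definition of metric properness from the preceding subsection, let $b(g):=\alpha(g)\cdot 0$ be the associated continuous cocycle, and set
\[\varphi_n(g):=\exp\bigl(-\tfrac{1}{n}\|b(g)\|^2\bigr).\]
Schoenberg's theorem gives that each $\varphi_n$ is normalized continuous positive definite. The map $(g,h)\mapsto \|b(g)-b(h)\|$ is a continuous left-invariant pseudometric on $G$, hence is bounded on every bounded subset, which yields $\varphi_n\to 1$ uniformly on bounded sets. Metric properness of $\alpha$ means $\|b(g)\|\to\infty$ as $g$ leaves every bounded set, so each $\varphi_n$ vanishes at infinity.

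For (ii)$\Rightarrow$(i), I would exploit that $G$, being a locally bounded Polish group, admits a maximal compatible left-invariant metric $d$ by \cite[Theorem 2.53]{Ro18}; the open balls $B_k:=\{g:d(g,e)<k\}$ form an increasing exhaustion by bounded open sets with the key \emph{absorbing} property that every bounded subset of $G$ is eventually contained in some $B_k$. Using (ii), I would inductively choose $n_k$ so that $\sup_{g\in B_k}|1-\varphi_{n_k}(g)|<2^{-k}$, and using vanishing at infinity pick a bounded set $C_k$ with $|\varphi_{n_k}(g)|<1/2$ outside $C_k$. Define
\[\psi(g):=\sum_{k=1}^\infty\bigl(1-\re\varphi_{n_k}(g)\bigr).\]
The tail from index $k_0$ is dominated by $\sum_{k\geq k_0}2^{-k}$ on $B_{k_0}$, so the series converges uniformly on each bounded set and defines a continuous function. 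Each summand is c.n.d.\ (since for normalized positive definite $\varphi$, the function $1-\varphi$ is c.n.d.\ by the identity $\sum c_i\bar c_j(1-\varphi(g_i^{-1}g_j))=-\sum c_i\bar c_j\varphi(g_i^{-1}g_j)$ when $\sum c_i=0$, and taking real parts preserves this), so $\psi$ is a continuous real c.n.d.\ function with $\psi(e)=0$ and $\psi(g^{-1})=\psi(g)$. Properness of $\psi$ follows by the standard estimate: for $M>0$ choose $K>2M$; the set $C:=\bigcup_{k\leq K}C_k$ is bounded, and for $g\notin C$ each of the first $K$ summands exceeds $1/2$, so $\psi(g)>K/2>M$. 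Applying the GNS-type reconstruction for c.n.d.\ functions produces a real Hilbert space, a continuous orthogonal representation, and a continuous cocycle $b_\psi$ with $\|b_\psi(g)\|^2=\psi(g)$, and properness of $\psi$ yields metric properness of the associated affine isometric action.

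The principal technical point—and really the only place where the locally bounded Polish assumption is used nontrivially—is the existence of an absorbing countable bounded exhaustion $(B_k)$, which is what makes the series defining $\psi$ converge uniformly on bounded sets. Once the maximal metric is at our disposal, the rest is a verbatim translation of the classical locally compact argument, with ``compact'' systematically replaced by ``coarsely bounded''. I expect no other serious obstacle; the verification that sums and real parts of continuous c.n.d.\ functions remain c.n.d., and the passage from a c.n.d.\ function to an affine isometric action, are completely general and do not require local compactness.
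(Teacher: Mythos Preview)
Your proposal is essentially correct and parallels the paper's argument. The direction (i)$\Rightarrow$(ii) is identical to the paper's. For (ii)$\Rightarrow$(i), the paper works directly with the GNS triples $(\pi_n,\Hil_n,\xi_n)$ of the $\varphi_n$ and defines the cocycle $b(g)=(\xi_n-\pi_n(g)\xi_n)_n$ on $\bigoplus_n\Hil_n$, verifying continuity and properness by hand; your route via the conditionally negative definite function $\psi=\sum_k(1-\re\varphi_{n_k})$ followed by the affine GNS construction is the standard equivalent repackaging (indeed $\psi(g)=\tfrac12\|b(g)\|^2$), so the two approaches are really the same.

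There is one imprecision worth fixing. You invoke \cite[Theorem 2.53]{Ro18} to obtain a \emph{maximal} metric, but that theorem requires $G$ to be generated by a bounded neighborhood of the identity, which does not follow from mere local boundedness (e.g.\ an infinitely generated countable discrete group is locally bounded but admits no maximal metric). Fortunately you only use the balls $B_k$ as an absorbing bounded open exhaustion, and for this a \emph{coarsely proper} metric suffices; locally bounded Polish groups do admit such a metric. Alternatively, and this is what the paper does, one can bypass metrics entirely and construct the absorbing exhaustion directly from $\sigma$-boundedness (the paper's Lemma preceding the proof), taking $B_n=UB_1'UB_2'\cdots UB_{2^n}'$ for a bounded open $U\ni 1$ and any bounded cover $(B_n')$, and invoking \cite[Proposition 2.7(4)]{Ro18} for the absorption property.
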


\begin{lemma}\label{lem: sigmabounded open} Let $G$ be a $\sigma$-bounded Polish group. Then there exists an increasing sequence $B_1\subset B_2\subset \dots \subset G$ of bounded open neighborhoods of the identity in $G$ such that $G=\bigcup_{n=1}^{\infty}B_n$ and that for every bounded subset $B\subset G$, there exists an $n\in \mathbb{N}$ such that $B\subset B_n$ holds. 
\end{lemma}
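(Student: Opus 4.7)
The plan is to realize the sets $B_n$ as open balls of a suitable compatible left-invariant coarsely proper metric on $G$. The first step is to observe that, since $G$ is $\sigma$-bounded and Polish, Lemma~\ref{lem:sigma-bounded-loc-bounded} immediately gives that $G$ is locally bounded.

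The second step is to invoke Rosendal's theorem from \cite{Ro18} characterizing locally bounded Polish groups as precisely those admitting a compatible left-invariant metric $d$ which is coarsely proper, in the sense that a subset of $G$ has finite $d$-diameter if and only if it is coarsely bounded (equivalently, $d$ generates the left-coarse structure in the sense of Definition~\ref{def:coarselyproper}). I fix such a metric $d$ on $G$.

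With $d$ in hand, I define $B_n := \{g \in G \mid d(g,e) < n\}$. Compatibility of $d$ makes each $B_n$ an open neighborhood of the identity; the sequence is manifestly increasing with $\bigcup_n B_n = G$ (since $d$ is finite-valued); and each $B_n$ is bounded because its $d$-diameter is at most $2n$ and coarse boundedness agrees with $d$-boundedness by coarse properness. Finally, given any bounded $B \subseteq G$, the set $B \cup \{e\}$ remains bounded and hence $d$-bounded, so $B \subseteq B_n$ for any $n$ exceeding $\sup_{g \in B} d(g,e)$.

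The only nontrivial ingredient is the existence step, namely Rosendal's theorem that local boundedness of a Polish group suffices to produce a compatible left-invariant coarsely proper metric; everything else reduces to routine verification of the ball-exhaustion properties. If a self-contained construction were preferred, one could start from a bounded symmetric open neighborhood $V$ of $e$ supplied by local boundedness together with a $\sigma$-bounded cover, and mimic a Birkhoff-Kakutani construction to produce a left-invariant compatible metric whose balls grow like bounded finite products of $V$ and coset representatives of the open subgroup $\langle V\rangle$; the main technical effort in such a direct approach would be verifying that every bounded set in $G$ lies inside some such ball, which is exactly the coarse properness one would otherwise cite.
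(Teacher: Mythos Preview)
Your proof is correct but follows a different route from the paper's. The paper constructs the $B_n$ by hand: starting from any increasing cover $B_1'\subset B_2'\subset\cdots$ by bounded sets and a bounded open neighborhood $U$ of the identity (from local boundedness), it sets $B_n=UB_1'UB_2'\cdots UB_{2^n}'$, so that the sequence is increasing, open, bounded, and satisfies $B_n^2\subset B_{n+1}$; the swallowing property then follows from \cite[Proposition~2.7~(4)]{Ro18}. You instead invoke the stronger structural result of Rosendal that a locally bounded Polish group admits a compatible left-invariant coarsely proper metric, and take open balls. Your approach is shorter and more conceptual once the metric is granted, while the paper's is more self-contained in that it only needs the elementary combinatorial fact about exhaustions with $B_n^2\subset B_{n+1}$ absorbing bounded sets, rather than the full metrization theorem.
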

\begin{proof}
Let $B_1'\subset B_2'\subset \cdots $ be a sequence of bounded subsets of $G$. We may assume $1\in B_1'$. 
Because the $\sigma$-boundedness is equivalent to the local boundedness, there exists an open neighborhood $U$ of 1 in $G$ which is bounded. Let $B_n=UB_1'UB_2'\cdots UB_{2^n}'$. 
Then $B_1\subset B_2\subset $ is an increasing sequence of open bounded neighborhoods of 1 in $G$ with $G=\bigcup_{n=1}^{\infty}B_n$ and $B_n^2\subset B_{n+1}\,(n\in \mathbb{N})$. Then the last claim follows from \cite[Proposition 2.7 (4)]{Ro18}. 
\end{proof}
\begin{proof}[Proof of Proposition \ref{prop: coarse Haagerup}]
(i)$\Rightarrow$(ii) Let $\pi\mid G\to \mathcal{U}(\Hil)$ be a strongly continuous unitary representation of $G$ on a Hilbert space $\Hil$ and $b\colon G\to \Hil$ be an associated proper 1-cocycle. Then for each $n\in \mathbb{N}$, the positive definite functions $\varphi_n(g)=\exp (-\tfrac{1}{n}\|b(g)\|^2)\,(g\in G)$ do the job.\\
Indeed, it is clear that $\varphi_n(1)=1$. Because $b$ is assumed to be a coarse embedding, it is expansive. So whenever a sequence $(g_n)_{n=1}^{\infty}$ in $G$ tends to $\infty$ as $n\to \infty$, $\|b(g_n)\|\stackrel{n\to \infty}{\to}\infty$ holds. This shows that $\varphi_n$ vanishes at infinity for each $n$. Moreover, if $B$ is a bounded subset of $G$, then because $b$ is bornologous, $\sup_{g\in B}\|b(g)\|<\infty$ holds. 
Therefore $\disp \lim_{n\to \infty}\varphi_n=1$ uniformly on $B$.\\
(ii)$\Rightarrow$(i) Let $(\varphi_n)_{n=1}^{\infty}$ be a sequence of continuous positive definite functions on $G$ witnessing the Haagerup property of $G$. By Lemma \ref{lem: sigmabounded open}, there exists an increasing sequence $B_1\subset B_2\subset \cdots$ of open bounded neighborhoods of 1 in $G$ such that any bounded subset of $G$ is contained in one of the $B_n$'s. Because $\disp \lim_{n\to \infty}\varphi_n=1$ uniformly on bounded subsets of $G$, by passing to a subsequence we may assume that for every $k\in \mathbb{N}$ the condition $|1-\varphi_n|<2^{-n}$ is satisfied on $B_k$ for every $n\ge k$. Let $(\pi_n,\Hil_n,\xi_n)$ be the cyclic unitary representation associated with $\varphi_n$. Let $\Hil=\bigoplus_{n=1}^{\infty} \Hil_n,\,\pi=\bigoplus_{n=1}^{\infty}\pi_n$. 
Let $g\in G$. Then $g\in B_k$ for some $k\in \mathbb{N}$. 
Then 
\eqa{
\sum_{n=k}^{\infty}\|\xi_n-\pi_n(g)\xi_n\|^2&=2{\rm{Re}}\,\sum_{n=k}^{\infty}(1-\varphi_n(g))\\
&\le 2\sum_{n=k}^{\infty}2^{-n}<\infty.
}
This implies that $b(g):=(\xi_n-\pi_n(g)\xi_n)_{n=1}^{\infty}$ defines an element in $\Hil$. Moreover, $b\colon G\to \Hil$ is a 1-cocycle of $\pi$. We show that $b$ is continuous. Let $(g_k)_{k=1}^{\infty}$ be a sequence in $G$ converging to $g\in G$. Let $\varepsilon>0$. Take $N\in \mathbb{N}$ so that $\sum_{n\ge N+1}2^{1-n}<\varepsilon$ holds. 
By assumption, there exists $k_0\in \mathbb{N}$ such that $g_k^{-1}g\in B_1\,(k\ge k_0)$ holds. 
It then follows that 
\eqa{
\|b(g_k)-b(g)\|^2&=2{\rm{Re}}\,\sum_{n=1}^{\infty}(1-\varphi_n(g_k^{-1}g))\\
&\le 2\sum_{n=1}^N(1-\varphi_n(g_k^{-1}g))+2\sum_{n=N+1}^{\infty}(1-\varphi_n(g_k^{-1}g))\\
&\le 2\sum_{n=1}^N(1-\varphi_n(g_k^{-1}g))+\varepsilon.
}
Since $\disp \lim_{k\to \infty}g_k^{-1}g=1$, it follows that $\limsup_{k\to \infty}\|b(g_k)-b(g)\|^2\le \varepsilon$. Since $\varepsilon$ is arbitrary, we have $\disp \lim_{k\to \infty}b(g_k)=b(g)$ and the continuity of $b$ follows. Next, we show that $b$ is a coarse embedding. Let $(g_k)_{k=1}^{\infty}$ be a sequence in $G$. Assume that $\disp \lim_{k\to \infty}g_k=\infty$, meaning that for every bounded subset $B\subset G$, there exists $k_0\in \mathbb{N}$ such that $g_k\in G\setminus B\,(k\ge k_0)$ holds.  Then for each $n$, $\varphi_n(g_k)\stackrel{k\to \infty}{\to}0$ holds. Let $N\in \mathbb{N}$. There exists $k_0\in \mathbb{N}$ such that ${\rm{Re}}\,(1-\varphi_n(g_k))\ge \tfrac{1}{2}\,(1\le n\le N)$ holds for every $k\ge k_0$. This implies that $\|b(g_k)\|^2\ge N\,(k\ge k_0)$. Since $N$ is arbitrary, it follows that $\|b(g_k)\|\stackrel{k\to \infty}{\to}\infty$ holds.\\
Conversely, if $g_k\to \infty$ does not hold, then there exists a subsequence $g_{k_1},g_{k_2},\dots$ and a bounded subset $B\subset G$ such that $g_{k_i}\in B\,(k\in \mathbb{N})$ holds. Take $m\in \mathbb{N}$ for which $B\subset B_m$ holds. 
Then for each $i\in \mathbb{N}$, we have 
\begin{equation}
\|b(g_{k_i})\|^2\le 4m+\sum_{n=m+1}2^{1-n}.\label{eq: bki}
\end{equation}
The right hand side of (\ref{eq: bki}) is finite and is independent of $i$. Thus, $\|b(g_{k_i})\|\to \infty$ does not hold. This shows that $b$ is a coarse embedding. 
\end{proof}

\subsection{von Neumann algebras}
In this subsection we recall backgrounds on von Neumann algebras which are necessary for our study. For more on basics of von Neumann algebra theory, we refer the reader to Takesaki's books \cite{Takesakibook1,Takesakibook2}.

First, we recall the non-commutative $L^p$-spaces.  
Let $M$ be a von Neumann algebra on a Hilbert space $\Hil$ and $\tau$ be a normal faithful semifinite trace on $M$. A closed and densely defined operator $A$ on $\Hil$ is said to be {\it affiliated with $M$} if $x'A\subset Ax'$ for every $x'\in M'$. This is equivalent to the condition that $u\in M$ and $1_B(|A|)\in M$ for every Borel set $B\subset \R$, where $A=u|A|$ is the polar decomposition of $A$. In this case, $A$ is moreover called {\it $\tau$-measurable} if $\lim_{r\to \infty}\tau(1_{[r,\infty)}(|A|))=0$ holds. 
\begin{definition}Let $1\le p<\infty$. 
The space $L^p(M,\tau)$ of all closed, densely defined and $\tau$-measurable operators on $\Hil$ which have finite $p$-norms is called the {\it non-commutative $L^p$-space} associated with $(M,\tau)$.  
Here, the $p$-norm of $A$, denoted $\|A\|_p$ is given by 
\[\|A\|_p=\left (\int_0^{\infty}\lambda^p{\rm d}\tau(e(\lambda))\right )^{\frac{1}{p}},\]
where $|A|=\int_0^{\infty}\lambda\,{\rm d}e(\lambda)$ is the spectral resolution of $|A|$. 
\end{definition}
The space $L^2(M,\tau)$ is often identified with the Hilbert space on which the semicyclic (GNS) representation of $\tau$ is defined. We also follow this convention. 
For the backdground on non-commutative integration theory, we refer the reader to Nelson's paper \cite{Nelson74},  \cite{PiXu03} or \cite[Chapter IX.2]{Takesakibook2}.

Next, we recall the Connes' characterization of hyperfiniteness. 
\begin{definition}Assume that $M$ is a factor. $M$ is called {\it hyperfinite} if there exists an increasing sequence $M_1\subset M_2\subset \cdots$ of finite-dimensional von Neuman subalgebras of $M$ whose union is dense in $M$ in the strong operator topology. 
$M$ is said to have {\it Schwartz' property P} if for every $x\in \mathbb{B}(\Hil)$, the following condition holds: 
\[\overline{\rm co}\{uxu^*\mid u\in \mathcal{U}(M)\}\cap M'\neq \emptyset.\]
Here, $\mathcal{U}(M)$ denotes the unitary group of $M$ and $\overline{\rm co}$ in the above expression refers to the closed convex hull with respect to the weak operator topology.
\end{definition}
We will use the following equivalence due to Connes \cite[Theorem 6]{Co76}. 

\begin{theorem}[Connes]\label{thm: Connes} Let $M$ be a factor acting on a separable Hilbert space. Then $M$ is hyperfinite if and only if $M$ has Schwartz' property {\rm P}.  
\end{theorem}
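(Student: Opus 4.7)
The plan is to split the equivalence into its two directions. The forward implication (hyperfinite implies property P) is the elementary half, while the converse is really the content of Connes' deep theorem equating injectivity and hyperfiniteness for factors on separable Hilbert spaces.

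For the easy direction, I would start with an increasing sequence $(M_n)_{n\in\mathbb{N}}$ of finite-dimensional unital $*$-subalgebras with $\bigcup_n M_n$ strong-operator dense in $M$. Given $x \in \mathbb{B}(\Hil)$, each unitary group $\mathcal{U}(M_n)$ is a compact Lie group, so averaging $uxu^*$ against normalized Haar measure on $\mathcal{U}(M_n)$ produces an operator $x_n$ which lies in the weak-operator closed convex hull of $\{uxu^*\mid u\in\mathcal{U}(M)\}$ and commutes with all of $M_n$. The sequence $(x_n)$ is norm-bounded by $\|x\|$, so by Banach-Alaoglu it has a weak-operator limit point $y$. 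Since $y$ commutes with every $M_n$ and $\bigcup_n M_n$ is weakly dense in $M$, one has $y\in M'$, yielding the required element of $\overline{\rm co}\{uxu^*\mid u\in\mathcal{U}(M)\}\cap M'$.

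For the converse, my plan is a two-step reduction. First, I would deduce from property P that $M$ is injective: the group $\mathcal{U}(M)$ acts on $\mathbb{B}(\Hil)$ by conjugation, and combining property P with a Markov-Kakutani type fixed-point argument (or equivalently averaging against an invariant mean on $\ell^\infty(\mathcal{U}(M))$) produces a norm-one, completely positive projection $E\colon \mathbb{B}(\Hil)\to M'$, which is exactly Hakeda-Tomiyama injectivity of $M'$ (and hence of $M$). Second, one invokes Connes' landmark theorem from \cite{Co76} that every injective factor acting on a separable Hilbert space is hyperfinite.

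The main obstacle is entirely concentrated in the second step of the converse: Connes' proof that injectivity implies hyperfiniteness is a monumental result whose argument is far beyond anything that could be sketched here, so in practice it would be cited as a black box, and the role of the present theorem is just to package this equivalence in the form most convenient for the amenability criterion for $\U_p(M,\tau)$ used later in Section~\ref{section:Up}.
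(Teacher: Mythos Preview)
The paper does not give its own proof of this theorem: it is stated as a background result attributed to Connes and cited as \cite[Theorem 6]{Co76}, with the remark that Connes in fact proved the equivalence with several other properties (injectivity, semidiscreteness, etc.). So there is nothing in the paper to compare your argument against.

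That said, your outline is the standard one and is correct. The forward direction via Haar averaging over $\mathcal{U}(M_n)$ and passing to a weak-operator limit point is exactly Schwartz's original argument. For the converse, your two-step route (property~P $\Rightarrow$ existence of a norm-one projection $\mathbb{B}(\Hil)\to M'$, hence injectivity of $M'$ and of $M$; then injectivity $\Rightarrow$ hyperfiniteness by Connes) is precisely how the result is organized in \cite{Co76}. You are also right that all the depth lies in the second step, which must be cited as a black box. One small point of care: to pass from ``for each $x$ the set $\overline{\rm co}\{uxu^*\}\cap M'$ is nonempty'' to a genuine linear, unital, positive projection onto $M'$ requires a coherent choice across all $x$; this is where the invariant-mean argument you allude to is actually needed, not merely a Markov--Kakutani fixed point for a single $x$.
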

He actually proved that these two properties are equivalent to many other important properties, such as injectivity or semidiscreteness (see \cite{Co76}). 

\section{The exponential length}\label{section:explength}
The goal of this section is to show that on every connected Banach-Lie group there is an explicitly defined compatible length function that is maximal, i.e. it defines a quasi-isometry type of the Banach-Lie group. In fact, it also minimal, so it defines the global Lipschitz geometric structure (\cite{Ro18-2}) on any connected Banach-Lie group. We compare this length function with other length functions and left-invariant metrics traditionally considered on Lie groups, and present some of its basic properties. We also comment on more general Banach-Lie groups that are not necessarily connected.

Then we specialize to unitary groups of $C^*$-algebras, where the idea of this length function originates, and which are a large source of interesting examples. Finally, we introduce some methods for showing that the length function, and thus the group itself, is unbounded.

\subsection{Exponential length and its properties}
\begin{definition}
Let $G$ be a connected Banach-Lie group with a Lie algebra $\mathfrak{g}$. For each $g\in G$, define 
$$\bel{G}(g):=\inf \left \{\sum_{i=1}^n\|X_i\|\,\middle|\, g=\exp (X_1)\cdots \exp (X_n),\,X_i\in \mathfrak{g}\,(1\le i\le n)\right \}.$$
\end{definition}
\begin{proposition}\label{prop:bel}
Let $G$ and $\LA$ be as above.
Then 
\begin{list}{}{}
\item[{\rm{(i)}}] $\bel{G}$ is a continuous pseudo-length function on $G$.
\item[{\rm{(ii)}}] $(G,d_G)$ is a length pseudometric space, where $d_G$ is the induced left-invariant pseudometric given by 
\[d_G(g,h):=\bel{G}(g^{-1}h),\,\,g,h\in G.\]
\item[{\rm{(iii)}}]  $d_G$ is a maximal compatible metric on $G$.
\end{list}
\end{proposition}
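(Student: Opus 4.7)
The plan is to verify (i)--(iii) in order, the whole argument ultimately resting on the fact that $\exp:\LA\to G$ is a local diffeomorphism at $0$.

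\textbf{Part (i).} The axioms of a pseudo-length function are nearly immediate from the definition: $\bel{G}(1_G)=0$ via the trivial decomposition $\exp(0)$, symmetry follows by reversing a decomposition together with $\exp(X)^{-1}=\exp(-X)$ and $\|{-X}\|=\|X\|$, and subadditivity follows by concatenating decompositions of $g$ and $h$. For continuity I start from the key estimate $\bel{G}(\exp X)\le\|X\|$, which is immediate from the definition; combined with $\exp$ being a local homeomorphism at $0$, this gives continuity of $\bel{G}$ at $1_G$, and then at an arbitrary $g_0$ via $|\bel{G}(g)-\bel{G}(g_0)|\le\bel{G}(g_0^{-1}g)$ and $g_0^{-1}g\to 1_G$ as $g\to g_0$.

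\textbf{Part (ii).} The inequality $d_G\le (d_G)_I$ is general. For the reverse inequality, given a decomposition $g=\exp(X_1)\cdots\exp(X_n)$, I concatenate the one-parameter segments $t\mapsto\exp(tX_i)$ into a continuous path $\gamma:[0,1]\to G$ from $1_G$ to $g$. The $d_G$-length of the $i$-th segment is bounded by $\|X_i\|$, since for any partition $0=s_0<\cdots<s_m=1$ left-invariance and the key estimate from (i) yield
\[\sum_j d_G(\exp(s_{j-1}X_i),\exp(s_jX_i))=\sum_j\bel{G}(\exp((s_j-s_{j-1})X_i))\le\|X_i\|.\]
Additivity of $L$ over concatenations then gives $L(\gamma)\le\sum_i\|X_i\|$, and taking the infimum over decompositions produces $(d_G)_I(1_G,g)\le\bel{G}(g)$.

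\textbf{Part (iii).} Thanks to (ii), and because a connected topological group admits no proper open subgroups, Lemma~\ref{lem:maximallengthmetric} reduces everything to showing that $d_G$ is a \emph{compatible metric} (rather than a mere pseudometric). I compare $\bel{G}$ with a reference compatible left-invariant metric $\rho$ (for instance the Finsler distance). The upper bound $\bel{G}(\exp X)\le\|X\|$, combined with $\exp$ being a local diffeomorphism at $0$, shows that the group-topology identity $G\to(G,d_G)$ is continuous. For the opposite direction, smoothness of $\exp$ with $D\exp(0)=\mathrm{id}$ provides a neighborhood $V\subseteq\LA$ of $0$ and a constant $C>0$ with $\rho(\exp X,1_G)\le C\|X\|$ for $X\in V$. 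For a general decomposition $g=\exp(X_1)\cdots\exp(X_n)$, I subdivide each factor by replacing $\exp(X_i)$ with $\exp(X_i/m_i)^{m_i}$ for $m_i\in\N$ large enough that $X_i/m_i\in V$; this preserves both $g$ and the total norm $\sum_i\|X_i\|$. A telescoping estimate using left-invariance of $\rho$ then yields $\rho(g,1_G)\le C\sum_i\|X_i\|$, whence $\rho\le C\bel{G}$. This simultaneously shows $\bel{G}(g)=0\Rightarrow g=1_G$ (so $d_G$ is a metric) and that $d_G$-convergence implies convergence in the ambient topology (completing compatibility).

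The main obstacle I anticipate is the lower bound $\rho\le C\bel{G}$ in Part (iii): the Lipschitz estimate for $\exp$ is only local, so some form of subdivision is needed to handle decompositions containing large exponents, and it is specifically the identity $\exp(X)=\exp(X/m)^m$ (a consequence of $\exp$ restricting to a one-parameter subgroup homomorphism on $\mathbb{R} X$) that makes the purely local smoothness of $\exp$ suffice globally.
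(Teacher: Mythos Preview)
Your proof is correct and follows essentially the same route as the paper: parts (i) and (ii) are identical in spirit, and for (iii) both you and the paper invoke Lemma~\ref{lem:maximallengthmetric} and reduce everything to showing $d_G$ is compatible, which is done by comparing with the Finsler distance $d$ (the paper packages this as Corollary~\ref{cor:belcompatible}, relying on Propositions~\ref{prop:Finslercompatible} and~\ref{prop:Finslercomparedtobel}).

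One point worth tightening: your justification ``smoothness of $\exp$ with $D\exp(0)=\mathrm{id}$'' does \emph{not} by itself yield $\rho(\exp X,1_G)\le C\|X\|$ for an arbitrary compatible left-invariant metric $\rho$ (take $\rho=\sqrt{d}$ for a counterexample). What makes the estimate true is the specific choice $\rho=d$, the Finsler distance: the curve $t\mapsto\exp(tX)$ has Finsler length exactly $\|X\|$, so $d(\exp X,1_G)\le\|X\|$ holds \emph{globally} with $C=1$. This is how the paper argues (Proposition~\ref{prop:Finslercomparedtobel}), and it renders your subdivision trick unnecessary. The remaining ingredient, that the Finsler distance is itself compatible, is nontrivial and is imported from \cite{Up85} (Proposition~\ref{prop:Finslercompatible}); you are implicitly using this when you assume such a $\rho$ exists.
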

\begin{proof}
(i) It is immediate from the definition that for every $g,h\in G$ we have $\bel{G}(g)=\bel{G}(g^{-1})$ and $\bel{G}(gh)\leq \bel{G}(g)+\bel{G}(h)$. So $\bel{G}$ is a pseudo-length function. We check that $\bel{G}$, and thus also $d_G$, is continuous. By the properties of the length function, it suffices to verify that for every sequence $(g_n)_{n=1}^{\infty}\subseteq G$, we have that $g_n\to 1$ implies $\bel{G}(g_n)\to 0$. Pick such a sequence $(g_n)_{n=1}^{\infty}\subseteq G$. There exist an open neighborhood $U$ of $0$ in $\LA$ and an open neighborhood $V$ of $1$ in $G$ such that $\exp:\LA\rightarrow G$ induces a homeomorphism between $U$ and $V$. Without loss of generality, $(g_n)_{n=1}^{\infty}\subseteq V$, therefore there is a sequence $(X_n)_{n=1}^{\infty}\subseteq U$ such that $X_n\to 0$, thus $\|X_n\|\to 0$, and $g_n=\exp(X_n)$, for each $n$. Since by definition of $\bel{G}$, for each $n$, $\bel{G}(g_n)\leq \|X_n\|$, we are done.\medskip

(ii) By the left-invariance, it suffices to show that for every $g\in G$ and every $\varepsilon>0$ there exists a path $\gamma$, connecting $1$ and $g$, of length less than $\bel{G}(g)+\varepsilon$. Find $X_1,\ldots,X_n\in\LA$ such that $g=\exp(X_1)\cdot\ldots\cdot\exp(X_n)$ and $\sum_{i=1}^n \|X_i\|<\bel{G}(g)+\varepsilon$. Set $X_0=0$. Since it is clear that for each $1\leq i\leq n$, the length of the path $\gamma_i:[0,1]\rightarrow G$, defined by $t\to \exp(X_{i-1})\exp(tX_i)$, is bounded by $\|X_i\|$. Concatenating the paths $(\gamma_i)_i$ gives a path of length less than $\bel{G}(g)+\varepsilon$ that connects $1$ and $g$.\medskip

(iii) We shall need the result of Corollary~\ref{cor:belcompatible} proved below that $d_G$ is a compatible metric. Then it follows from (ii) and Lemma~\ref{lem:maximallengthmetric} that $d_G$ is a maximal compatible metric.
\end{proof}

Next we prove that $\bel{G}$ is also minimal (recall Definition~\ref{def:minimalmetric}). We start with a lemma.
\begin{lemma}\label{lem:localityLA}
Let $G$ be a connected Banach-Lie group with a Banach-Lie algebra $\LA$. Let $V$ be a bounded open neighborhood of $0\in\LA$ such that such that $\exp: V\rightarrow G$ induces a diffeomorphism with its image. Then there exists $K\geq 1$ such that for every $X\in U$ \[\bel{G}(\exp(X))\leq \|X\|\leq K\cdot \bel{G}(\exp(X)).\]
\end{lemma}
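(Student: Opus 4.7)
The inequality $\bel{G}(\exp(X))\leq \|X\|$ is immediate from the definition by taking the trivial one-term decomposition $\exp(X)=\exp(X)$. The work is in the reverse inequality $\|X\|\leq K\cdot \bel{G}(\exp(X))$, and my plan is to split into two regimes depending on the size of $\bel{G}(\exp(X))$.

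The technical core is a local estimate: I will show there exist $\delta>0$ and $K_0\geq 1$ such that whenever $X_1,\dots,X_n\in \LA$ satisfy $s:=\sum_{i=1}^n\|X_i\|<\delta$, the product $g:=\exp(X_1)\cdots\exp(X_n)$ lies in $\exp(V)$ and its preimage $\log(g)$ under the local inverse $\log:\exp(V)\to V$ satisfies $\|\log(g)\|\leq K_0 s$. To prove this, I first use $d\exp_0=\mathrm{id}$ and continuity of the differential of $\exp$ to find $r>0$ with $\overline{B(0,r)}\subseteq V$ on which, in the left-invariant identification $T_{\exp(Y)}G\cong \LA$, the linear map $d\exp_Y:\LA\to\LA$ is invertible with $\|(d\exp_Y)^{-1}\|\leq K_0$. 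Then I concatenate the unit-speed one-parameter subgroup segments $t\mapsto \exp(X_1)\cdots\exp(X_{i-1})\exp((t-s_{i-1})X_i/\|X_i\|)$, with $s_i:=\sum_{j\leq i}\|X_j\|$, into a piecewise $C^{\infty}$ path $\gamma:[0,s]\to G$ running from $1_G$ to $g$, and lift it to $\tilde\gamma:[0,s]\to \LA$ via $\gamma=\exp\circ\tilde\gamma$ with $\tilde\gamma(0)=0$. In the left-invariant identification $\dot\gamma$ has $\LA$-norm $1$ almost everywhere, so the chain rule yields $\|\dot{\tilde\gamma}(t)\|\leq K_0$ as long as $\tilde\gamma(t)\in \overline{B(0,r)}$.

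The main obstacle is justifying that the lift $\tilde\gamma$ is defined on all of $[0,s]$ and never leaves $\overline{B(0,r)}$. I would settle this by a standard bootstrap: the set of $t\in[0,s]$ with $\tilde\gamma(u)\in \overline{B(0,r)}$ for every $u\leq t$ is closed in $[0,s]$; on this set, integration of the derivative bound gives $\|\tilde\gamma(t)\|\leq K_0 t\leq K_0 s$, so if $\delta$ is chosen so that $K_0\delta<r$, the lift remains in the open ball $B(0,r)$ for every such $t$ and the set is also open, hence equals $[0,s]$ by connectedness. Integrating once more yields $\|\log(g)\|=\|\tilde\gamma(s)\|\leq K_0 s$, giving the local estimate.

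The lemma follows by combining the two regimes. If $\bel{G}(\exp(X))<\delta/(2K_0)$, any decomposition $\exp(X)=\exp(X_1)\cdots\exp(X_n)$ with $\sum\|X_i\|$ close enough to $\bel{G}(\exp(X))$ satisfies $\sum\|X_i\|<\delta$; injectivity of $\exp$ on $V$ forces $X=\log(\exp(X_1)\cdots\exp(X_n))$, so $\|X\|\leq K_0\sum\|X_i\|$, and passing to the infimum gives $\|X\|\leq K_0\cdot \bel{G}(\exp(X))$. Otherwise $\bel{G}(\exp(X))\geq \delta/(2K_0)$ and the crude estimate $\|X\|\leq R:=\sup_{Y\in V}\|Y\|<\infty$, available because $V$ is bounded, yields $\|X\|\leq (2RK_0/\delta)\cdot \bel{G}(\exp(X))$. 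Taking $K:=\max(K_0,\,2RK_0/\delta)$ completes the proof.
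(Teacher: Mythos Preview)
Your proof is correct, but it takes a genuinely different route from the paper's. The paper argues by contradiction: assuming no such $K$ exists, it produces $X_n\in V$ with $\|X_n\|>n\cdot\bel{G}(\exp(X_n))$, then uses a rescaling trick (replacing $X_n$ by $m X_n$ while staying in $V$) to arrange $\inf_n\|X_n\|>0$; since $V$ is bounded this forces $\bel{G}(\exp(X_n))\to 0$, and then compatibility of $\bel{G}$ (established separately via the Finsler comparison) gives $\exp(X_n)\to 1_G$, so $X_n\to 0$, a contradiction.

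Your argument is direct and self-contained: you build a local Lipschitz estimate for $\log$ by lifting piecewise one-parameter paths through $\exp$ and controlling the lift via the inverse of $d\exp$, then combine it with the crude bound coming from boundedness of $V$. This avoids invoking compatibility of $\bel{G}$ (which in the paper is a forward reference to the Finsler comparison), and in effect re-derives that local comparison inline. The trade-off is length and technicality: the paper's five-line contradiction argument with the rescaling trick is considerably shorter, while yours is more explicit and does not depend on any external input about $\bel{G}$ beyond its definition.
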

\begin{proof}
Since for any $X\in\LA$ we have by definition $\bel{G}(\exp(X))\leq \|X\|$, we only care about the other inequality. Let $V$ be as in the statement. Suppose that no such $K\geq 1$, as wanted, exists. Then for every $n\in\Nat$ there is $X_n\in V$ such that $\|X_n\|> n\cdot\bel{G}(\exp(X_n))$. Without loss of generality, we may assume that $\inf_{n\in\Nat} \|X_n\|>0$. Indeed, if for some $X\in V$, $n,m\in\Nat$ we have $\|X\|> n\cdot\bel{G}(\exp(X))$ and $mX\in V$, then also $\|mX\|>n\cdot\bel{G}(\exp(mX))$. This holds since \[\|mX\|>mn\cdot\bel{G}(\exp(X))\geq n\cdot\bel{G}(\exp(X)^m)=n\cdot\bel{G}(\exp(mX)).\] However, then we have $\lim_{n\to\infty} \bel{G}(\exp(X_n))=0$ and so since $\bel{G}$ is compatible, $\exp(X_n)\to 1_G$, thus $X_n\to 0$, a contradiction.
\end{proof}
\begin{proposition}
Let $G$ be a connected Banach-Lie group. Then $\bel{G}$ induces a minimal metric.
\end{proposition}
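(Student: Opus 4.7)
The plan is to prove minimality by contradiction, transferring the question to the Lie algebra via the exponential chart and then exploiting subadditivity along one-parameter subgroups. Unpacking Definition~\ref{def:minimalmetric} together with left-invariance, it suffices to show: for every compatible left-invariant metric $\partial$ on $G$, there exist a neighborhood $U$ of $1_G$ and a constant $L>0$ with $\bel{G}(g)\le L\,\partial(g,1_G)$ for all $g\in U$.

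Fix a bounded open neighborhood $V$ of $0\in\LA$ on which $\exp$ is a diffeomorphism onto $V':=\exp(V)$ and to which Lemma~\ref{lem:localityLA} applies with some constant $K$. Since $\bel{G}(\exp(X))\le\|X\|$ by definition, it is enough to establish the stronger comparison $\|X\|\le L'\,\partial(\exp(X),1_G)$ for $X$ in a (possibly smaller) neighborhood of $0$ in $\LA$. Suppose for contradiction that no such $L'$ works; then one extracts a sequence $(X_n)_{n\ge 1}\subseteq V$ satisfying $\partial(\exp(X_n),1_G)<\|X_n\|/n$. Since $\|X_n\|$ is bounded, $\partial(\exp(X_n),1_G)\to 0$, and compatibility of $\partial$ combined with continuity of $\exp^{-1}$ on $V'$ forces $X_n\to 0$ in $\LA$.

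Now choose $\varepsilon>0$ with $B_{2\varepsilon}(0)\subseteq V$ and, for each sufficiently large $n$, set $k_n:=\lfloor \varepsilon/\|X_n\|\rfloor$, so that $k_n X_n\in \overline{B_\varepsilon(0)}\subseteq V$ and $\|k_n X_n\|\to\varepsilon$. The one-parameter subgroup identity $\exp(k_n X_n)=\exp(X_n)^{k_n}$ together with the left-invariant triangle inequality yields
\[\partial(\exp(k_n X_n),1_G)\,\le\,k_n\,\partial(\exp(X_n),1_G)\,<\,\frac{k_n\|X_n\|}{n}\,\le\,\frac{\varepsilon}{n}\xrightarrow{n\to\infty}0.\]
Compatibility of $\partial$ then forces $\exp(k_n X_n)\to 1_G$ in $G$; since each $k_n X_n$ lies in the diffeomorphism domain $V$, continuity of $\exp^{-1}\colon V'\to V$ gives $k_n X_n\to 0$ in $\LA$, contradicting $\|k_n X_n\|\to\varepsilon>0$.

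The main obstacle is a scale-balance issue: the iterates $\exp(X_n)^{k_n}$ must remain inside the chart where $\exp^{-1}$ is defined and continuous. This is why $\varepsilon$ is chosen strictly smaller than half the radius of the diffeomorphism neighborhood, ensuring that the rounding error in $k_n=\lfloor\varepsilon/\|X_n\|\rfloor$ does not push $k_n X_n$ out of $V$. Once this scale is fixed, the proof reduces to a clean combination of Lemma~\ref{lem:localityLA}, subadditivity of $\partial$ along the one-parameter subgroup generated by $X_n$, and local inversion of $\exp$ near $0$.
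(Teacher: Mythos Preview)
Your proof is correct. Both your argument and the paper's rely on the same core mechanism: exploiting the one-parameter subgroup identity $\exp(kX)=\exp(X)^k$ together with the local diffeomorphism property of $\exp$ to rescale a sequence back to a definite norm level and derive a contradiction. The packaging, however, differs. The paper does not work directly from Definition~\ref{def:minimalmetric}; instead it invokes Rosendal's intrinsic characterization of minimality (Theorem~3\,(3) in \cite{Ro18-2}), which reduces the problem to checking that $n\cdot\bel{G}(g)\le K\cdot\bel{G}(g^n)$ whenever $g,\dots,g^n$ stay in a fixed neighborhood, and then reads this off Lemma~\ref{lem:localityLA} in one line. Your approach is more self-contained in that it avoids citing Rosendal's characterization and argues directly against an arbitrary compatible left-invariant metric $\partial$; in effect you are rerunning the rescaling proof of Lemma~\ref{lem:localityLA} with $\partial$ in place of $\bel{G}$. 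One minor remark: your invocation of Lemma~\ref{lem:localityLA} is largely cosmetic, since you only use the trivial direction $\bel{G}(\exp(X))\le\|X\|$; the nontrivial content you need is the rescaling \emph{idea} from that lemma's proof, applied now to $\partial$, rather than the lemma's statement itself.
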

\begin{proof}
We shall use the characterization of minimality from \cite[Theorem 3 (3)]{Ro18-2}. We need to find an open neighborhood $U$ of $1_G\in G$ and $K\geq 1$ such that for every $g\in G$ and $n\in\Nat$, if $g,g^2,\ldots,g^n\in U$ then $n\cdot \bel{G}(g)\leq K\cdot \bel{G}(g^n)$. Let $V$ and $K\geq 1$ be given by Lemma~\ref{lem:localityLA} and set $U:=\exp[V]\subseteq G$. We claim that $U$ and $K\geq 1$ are as desired. Let $g\in G$ and $n\in\Nat$ be such that $g,\ldots,g^n\in U$. Set $X:=\log(g)$. Then for all $1\leq i\leq n$ we also have $\log(g^i)=iX$. By Lemma~\ref{lem:localityLA} we get \[n\cdot \bel{G}(g)\leq n\|X\|=\|nX\|\leq K\cdot\bel{G}(g^n),\] and we are done.
\end{proof}
Since we have already proved that $\bel{G}$ is maximal, the following is an immediate corollary (see \cite[Section 3]{Ro18-2}).
\begin{theorem}\label{thm:elismaxmin}
Let $G$ be a connected Banach-Lie group. Then $\bel{G}$ is both maximal and minimal and thus defines the global Lipschitz geometric structure of $G$.
\end{theorem}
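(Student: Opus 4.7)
The plan is essentially to collect the two preceding results and invoke the general framework of Rosendal. Maximality of the compatible metric $d_G$ induced by $\bel{G}$ has already been established in Proposition~\ref{prop:bel}(iii), using the fact that $(G,d_G)$ is a length space together with Lemma~\ref{lem:maximallengthmetric}; and minimality of $d_G$ has just been shown in the previous proposition by the locality estimate from Lemma~\ref{lem:localityLA}. So the content of the theorem itself is entirely the observation that these two facts coexist.

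First I would explicitly restate what has been proved: there is a single compatible left-invariant metric $d_G$ on the connected Banach-Lie group $G$ which is simultaneously maximal in the order $\ll$ on $\PM$ (hence defines the quasi-isometry type of $G$ in the sense of Rosendal) and minimal in the sense of Definition~\ref{def:minimalmetric} (i.e. the identity map from $(G,\partial)$ to $(G,d_G)$ is Lipschitz on a neighbourhood of $1_G$ for any compatible left-invariant metric $\partial$). Nothing additional needs to be verified about $\bel{G}$ at this stage.

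Next I would appeal to \cite[Section 3]{Ro18-2}, where Rosendal introduces the notion of a \emph{global Lipschitz geometric structure}: by definition, a compatible left-invariant metric that is both maximal and minimal determines such a structure on the group, and any two such metrics are bi-Lipschitz equivalent on a neighbourhood of the identity and quasi-isometric globally. Plugging $d_G$ into this definition gives exactly the conclusion of the theorem.

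The only thing that could conceivably be called an obstacle is pedantic: one should make sure that the two propositions are stated for exactly the same object, i.e. that the maximal metric of Proposition~\ref{prop:bel}(iii) and the minimal metric from the previous proposition are literally the same $d_G$ arising from $\bel{G}$. This is clear from the constructions, but I would mention it briefly to avoid any ambiguity, and then close the proof by citing \cite[Section 3]{Ro18-2}. No further computation is required.
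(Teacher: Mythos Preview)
Your proposal is correct and matches the paper's approach exactly: the paper also treats this theorem as an immediate consequence of the two preceding results (maximality from Proposition~\ref{prop:bel}(iii) and minimality from the proposition just before), together with the reference to \cite[Section 3]{Ro18-2}. No separate argument is given in the paper beyond this.
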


It is now in order to compare $\bel{G}$ and $d_G$ with other distances considered on Lie and Banach-Lie groups.

We start with the finite-dimensional case. Let $G$ be a connected real (finite-dimensional) Lie group and $\LA$ its Lie algebra. Recall (e.g. from \cite[Section 5.6.4]{DruKap}) that $G$ is equipped with a canonical metric, with which it is geometrically investigated: the left-invariant Riemannian distance. We briefly recall its construction. Choose a Hilbert space (i.e. defined using an inner product) norm $\norm$ on $\LA$. Let $\gamma: I\rightarrow G$ be a smooth (or just piecewise smooth) curve. Define the length $L(\gamma)$ of $\gamma$ as follows: $$L(\gamma):=\int_I \|DL_{\gamma(t)^{-1}}(\dot{\gamma(t))}\| dt.$$ We now define the left-invariant Riemannian distance $d(g,h)$, between any $g,h\in G$, by $$d(g,h):=\inf\{L(\gamma)\mid \gamma: [a,b]\rightarrow G\text{ is piecewise smooth}, \gamma(a)=g,\gamma(b)=h\}.$$

\begin{fact}\label{fact:belandRiemann}
Let $G$ be a connected real Lie group, let $d$ be a left-invariant Riemannian distance on $G$ and let $d_G$ be the distance induced by $\bel{G}$ using the same Hilbert norm on $\LA$. Then there exists $K>0$ such that $1/K d_G-K\leq d\leq d_G$; in particular, $d$ and $d_G$ are quasi-isometric.
\end{fact}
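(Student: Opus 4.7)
The strategy is to prove the two inequalities separately, exploiting that both metrics are length metrics arising from the same Hilbert norm on the Lie algebra at the infinitesimal level.

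The upper bound $d\le d_G$ is direct. Given $\varepsilon>0$ and a product representation $g^{-1}h=\exp(X_1)\cdots \exp(X_n)$ with $\sum_i\|X_i\|<d_G(g,h)+\varepsilon$, I would concatenate the left-translated one-parameter subgroup segments $\gamma_i(t)=g\exp(X_1)\cdots \exp(X_{i-1})\exp(tX_i)$ for $t\in[0,1]$. By left-invariance of the Riemannian metric together with the standard computation of $\tfrac{d}{dt}\exp(tX_i)$, each $\gamma_i$ has Riemannian length exactly $\|X_i\|$. The concatenation joins $g$ to $h$ with total Riemannian length $\sum_i\|X_i\|<d_G(g,h)+\varepsilon$, and letting $\varepsilon\to 0$ yields $d\le d_G$.

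For the reverse inequality, which actually yields the stronger bi-Lipschitz bound $d_G\le C\cdot d$, the plan is to establish a local comparison and then globalize via the length structure. Fix a bounded open neighborhood $V\subseteq \LA$ of $0$ on which $\exp\colon V\to U:=\exp(V)$ is a diffeomorphism. Trivially $\bel{G}(\exp(X))\le \|X\|$ for $X\in V$. Conversely, in exponential coordinates near $1_G$ the left-invariant Riemannian metric tensor is continuous and agrees at the origin with the Hilbert inner product, so after shrinking $V$ one obtains $c>0$ with $\|X\|\le \tfrac{1}{c}\,d(1_G,\exp(X))$ for every $X\in V$. Combining these yields $\bel{G}(g)\le \tfrac{1}{c}\,d(1_G,g)$ for $g\in U$. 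To globalize, for $g,h\in G$ and $\varepsilon>0$, I would pick a piecewise smooth path $\gamma\colon[0,1]\to G$ from $g$ to $h$ with $L(\gamma)<d(g,h)+\varepsilon$, together with a partition $0=t_0<\cdots<t_n=1$ so fine that each $\gamma(t_{i-1})^{-1}\gamma(t_i)\in U$ (possible by left-invariance of $d$ together with $d(\gamma(t_{i-1}),\gamma(t_i))\le L(\gamma|_{[t_{i-1},t_i]})$ being small). Applying the local estimate and subadditivity of $\bel{G}$ then gives
\[\bel{G}(g^{-1}h)\le \sum_{i=1}^n \bel{G}(\gamma(t_{i-1})^{-1}\gamma(t_i))\le \tfrac{1}{c}\sum_{i=1}^n L(\gamma|_{[t_{i-1},t_i]})=\tfrac{1}{c}L(\gamma),\]
and $\varepsilon\to 0$ produces $d_G\le \tfrac{1}{c}\,d$; the quasi-isometry then follows with $K=\max(1,1/c)$.

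The main technical hurdle will be the local Riemannian estimate $\|X\|\le \tfrac{1}{c}\,d(1_G,\exp(X))$ on $V$. Although standard in finite-dimensional Riemannian geometry, it does require attention: one must ensure that near-minimizing paths from $1_G$ to $\exp(X)$ stay inside the chart on which the metric tensor is uniformly comparable to the Hilbert norm. For $X\in V$ sufficiently small this is automatic, since any such path has length at most $\|X\|+\varepsilon$ by the trivial upper bound via the one-parameter subgroup, and hence remains inside a small ball around $1_G$. Everything else in the argument is an exercise in concatenation and left-invariance.
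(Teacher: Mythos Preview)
Your argument is correct and in fact establishes the sharper bi-Lipschitz inequality $d_G\le \tfrac{1}{c}\,d$, which is stronger than the quasi-isometric bound $\tfrac{1}{K}d_G-K\le d$ stated in the Fact. The upper bound $d\le d_G$ is handled identically in both proofs.

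For the reverse inequality the paper takes a slightly different route. Rather than comparing metric tensors near the origin, it fixes the exponential chart on a unit ball $V\subseteq\LA$ and sets
\[K'=\max\left\{\frac{\|X\|}{d(\exp(X),e)}\,\middle|\, \tfrac{1}{2}\le\|X\|\le 1\right\},\]
a maximum over a compact \emph{annulus}. A near-minimizing smooth curve from $e$ to $g$ is then partitioned so that each increment $X_i=\log\big(\gamma(a_i)^{-1}\gamma(a_{i+1})\big)$ satisfies $\tfrac{1}{2}\le\|X_i\|<1$, except possibly the last one. On each such piece one has $\|X_i\|\le K'\,d(\gamma(a_i),\gamma(a_{i+1}))\le K'\int_{[a_i,a_{i+1}]}\|DL_{\gamma(t)^{-1}}(\dot\gamma(t))\|\,dt$, and summing gives $\bel{G}(g)\le K'\,L(\gamma)+\tfrac{1}{2}$. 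The leftover last piece is what produces the additive constant~$K$.

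The tradeoff is this: the paper's annulus argument is more elementary in that it never needs to argue that near-minimizing paths remain inside the chart (the bound $\|X_i\|\le K' d(\gamma(a_i),\gamma(a_{i+1}))$ is just the definition of $K'$), at the cost of the additive constant. Your approach invokes the standard Riemannian fact that in exponential coordinates the metric tensor is $c$-close to Euclidean near the origin, which requires the short chart-containment argument you sketch, but rewards you with a clean multiplicative bound and no additive term.
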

\begin{proof}
Since $d$ is left-invariant, it suffices to show that for some $K>0$, for any $g\in G$ we have $1/K \bel{G}(g)-K\leq d(g,e)\leq \bel{G}(g)$. We fix some $g\in G$.

First we show that $d(g,e)\leq \bel{G}(g)$. Fix some $\varepsilon>0$ and let $X_1,\ldots,X_n\in\LA$ be such that $g=\exp(X_1)\cdot\ldots\cdot\exp(X_n)$ and $\sum_{i=1}^n \|X_i\|<\bel{G}(g)+\varepsilon$. For each $i\leq n$, let $\gamma'_i:[0,1]\rightarrow G$ be the smooth curve defined by $\gamma'_i(t):=\exp(tX_i)$, and let $\gamma_1:=\gamma'_1$ and $\gamma_i:=\big(\prod_{j=1}^{i-1} \exp(X_j)\big)\cdot \gamma'_i$, for $1<i$. Let $\gamma:[0,n]\rightarrow G$ be their natural concatenation which is a piecewise smooth curve satisfying $\gamma(0)=e$ and $\gamma(n)=g$. For each $i<n$ and $i<t<i+1$ we have $DL_{\gamma(t)^{-1}}(\dot \gamma(t))=X_i$, so it follows that $L(\gamma)=\sum_{i=1}^n \|X_i\|$, and therefore $d(g,e)<\bel{G}(g)+\varepsilon$. Since $\varepsilon>0$ was arbitrary, we have proved the first inequality.

For the converse, first we find the desired $K>0$. Since $\exp$ induces a local diffeomorphism between $\LA$ and $G$, let $V$ be an open ball around $0$ in $\LA$ such that $V$ and $U:=\exp[V]\subseteq G$ are diffeomorphic (via $\exp$). Without loss of generality, in order to simplify the notation, we assume that $V$ is the unit ball. Set $$K':=\max \left \{\frac{\|X\|}{d(\exp(X),e)}\,\middle|\, 1/2\leq \|X\|\leq 1\right \},$$ and then set $K:=\max\{K',1\}$. We again fix some $\varepsilon>0$ and find a smooth curve $\gamma:[0,1]\rightarrow G$ such that $\gamma(0)=e$, $\gamma(1)=g$, and $L(\gamma)<d(g,e)+\varepsilon$. We find a partition $a_0=0<a_1<\ldots<a_n=1$ such that 
\begin{itemize}
    \item for each $0\leq i<n$, $\gamma[a_i,a_{i+1}]\subseteq \gamma(a_i)U$;
    \item for the unique sequence $(X_i)_{i=0}^{n-1}\subseteq \LA$ such that for all $i<n$, $\gamma(a_{i+1})=\exp(X_0)\cdot\ldots\cdot\exp(X_i)$, we have $\|X_i\|\leq 1/2$ if and only if $i=n-1$.
\end{itemize}
It also follows that for every $i<n$, $\|X_i\|<1$ since $X_i\in V$ as $\exp(X_i)\in U$. Now for every $i<n-1$ we have $$\bel{G}(\exp(X_i))\leq \|X_i\|\leq K'd(\gamma(a_i),\gamma(a_{i+1}))\leq K'\int_{[a_i,a_{i+1}]} \|DL_{\gamma(t)^{-1}}(\dot \gamma(t))\|dt.$$ It follows that $$\bel{G}(g)\leq \sum_{i=0}^{n-1} \|X_i\|\leq \Big(K'\int_{[0,a_{n-1}]} \|DL_{\gamma(t)^{-1}}(\dot \gamma(t))\|dt\Big)+1/2\leq K\cdot L(\gamma)+K\leq K(d(g,e)+1+\varepsilon).$$ Since $\varepsilon$ was arbitrary, we are done.
\end{proof}

Next we consider distances on general Banach-Lie groups. Let $G$ be a connected Banach-Lie group with a Banach-Lie algebra $\LA$ equipped with a compatible norm $\norm$. The distance on $G$ may be defined as in the finite-dimensional Riemannian case using the norm $\norm$.
\begin{definition}
Let $G$ be a connected Banach-Lie group and let $\LA$ be its Banach-Lie algebra with a compatible norm $\norm$. A \emph{rectifiable path} is a function $\gamma:I\rightarrow G$, where $I$ is some interval, such that the function $t\to \|DL_{\gamma(t)^{-1}}(\dot{\gamma(t))}\|$ is integrable. The integral $\int_I \|DL_{\gamma(t)^{-1}}(\dot{\gamma(t))}\|dt$ is then the length of $\gamma$ denoted by $L(\gamma)$.

The \emph{Finsler distance} $d$ on $G$ is then defined, for $g,h\in G$, by \[d(g,h):=\inf\{L(\gamma)\mid \gamma:I\rightarrow G\text{ is rectifiable and }\gamma(0)=g,\gamma(1)=h\}.\]
\end{definition}
Clearly, the Finsler distance is a continuous left-invariant pseudometric on $G$. We now compare $\bel{G}$ with the Finsler distance on $G$. In the special case of unitary groups of $C^*$-algebras, this has been already done by Ringrose in \cite[Proposition 2.9]{Ri92} who proved that these two distances coincide (notice that Ringrose does not use the term `Finsler distance'). Applying the results of Larotonda from \cite{Lar19} we generalize Ringrose's result for SIN Banach-Lie groups, or equivalently Banach-Lie groups whose Banach-Lie algebra admits an $\mathrm{Ad}$-invariant compatible norm, and thus a bi-invariant Finsler distance - which is the case of unitary groups. In general, we show that these two distances are bi-Lipschitz and that $d_G$, the distance induced from $\bel{G}$, majorizes the Finsler distance.\medskip

First, we need the following important fact.
\begin{proposition}\label{prop:Finslercompatible}
Let $G$ be a connected Banach-Lie group and let $d$ be the Finsler distance induced from some compatible norm $\norm$ on the Banach-Lie algebra $\LA$. Then $d$ is a compatible metric.
\end{proposition}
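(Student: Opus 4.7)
The plan is to exploit left-invariance to reduce compatibility to a local statement at the identity, then use the exponential chart to relate $d$ to the Lie algebra norm. It suffices to show that $d(1, g_n) \to 0$ if and only if $g_n \to 1$ in $G$, and that $d(1, g) = 0$ implies $g = 1$.

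I would first dispose of the easy direction. Taking $\gamma(t) := \exp(tX)$, the left-translated velocity at every $t$ equals $X$ because $\exp(tX)$ is a one-parameter subgroup, so $L(\gamma) = \|X\|$ and hence $d(1, \exp(X)) \leq \|X\|$. Combined with $\exp$ being a local diffeomorphism at $0 \in \LA$, this shows that $g_n \to 1$ implies $d(1, g_n) \to 0$.

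The substance of the proof is the converse direction, which also gives that $d$ is a metric. My plan is to fix $\rho_0 > 0$ small enough that $V_0 := \{X \in \LA : \|X\| < \rho_0\}$ maps diffeomorphically onto $U_0 := \exp(V_0)$ and, simultaneously, the standard differential-of-$\exp$ formula
\[DL_{\exp(-X)} \circ D\exp_X = \frac{1 - e^{-\mathrm{ad}(X)}}{\mathrm{ad}(X)},\]
which is continuous in $X$ and equals the identity at $X = 0$, produces an operator on $\LA$ of norm at most $2$ whose inverse has norm at most $2$ for every $X \in V_0$. Verifying this formula and its continuous dependence on $X$ in the Banach-Lie setting is the main technical input; it is standard (see \cite{Neeb04} or \cite{Up85}), and I would invoke it rather than rederive it, since setting up the convergent power series and integral formulas carefully for Banach-Lie algebras is the main potential obstacle.

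Once this estimate is in hand, for any rectifiable path $\gamma : [0,1] \to G$ with $\gamma(0) = 1$ and $L(\gamma) < \rho_0/2$, I define $X(t) := \log(\gamma(t))$ as long as $\gamma$ stays in $U_0$. Applying the operator bound above gives
\[L(\gamma|_{[0,t]}) = \int_0^t \|DL_{\gamma(s)^{-1}} \dot\gamma(s)\|\, ds \;\geq\; \tfrac{1}{2} \int_0^t \|\dot X(s)\|\, ds \;\geq\; \tfrac{1}{2} \|X(t)\|.\]
This prevents $\gamma$ from leaving $U_0$: otherwise by continuity there would be $t_0 \in (0,1]$ with $\|X(t_0)\| = \rho_0$, forcing $L(\gamma) \geq \rho_0/2$, contradicting the assumption. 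Hence $\gamma(1) \in U_0$ and $\|\log(\gamma(1))\| \leq 2L(\gamma)$. Taking the infimum over admissible $\gamma$, whenever $d(1, g) < \rho_0/2$ one obtains $g \in U_0$ and $\|\log(g)\| \leq 2\, d(1, g)$. This simultaneously gives $d(1,g) = 0 \Rightarrow g = 1$, so $d$ is a genuine metric, and the converse direction $d(1, g_n) \to 0 \Rightarrow g_n \to 1$, completing the proof of compatibility.
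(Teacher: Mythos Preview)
Your argument is correct. The paper's proof, by contrast, consists entirely of a citation: it invokes \cite[Example 12.32 and Proposition 12.22]{Up85} to say that a compatible norm on $\LA$ induces a compatible tangent norm on $G$, and that the associated Finsler distance is then compatible. So the two approaches differ substantially in character. You give a direct, self-contained argument using the explicit formula for $DL_{\exp(-X)}\circ D\exp_X$ to obtain a local bi-Lipschitz comparison $\tfrac{1}{2}\|\log g\|\le d(1,g)\le\|\log g\|$ near the identity; the paper simply outsources the entire argument to Upmeier's book. Your approach has the advantage of exhibiting the quantitative local comparison explicitly (which is in the spirit of the paper's later Lemma~\ref{lem:localityLA}), at the cost of invoking and manipulating the $D\exp$ formula. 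The paper's approach is shorter but opaque unless one consults the reference. One minor point: your trapping argument implicitly assumes enough regularity of $\gamma$ for $X(t)=\log\gamma(t)$ to be absolutely continuous with $\dot X$ integrable; this is consistent with the paper's definition of rectifiable (which presupposes $\dot\gamma$ exists and the integrand is integrable), but it would be worth saying explicitly that $\log$ being a local diffeomorphism transfers this regularity to $X$.
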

\begin{proof}
By \cite[Example 12.32]{Up85}, $\norm$ induces a compatible tangent norm on $G$ (see \cite[Definition 12.19]{Up85}) and therefore by \cite[Proposition 12.22]{Up85}, the Finsler metric on $G$ induced from $\norm$ is compatible.
\end{proof}

\begin{proposition}\label{prop:Finslercomparedtobel}
Let $G$ be a connected Banach-Lie group with a Banach-Lie algebra $\LA$ equipped with a compatible norm $\norm$. Let $d_G$ be the metric induced by $\bel{G}$ and let $d$ be the Finsler distance on $G$.
\begin{enumerate}
    \item If $G$ is a SIN group and so without loss of generality, $d$ is bi-invariant, then $d_G=d$.
    \item In general, $d\leq d_G$ and the distances $d$ and $d_G$ are bi-Lipschitz equivalent.
\end{enumerate}
 
\end{proposition}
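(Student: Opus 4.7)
My plan is to attack the two statements together, proving $d \leq d_G$ and the local bi-Lipschitz inequality $\|X\| \leq C \cdot d(\exp(X),e)$ on a neighborhood of $0$ in $\LA$, then globalizing by a partition argument to obtain $d_G \leq C \cdot d$, and finally upgrading to equality in the SIN case via Larotonda's results.

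For the easy direction $d \leq d_G$, I would take any factorization $g = \exp(X_1) \cdots \exp(X_n)$ with $\sum \|X_i\| < \bel{G}(g) + \varepsilon$ and concatenate the one-parameter pieces $\gamma_i(s) := g_{i-1}\exp(sX_i)$ where $g_{i-1} := \exp(X_1) \cdots \exp(X_{i-1})$. The defining property $\frac{d}{ds}\exp(sX_i) = DL_{\exp(sX_i)}(X_i)$ gives $DL_{\gamma_i(s)^{-1}}\dot\gamma_i(s) = X_i$ identically, so each segment has Finsler length $\|X_i\|$ and the concatenation has length $\sum \|X_i\|$. Taking infima yields $d(e,g) \leq \bel{G}(g)$.

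For the local inequality, I would fix a bounded open $V \subseteq \LA$ containing $0$ such that $\exp: V \to U \subseteq G$ is a diffeomorphism. Writing a curve $\gamma$ in $U$ as $\exp \circ \tilde\gamma$, the left logarithmic derivative $d\ell_Y := \sum_{k \geq 0}\tfrac{(-1)^k}{(k+1)!}\mathrm{ad}(Y)^k$ satisfies $DL_{\gamma^{-1}}\dot\gamma = d\ell_{\tilde\gamma}(\dot{\tilde\gamma})$. Since $d\ell_0 = \mathrm{id}$, shrinking $V$ ensures $\|d\ell_Y - \mathrm{id}\|_{\mathrm{op}} \leq 1/2$ on $V$, whence $L(\gamma) \geq \tfrac{1}{2}\int \|\dot{\tilde\gamma}\|\,dt \geq \tfrac{1}{2}\|\tilde\gamma(1)\|$, provided $\gamma$ stays in $U$. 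This confinement follows from Proposition~\ref{prop:Finslercompatible} (compatibility of $d$): for $X$ sufficiently small, any nearly-optimal Finsler path from $e$ to $\exp(X)$ must remain in $U$. Thus $\|X\| \leq C \cdot d(\exp(X),e)$ on $V$ with $C = 2$. Globalization is routine: for rectifiable $\gamma:[0,1] \to G$ from $e$ to $g$ with $L(\gamma) < d(e,g) + \varepsilon$, partition $[0,1]$ finely enough that each $\gamma(t_{i-1})^{-1}\gamma(t_i)$ lies in $U$; set $X_i := \log(\gamma(t_{i-1})^{-1}\gamma(t_i))$ and apply left-invariance of $d$ together with the local inequality, summing the estimates $\|X_i\| \leq C \cdot d(\gamma(t_{i-1}),\gamma(t_i)) \leq C \cdot L(\gamma|_{[t_{i-1},t_i]})$. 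This yields $\bel{G}(g) \leq C \cdot d(e,g)$ after sending $\varepsilon \to 0$, completing part (2).

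For part (1) in the SIN case, I would invoke Larotonda's work \cite{Lar19}, which sharpens Ringrose's \cite{Ri92} computation of Finsler length for unitary groups. Bi-invariance upgrades the partition argument above by giving the sharp constant $C = 1$: any rectifiable path is approximable by a piecewise exponential path $\exp(X_1) \cdots \exp(X_n)$ with $\sum \|X_i\|$ arbitrarily close to $L(\gamma)$, reflecting the fact that along a one-parameter subgroup in a bi-invariant Finsler geometry there is no metric distortion between $\|X\|$ and the actual Finsler length. Taking infima yields $\bel{G}(g) \leq d(e,g)$, and combined with the easy direction, $d_G = d$. The main obstacle throughout is the confinement step in the local bi-Lipschitz argument, ensuring that short Finsler paths cannot escape the exponential chart before the Maurer-Cartan estimate applies; for the SIN refinement, the work is done by \cite{Lar19} and my task is to apply it carefully rather than reprove it from scratch.
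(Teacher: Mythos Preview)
Your proof is correct. For the inequality $d \leq d_G$ and for the SIN case (part~1), your argument coincides with the paper's: both concatenate one-parameter segments for the easy direction, and both invoke Larotonda's result \cite{Lar19} that short one-parameter subgroups are Finsler geodesics in the bi-invariant setting, so that a partition of a nearly-optimal curve gives $\sum\|X_i\| \leq L(\gamma)$ with no loss.

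For the general bi-Lipschitz equivalence (part~2), however, you take a genuinely different route. The paper does not argue analytically at all: it observes that $\bel{G}$ has already been shown to be both maximal and minimal (Theorem~\ref{thm:elismaxmin}), notes that $d \leq d_G$ forces $d$ to be minimal as well, and then appeals to \cite[Lemma~13]{Ro18-2}, which says that any two metrics that are simultaneously maximal and minimal are automatically bi-Lipschitz. Your argument instead works directly with the differential of $\exp$: the logarithmic-derivative formula $DL_{\exp(Y)^{-1}}D\exp_Y = \sum_{k\geq 0}\frac{(-1)^k}{(k+1)!}\mathrm{ad}(Y)^k$ gives a local comparison $\|X\| \leq 2\,d(e,\exp(X))$ near the identity, and a partition argument globalizes this to $d_G \leq 2d$. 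Your approach is more elementary and self-contained---it does not rely on Rosendal's abstract framework and produces an explicit Lipschitz constant---whereas the paper's approach is shorter given that Theorem~\ref{thm:elismaxmin} is already in hand. The confinement step you flag as the main obstacle is indeed the only delicate point, and your handling of it (using compatibility of $d$ to trap short paths in the exponential chart) is sound.
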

\begin{proof}
The proof that $d\leq d_G$ is completely analogous to the corresponding proof in Fact~\ref{fact:belandRiemann} or to the corresponding proof of the Ringrose' result in \cite[Proposition 2.9]{Ri92}.

Assume first that $d$ is bi-invariant. It suffices to check that for any $g\in G$, $\bel{G}(g)\leq d(g,e)$. Fix some $\varepsilon>0$ and let $\gamma:[0,1]\rightarrow G$ be a rectifiable curve satisfying $\gamma(0)=e$, $\gamma(1)=g$, and $L(\gamma)\leq d(g,e)+\varepsilon$. Let $R>0$ be such that $\exp: U\rightarrow V$ is a local diffeomorphism, where $U$ is the open ball of radius $R$ around $0$ in $\LA$. We can find a partition $a_0=0<a_1<\ldots<a_n=1$ such that for each $0\leq i<n$, $g_i:=\gamma(a_i)^{-1}\gamma(a_{i+1})\in V$. For each $0\leq i<n$, let $X_i\in\LA$ be such that $g_i=\exp(X_i)$, and let $\gamma_i:[0,1]\rightarrow G$ be the path $\gamma_i(t):=g_{i-1}\cdot \exp(tX_i)$, where we declare $g_{-1}=e$. By \cite[Theorem 13(2)]{Lar19}, each $\gamma_i$ is the shortest path between $g_i$ and $g_{i+1}$. Moreover, it is clear that $L(\gamma_i)=\|X_i\|$, for each $0\leq i<n$. It follows that $$\bel{G}(g)\leq \sum_{i=0}^{n-1} \|X_i\|=\sum_{i=0}^{n-1} L(\gamma_i)\leq L(\gamma)<d(g,e)+\varepsilon,$$ which finishes the proof that $d=d_G$ if $d_G$ is bi-invariant, since $\varepsilon$ was arbitrary.\medskip

Now we treat the general case. Since by Theorem~\ref{thm:elismaxmin}, $\bel{G}$ is both maximal and minimal, and we have $d\leq d_G$, it follows from the definition of minimality that $d$ is also minimal. Therefore both $d$ and $d_G$ are simultaneously maximal and minimal and so by \cite[Lemma 13]{Ro18-2} they are bi-Lipschitz equivalent.
\end{proof}

\begin{corollary}\label{cor:belcompatible}
For any connected Banach-Lie group, the pseudometric $d_G$ induced from $\bel{G}$ is a compatible metric.
\end{corollary}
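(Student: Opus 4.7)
The plan is to bootstrap from the Finsler distance $d$, whose compatibility is already established in Proposition~\ref{prop:Finslercompatible}, via the inequality $d \leq d_G$ which constitutes the first half of Proposition~\ref{prop:Finslercomparedtobel}. First I would verify that this inequality is genuinely available without circular reasoning: the opening paragraph of the proof of Proposition~\ref{prop:Finslercomparedtobel} establishes $d \leq d_G$ by a direct path-concatenation argument (lifting a decomposition $g = \exp(X_1)\cdots\exp(X_n)$ to the concatenation of the smooth pieces $t \mapsto \bigl(\prod_{j<i}\exp(X_j)\bigr)\exp(tX_i)$ and noting that each piece has Finsler length exactly $\|X_i\|$), and this argument uses neither compatibility of $d_G$ nor the maximality/minimality statements that in turn depend on this corollary.

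With that ingredient in place, the argument has two steps. First, since $G$ is connected, every element is a finite product of exponentials, so $\bel{G}$ takes only finite values, and by Proposition~\ref{prop:bel}(i) it is a continuous pseudo-length function; hence $d_G$ is a continuous left-invariant pseudometric, and in particular the $d_G$-topology is coarser than the given topology on $G$. Combining $d \leq d_G$ with the fact that $d$ is a compatible and therefore point-separating metric immediately forces $d_G(g,h) = 0 \Rightarrow g = h$, so $d_G$ is an honest metric rather than merely a pseudometric.

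Second, for the reverse inclusion of topologies, if $d_G(g_n,e) \to 0$ then $d(g_n,e) \to 0$ by $d \leq d_G$, and hence $g_n \to e$ in the original topology since $d$ is compatible; by left-invariance this upgrades to arbitrary sequences, so the two topologies agree and $d_G$ is compatible. The only delicate point, as emphasized above, is to invoke solely the inequality $d \leq d_G$ from Proposition~\ref{prop:Finslercomparedtobel} and not its bi-Lipschitz strengthening, which rests on Theorem~\ref{thm:elismaxmin} and would render the argument circular; beyond that bookkeeping, no further work is required.
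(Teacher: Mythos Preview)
Your proof is correct and follows essentially the same route as the paper: use Proposition~\ref{prop:Finslercompatible} to get a compatible Finsler metric $d$, invoke only the inequality $d \leq d_G$ from Proposition~\ref{prop:Finslercomparedtobel} to deduce that $d_G$ separates points and that its topology is at least as fine as the original one, and use the continuity of $\bel{G}$ from Proposition~\ref{prop:bel}(i) for the reverse inclusion. Your explicit attention to the circularity issue---that one must use only the elementary inequality $d \leq d_G$ and not the bi-Lipschitz conclusion, which passes through Theorem~\ref{thm:elismaxmin} and hence through this very corollary---is a worthwhile clarification that the paper leaves implicit.
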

\begin{proof}
By Proposition~\ref{prop:Finslercompatible}, $d$ is a compatible metric, where $d$ is the Finsler metric. Since by Proposition~\ref{prop:Finslercomparedtobel}, we have $d\leq d_G$, we immediately get that $d_G$ is a metric as well. The same inequality $d\leq d_G$ also gives that $d_G$ induces a topology at least as fine as the topology induced by $d$ which is the original topology of $G$. Since $d_G$ is by Proposition~\ref{prop:bel} continuous, we get that it must be compatible.
\end{proof}
\begin{remark}
It follows from Proposition~\ref{prop:Finslercomparedtobel}, or more directly from Lemma~\ref{lem:maximallengthmetric}, Proposition~\ref{prop:Finslercompatible}, and the realization that the Finsler distance is a length metric, that the Finsler distance is also a maximal distance.

As also showed in Proposition~\ref{prop:Finslercomparedtobel}, the Finsler distance and the metric $d_G$ induced from the exponential length in some cases coincide. We are not aware of any example where they differ. In any case, it seems that in practice it is more convenient to work with $\bel{G}$ as the computations involving this distance are often easier, and in some particular cases, $\bel{G}$ can be computed explicitly. This in particular holds true for unitary groups of $C^*$-algebras. See Section~\ref{section:abelian} in this article and references from Subsection~\ref{subsection:C-star-el-examples} on the $C^*$-exponential length.
\end{remark}

We present some basic observations concerning the exponential length.

Let $\norm$ and $\norm'$ be two equivalent norms on the Lie algebra $\LA$, and let $\bel{G}$, resp. $\bel{G}'$ be the corresponding exponential lengths. Since the equivalent norms are automatically bi-Lipschitz equivalent, the following lemma is easily verified.
\begin{lemma}
Let $\norm$ and $\norm'$ be two equivalent norms on the Lie algebra $\LA$, and let $\bel{G}$, resp. $\bel{G}'$ be the corresponding exponential lengths. Then $\bel{G}$ and $\bel{G}'$, and the distances they define, are bi-Lipschitz equivalent.
\end{lemma}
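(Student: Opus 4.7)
The plan is to unwind the definitions and exploit the fact that equivalent norms on a Banach space (here $\LA$) are automatically bi-Lipschitz equivalent: there exist constants $0<C_1\le C_2$ such that $C_1\|X\|\le \|X\|'\le C_2\|X\|$ for every $X\in\LA$. The exponential length is defined as an infimum of sums of norms over decompositions $g=\exp(X_1)\cdots\exp(X_n)$, and these decompositions do not involve the norm at all; only the quantity being summed does. So the two infimums are taken over exactly the same set of admissible tuples $(X_1,\dots,X_n)$, and differ only by rescaling each summand.

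Concretely, I would fix $g\in G$ and an arbitrary decomposition $g=\exp(X_1)\cdots\exp(X_n)$. From the pointwise inequalities between the norms I immediately obtain
\[
\sum_{i=1}^n \|X_i\|'\le C_2\sum_{i=1}^n \|X_i\|,\qquad \sum_{i=1}^n \|X_i\|\le C_1^{-1}\sum_{i=1}^n \|X_i\|'.
\]
Taking the infimum over all such decompositions on both sides yields
\[
\bel{G}'(g)\le C_2\,\bel{G}(g)\quad\text{and}\quad \bel{G}(g)\le C_1^{-1}\,\bel{G}'(g),
\]
so $\bel{G}$ and $\bel{G}'$ are bi-Lipschitz equivalent as pseudo-length functions on $G$.

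Passing from length functions to the associated left-invariant distances $d_G$ and $d_G'$ is automatic: since $d_G(g,h)=\bel{G}(g^{-1}h)$ and $d_G'(g,h)=\bel{G}'(g^{-1}h)$, the same constants $C_1,C_2$ give $C_1\,d_G\le d_G'\le C_2\,d_G$, which is the desired bi-Lipschitz equivalence of the induced metrics.

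There is essentially no obstacle here; the argument is a one-line consequence of the definition of $\bel{G}$ being linear in the norm applied term by term. The only thing worth flagging is that one should notice that the set of admissible decompositions of $g$ depends only on $G$ and not on the norm chosen on $\LA$, which is what makes the passage between the two infima completely mechanical.
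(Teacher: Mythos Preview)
Your proof is correct and matches the paper's approach exactly: the paper simply remarks that equivalent norms are automatically bi-Lipschitz equivalent and declares the lemma ``easily verified,'' which is precisely the termwise estimate over the common set of decompositions that you wrote out.
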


\begin{proposition}
Let $G$ and $H$ be connected Banach-Lie group and let $\Phi:G\rightarrow H$ be a topological isomorphism. Then it is bi-Lipschitz with respect to exponential lengths. In particular, it is a quasi-isometric equivalence.
\end{proposition}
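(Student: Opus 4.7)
The plan is to pull back the exponential length from $H$ to $G$ via $\Phi$, show that this pullback is simultaneously a maximal and a minimal compatible left-invariant metric on $G$, and then invoke the fact (cf.\ Theorem~\ref{thm:elismaxmin} together with \cite[Lemma 13]{Ro18-2}) that two compatible left-invariant metrics on $G$ which are both maximal and minimal must be bi-Lipschitz equivalent. This will produce constants $c,C>0$ such that $c\cdot d_G(g_1,g_2)\le d_H(\Phi(g_1),\Phi(g_2))\le C\cdot d_G(g_1,g_2)$ for all $g_1,g_2\in G$, which is exactly the bi-Lipschitz assertion.

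Concretely, I would define $\tilde d(g_1,g_2):=\bel{H}(\Phi(g_1)^{-1}\Phi(g_2))$ on $G$. Since $\Phi$ is a group homomorphism, $\tilde d$ is left-invariant, and since $\Phi$ is a homeomorphism while $\bel{H}$ induces a compatible metric on $H$ by Corollary~\ref{cor:belcompatible}, $\tilde d$ is a compatible (and genuinely separating, as $\Phi$ is injective) metric on $G$. To verify maximality, take any continuous left-invariant pseudometric $d'$ on $G$; the push-forward $d'\circ(\Phi^{-1}\times\Phi^{-1})$ is a continuous left-invariant pseudometric on $H$, so by maximality of $d_H$ there exist $K,L>0$ with $d'(\Phi^{-1}(h_1),\Phi^{-1}(h_2))\le K\cdot d_H(h_1,h_2)+L$. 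Substituting $h_i=\Phi(g_i)$ yields $d'\le K\tilde d+L$, so $\tilde d$ is maximal on $G$. For minimality, given any compatible left-invariant metric $\partial$ on $G$, the push-forward $\Phi_*\partial$ is a compatible left-invariant metric on $H$; by minimality of $d_H$ the identity $(H,\Phi_*\partial)\to(H,d_H)$ is Lipschitz on some neighborhood $V$ of $1_H$, and pulling back by the homeomorphism $\Phi$, the identity $(G,\partial)\to(G,\tilde d)$ is Lipschitz on the neighborhood $\Phi^{-1}(V)$ of $1_G$, giving minimality of $\tilde d$.

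Having shown that both $d_G$ and $\tilde d$ are compatible left-invariant metrics on $G$ which are simultaneously maximal and minimal, \cite[Lemma 13]{Ro18-2} supplies constants $c,C>0$ with $c\cdot d_G\le\tilde d\le C\cdot d_G$, which is the desired bi-Lipschitz bound for $\Phi$. The quasi-isometric equivalence then follows at once: a bi-Lipschitz bijection is automatically a quasi-isometric embedding (with zero additive constant), and since $\Phi$ is onto, its image trivially forms a net in $H$. The only step that requires any real care is the preservation of minimality under pullback, because minimality is a local condition rather than a condition phrased in the partial order $\ll$; however, since $\Phi$ is a topological isomorphism it carries open neighborhoods of the identity to open neighborhoods of the identity and preserves the group operations verbatim, so the transfer of the Lipschitz-near-identity condition is automatic. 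No other obstacle is expected.
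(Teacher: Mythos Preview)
Your argument is correct, but it takes a genuinely different route from the paper. The paper proceeds directly from the definition of the exponential length: a topological isomorphism $\Phi$ of Banach-Lie groups induces a continuous Lie algebra isomorphism $\phi:\mathfrak{g}\to\mathfrak{h}$ (via \cite[Theorem IV.2 (b)]{Neeb04}), which as a bounded linear bijection between Banach spaces is bi-Lipschitz with some constant $K$. Using $\Phi(\exp(x))=\exp(\phi(x))$, any decomposition $g=\exp(x_1)\cdots\exp(x_n)$ yields $\Phi(g)=\exp(\phi(x_1))\cdots\exp(\phi(x_n))$ with $\sum\|\phi(x_i)\|_{\mathfrak h}\le K\sum\|x_i\|_{\mathfrak g}$, whence $\bel{H}(\Phi(g))\le K\,\bel{G}(g)$; the reverse inequality is symmetric.

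Your approach instead packages everything through the abstract machinery of Theorem~\ref{thm:elismaxmin} and \cite[Lemma 13]{Ro18-2}: pulling back $d_H$ along $\Phi$ gives a second compatible left-invariant metric on $G$ which is simultaneously maximal and minimal, and any two such metrics are bi-Lipschitz equivalent. This is clean and conceptual, and it makes transparent that the result is a formal consequence of $\bel{G}$ determining a global Lipschitz structure. The paper's route, by contrast, is more elementary---it bypasses minimality entirely---and has the advantage of exhibiting an explicit Lipschitz constant, namely the operator norm bound for the induced Lie algebra isomorphism.
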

\begin{proof}
Let $\mathfrak{g}$ and $\mathfrak{h}$ be the Banach-Lie algebras of $G$ and $H$ respectively. As in the finite-dimensional case, see \cite[Theorem IV.2 (b)]{Neeb04}, $\Phi$ induces an isomorphism $\phi:\mathfrak{g}\rightarrow\mathfrak{h}$ which is in particular a linear isomorphism, so bi-Lipschitz, between $\mathfrak{g}$ and $\mathfrak{h}$ as Banach spaces. Let $K\geq 1$ bound the norm of $\phi$ and $\phi^{-1}$. Then for every $g\in G$ and every $x_1,\ldots,x_n\in \mathfrak{g}$ such that $g=\exp(x_1)\cdot\ldots\cdot\exp(x_n)$ we have
$$\bel{H}(\Phi(g))= \bel{H}(\Phi(\exp(x_1))\cdot\ldots\cdot\Phi(\exp(x_n)))=\bel{H}(\exp(\phi(x_1))\cdot\ldots\cdot \exp(\phi(x_n)))\leq$$ $$ \sum_{i=1}^n \|\phi(x_i)\|_{\mathfrak{h}}\leq K\sum_{i=1}^n \|x_i\|_{\mathfrak{g}}.$$ 
It follows that $\bel{H}(\Phi(g))\leq K\bel{G}(g)$. A symmetric argument shows that $\bel{G}(g)\leq K\bel{H}(\Phi(g))$, which finishes the proof.
\end{proof}

Finally, we discuss the situation when a given Banach-Lie group $G$ is not necessarily connected. Let us denote in that case the connected component of the identity by $G_0$ and by $\Gamma_G$ the discrete quotient $G/G_0$. In some cases the large scale geometry of $G$ reduces to studying separately the large scale geometry of $G_0$ and the large scale geometry of $\Gamma_G$. This happens e.g. when $G$ is abelian and we comment on it in Section~\ref{section:abelian}.

In general, let $\bel{G}$ again be the exponential length on $G_0$ and let $\ell$ be an arbitrary compatible (i.e. discrete) length function on the discrete group $\Gamma_G$. Let $R'\subseteq G$ be a set of representatives for the left cosets $G/G_0$, where $G_0$ is represented by $1_G$, and set $R=R'\cup (R')^{-1}$. It follows that $R$ intersects each left coset in at most two elements. We construct a compatible length function $L$ on $G$ made from $\bel{G}$ and $\ell$ as follows. In the following, we denote by $P:G\rightarrow \Gamma_G$ the quotient map.

For any $g\in G$ we set \[L(g):=\inf\left \{\sum_{i=1}^n \Big (\ell(P(r_i))+\bel{G}(g_i)\Big )\,\middle|\, g=\prod_{i=1}^n r_ig_i,\; (r_i)_{i=1}^n\subseteq R,\; (g_i)_{i=1}^n\subseteq G_0\right \}.\]

We leave to the reader the straightforward verification that $L$ is a compatible length function.

\begin{corollary}\label{cor:G0coarseinG}
Suppose that $G/G_0$ is at most countable. Then the inclusion $G_0\subseteq G$ is a coarse embedding.

In particular, if $G$ is separable, then the inclusion $G_0\subseteq G$ is a coarse embedding.
\end{corollary}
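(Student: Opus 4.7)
The inclusion is \emph{bornologous} for trivial reasons: for any $g\in G_0$, taking the decomposition with $n=1$, $r_1=1_G$, $g_1=g$ gives $L(g)\leq \ell(1_{\Gamma_G})+\bel{G}(g)=\bel{G}(g)$. The real content is \emph{expansiveness}, i.e.\ showing that for each $M$ there exists $S=S(M)$ such that $g\in G_0$ with $L(g)\leq M$ forces $\bel{G}(g)\leq S$. Compatibility of $\ell$ with the discrete topology of $\Gamma_G$ already gives a uniform positive lower bound $\delta$ on $\ell$ off the identity; since $\Gamma_G$ is countable one can moreover take $\ell$ to be proper, so that $F_M:=\{\gamma\in\Gamma_G:\ell(\gamma)\leq M\}$ is finite for every $M$ — this is the feature that will make the argument close up.

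Fix such a $g$ and an almost optimal decomposition $g=r_1g_1\cdots r_ng_n$ of total cost at most $M+1$. At most $(M+1)/\delta$ of the $r_i$'s are nontrivial, and by properness they all come from the finite subset $R_M:=\{r\in R:P(r)\in F_M\}\subseteq R$; absorbing trivial $r_i$'s into adjacent $g_i$'s one may also assume $n\leq N(M)$ for some constant. The pivotal step is the standard straightening via normality of $G_0$ in $G$: setting $s_i:=r_{i+1}\cdots r_n$ and $\tilde g_i:=s_i^{-1}g_is_i\in G_0$,
\[
g=r_1g_1\cdots r_ng_n=(r_1\cdots r_n)\cdot \tilde g_1\tilde g_2\cdots \tilde g_n=:u\cdot \tilde g_1\cdots\tilde g_n,
\]
so that $u\in G_0$ (since $g$ and all $\tilde g_i$ are).

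Bounding $\bel{G}(g)\leq \bel{G}(u)+\sum_i \bel{G}(\tilde g_i)$ now reduces to bounding the two pieces. The element $u$ is a word of length $\leq N(M)$ in the finite set $R_M\cup\{1_G\}$, hence lies in a finite subset of $G$; its intersection with $G_0$ is in particular $\bel{G}$-bounded by a constant $B_M$. Each $s_i$ also ranges over a finite subset of $G$, so by continuity of $\mathrm{Ad}\colon G\to \mathrm{End}(\LA)$ we have $\|\mathrm{Ad}(s_i^{-1})\|\leq C_M$; combined with the identity $s\exp(X)s^{-1}=\exp(\mathrm{Ad}(s)X)$ applied termwise in the definition of $\bel{G}$, this yields $\bel{G}(\tilde g_i)\leq C_M\bel{G}(g_i)$. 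Summing gives $\bel{G}(g)\leq B_M+C_M(M+1)=:f(M)$, and then $\rho_1(t):=\sup\{M:f(M)\leq t\}\to\infty$ is the desired expansive modulus. The main obstacle is precisely this straightening and the bookkeeping around it: the finiteness of $R_M$ — made available by the properness of $\ell$, which countability of $\Gamma_G$ permits — is exactly what keeps both $\bel{G}(u)$ and $\|\mathrm{Ad}(s_i^{-1})\|$ uniformly bounded in terms of $M$ alone.
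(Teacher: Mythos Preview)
Your proof is correct and follows the same skeleton as the paper's: both use properness of $\ell$ on $\Gamma_G$ to reduce to finitely many patterns of coset representatives, then straighten a product $r_1g_1\cdots r_ng_n$ into $(r_1\cdots r_n)\cdot(\text{product of conjugates of the }g_i)$ and bound each piece. The difference lies in how the conjugation is controlled. The paper argues purely coarse-geometrically: conjugation by any $s\in G$ restricts to a topological automorphism of $G_0$, and such automorphisms send coarsely bounded sets to coarsely bounded sets, so each conjugate of the ball $B=\{h\in G_0:\bel{G}(h)<M\}$ remains bounded in $G_0$; it then concludes by contradiction that an $L$-bounded sequence in $G_0$ sits in a finite union of bounded sets. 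You instead use the Lie-theoretic identity $s\exp(X)s^{-1}=\exp(\mathrm{Ad}(s)X)$ to extract an explicit Lipschitz estimate $\bel{G}(s^{-1}hs)\leq\|\mathrm{Ad}(s^{-1})\|\,\bel{G}(h)$ and take the maximum of finitely many operator norms. Your route yields an explicit expansive modulus $f(M)$, while the paper's route is softer and would transfer unchanged to any topological group with $G_0$ open, normal, and carrying a maximal metric, without mentioning~$\mathrm{Ad}$.

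One small correction of order: the uniform lower bound $\delta>0$ on $\ell$ off the identity does not follow from compatibility with the discrete topology alone; it follows once $\ell$ is chosen \emph{proper} (since then $\{\gamma:\ell(\gamma)\leq 1\}$ is finite and the minimum over its nontrivial elements is positive). State the properness assumption before invoking~$\delta$.
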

\begin{proof}
We suppose that $G/G_0$ is infinite as otherwise the statement is trivial. Let $\ell$ be a proper length function on $\Gamma_G$ (which exists as $\Gamma_G$ is countable; see e.g. \cite[Proposition 1.2.2.]{NoYu12}) and let $L$ be constructed from $\bel{G}$ and $\ell$ as above. Pick a sequence $(g_n)_{n=1}^{\infty}\subseteq G_0$ such that $\bel{G}(g_n)\to\infty$.

We claim that $L(g_n)\to\infty$. If not, by passing to a subsequence if necessary, we may assume that there exists $M$ such that for all $n$, $L(g_n)< M$. It follows that for each $n$ there are elements $(r_i)_{i=1}^m\subseteq R$ and $(h_i)_{i=1}^m\subset G_0$ such that $g_n=\prod_{i=1}^m r_ih_i$ and $\sum_{i=1}^m \{\ell(P(r_i))+\bel{G}(h_i)\}<M$. However, since $\ell$ is proper, there are only finitely many finite sequences $\vec{r}=(r_1,\ldots,r_k)\subseteq R$ such that $\sum_{i=1}^k \ell(P(r_i))<M$. Let $\mathcal{R}$ be the finite set of such sequences. For each $\vec{r}=(r_1,\ldots,r_k)\in\mathcal{R}$, let $B_{\vec{r}}:=\prod_{j=1}^k r_jB$, where $B=\{g\in G_0\mid \bel{G}(g)<M\}$. Each $B_{\vec{r}}$ can be also written as $\big(\prod_{j=1}^k r_j\big)\cdot\big(\prod_{j=1}^k B_j\big)$, where each $B_j$, $j\leq k$, is a conjugate of $B$. Since $G_0$ is normal, for every $j\leq k$, $B_j\subseteq G_0$, and moreover since it is an image of a bounded set in $G_0$ via a topological automorphism of $G_0$ - the conjugation by an element of $G$, it is also bounded. Finally, notice that for any $\vec{r}\in\mathcal{R}$, $B_{\vec{r}}\cap G_0\neq\emptyset$ if and only if $B_{\vec{r}}\subseteq G_0$ if and only if $\prod_{j=1}^k r_k\in G_0$. It follows that for every $\vec{r}\in\mathcal{R}$, if $B_{\vec{r}}\cap G_0\neq \emptyset$, then $B_{\vec{r}}$ is a bounded subset of $G_0$. Consequently, $(g_n)_{n=1}^\infty$ is covered by finitely many bounded subsets of $G_0$, which is a contradiction.
\end{proof}

\begin{corollary}\label{cor:coarselyproperLFonG}
$G$ admits a coarsely proper compatible length function if and only if $G/G_0$ is at most countable.
\end{corollary}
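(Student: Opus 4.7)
For the $(\Rightarrow)$ direction, I would argue as follows. Suppose $L$ is a coarsely proper compatible length function on $G$. Then $G = \bigcup_{n\in\N}\{g : L(g)<n\}$ exhibits $G$ as a countable union of $L$-bounded, hence coarsely bounded, sets. Since the quotient map $P\colon G\to\Gamma_G$ is a continuous homomorphism and continuous homomorphic images of coarsely bounded sets are coarsely bounded (a fact recalled in the preliminaries), each $P(\{g:L(g)<n\})$ is coarsely bounded in the discrete group $\Gamma_G$; but in a discrete group, coarsely bounded sets are finite, as taking $\{e\}$ as the open neighbourhood of the identity in the defining property of coarse boundedness forces this. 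Therefore $\Gamma_G$ is a countable union of finite sets, hence at most countable.

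For the $(\Leftarrow)$ direction, assume $\Gamma_G$ is at most countable. I would first fix a proper length function $\ell$ on $\Gamma_G$ with $\ell(x)\ge 1$ for every $x\ne e$, which always exists on countable discrete groups (if a given proper $\ell$ fails this, replace it by $\max(\ell,1)$ off the identity). Then I would define $L$ from $\bel{G}$, $\ell$ and a representative set $R$ exactly as in the paragraph preceding Corollary~\ref{cor:G0coarseinG}; that paragraph already observes that $L$ is a compatible length function. To verify coarse properness of $L$, it suffices to show that $L$-bounded and coarsely bounded subsets of $G$ coincide. One direction is free: since $L$ is itself a continuous left-invariant length function, every coarsely bounded set is automatically $L$-bounded.

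The main obstacle will be the converse, ``$L$-bounded $\Rightarrow$ coarsely bounded''. Given $g\in G$ with $L(g)\le M$, I would take a near-infimal decomposition $g=r_1 g_1\cdots r_n g_n$ with $\sum_i\bigl(\ell(P(r_i))+\bel{G}(g_i)\bigr)\le M+1$ and then normalise it: whenever $r_{i+1}=1_G$ (the representative of the trivial coset), merge $g_i g_{i+1}$ into a single $G_0$-factor, using subadditivity of $\bel{G}$. After normalisation, the decomposition alternates between nontrivial representatives and $G_0$-factors. The condition $\ell\ge 1$ off $e$ then bounds the number of nontrivial representatives by $M+1$, so the total number of factors is at most $2M+3$. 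Each factor lies either in the finite set $\mathcal{R}_{M+1}:=\{r\in R : 1\le\ell(P(r))\le M+1\}$, finite because $\ell$ is proper on $\Gamma_G$ and $R$ meets each coset in at most two elements, or in the $\bel{G}$-ball $B_{M+1}:=\{h\in G_0 : \bel{G}(h)\le M+1\}$, which is coarsely bounded in $G_0$ by the maximality of $\bel{G}$ (Proposition~\ref{prop:bel}), and hence coarsely bounded in $G$ by Corollary~\ref{cor:G0coarseinG}. Thus $\{g : L(g)\le M\}$ sits inside a finite union of products of at most $2M+3$ coarsely bounded sets, and is therefore coarsely bounded, concluding the verification.
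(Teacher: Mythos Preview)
Your proof is correct and follows essentially the same strategy as the paper's: both directions hinge on the combinatorics of the decomposition defining $L$ together with properness of $\ell$ on $\Gamma_G$, and the $(\Leftarrow)$ argument is the same finite-product-of-bounded-sets idea that already appears in the proof of Corollary~\ref{cor:G0coarseinG}, which the paper simply invokes rather than re-running. Two small remarks: first, your appeal to Corollary~\ref{cor:G0coarseinG} for ``bounded in $G_0$ $\Rightarrow$ bounded in $G$'' is unnecessary, since any continuous left-invariant pseudometric on $G$ restricts to one on $G_0$; second, your $(\Rightarrow)$ argument uses Rosendal's $(FV)^n$ characterization of coarse boundedness rather than the pseudometric definition adopted in the paper, so a pointer to \cite[Proposition~2.7]{Ro18} would be appropriate.
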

\begin{proof}
Suppose that $\Gamma_G=G/G_0$ is at most countable. We suppose that it is infinite, otherwise the statement is again trivial. $\Gamma_G$ then admits a proper length function $\ell$, as mentioned above. We show that the length function $L$, defined above, constructed from $\bel{G}$ and $\ell$ is coarsely proper. Let $(g_n)_{n=1}^\infty\subseteq G$ be a given sequence. We claim that $(g_n)_{n=1}^{\infty}$ is unbounded if and only if either the sequence $(P(g_n))_{n=1}^{\infty}$ is unbounded in $\Gamma_G$ or this sequence is bounded, thus finite, and without loss of generality we then have $(P(g_n))_{n=1}^{\infty}$ is constant, and $\bel{G}(g_1^{-1}g_n)\to\infty$. Indeed, if $(P(g_n))_{n=1}^{\infty}$ is unbounded in $\Gamma_G$, then $(g_n)_{n=1}^\infty$ cannot be bounded since otherwise $(P(g_n))_{n=1}^{\infty}$ is an image of a bounded set by a quotient map, thus bounded. So assume that $(P(g_n))_{n=1}^{\infty}$ is bounded, thus finite. Without loss of generality, we assume that $(P(g_n))_{n=1}^{\infty}$ is constant. Then if $\bel{G}(g_1^{-1}g_n)$ is bounded, then $(g_1^{-1}g_n)_{n=1}^\infty$ is bounded in $G_0$, thus also in $G$, and consequently also $(g_n)_{n=1}^\infty$ is bounded in $G$. If $\bel{G}(g_1^{-1}g_n)\to\infty$, then by Corollary~\ref{cor:G0coarseinG}, $L(g_1^{-1}g_n)\to\infty$, so also $L(g_n)\to\infty$, and $(g_n)_{n=1}^\infty$ is unbounded.

So suppose now that $(g_n)_{n=1}^\infty$ is unbounded. Either $(P(g_n))_{n=1}^{\infty}$ is infinite, or without loss of generality it is constant. In the latter case, by Corollary~\ref{cor:G0coarseinG}, $L(g_1^{-1}g_n)\to \infty$, thus also $L(g_n)\to\infty$. In the former case, if $(L(g_n))_{n=1}^\infty$ is bounded, then as in the proof of Corollary~\ref{cor:G0coarseinG} we get that $(g_n)_{n=1}^\infty$ is covered by $\bigcup_{i=1}^k g_i B_i$, where $g_i\in G$ and $B_i\subseteq G_0$ is bounded. However, $P(\bigcup_{i=1}^k g_i B_i)$ is bounded in $\Gamma_G$, a contradiction.

For the other direction, we must show that if $\Gamma_G$ is uncountable, then $G$ does not admit a coarsely proper compatible length function. It is straightforward to check that for a sequence $(g_n)_{n=1}^{\infty}\subseteq G$, if the set $\{P(g_n)\mid n\in\Nat\}$ is infinite, then $(g_n)_{n=1}^{\infty}$ is unbounded in $G$. It follows that if $L$ were a coarsely proper length function on $G$, then the quotient length function $\ell$ defined on the quotient would be a proper length function on $\Gamma_G$, which is impossible since $\Gamma_G$ is uncountable.
\end{proof}

\begin{corollary}
$G$ admits a maximal compatible length function, i.e. $G$ has a well-defined quasi-isometry type, if and only if $G/G_0$ is finitely generated.
\end{corollary}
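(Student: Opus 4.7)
The plan is to use Rosendal's characterization of groups admitting a maximal compatible metric, namely \cite[Theorem 2.53]{Ro18} as already invoked in the preceding proposition: a Baire topological group admits such a metric if and only if it is generated by a bounded open neighborhood of the identity. The forward direction will then be almost forced by Corollary~\ref{cor:coarselyproperLFonG}, while the converse will reduce to exhibiting a bounded open generating neighborhood in $G$ built from a finite generating set of $\Gamma_G := G/G_0$ and a bounded neighborhood of $1_G$ in $G_0$.

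For the forward direction, suppose $L$ is a maximal compatible length function on $G$. By Proposition~\ref{prop:maximalmetric}, $L$ is coarsely proper, so Corollary~\ref{cor:coarselyproperLFonG} forces $\Gamma_G$ to be at most countable. Large scale geodesicity of $L$ supplies a constant $K > 0$ for which $B := \{g \in G : L(g) \le K\}$ generates $G$. Because $P \colon G \to \Gamma_G$ is a continuous homomorphism and bounded sets are preserved under such maps, $P(B) \subseteq \Gamma_G$ is a bounded generating subset. Since the countable discrete group $\Gamma_G$ carries a proper compatible length function by \cite[Proposition 1.2.2]{NoYu12}, and that length function is in particular a continuous left-invariant pseudometric on $\Gamma_G$, every coarsely bounded subset of $\Gamma_G$ has to be finite. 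Thus $P(B)$ is finite and $\Gamma_G$ is finitely generated.

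For the converse, I would assume $\Gamma_G$ is generated by a finite symmetric set $S$ containing $1$, and lift it to a finite set $R \subseteq G$ with $1_G \in R$ and $P(R) = S$. Letting $U \subseteq G_0$ be the open $\bel{G_0}$-ball of radius $1$ around $1_G$, which is bounded in $G_0$, I would take $V := \bigcup_{r \in R} rU$ and verify that $V$ is a bounded open neighborhood of $1_G$ which generates $G$ as a group. Openness is immediate since $G_0$ is open in $G$; boundedness in $G$ follows because any continuous left-invariant pseudometric on $G$ restricts to one on $G_0$, so $U$ has finite diameter in it and $V$ is a finite union of left-translates of $U$; the generation property follows from $U$ generating the path-connected $G_0$ together with $P(R) = S$ generating $\Gamma_G$. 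With this bounded open generating neighborhood in hand, \cite[Theorem 2.53]{Ro18} applied to the Baire group $G$ would then produce the desired maximal compatible left-invariant metric.

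I expect the main technical annoyance to lie precisely in the verification that $V$ is coarsely bounded in $G$, since this requires carefully relating continuous left-invariant pseudometrics on $G$ with those on the open subgroup $G_0$; once that is accepted, the rest is essentially bookkeeping, combined with the maximality of $\bel{G_0}$ on $G_0$ from Theorem~\ref{thm:elismaxmin} and the two cited results of Rosendal.
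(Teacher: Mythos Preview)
Your argument is correct, and both directions differ from the paper's route in ways worth noting.

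For the forward implication, the paper passes to the quotient length function $\ell$ on $\Gamma_G$, shows directly from large scale geodesicity of $L$ that $(\Gamma_G,\ell)$ is coarsely connected, and then invokes \cite[Proposition 1.A.1 (fg)]{CdH16} to conclude finite generation. Your version is slightly more self-contained: you first extract countability of $\Gamma_G$ from Corollary~\ref{cor:coarselyproperLFonG}, then observe that the bounded generating ball $B$ projects to a bounded generating set of $\Gamma_G$, which is finite because a countable discrete group carries a proper length function. The two arguments are close cousins, but yours avoids the external citation at the cost of the extra step through Corollary~\ref{cor:coarselyproperLFonG}.

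For the converse, the approaches genuinely diverge. The paper constructs an explicit maximal length function $L$ on $G$ by the formula
\[
L(g)=\inf\Big\{\sum_{i=1}^n\big(\ell(P(s_i))+\bel{G}(g_i)\big)\;\Big|\; g=\prod_{i=1}^n s_ig_i,\ s_i\in S,\ g_i\in G_0\Big\},
\]
and checks by hand that it is coarsely proper (reusing the argument of Corollary~\ref{cor:coarselyproperLFonG}) and large scale geodesic with constant $K=2$. Your approach is non-constructive but shorter: you exhibit a bounded open generating neighborhood $V=\bigcup_{r\in R} rU$ and appeal to \cite[Theorem 2.53]{Ro18}. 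The key step you flagged---that $U$ is coarsely bounded in $G$ because every continuous left-invariant pseudometric on $G$ restricts to one on $G_0$, where $\bel{G_0}$ is maximal---is indeed the only point requiring care, and it goes through exactly as you say. What the paper's approach buys is an explicit formula for the maximal length, consistent with the explicit length functions built throughout Section~\ref{section:explength}; what yours buys is brevity and a cleaner reduction to Rosendal's general machinery.
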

\begin{proof}
Suppose first that $G/G_0=\Gamma_G$ is finitely generated. Let $S\subseteq G$ be a finite symmetric set containing $1_G$ such that $P[S]\subseteq \Gamma_G$ generates $\Gamma_G$. We slightly modify the definition of the length function $L$ as follows. For any $g\in G$ we set \[L(g):=\inf\left \{\sum_{i=1}^n\Big ( \ell(P(s_i))+\bel{G}(g_i)\Big )\,\middle|\, g=\prod_{i=1}^n s_ig_i,\; (s_i)_{i\leq n}\subseteq S,\; (g_i)_{i\leq n}\subseteq G_0\right \},\] where $\ell:S\rightarrow\{0,1\}$ is defined so that $\ell(s)=0$ if and only if $s=1_G$.

The proof that $L$ is a coarsely proper compatible length function is as in the proof of Corollary~\ref{cor:coarselyproperLFonG}. So it is enough to just prove that $L$ is large scale geodesic (recall Proposition~\ref{prop:maximalmetric}). However this easily follows (with constant $K=2$) from the definition of $L$ and the fact that $\bel{G}$ is large scale geodesic with a constant arbitrarily close to $1$.\medskip

Conversely, assume that $G$ admits a maximal compatible length function $L$. Let $\ell$ be the quotient length function on $\Gamma_G$, which is compatible, thus discrete. We claim that $(\Gamma_G,\ell)$ is \emph{coarsely connected}, i.e. there exists $C>0$ such that for every $\gamma\in\Gamma_G$ there are $\gamma_1,\ldots,\gamma_n\in\Gamma_G$ such that $\gamma=\prod_{i=1}^n \gamma_i$ and $\ell(\gamma_i)<C$, for every $i\leq n$. We prove that. Pick $\gamma\in \Gamma_G$ and let $g\in G$ be such that $p(g)=\gamma$. Since $L$ is large scale geodesic with some constant $K>0$, there exist $g_1,\ldots,g_n\in G$ such that $g=\prod_{i=1}^n g_i$ and $L(g_i)<K$, for all $i\leq n$. It suffices to set $\gamma_i=P(g_i)$, for $i\leq n$, and we have obtained a sequence witnessing that $\ell$ is coarsely connected with constant $C=K$, since $\ell(\gamma_i)\leq L(g_i)<K$, for $i\leq n$. Now $\Gamma_G$ must be finitely generated by \cite[Proposition 1.A.1 (fg)]{CdH16}.
\end{proof}

\subsection{Examples: The $C^*$-exponential length and rank}\label{subsection:C-star-el-examples}
Here we briefly review our main example of an exponential length function on a Banach-Lie group that has inspired the general definition - the $C^*$-exponential length on connected components of unitary groups of $C^*$-algebras.

Let $A$ be a unital $C^*$-algebra, denote by $\U(A)$ its unitary group equipped with the norm topology, and by $\U_0(A)$ the connected component of the identity of $\U(A)$. $\U(A)$ is an algebraic subgroup of $\GL(A)$, therefore a Banach-Lie group (see \cite[Theorem 1]{HK77} and \cite[Proposition IV.9]{Neeb04}). A standard argument shows that the Banach-Lie algebra of $\U(A)$ is the set of skew-adjoint elements of $A$. Here we shall however follow the literature on $C^*$-algebras, that is also compatible with the practice in mathematical physics, and we identify the Lie algebra of $\U(A)$ with the set $A_{\rm{sa}}$ of self-adjoint elements of $A$; the exponential map has therefore the form $a\to \exp(ia)$.\medskip

The exponential length for the identity components of unitary groups of $C^*$-algebras has been introduced in \cite{Ri92} (for non-unital algebras $A$, $\U(A)$ may be defined as $\{1+a\in \U(\tilde{A})\mid a\in A\}$, where $\tilde{A}$ is the minimal unitization of $A$). Since then, a substantial research in the theory of $C^*$-algebras has been done on computing the $C^*$-exponential length, which in our terms can be described as determining for which $C^*$-algebras $A$, the unitary group $\U_0(A)$ is bounded, or unbounded \cite{AM20}. We refer the reader to \cite{Ri92}, \cite{Ph93}, \cite{Zhang93}, \cite{Lin14}, \cite{PW14}, and references therein.\medskip

For any $C^*$-algebra $A$, since $\U(A)$ is a Banach-Lie group, $\U_0(A)=\{\prod_{j=1}^n \exp(\ri a_j)\mid a_1,\ldots,a_j\in A_{\rm{sa}}\}$. Following \cite{PhRi91}, define $\cer(A)$, the $C^*$-exponential rank of $A$, to be the minimal $n$ (if it exists, otherwise set $\cer(A)=\infty$) such that for every $u\in \U_0(A)$ there are $a_1,\ldots,a_n\in A_{\rm{sa}}$ such that $u=\prod_{j=1}^n \exp(\ri a_j)$. Note that originally $\cer$ has been defined to attain values $1,1+\varepsilon,2,2+\varepsilon,\ldots$, however this more precise definition is not relevant for our purposes. The main original reason for defining the $C^*$-exponential length in \cite{Ri92} was to find in some cases bounds on the $C^*$-exponential rank. The following fact, proved more quantitatively in \cite[Corollary 2.7]{Ri92}, is immediate from the general Lie theory.
\begin{fact}
Let $A$ be a $C^*$-algebra. If $\U_0(A)$ is bounded, i.e. $\mathrm{diam}_{\mathrm{cel}}(\U_0(A))<\infty$, then $\cer(A)$ is finite.
\end{fact}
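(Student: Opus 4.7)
The plan is to convert the diameter bound $M:=\mathrm{diam}_{\mathrm{cel}}(\U_0(A))<\infty$ into a uniform bound on the number of exponential factors required to represent an arbitrary $u\in\U_0(A)$. The central analytic input is that any unitary $v\in\U(A)$ with $\|v-1\|<2$ has spectrum missing $-1$, so that the principal branch of $-i\log$, applied via continuous functional calculus, produces a self-adjoint $b\in A$ with $v=\exp(ib)$; such a $v$ is a \emph{single} exponential. The combinatorial step will be a greedy grouping of factors in a near-optimal $\mathrm{cel}$-representation of $u$.

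Fix any threshold $T<2$, say $T=1$. Given $u\in\U_0(A)$, choose by definition of $\bel{\U_0(A)}$ a representation $u=\exp(ia_1)\cdots\exp(ia_k)$ with $\sum_{j=1}^k\|a_j\|<M+1$. By induction from $\|\exp(ia)-1\|\leq\|a\|$ and the triangle inequality one obtains
\[
\|\exp(ia_{j_1})\cdots\exp(ia_{j_\ell})-1\|\ \leq\ \sum_{s=1}^\ell\|a_{j_s}\|,
\]
so any consecutive subproduct whose $\|a_j\|$-sum stays strictly below $T$ is itself a single exponential. To guarantee that a greedy partition does not produce too many blocks, first refine the representation so that every individual factor satisfies $\|a_j\|<T/2$: replace each offending $\exp(ia_j)$ by $\exp(ia_j/m_j)^{m_j}$ for $m_j$ sufficiently large. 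This preserves $\sum_j\|a_j\|$ because $a_j$ commutes with $a_j/m_j$.

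Now run a greedy partition of the refined sequence into consecutive blocks, starting a new block whenever appending the next $a_j$ would push the running $\|\cdot\|$-sum above $T$. Every block except possibly the last then has total norm-sum at least $T/2$, so the number of blocks is at most $\lceil 2(M+1)/T\rceil+1$. By the preceding paragraph, each block collapses to a single exponential, exhibiting $u$ as a product of at most $\lceil 2(M+1)/T\rceil+1$ exponentials. This bound is independent of $u$, so $\cer(A)\leq\lceil 2(M+1)/T\rceil+1<\infty$. No serious obstacle is expected; the only balancing act is the choice of $T<2$, which must simultaneously permit the principal-branch logarithm on each block-product and allow a lower bound on block sizes so that the number of blocks stays controlled. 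No BCH-type input is required, since the subdivision $\exp(ia)=\exp(ia/m)^m$ is immediate for self-adjoint $a$.
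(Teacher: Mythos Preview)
Your argument is correct. The paper itself gives no proof of this fact, only a pointer to \cite[Corollary 2.7]{Ri92} together with the remark that it is ``immediate from the general Lie theory''; your proposal spells out precisely the standard mechanism behind that remark: use the $\mathrm{cel}$-bound to get a representation of controlled total norm, refine so that individual factors are small, greedily group into blocks each of which lies in the image of the logarithm, and count the blocks. This is essentially Ringrose's quantitative argument, so there is no genuine divergence in approach.

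Two minor remarks. First, the justification ``this preserves $\sum_j\|a_j\|$ because $a_j$ commutes with $a_j/m_j$'' is slightly misdirected: the sum is preserved simply because $m_j\cdot\|a_j/m_j\|=\|a_j\|$, while the commutativity is what makes $\exp(ia_j/m_j)^{m_j}=\exp(ia_j)$ hold (and that is automatic). Second, the inequality $\|\exp(ia)-1\|\le\|a\|$ that seeds your induction uses that $a$ is self-adjoint in a $C^*$-algebra (via $|e^{i\lambda}-1|=2|\sin(\lambda/2)|\le|\lambda|$ and functional calculus); the naive power-series bound only gives $e^{\|a\|}-1$. You are implicitly using this, but it is worth stating.
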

Since from our point of view, which is the point of view of large scale geometry, unbounded groups are more interesting, the contraposition of the fact is more important. If $\cer(A)=\infty$, then the group $\U_0(A)$ is unbounded.\medskip

Based on the discussion above, we present some examples.
\begin{enumerate}
    \item If $A$ is a unital $C^*$-algebra of real rank zero, then $\U_0(A)$ is bounded; more precisely, $\mathrm{diam}_{\mathrm{cel}}(\U_0(A))=\pi$ (see \cite[Theorem 3.5]{Ph95}).
    \item If $X$ is a compact Hausdorff space and $A=C(X)\otimes \mathbb{B}(\Hil)$, then $\U_0(A)$ is bounded (see \cite{Ri92} and also \cite{Zhang93} for far reaching generalizations).
    \item If $X$ is a compact Hausdorff space which is not totally disconnected, $n\geq 2$, and $B$ is a UHF $C^*$-algebra, then the groups $\U_0(C(X))$, $\U_0(C(X)\otimes M_n)$, and $\U_0(C(X)\otimes B)$ are unbounded (see \cite[Theorem 6.7]{Ph93}).
    \item More generally, if $A$ is a $C^*$-algebra and there are two different traces on $A$ that induce the same homomorphisms from $K_0(A)$, then $\U_0(A)$ is unbounded (see \cite[Corollary 3.2]{Ph95}).
    \item Let $F_\infty$ be the free group on countably many generators and let $A=C^*(F_\infty)$, the full group $C^*$-algebra of $F_\infty$. Then $\cer(A)=\infty$ and so $\U_0(A)$ is unbounded (see \cite[Corollary 2.7]{Ph94}).
    
\end{enumerate}

Let us note that it often happens that for a $C^*$-algebra $A$, $\cer(A)<\infty$, however $\U_0(A)$ is unbounded. Consider e.g. abelian unital $C^*$-algebras as a prime example when this occurs.

\subsection{The reduced exponential length}
In \cite{Ph95}, Phillips introduced the \emph{reduced $C^*$-exponential length}. Besides computing the exponential rank, it turns out this is one of the main tools how to show that a particular unitary group (more precisely, its connected component of the identity) of a $C^*$-algebra has infinite $\mathrm{cel}$, and thus it is unbounded. Here we generalize the reduced exponential length to general Banach-Lie groups and show some of its applications.
\begin{remark}
For many $C^*$-algebras $A$, this method of Phillips from \cite{Ph95} is essentially the only way how to show that $\U_0(A)$ is unbounded. Indeed, as proved in \cite{Ph95} and also more generally in Proposition~\ref{prop:relbasicfacts}, the reduced exponential length is the distance of an element to the commutator subgroup. If $A$ is a simple $\mathcal{Z}$-stable $C^*$-algebra of rational tracial rank at most one, then by \cite{Lin14}, the diameter of the commutator subgroup of $\U_0(A)$ is bounded by $2\pi$ in the exponential length.
\end{remark}
\begin{definition}
Let $G$ be a connected Banach-Lie group with a Banach-Lie algebra $\LA$. Let $g\in G$ and let us define the \emph{reduced exponential length} $\rbel{G}(g)$ as $$\inf \left \{\|\sum_{i=1}^n X_i\|\,\middle|\, g=\exp (X_1)\cdots \exp (X_n),\,X_i\in \mathfrak{g}\,(1\le i\le n)\right \}.$$
\end{definition}
\begin{proposition}\label{prop:relbasicfacts}
Let $G$ be a connected Banach-Lie group with a Banach-Lie algebra $\LA$.
\begin{list}{}{}
    \item[{\rm{(i)}}] The function $\mathrm{rel}:G\rightarrow \left[0,\infty\right)$ is a continuous pseudo-length function on $G$ and we have $\mathrm{rel}\leq \mathrm{el}$.
    \item[{\rm{(ii)}}] If $G'$ is the closure of the derived subgroup of $G$, then $\mathrm{rel}$ is the quotient length function, of $\mathrm{el}$, on $G/G'$. That is, for every $g\in G$ $$\rbel{G}(g)=\inf\left\{\bel{G}(gh)\,\middle|\, h\in G'\right\}.$$ In particular, it is a maximal compatible length function on the abelianization $G/G'$.
\end{list}
\end{proposition}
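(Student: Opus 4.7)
For part (i), the claims are essentially immediate from the definition of $\rbel{G}$: symmetry follows from negating and reversing a decomposition, subadditivity from concatenating two decompositions and applying the triangle inequality in $\LA$, and the bound $\rbel{G} \le \bel{G}$ from $\|\sum X_i\| \le \sum \|X_i\|$. Continuity at the identity is inherited from continuity of $\bel{G}$ (Proposition~\ref{prop:bel}), and continuity everywhere then follows by left-invariance.

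For part (ii), the first step is to show that $\rbel{G}$ vanishes on $G'$. For a commutator $[a, b]$ with $a = \prod_{i=1}^n \exp(X_i)$ and $b = \prod_{j=1}^m \exp(Y_j)$, the explicit expression
\[
[a,b] = \exp(X_1)\cdots\exp(X_n)\exp(Y_1)\cdots\exp(Y_m)\exp(-X_n)\cdots\exp(-X_1)\exp(-Y_m)\cdots\exp(-Y_1)
\]
yields $\rbel{G}([a,b]) = 0$, since the exponents cancel. Subadditivity extends this to all of $[G,G]$, and the continuity from~(i) extends it to $G' = \overline{[G,G]}$. Consequently $\rbel{G}(gh) = \rbel{G}(g)$ for every $h \in G'$, which combined with $\rbel{G} \le \bel{G}$ already yields one direction of the formula, $\rbel{G}(g) \le \inf_{h \in G'}\bel{G}(gh)$.

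The reverse inequality, which is the heart of the proof, reduces to the congruence
\[
\prod_{i=1}^n \exp(X_i) \equiv \exp\Bigl(\sum_{i=1}^n X_i\Bigr) \pmod{G'}, \qquad X_1, \ldots, X_n \in \LA.
\]
My approach is via the multi-variable Trotter product formula $\exp(\sum_i X_i) = \lim_k \bigl(\prod_i \exp(X_i/k)\bigr)^k$, valid in any Banach-Lie group. Since $G/G'$ is abelian, the quotient map $\pi$ satisfies $\pi\bigl((\prod_i \exp(X_i/k))^k\bigr) = \prod_i \pi(\exp(X_i/k))^k = \prod_i \pi(\exp(X_i))$ for every $k$, pulling the $k$-th power through the product by abelianness and then using $\exp(X_i/k)^k = \exp(X_i)$. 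Letting $k \to \infty$ and using closedness of $G'$ yields $\pi(\exp(\sum_i X_i)) = \pi(\prod_i \exp(X_i))$, i.e.\ the congruence. Granted this, given $g = \prod_i \exp(X_i)$, setting $h := g^{-1}\exp(\sum_i X_i) \in G'$ gives $gh = \exp(\sum_i X_i)$ with $\bel{G}(gh) \le \|\sum_i X_i\|$, and infimising over decompositions of $g$ gives $\inf_{h \in G'}\bel{G}(gh) \le \rbel{G}(g)$.

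Finally, $\rbel{G}$ descends to a genuine length function on $G/G'$ because $\rbel{G}(g) = 0$ forces $g$ into the $d_G$-closure of $G'$, which is $G'$ itself. Maximality on $G/G'$ then transfers from the maximality of $\bel{G}$ on $G$ (Theorem~\ref{thm:elismaxmin}) via Proposition~\ref{prop:maximalmetric}, by checking coarse properness and large scale geodesicity of the quotient length. The main obstacle is the congruence in the second step: the naive BCH route runs into a thicket of iterated brackets and requires showing that $\exp$ of brackets lies in $G'$, whereas the Trotter formula combined with the abelianness of $G/G'$ avoids this entirely.
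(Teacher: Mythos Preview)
Your proof is correct and follows essentially the same route as the paper: both arguments hinge on the Trotter product formula to establish the congruence $\prod_i \exp(X_i) \equiv \exp\bigl(\sum_i X_i\bigr) \pmod{G'}$, and then read off the reverse inequality exactly as you do. The only cosmetic differences are that the paper packages the two-variable case $\exp(X)\exp(Y)\exp(X+Y)^{-1}\in G'$ as a separate lemma (together with $\exp([X,Y])\in G'$ via the commutator formula) and iterates, whereas you invoke the multi-variable Trotter formula directly and verify vanishing on commutators by the more elementary cancellation argument.
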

\begin{proof}
It is clear that for every $g,h\in G$ we have $\rbel{G}(g)=\rbel{G}(g^{-1})$ and $\rbel{G}(gh)\leq \rbel{G}(g)+\rbel{G}(h)$. Thus it is a pseudo-length function on $G$. By definition, for any $g\in G$, $\rbel{G}(g)\leq \bel{G}(g)$. So since $\mathrm{el}$ is continuous by Proposition~\ref{prop:bel}, $\mathrm{rel}$ is continuous as well. This proves (i).\medskip

In order to prove (ii), we shall need the following lemma (cf. \cite[Lemmas 2.4 and 2.5]{Ph95}).
\begin{lemma}\label{lem:Trotterandrel}
For every $X,Y\in\LA$ and for every $g\in G'$ we have
\begin{enumerate}
    \item\label{rel:it1} $\exp(X)\exp(Y)\exp(X+Y)^{-1},\exp([X,Y])\in G'$;
    \item\label{rel:it2} $\rbel{G}(\exp(X)\exp(Y))=\rbel{G}(\exp(X+Y))$, $\rbel{G}(g)=0$.
\end{enumerate}
\end{lemma}
\begin{proof}[Proof of Lemma~\ref{lem:Trotterandrel}]
\eqref{rel:it2} follows from \eqref{rel:it1} as in \cite[Lemma 2.5]{Ph95}, while \eqref{rel:it1} follows exactly as in \cite[Lemma 2.4]{Ph95} from the Trotter product formula $$\exp(X+Y)=\lim_{n\to\infty} \Big(\exp(\tfrac{1}{n}X)\exp(\tfrac{1}{n}Y)\Big)^n$$ and the commutator formula $$\exp([X,Y])=\lim_{n\to\infty} \Big(\exp(\tfrac{1}{n}X)\exp(\tfrac{1}{n}Y)\exp(-\tfrac{1}{n}X)\exp(-\tfrac{1}{n}Y)\Big)^{n^2}$$ respectively, which hold in general Banach-Lie groups (see \cite[Theorem IV.2]{Neeb04}).
\end{proof}

Fix some $g\in G$ and let $h\in G'$ be arbitrary. We get $\rbel{G}(gh)=\rbel{G}(g)$ since $\rbel{G}(h)=0$ by Lemma~\ref{lem:Trotterandrel}. Thus $\rbel{G}(g)=\rbel{G}(gh)\leq\bel{G}(gh)$. Since $h$ was arbitrary, we get $\rbel{G}(g)\leq \inf\left\{\bel{G}(gh)\,\middle|\, h\in G'\right\}$. For the other inequality, fix some $\varepsilon>0$ and find $X_1,\ldots,X_n\in\LA$ such that $g=\exp(X_1)\cdots \exp(X_n)$ and $\|\sum_{i=1}^n X_i\|< \rbel{G}(g)+\varepsilon$. Set $g':=\exp(X_1+\ldots +X_n)$. By Lemma~\ref{lem:Trotterandrel}, $h:=g^{-1}g'\in G'$, so we have \[\bel{G}(gh)=\bel{G}(g')\leq \|\sum_{i=1}^n X_i\|<\rbel{G}(g)+\varepsilon.\] Since $\varepsilon>0$ was arbitrary, we get $\inf\left\{\bel{G}(gh)\mid h\in G'\right\}\leq \rbel{G}(g)$, and we are done.

It follows that $\mathrm{rel}$ is compatible with the quotient topology on $G/G'$. Maximality of $\mathrm{rel}$ follows by a straightforward argument from maximality of $\mathrm{el}$ - this is left to the reader since it is not needed in the sequel. This finishes the proof.
\end{proof}
In what follows, we shall consider $\mathrm{rel}$ both as a pseudo-length function defined on $G$ and a length function defined on $G/G'$.\medskip

Next we define a map analogous to the homomorphism from \cite[Theorem 2.8]{Ph95} using which we later show that $\mathrm{rel}$ is unbounded on many Banach-Lie groups, thus these groups are unbounded.

For the rest of the section a connected Banach-Lie group $G$ and its Banach-Lie algebra $\LA$ are fixed. Denote by $\LA'$ the closed real linear span of commutators in $\LA$, i.e. $$\LA':=\overline{\Rea-\Span}\left\{[X,Y]\mid X,Y\in\LA\right\}.$$ That is, $\LA'$ is the derived Banach-Lie algebra of $\LA$. In the sequel, we shall sometimes denote the Banach space $\LA/\LA'$ by $E$.

\begin{proposition}\label{prop:mapPhi}
The map $\Phi':\LA\rightarrow G/G'$ defined by $X\to P_G(\exp(X))$, where $P_G:G\rightarrow G/G'$ is the projection, induces a continuous open surjective homomorphism, which vanishes on $\LA'$, thus it factorizes through a continuous open surjective homomorphism $\Phi: E\rightarrow G/G'$. In particular, $G/G'\simeq E/\mathrm{ker}(\Phi)$.
\end{proposition}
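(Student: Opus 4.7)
The plan is to assemble the proposition from the ingredients already at hand: Lemma~\ref{lem:Trotterandrel}, the fact that $\exp$ is a local homeomorphism onto a neighborhood of $1_G$, the fact that the quotient map $P_G\colon G\to G/G'$ is continuous and open, and the connectedness of $G$. The argument will naturally split into (a) $\Phi'$ is a continuous homomorphism; (b) $\Phi'$ vanishes on $\LA'$; (c) $\Phi'$ is surjective and open; (d) $\Phi'$ descends to $E$.

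First I would verify that $\Phi'\colon(\LA,+)\to G/G'$ is a continuous homomorphism. Continuity is immediate from the continuity of $\exp$ and $P_G$. For the homomorphism property, I note that by Lemma~\ref{lem:Trotterandrel}(1), $\exp(X)\exp(Y)\exp(X+Y)^{-1}\in G'$ for all $X,Y\in\LA$, so that $P_G(\exp(X+Y))=P_G(\exp(X))P_G(\exp(Y))$, i.e. $\Phi'(X+Y)=\Phi'(X)\Phi'(Y)$ (the target group $G/G'$ is abelian, being a quotient of $G$ by the closure of its derived subgroup).

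Next, I would show that $\Phi'$ vanishes on $\LA'$. By Lemma~\ref{lem:Trotterandrel}(1), $\exp([X,Y])\in G'$, so $\Phi'([X,Y])=e$ for all $X,Y\in\LA$. Since $\Phi'$ is an additive homomorphism and $\lambda[X,Y]=[\lambda X,Y]$, it vanishes on every real linear combination of commutators. By continuity of $\Phi'$ and the fact that $\{e\}$ is closed in $G/G'$ (the latter being Hausdorff, as $G'$ is closed in $G$), the kernel of $\Phi'$ is closed in $\LA$ and therefore contains the closed linear span $\LA'$. Consequently, $\Phi'$ factorizes through the quotient Banach space $E=\LA/\LA'$, yielding a continuous homomorphism $\Phi\colon E\to G/G'$.

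For surjectivity, I use that every $g\in G$ is a finite product $\exp(X_1)\cdots\exp(X_n)$ since $G$ is connected. Then in the abelian group $G/G'$,
\[P_G(g)=\Phi'(X_1)\cdots\Phi'(X_n)=\Phi'(X_1+\cdots+X_n),\]
so $\Phi'$, and hence $\Phi$, is surjective. For openness, I would argue locally around the identity: pick $U\subseteq\LA$ an open neighborhood of $0$ such that $\exp\colon U\to V:=\exp(U)$ is a homeomorphism onto an open neighborhood of $1_G$. Since $P_G$ is an open map (being the quotient by the closed normal subgroup $G'$), $\Phi'(U)=P_G(V)$ is open in $G/G'$. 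As $\Phi'$ is a continuous group homomorphism, openness at $0$ propagates by translation to openness everywhere. Finally, since $\Phi=\Phi'\circ q$ for the quotient map $q\colon\LA\to E$ and $q$ is a surjective open map, openness of $\Phi'$ implies openness of $\Phi$. The isomorphism $G/G'\simeq E/\ker(\Phi)$ is then the standard first isomorphism theorem for continuous open surjective homomorphisms of topological groups.

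The only delicate step is step (b): one needs both the commutator identity $\exp([X,Y])\in G'$ (supplied by Lemma~\ref{lem:Trotterandrel}) and closedness of $\ker\Phi'$ to pass from the algebraic span of commutators to the closed linear span $\LA'$. Every other step is routine once the universal character of $G/G'$ as the abelianization and the local diffeomorphism property of $\exp$ are invoked.
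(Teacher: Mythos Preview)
Your proof is correct and follows essentially the same outline as the paper: Lemma~\ref{lem:Trotterandrel} for the homomorphism property and vanishing on commutators, connectedness for surjectivity, and continuity to pass to the closed span $\LA'$. The only noteworthy variation is the openness argument: the paper shows directly that $\Phi'[B_r]=\{g\in G/G'\mid \rbel{G}(g)<r\}$ via Proposition~\ref{prop:relbasicfacts}(ii), whereas you use the more elementary fact that $\exp$ is a local homeomorphism and $P_G$ is open, then propagate by translation. Both are standard; yours avoids invoking the $\rbel{G}$-characterization, while the paper's version ties the argument back to the reduced exponential length framework. (One small slip: you wrote $\Phi=\Phi'\circ q$, but the factorization is $\Phi'=\Phi\circ q$; your subsequent deduction of openness of $\Phi$ from openness of $\Phi'$ is nonetheless the correct direction and goes through.)
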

\begin{proof}
The proof mostly follows the lines of the proof of \cite[Theorem 2.8]{Ph95}. By Lemma~\ref{lem:Trotterandrel} \eqref{rel:it1}, we get that $\Phi'$ is a homomorphism that vanishes on commutators $[X,Y]\in\LA$. Surjectivity follows since $G$ is connected, and continuity is clear. In order to show that $\Phi'$ is open, it suffices to show that for any $r>0$, $\Phi'[B_r]$ is open in $G/G'$, where $B_r:=\{X\in\LA\mid \|X\|<r\}$. Pick $r>0$. It follows from the definition of $\mathrm{rel}$ and from Proposition~\ref{prop:relbasicfacts} (ii) that $$\Phi'[B_r]=\{g\in G/G'\mid \rbel{G}(g)<r\},$$ which shows the claim.

It follows from the continuity that $\Phi'$ vanishes on the whole $\LA'$, thus we obtain a continuous onto homomorphism $\Phi: E\rightarrow G/G'$. Since $\Phi'=\Phi\circ P_\LA$, where $P_\LA: \LA\rightarrow E$ is the projection, we see that $\Phi$ is open as well. Therefore it is a quotient map and we obtain the topological group isomorphism $G/G'\simeq E/\mathrm{ker}(\Phi)$.
\end{proof}
In view of the preceding proposition, it is important to understand the kernel of $\Phi$. Here we digress from \cite{Ph95} and use more general ideas. In several cases, we will be able to obtain that $E/\mathrm{ker}(\Phi)$, and thus also $G/G'$ and $G$, are unbounded groups.\bigskip

View the Banach space $E=\LA/\LA'$ as both a Banach-Lie group and a Banach-Lie algebra with a trivial Lie bracket (and exponential being the identity). It follows that the projection $P_\LA: \LA\rightarrow E$ is a continuous Lie algebra homomorphism. As the Baker-Campbell-Hausdorff formula holds true also for Banach-Lie groups (see \cite[Section 9.2.5]{HiNe-book} for the BCH-formula for classical Lie groups and notice that it has been generalized for Banach-Lie groups already by Birkhoff in \cite{Bir38}), we can argue as in the finite-dimensional case (see e.g. \cite[Theorem 9.5.9]{HiNe-book}) that $P_\LA$ induces a local continuous homomorphism $\psi: U\rightarrow E$ defined on some neighborhood of the identity $U\subseteq G$ (for $g\in U$, we have $\psi(g)=P_\LA(\log(g))$).

Let now $f:[0,1]\rightarrow G$ be an arbitrary continuous map. We define $\Psi(f)\in E$ as follows. We choose a partition $0=t_0<t_1<\ldots<t_n=1$ so that for every $0<i\leq n$, $f(t_{i-1})^{-1}f(t_i)\in U$, and we set $$\Psi(f):=\sum_{i=1}^n \psi\big(f(t_{i-1})^{-1}f(t_i)\big).$$ Using the fact that $\psi$ is a local homomorphism on $U$, it is easy to check that $\Psi(f)$ does not depend on the partition $0=t_0<t_1<\ldots<t_n=1$. A standard and similar argument, again using that $\psi$ is a local homomorphism, in fact shows that if for some other $f':[0,1]\rightarrow G$, if $f\sim_h f'$, i.e. $f'(0)=f(0)$, $f'(1)=f(1)$, and $f$ and $f'$ are homotopy equivalent, then $\Psi(f)=\Psi(f')$. In particular, we record the following lemma as a corollary of the preceding discussion.
\begin{lemma}
The map $\Psi$ induces a homomorphism, with the same name, $\Psi:\pi_1(G)\rightarrow E$.
\end{lemma}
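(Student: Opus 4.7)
The plan is to verify three things, most of the substantive work already having been announced in the preceding discussion. The core step is homotopy invariance: given two paths $f,f'\colon [0,1]\to G$ with the same endpoints that are homotopic via $H\colon [0,1]^2\to G$, we need $\Psi(f)=\Psi(f')$. By uniform continuity of $H$ (Lebesgue's number lemma applied to an open cover by translates of $U$), one finds an $n\times n$ grid $0=t_0<\cdots<t_n=1$, $0=s_0<\cdots<s_m=1$ such that on each closed cell $[t_{i-1},t_i]\times[s_{j-1},s_j]$, the four corner values $H(t_{i-1},s_{j-1})$, $H(t_i,s_{j-1})$, $H(t_{i-1},s_j)$, $H(t_i,s_j)$ have all the relevant pairwise products of inverses lying in $U$. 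Since $\psi$ is a local homomorphism (i.e.\ $\psi(ab)=\psi(a)+\psi(b)$ whenever $a,b,ab\in U$), a telescoping identity around the boundary of each such cell shows that the two contributions along $s=s_{j-1}$ and $s=s_j$ coincide; iterating over $j$ yields $\Psi(f)=\Psi(f')$. Independence of $\Psi(f)$ on the chosen partition is the degenerate case $f=f'$ with different subdivisions, handled the same way.

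Next I would verify the concatenation property: for loops $f,g$ based at $1_G$, $\Psi(f*g)=\Psi(f)+\Psi(g)$. Using partition-independence, I select a partition of $[0,1]$ for $f*g$ that contains the midpoint $1/2$. The sum defining $\Psi(f*g)$ then splits cleanly into one part indexed by partition points in $[0,1/2]$ and one part indexed by partition points in $[1/2,1]$. Because each summand $\psi\bigl((f*g)(t_{i-1})^{-1}(f*g)(t_i)\bigr)$ depends only on the image values (not on the parameterization speed), after the natural reparameterizations these two parts reproduce exactly $\Psi(f)$ and $\Psi(g)$. Similarly, the constant loop at $1_G$ gives $\Psi\equiv 0$ since $\psi(1_G)=P_{\LA}(\log 1_G)=0$, and reversal satisfies $\Psi(\bar f)=-\Psi(f)$ by the same splitting argument.

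Combining these items, the assignment $[f]\mapsto \Psi(f)$ is a well-defined map $\pi_1(G,1_G)\to E$, and the concatenation identity together with $\Psi(\text{constant})=0$ makes it a group homomorphism; that this is consistent with abelianness of the target is no surprise since $\pi_1(G)$ is itself abelian for any connected topological group. The main obstacle is really the first step, the two-dimensional grid/telescoping argument establishing homotopy invariance; the rest is formal manipulation that follows directly from $\psi$ being a local homomorphism defined in a neighborhood of the identity.
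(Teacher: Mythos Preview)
Your proposal is correct and follows essentially the same approach as the paper. The paper itself does not give a detailed proof of this lemma; it merely records it as a corollary of the preceding discussion, where partition-independence and homotopy invariance of $\Psi$ are asserted to follow from a ``standard and similar argument, again using that $\psi$ is a local homomorphism.'' Your grid/telescoping argument is precisely that standard argument spelled out, and the remaining verifications (concatenation, constant loop, reversal) are the formal steps needed to pass from homotopy invariance of $\Psi$ on paths to a genuine homomorphism on $\pi_1(G)$.
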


A classical result from Lie theory, valid also for Banach-Lie groups, says that if $G$ is simply connected, then the local homomorphism uniquely extends to a full homomorphism from $G$ to $E$. Since we do not want to assume simple-connectedness in general, we define the following homomorphism.
\begin{definition}
Let $F:=\overline{\Psi[\pi_1(G)]}$ be the closure of the image of $\pi_1(G)$ in $E$ via $\Psi$. Set $Z:=E/F$ and let $P_Z:E\rightarrow Z$ be the projection. We define a map $\Delta: G\rightarrow Z$ as follows. Since $G$ is connected, it is generated by the neighborhood $U$. Therefore for any $g\in G$, find a discrete path $g_1,\ldots,g_n\in U$ such that $g=g_1\cdots g_n$ and set $$\Delta(g):=\sum_{i=1}^n P_Z\big(\psi(g_i)\big).$$ To see that $\Delta$ is well-defined, notice that if $h_1,\ldots,h_m\in U$ is another path such that $g=h_1\cdots h_m$, then the elements $1_G, g_1, g_1g_2,\ldots,g=g_1\cdots g_n=h_1\cdots h_m,h_1\cdots h_{m-1},\ldots,h_1,1_G$ lie on some closed curve whose homotopy equivalence class represents some element $x\in\pi_1(G)$. It follows that $P_Z\big(\sum_{i=1}^n \psi(g_i)\big)=P_Z\big(\sum_{j=1}^m \psi(h_j)\big)$ showing that the definition of $\Delta(g)$ does not depend on the choice of the path.

It follows that $\Delta:G\rightarrow Z$ is a continuous homomorphism. Since its target is abelian, it factorizes through a continuous homomorphism, denoted the same, $\Delta: G/G'\rightarrow Z$.
\end{definition}
\begin{theorem}\label{thm:commutativediagram}
The following diagram commutes.
\[
  \begin{tikzcd}
    E  \arrow{d}{\Phi} \arrow{dr}{P_Z}\\
    G/G' \arrow{r}{\Delta}  & Z
  \end{tikzcd}
\]
\end{theorem}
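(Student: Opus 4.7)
The plan is to verify the commutativity by chasing a generic element $X \in \LA$ around the diagram and using the nice multiplicative structure of $\exp$ on one-parameter subgroups to compute $\Delta$ explicitly.

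First I would observe that all three maps $\Phi$, $\Delta$, and $P_Z$ are continuous homomorphisms between abelian topological groups, so to verify $\Delta \circ \Phi = P_Z$ it suffices to compare both sides on an arbitrary class $X + \LA' \in E$. Moreover, since $\Phi$ factors as $\Phi' = \Phi \circ P_\LA$, where $\Phi'(X) = P_G(\exp X)$, it is equivalent to show $\Delta(P_G(\exp X)) = P_Z(X + \LA')$ for every $X \in \LA$.

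Next, to evaluate $\Delta(\exp X)$, I need to write $\exp X$ as a discrete path through the neighbourhood $U$ on which the local homomorphism $\psi = P_\LA \circ \log$ is defined. The key point is that for $n$ sufficiently large (depending on $X$), the element $X/n$ lies in the open neighbourhood $V \subseteq \LA$ on which $\exp : V \to U$ is a diffeomorphism, so $\exp(X/n) \in U$ and $\log \exp(X/n) = X/n$. Taking the partition $g_1 = \cdots = g_n = \exp(X/n)$ gives the factorisation
\[
\exp X \;=\; \exp(X/n) \cdots \exp(X/n) \quad (n \text{ factors}),
\]
which is the valid sort of discrete path used in the definition of $\Delta$.

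Applying the definition of $\Delta$ to this factorisation and using $\psi(\exp(X/n)) = P_\LA(X/n) = (X/n) + \LA'$ in $E$, I obtain
\[
\Delta(\exp X) \;=\; \sum_{i=1}^n P_Z\bigl(\psi(\exp(X/n))\bigr) \;=\; n \cdot P_Z\bigl((X/n) + \LA'\bigr) \;=\; P_Z(X + \LA'),
\]
the last equality since $P_Z$ is a group homomorphism from the abelian group $E$ to $Z$ and multiplication by $n$ is compatible with the $\R$-linear structure. Passing to the quotient $G/G'$, this reads $\Delta(\Phi(X + \LA')) = P_Z(X + \LA')$, which is exactly the commutativity of the diagram. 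No serious obstacle is expected here beyond checking that the definition of $\Delta$ really is independent of the chosen discrete path (which was already established when $\Delta$ was introduced), so that the particularly convenient choice $g_i = \exp(X/n)$ is legitimate.
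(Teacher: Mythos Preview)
Your proof is correct and uses essentially the same idea as the paper's: both arguments reduce to the case where the exponential lies in the neighbourhood $U$, compute $\psi(\exp(X/n)) = P_\LA(X/n)$ there, and use that the maps are homomorphisms. The paper phrases this as ``since all the maps are homomorphisms, it suffices to check it for $[X]\in E$ such that $\exp(X)\in U$'' and then performs the single-step computation; you instead carry out the full computation for arbitrary $X$ by explicitly choosing the discrete path $g_1=\cdots=g_n=\exp(X/n)$ and summing. The content is the same.
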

\begin{proof}
We must show that for every $[X]\in E$ we have $P_Z([X])=\Delta\circ \Phi([X])$, where $[X]\in E=\LA/\LA'$ is an equivalence class of some $X\in\LA$. Since all the maps are homomorphisms, it suffices to check it for $[X]\in E$ such that $\exp(X)\in U$. However then we have $$\Delta\circ\Phi([X])=\Delta\Big(P_G\big(\exp(X)\big)\Big)=P_Z\Big(\psi\big(\exp(X)\big)\Big)=P_Z\Big(P_\LA\big(\log(\exp(X))\big)\Big)=P_Z([X]).$$
\end{proof}
The commutative diagram immediately provides a desired estimate on the kernel of $\Phi$.
\begin{corollary}\label{cor:kernelofPhi}
We have $\mathrm{ker}(\Phi)\subseteq F$.
\end{corollary}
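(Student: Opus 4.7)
The plan is to read off the inclusion directly from the commutative diagram established in Theorem~\ref{thm:commutativediagram}. The key observation is that, by construction, $F = \ker(P_Z)$, since $Z = E/F$ and $P_Z: E \to Z$ is the canonical quotient projection onto the abelian Banach group $Z$.

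Given this, the argument is essentially a one-line diagram chase. Let $[X] \in \ker(\Phi) \subseteq E$. Then $\Phi([X])$ is the identity element of $G/G'$. Since $\Delta: G/G' \to Z$ is a (continuous) homomorphism, it sends the identity to $0 \in Z$. The commutativity of the diagram in Theorem~\ref{thm:commutativediagram} gives $P_Z([X]) = \Delta(\Phi([X])) = 0$, so $[X] \in \ker(P_Z) = F$, as required.

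There is no real obstacle, since all the substantive content, namely the construction of the local homomorphism $\psi$, the homomorphism $\Psi:\pi_1(G)\to E$, the extension to $\Delta:G/G'\to Z$, and the verification that $P_Z = \Delta\circ\Phi$, has already been carried out in Theorem~\ref{thm:commutativediagram}. I would simply present the above two-sentence deduction as the proof.
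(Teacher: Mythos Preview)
Your proof is correct and is precisely the immediate diagram chase the paper has in mind; the paper itself does not write out a proof but simply states the corollary as an immediate consequence of Theorem~\ref{thm:commutativediagram}.
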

The last theorem utilizes the machinery we have built to provide a criterion for a Banach-Lie group to be unbounded.
\begin{theorem}
Let $G$ be an infinite-dimensional connected Banach-Lie group. If $\pi_1(G)$ is finitely generated, in particular, when $G$ is simply connected, and if the finite rank of $\pi_1(G)$ is strictly less than the dimension of the Banach space $\LA/\LA'$ (thus in the case that $G$ is simply connected, we merely want that $\LA'\neq \LA$), then $G/G'$ and so also $G$ are unbounded.
\end{theorem}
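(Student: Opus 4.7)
The plan is to combine the topological group isomorphism $G/G'\simeq E/\ker(\Phi)$ from Proposition~\ref{prop:mapPhi} with the containment $\ker(\Phi)\subseteq F$ from Corollary~\ref{cor:kernelofPhi}, and to produce an unbounded continuous real-valued homomorphism out of $G/G'$. Once such a homomorphism is at hand, the standard fact that continuous homomorphic images of bounded sets are bounded will force $G/G'$ to be unbounded, and then $G$ itself must be unbounded because it admits a continuous surjective homomorphism onto $G/G'$.

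First I would pin down the size of $F=\overline{\Psi[\pi_1(G)]}$ inside the Banach space $E=\LA/\LA'$. Since $\pi_1(G)$ is abelian (as the fundamental group of a topological group) and by hypothesis finitely generated of rank $r$, it decomposes as $\Z^r\oplus T$ with $T$ a finite torsion group. Because $E$ is torsion-free as an additive group, $T\subseteq \ker\Psi$, so $\Psi[\pi_1(G)]$ is generated as an abelian group by the images of the $r$ free generators, i.e.\ by $r$ vectors $v_1,\ldots,v_r\in E$. Hence $\Psi[\pi_1(G)]\subseteq V:=\Span_\Rea\{v_1,\ldots,v_r\}$, a closed real subspace of $E$ of dimension at most $r$; since $V$ is closed, $F\subseteq V$ as well, and Corollary~\ref{cor:kernelofPhi} yields $\ker(\Phi)\subseteq F\subseteq V$.

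Next I would invoke Hahn--Banach. By the hypothesis, $\dim E>r\geq \dim V$, so $V$ is a proper closed subspace of $E$; the standard consequence of the Hahn--Banach theorem then produces a nonzero continuous $\Rea$-linear functional $\varphi:E\to\Rea$ vanishing on $V$. Since $\ker(\Phi)\subseteq V$, $\varphi$ descends to a continuous group homomorphism $\bar\varphi:E/\ker(\Phi)\to\Rea$ whose image is all of $\Rea$, hence unbounded. Transporting along the topological isomorphism $E/\ker(\Phi)\simeq G/G'$ from Proposition~\ref{prop:mapPhi} produces an unbounded continuous homomorphism $G/G'\to\Rea$, and thus $G/G'$, and therefore $G$, are unbounded.

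The only delicate point is the torsion-killing observation in the first step: one must use that $E=\LA/\LA'$ is torsion-free as an abelian group so that only the free rank $r$ of $\pi_1(G)$, rather than a possibly larger total number of generators including torsion ones, controls the dimension of the subspace containing $F$. Everything else is either already set up in the excerpt (Proposition~\ref{prop:mapPhi} identifying $G/G'$ with $E/\ker(\Phi)$, and Corollary~\ref{cor:kernelofPhi} locating $\ker(\Phi)$ inside $F$) or is a direct application of Hahn--Banach, so I do not foresee any technical obstacle beyond bookkeeping.
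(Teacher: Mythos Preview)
Your proposal is correct and follows essentially the same route as the paper: use Proposition~\ref{prop:mapPhi} and Corollary~\ref{cor:kernelofPhi} to trap $\ker(\Phi)$ inside a real subspace of $E$ of dimension at most the rank of $\pi_1(G)$, and then exploit that this subspace is proper. The only cosmetic difference is in the final step: the paper quotients $E$ by the finite-dimensional span $H$ and argues that $E/H$ is a nontrivial Banach space onto which $G/G'\simeq E/\ker(\Phi)$ surjects, whereas you invoke Hahn--Banach to produce a single nonzero continuous functional vanishing on that span, yielding a surjection $G/G'\to\Rea$. These are equivalent maneuvers. Your explicit torsion-killing remark (that the finite torsion part of $\pi_1(G)$ maps to $0$ in $E$, so only the free rank bounds $\dim V$) is a detail the paper leaves implicit.
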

\begin{proof}
By Proposition~\ref{prop:mapPhi}, we get that $G/G'\simeq E/\mathrm{ker}(\Phi)$, and by Corollary~\ref{cor:kernelofPhi}, we have $\mathrm{ker}(\Phi)\subseteq \overline{\Psi[\pi_1(G)]}$. Let $H$ be the finite-dimensional Banach subspace of $E$ generated by $\Psi[\pi_1(G)]$ (that is, its basis is a subset of $\Psi[S]$, where $S$ is a finite generating set of $\pi_1(G)$). We have $\overline{\Psi[\pi_1(G)]}\subseteq H$ and the dimension of $H$ is at most the finite rank of $\pi_1(G)$. Therefore the dimension of $H$ is strictly less than the dimension of $E$, we get that $E/H$ is a non-trivial Banach space, thus an unbounded Banach-Lie group. Since there is a projection $E/\mathrm{ker}(\Phi)\to E/H$ and a quotient of a bounded group is bounded, we get that $E/\mathrm{ker}(\Phi)$, and thus also $G/G'$ and in turn also $G$, are unbounded.
\end{proof}
\section{Large scale geometry of abelian unitary groups}\label{section:abelian}
In this section we illustrate the notions of the previous section on the example of abelian unitary groups on which the exponential length can be explicitly computed in an elementary way. It also demonstrates how the notions of coarse equivalence, quasi-isometry, and isomorphism differ.

Before specializing to unitary groups, we start with some observations on large scale geometry of general abelian Banach-Lie groups.
\subsection{Abelian Banach-Lie groups}
The main examples of abelian Banach-Lie groups are Banach spaces themselves, in which case the Banach-Lie algebra of a Banach space $X$ is $X$ with trivial Lie bracket and $\exp:X\rightarrow X$ being the identity map.

We describe general connected abelian Banach-Lie groups. Let $G$ be a connected abelian Banach-Lie group with Banach-Lie algebra $X$ and exponential map $\exp: X\rightarrow G$. Some parts of the following theorem are probably known even in this generality, however lacking a proper reference, we provide a complete proof.
\begin{theorem}\label{thm:generalabelianliegrps}
Let $G$ be a connected abelian Banach-Lie group whose Banach-Lie algebra is a Banach space $X$. Then there exists a discrete subgroup $\Gamma$ of $X$ and a topological group isomorphism between $G$ and $X/\Gamma$. The group $\Gamma$ is isomorphic to the fundamental group $\pi_1(G)$.  Further, let $T$ be the closure of the torsion subgroup of $G$ and let $Y=\overline{\Rea-\rm{span}[\Gamma]}\subseteq X$. Then $G/T$ is topologically isomorphic to the real Banach space $X/Y$.

Moreover, the subgroup $T$ is bounded if and only if $\Gamma$ is cobounded in $Y$, i.e. $Y/\Gamma$ is bounded. In particular, if $\Gamma$ is cobounded in $Y$, then $G$ is quasi-isometric to the Banach space $X/Y$
\end{theorem}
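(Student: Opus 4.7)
The plan is to identify $G$ with $X/\Gamma$ via $\exp$, identify the closure of the torsion subgroup intrinsically inside $X$, and then use the fact that on an abelian Banach-Lie group the exponential length is nothing but the quotient seminorm coming from the Banach-Lie algebra, from which both the (co)boundedness equivalence and the quasi-isometry fall out.

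Because $G$ is abelian, the identity $\exp(X+Y)=\exp(X)\exp(Y)$ (valid whenever $X,Y$ commute) says that $\exp:X\to G$ is a continuous group homomorphism. Being a local diffeomorphism at $0$, its image is open, hence clopen, and by connectedness of $G$ it is surjective. Let $\Gamma:=\ker(\exp)$. Injectivity of $\exp$ on a neighbourhood of $0$ forces $\Gamma$ to meet that neighbourhood only at $0$, so $\Gamma$ is discrete in $X$ and the continuous bijection $X/\Gamma\to G$ induced by $\exp$ is a topological isomorphism (openness of $\exp$ is automatic from being a local homeomorphism). Since Banach spaces are contractible, $X$ is simply connected, so the quotient map $X\to X/\Gamma$ is the universal cover of $G$ and its deck transformation group $\Gamma$ is canonically identified with $\pi_1(G)$.

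Next I would compute the closure of the torsion subgroup. An element $x+\Gamma\in X/\Gamma$ is torsion iff $nx\in\Gamma$ for some $n\geq1$, so the torsion subgroup equals $(\mathbb{Q}\Gamma)/\Gamma$, where $\mathbb{Q}\Gamma=\bigcup_{n\geq1}\tfrac{1}{n}\Gamma$ is easily checked to coincide with the $\mathbb{Q}$-linear span of $\Gamma$ in $X$. Because the quotient map $X\to X/\Gamma$ is open, the closure of the torsion subgroup in $G$ is $\overline{\mathbb{Q}\Gamma}/\Gamma$. For any $r\in\mathbb{R}$ and $\gamma\in\Gamma$, rational approximations $q_n\to r$ yield $q_n\gamma\to r\gamma$, so $\overline{\mathbb{Q}\Gamma}$ contains, and therefore equals, $Y=\overline{\mathbb{R}\text{-span}(\Gamma)}$. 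Thus $T=Y/\Gamma$, and the third isomorphism theorem gives $G/T\cong (X/\Gamma)/(Y/\Gamma)\cong X/Y$; since $Y$ is a closed subspace of the Banach space $X$, the quotient $X/Y$ is itself a real Banach space.

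Now I would read off the exponential length. Because $G$ is abelian and $\exp$ is a homomorphism, $\exp(X_1)\cdots\exp(X_n)=\exp(X_1+\cdots+X_n)$, and the triangle inequality in $X$ shows that using a single factor is optimal, so
\[
\bel{G}(x+\Gamma)=\inf\{\|x-\gamma\|\,:\,\gamma\in\Gamma\},
\]
i.e. $\bel{G}$ is the quotient seminorm on $X/\Gamma$. Restricting to $y\in Y$, this gives $\sup_{t\in T}\bel{G}(t)=\sup_{y\in Y}\inf_{\gamma\in\Gamma}\|y-\gamma\|$, which is exactly the diameter of $Y/\Gamma$ in its own quotient seminorm. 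Hence $T$ is bounded in $G$ if and only if $\Gamma$ is cobounded in $Y$, proving the fourth claim.

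Finally, assume $\Gamma$ is cobounded in $Y$ with constant $M:=\sup_{y\in Y}\inf_{\gamma\in\Gamma}\|y-\gamma\|<\infty$. The canonical surjection $\pi:X/\Gamma\to X/Y$ is $1$-Lipschitz, since $\Gamma\subseteq Y$ gives $\|x+Y\|_{X/Y}\le\bel{G}(x+\Gamma)$. Conversely, given $\varepsilon>0$, choose $y_0\in Y$ with $\|x-y_0\|<\|x+Y\|_{X/Y}+\varepsilon$ and then $\gamma_0\in\Gamma$ with $\|y_0-\gamma_0\|<M+\varepsilon$; the triangle inequality yields $\bel{G}(x+\Gamma)\le\|x+Y\|_{X/Y}+M+2\varepsilon$. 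Letting $\varepsilon\to0$ shows that $\pi$ is a $(1,M)$-quasi-isometry onto the Banach space $X/Y\cong G/T$. The main obstacle, if any, is purely bookkeeping: keeping the identifications $G\cong X/\Gamma$, $T\cong Y/\Gamma$, $G/T\cong X/Y$ straight, and verifying the density $\overline{\mathbb{Q}\Gamma}=Y$ and the formula $\bel{G}(x+\Gamma)=\inf_{\gamma\in\Gamma}\|x-\gamma\|$, both of which are short.
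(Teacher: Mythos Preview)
Your proof is correct and follows the same overall structure as the paper's: identify $G$ with $X/\Gamma$ via $\exp$, recognise $\Gamma$ as $\pi_1(G)$ via the universal-cover argument, identify $T$ with $Y/\Gamma$, and deduce the remaining statements. The differences are in execution rather than strategy. For the identification $T=Y/\Gamma$, the paper shows directly that $\exp[Y]$ is closed (using a local section of $\exp$) and contains every torsion element, whereas you work entirely inside $X/\Gamma$, characterising the torsion subgroup as $(\mathbb{Q}\Gamma)/\Gamma$ and passing to the closure via openness of the quotient map; your route is a bit cleaner. For the last two claims, the paper argues abstractly (image of a bounded set under a continuous homomorphism is bounded; quotient by a bounded normal subgroup is a quasi-isometry), while you observe the explicit formula $\bel{G}(x+\Gamma)=\inf_{\gamma\in\Gamma}\|x-\gamma\|$ in the abelian case and read everything off from it, even obtaining the sharp constant $(1,M)$ for the quasi-isometry. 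This is a genuine, if modest, improvement in concreteness; the paper only later proves the analogous formula in the special case of abelian unitary groups (Proposition~\ref{prop:abeliancel}).
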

\begin{proof}
Note that $\exp$ is, in this case, a continuous surjective open homomorphism of abelian groups. Set $\Gamma:=\rm{Ker}(\exp)$. Since $\exp$ is a local homeomorphism, it follows that $\Gamma$ is a discrete subgroup of $X$. The isomorphism $G\simeq X/\Gamma$ then immediately follows.\medskip

We show that $\Gamma\simeq \pi_1(G)$. To see this, observe first that the Banach space $X$ is the universal cover of $G$. Indeed, clearly $X$ is simply connected and it is straightforward to check that the quotient map $p:X\rightarrow X/\Gamma\simeq G$ is a covering map. The rest is a standard fact of algebraic topology that whenever a discrete group $\Gamma$ acts freely and properly discontinuously by homeomorphism on a path connected and simply connected space $X$ (which $\Gamma$ does in this case), then $\pi_1(X/\Gamma)\simeq \Gamma$.\medskip

Next we claim that $T=\exp[Y]$. First we show that $\exp[Y]\subseteq T$. Since $T$ is a closed subgroup and $\exp$ is continuous it suffices to show that for every $\gamma\in\Gamma$ and every $\alpha\in \Rea$ we have $\exp(\alpha\gamma)\in T$. Further application of the continuity of $\exp$ reduces the general case first to the case when $\alpha=p/q$, for $p,q\in\Int$, and then, using that $\exp$ is a homomorphism, to the case when $\alpha=1/q$, for $q\in\Int$. However then $\exp(1/q\cdot \gamma)$ is an element with torsion $q$, thus it belongs to $T$.

In order to show that $T\subseteq \exp[Y]$, we show that $\exp[Y]$ is closed and that for every torsion element $g\in G$, we have $g\in\exp[Y]$. Let us do the the former. Let $(g_n)_{n=1}^{\infty}\subseteq \exp[Y]$ be a sequence converging to some $g\in G$. Since $\exp$ is surjective and a local homeomorphism, there are $x\in X$ with $\exp(x)=g$, open neighborhoods $U$, $V$ of $g$ in $G$ and $x$ in $X$ respectively, so that the restriction $\exp: V\rightarrow U$ is a homeomorphism. Without loss of generality, we may assume that $(g_n)_{n=1}^{\infty}\subseteq U$. We can then find a sequence $(x_n)_{n=1}^{\infty}\subseteq V\cap Y$ such that $\exp(x_n)=g_n$, for all $n$, and $x_n\to x$. Since $Y$ is a closed subspace, we get $x\in Y$.

Now we show the latter, e.g. that any torsion element $g\in G$ is in $\exp[Y]$. Let $n$ be the order of $g$ and let $x\in X$ be such that $\exp(x)=g$. It follows that $\exp(nx)=g^n=1$, so $nx\in\Gamma$ and therefore $x\in Y$.

We showed that $T=\exp[Y]$ and it immediately follows that $\exp$ induces an isomorphism between $G/T$ and $X/Y$.\medskip

Finally, we prove that $\Gamma$ is cobounded in $Y$ if and only if $T$ is bounded. Suppose first that $\Gamma$ is cobounded in $Y$, so there exists a bounded subset $A\subseteq Y$ such that $Y=\bigcup_{\gamma\in\Gamma} \gamma+A$. Pick $y\in Y$, $\gamma\in\Gamma$ and $a\in A$ such that $y=\gamma+a$. It follows that \[\exp(y)=\exp(\gamma)\cdot\exp(a)=\exp(a),\] thus $\exp[Y]=\exp[A]$, and since $A$ is bounded, $\exp[A]$ is bounded as well, as an image of a bounded set by a continuous homomorphism.

Conversely, assume that $\Gamma$ is not cobounded in $Y$ and let $(y_n)_{n=1}^{\infty}\subseteq Y$ be such that $\mathrm{dist}(y_n,\Gamma)\to \infty$. It follows that the projection of the sequence $(y_n)_{n=1}^{\infty}$ is unbounded in the quotient group $X/\Gamma$ and since $\exp$ induces a topological isomorpism between $X/\Gamma$ and $G$, we get $(\exp(y_n))_{n=1}^{\infty}$ is unbounded in $G$. Since for each $n$, $\exp(y_n)\in T$, we are done.

The final claim of the statement follows. If $\Gamma$ is cobounded and therefore $T$ is bounded in $G$, it is straightforward to see that the projection $G\rightarrow G/T$ is a quasi-isometry, and $G/T$ is isomorphic to the Banach space $X/Y$.
\end{proof}

In this paper, we focus mainly on connected Banach-Lie groups. Section~\ref{section:explength} however contained some observations on large scale geometry of not necessarily connected Banach-Lie groups. We complete this discussion here. It turns out that for abelian Banach-Lie groups, their large scale geometry can be independently divided into the large scale geometry of the connected groups and the discrete groups. This follows from the following basic fact.

\begin{fact}\label{fact:splitabeliangrps}
Let $G$ be an abelian Banach-Lie group and let $G_0$ be the connected component of the identity. Then $G\simeq G_0\oplus G/G_0$.
\end{fact}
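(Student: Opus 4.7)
The plan is to exploit the fact that $G_0$ is a divisible abelian group, which makes it an injective object in the category of abelian groups and therefore forces the extension $G$ of $G/G_0$ by $G_0$ to split algebraically. First I would verify divisibility of $G_0$: because $G_0$ is connected and $G$ is abelian, the exponential map $\exp: \mathfrak{g} \to G_0$ is surjective (a product of exponentials collapses in the abelian case, $\exp(X_1)\cdots\exp(X_n) = \exp(X_1 + \cdots + X_n)$), so for any $g = \exp(X) \in G_0$ and any $n \in \mathbb{N}$ the element $\exp(X/n)$ is an $n$-th root of $g$. By Baer's classical theorem, divisible abelian groups are injective $\mathbb{Z}$-modules, so the short exact sequence
$$0 \to G_0 \to G \xrightarrow{P} G/G_0 \to 0$$
splits algebraically: there is a group homomorphism $s: G/G_0 \to G$ with $P \circ s = \mathrm{id}_{G/G_0}$.

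Next I would upgrade this algebraic splitting to a topological one. Since the connected component $G_0$ of a Banach-Lie group is open, the quotient $G/G_0$ is discrete, so $s$ is automatically continuous. I would then define
$$\Phi: G_0 \oplus (G/G_0) \longrightarrow G, \qquad (g, \gamma) \longmapsto g \cdot s(\gamma),$$
which is a continuous homomorphism thanks to $G$ being abelian. Injectivity is immediate from $s(\gamma) \in G_0$ forcing $\gamma = 0$, and surjectivity follows from the factorization $h = \bigl(h \cdot s(P(h))^{-1}\bigr) \cdot s(P(h))$ whose first factor lies in $G_0$. The inverse map $h \mapsto \bigl(h \cdot s(P(h))^{-1}, P(h)\bigr)$ is continuous because $P$ and $s$ are, so $\Phi$ is a topological group isomorphism.

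The hard part would ordinarily be promoting the algebraic splitting to a continuous one, but this is trivialized here by the discreteness of $G/G_0$. The only ingredient genuinely borrowed from the Banach-Lie structure is the surjectivity of $\exp$ onto $G_0$, which yields divisibility; everything else is a formal exercise in homological algebra of abelian groups and elementary point-set topology.
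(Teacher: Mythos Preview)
Your proof is correct and follows essentially the same route as the paper: establish that $G_0$ is divisible, invoke injectivity of divisible abelian groups to split the short exact sequence algebraically, and observe that the splitting is automatically topological because $G_0$ is open and $G/G_0$ discrete. The only cosmetic difference is that the paper obtains divisibility by citing the earlier structure theorem (which shows $G_0$ is a quotient of a Banach space), whereas you argue it directly from surjectivity of $\exp$ in the abelian case; these are the same underlying observation.
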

\begin{proof}
By Theorem~\ref{thm:generalabelianliegrps}, $G_0$ is a quotient of a Banach space, therefore a quotient of a divisible abelian group, so $G_0$ itself is divisible. Since $G_0$ is also an open subgroup of $G$, the short exact sequence \[0\to G_0\to G\to G/G_0\to 0\] topologically and algebraically splits. This means that $G$ is topologically isomorphic to $G_0\oplus G/G_0$.
\end{proof}
Suppose that $d$ is a maximal compatible metric on $G_0$. By abusing the notation, denote still by $d$ its canonical extension to a pseudometric on $G_0\oplus G/G_0$ by defining $d((a,g),(b,h)):=d(a,b)$. Do the same for a maximal (if $G/G_0$ is finitely generated) or coarsely proper (if $G/G_0$ is countable) metric $p$ on $G/G_0$. It follows that a maximal (or coarsely proper) compatible invariant metric on $G$ can be obtained as a sum $d$ and $p$. This reduces the investigation of large scale geometry of $G$ to separately investigating the geometry of $G_0$ and separately the geometry of $G/G_0$. 
\subsection{Abelian unitary groups}
Here we start our investigation of the large scale geometry of unitary groups, with the norm topology, of unital commutative $C^*$-algebras. Let $A$ be a unital commutative $C^*$-algebra and let $X$ be a compact Hausdorff space such that $A=C(X)$. Denote the discrete abelian group $\U(A)/\U_0(A)$ by $\Gamma_A$. Then $\U(A)=C(X,\T)$ and in fact, we have the following exact sequence (see \cite[Proposition 8.50]{HoMo13})
\[0\to C(X,\Int)\to C(X,\Rea)\to C(X,\T)\to \Gamma_A\to 0.\]
Moreover, as follows from Fact~\ref{fact:splitabeliangrps}, we have $\U(A)\simeq \U_0(A)\oplus \Gamma_A$.

In the sequel, we shall focus only on the connected component $\U_0(A)$.
\medskip

First we provide an explicit formula on the exponential length $\rm{cel}$ which is useful in many computations.

Let $P:\Rea\rightarrow \Rea/\Int$ be the canonical projection. Denote by $\Omega:\U_0(A)\rightarrow C(X)/C(X,\Int)$ the canonical isomorphism between $\U_0(A)=C(X,\T)_0$ and $C(X,\Rea)/C(X,\Int)=C(X,\Rea/\Int)_0$ (induced from the isomorphism between $\T$ and $\Rea/\Int$). Moreover, denote by $\|\cdot\|_Q$ the quotient norm on $C(X,\Rea)/C(X,\Int)$. That is, for every $f\in C(X,\Rea/\Int)_0$, set \[\|f\|_Q:=\inf\{\|f'\|\mid f'\in C(X,\Rea),\; P\circ f'=f\},\]
where $\|f'\|$ is the standard supremum norm of $f'$.
\begin{proposition}\label{prop:abeliancel}
Let $X$ be a compact Hausdorff space and set $A=C(X)$. Then for every $f\in \U_0(A)$, we have \[\cel f=2\pi\|\Omega(f)\|_Q.\]
\end{proposition}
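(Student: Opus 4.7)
The plan is to prove the identity in two steps, exploiting that $A = C(X)$ is abelian and that $\Omega$ is the isomorphism induced by the Lie isomorphism $\T \cong \R/\Z$ given by $e^{2\pi i t} \leftrightarrow t + \Z$. Under the convention fixed earlier in the excerpt, the Lie algebra of $\U(A)$ is identified with $C(X,\R)_{\mathrm{sa}} = C(X,\R)$ with exponential $h \mapsto e^{ih}$.

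\textbf{Step 1: Collapse products of exponentials to a single exponential.} Because $C(X)$ is abelian, for any $h_1,\dots,h_n \in C(X,\R)$,
\[
e^{ih_1}\cdots e^{ih_n} \;=\; e^{i(h_1+\cdots+h_n)}.
\]
So if $f = e^{ih_1}\cdots e^{ih_n}$, then $h := h_1+\cdots+h_n$ satisfies $f = e^{ih}$ and $\|h\| \le \sum_i \|h_i\|$ by the triangle inequality. Conversely, every representation $f = e^{ih}$ is itself a product of one exponential. Taking infima yields
\[
\cel f \;=\; \inf\bigl\{\|h\| : h \in C(X,\R),\ f = e^{ih}\bigr\}.
\]
(The infimum is over a non-empty set since $f \in \U_0(A)$: the exponential image is an open subgroup of the path-connected group $\U_0(A)$, hence all of it; this is also covered by Theorem~\ref{thm:generalabelianliegrps}.)

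\textbf{Step 2: Match single-exponential lifts with lifts through $P$.} Unwinding the identifications, $f = e^{ih}$ means $f(x) = e^{ih(x)} = e^{2\pi i\,(h/2\pi)(x)}$, so $\Omega(f) = P \circ (h/2\pi)$. Conversely, if $g \in C(X,\R)$ lifts $\Omega(f)$ in the sense $P\circ g = \Omega(f)$, then $h := 2\pi g$ satisfies $f = e^{ih}$. Hence the sets
\[
\bigl\{h : f = e^{ih}\bigr\} \quad\text{and}\quad \bigl\{2\pi g : g\in C(X,\R),\ P\circ g = \Omega(f)\bigr\}
\]
coincide, and comparing supremum norms gives
\[
\inf\bigl\{\|h\| : f = e^{ih}\bigr\} \;=\; 2\pi \inf\bigl\{\|g\| : P\circ g = \Omega(f)\bigr\} \;=\; 2\pi \|\Omega(f)\|_Q.
\]
Combining Steps 1 and 2 yields the claimed formula $\cel f = 2\pi\|\Omega(f)\|_Q$.

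\textbf{Potential obstacles.} There are essentially no serious difficulties; the abelian collapse in Step 1 is what makes the whole computation work, and it is precisely this feature that is unavailable in the non-commutative case. The only point to be slightly careful about is the bijective correspondence between continuous real lifts of $\Omega(f)$ and continuous real functions $h$ with $f = e^{ih}$, which is tied to the concrete choice of isomorphism $\T \cong \R/\Z$ (and accounts for the factor $2\pi$).
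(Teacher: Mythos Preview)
Your proof is correct and follows essentially the same route as the paper: both use the abelian collapse to reduce from products of exponentials to a single exponential (you via $e^{ih_1}\cdots e^{ih_n}=e^{i\sum h_k}$ on the $\U_0(A)$ side, the paper via $\sum P\circ x_k = P\circ\sum x_k$ on the $C(X,\Rea/\Int)_0$ side), and then identify single lifts with the quotient norm. The only difference is which side of the isomorphism $\Omega$ the collapse is performed on, which is cosmetic.
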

\begin{proof}
The constant $2\pi$ in the equality appears because of the two different lengths of two canonical models of the circle group: either as $\T=\{t\in\Com\mid |t|=1\}$, or $\Rea/\Int$. Denote by $G$ the group $C(X,\Rea/\Int)$ and $G_0$ its component of the identity. In this notation we have $\Omega:\U_0(A)\rightarrow G_0$. If for each $f\in G_0$ we define \[|f|:=\inf\{\sum_{k=1}^n \|x_k\|\mid f=\sum_{k=1}^n P\circ x_k,\; (x_k)_{k\leq n}\subseteq C(X,\Rea)\},\]
we clearly get that for all $a\in\U_0(A)$, $\cel a=2\pi |\Omega(a)|$. Thus it suffices to show that for every $f\in G_0$ we have $|f|=\|f\|_Q$. Fix $f\in G_0$. It is immediate from the definition that $|f|\leq \|f\|_Q$. Conversely, fix $\varepsilon>0$ and find $(x_k)_{k\leq n}\subseteq C(X,\Rea)$ such that $f=\sum_{k=1}^n P\circ x_k$ and $\sum_{k=1}^n \|x_k\|< |f|+\varepsilon$. Set $x=\sum_{k=1}^n x_k$. Since $P$ is a homomorphism from $\Rea$ to $\Rea/\Int$ we have \[f=\sum_{k=1}^n P\circ x_k=P\circ \sum_{k=1}^n x_k=P\circ x,\] and thus \[\|f\|_Q\leq \|x\|\leq \sum_{k=1}^n \|x_k\|<|f|+\varepsilon.\] Since $\varepsilon$ was arbitrary, we are done.
\end{proof}
With this proposition in hand, we can now easily recover the following result of Phillips (see \cite[Corollary 3.3]{Ph95} for one implication; for the other implication, if $X$ is a totally disconnected compact Hausdorff space, then $C(X)$ is an AF-algebra, i.e. a direct limit of finite-dimensional algebras $(A_n)_{n=1}^{\infty}$, and we have by \cite[Lemma 3.8]{Ph95} that $\cel{C(X)}=\lim_n \cel{A_n}=\pi$, since it is easy to check that $\cel{A}=\pi$ for $A$ finite-dimensional).
\begin{corollary}\label{cor:totallydisconnectedbounded}
Let $X$ be a compact Hausdorff space and let $A=C(X)$. Then $\mathrm{cel}$ is bounded on $\U_0(A)$ if and only if $X$ is totally disconnected.
\end{corollary}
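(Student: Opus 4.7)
The plan is to translate the question via Proposition~\ref{prop:abeliancel}, which gives $\cel u=2\pi\|\Omega(u)\|_Q$, into a question about the quotient norm on $C(X,\mathbb{R})/C(X,\mathbb{Z})$: the length function $\mathrm{cel}$ is bounded on $\U_0(A)$ if and only if $\|\cdot\|_Q$ is bounded on that quotient. Each implication then becomes an elementary statement about continuous real-valued functions on $X$.

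For the ``if'' direction, suppose $X$ is totally disconnected. I would show that every class $[f]\in C(X,\mathbb{R})/C(X,\mathbb{Z})$ satisfies $\|[f]\|_Q\le 1/2$, so that $\cel u\le \pi$ for every $u\in\U_0(A)$. Fix $f\in C(X,\mathbb{R})$ and $\varepsilon>0$. Since clopen sets form a basis for $X$, for each $x\in X$ one can pick a clopen neighbourhood $V_x$ with $f(V_x)\subseteq (f(x)-\varepsilon/2, f(x)+\varepsilon/2)$; by compactness finitely many $V_{x_1},\dots,V_{x_k}$ cover $X$, and disjointifying yields a clopen partition $X=U_1\sqcup\cdots\sqcup U_k$ with $U_i\subseteq V_{x_i}$. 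Letting $n_i\in\mathbb{Z}$ be the nearest integer to $f(x_i)$, the function $n:X\to\mathbb{Z}$ defined by $n|_{U_i}\equiv n_i$ is locally constant, hence continuous, and for $y\in U_i$ we have $|f(y)-n(y)|\le |f(y)-f(x_i)|+|f(x_i)-n_i|<\varepsilon/2+1/2$. Thus $\|f-n\|_\infty\le 1/2+\varepsilon/2$, and as $\varepsilon>0$ was arbitrary, $\|[f]\|_Q\le 1/2$.

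For the ``only if'' direction, assume $X$ is not totally disconnected and pick two distinct points $x,y$ in a common connected component $K$. By normality of $X$ and Urysohn's lemma there is a continuous $g:X\to[0,1]$ with $g(x)=0$ and $g(y)=1$; since $K$ is connected, $g(K)$ is a connected subset of $[0,1]$ containing both endpoints, hence $g(K)=[0,1]$. For each $n\in\mathbb{N}$ set $u_n:=\exp(2\pi\ri\, ng)\in\U_0(A)$, so that $\Omega(u_n)=[ng]$ and Proposition~\ref{prop:abeliancel} gives $\cel{u_n}=2\pi\|[ng]\|_Q$. Any $m\in C(X,\mathbb{Z})$ is locally constant, hence constant on the connected set $K$ with some value $k\in\mathbb{Z}$, whence
\[\|ng+m\|_\infty\ge \sup_{z\in K}|ng(z)+k|=\max(|k|,|n+k|)\ge n/2.\]
Therefore $\|[ng]\|_Q\ge n/2$, so $\cel{u_n}\ge \pi n\to\infty$ and $\mathrm{cel}$ is unbounded.

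The only delicate step is the construction of the locally constant integer function $n$ in the $(\Leftarrow)$ part; it is precisely here that the total disconnectedness of $X$ is used, through the availability of arbitrarily fine disjoint clopen refinements. This approach is self-contained and avoids the alternative route indicated in the excerpt, which invoked the AF-structure of $C(X)$ and Phillips' Lemma 3.8.
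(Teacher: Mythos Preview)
Your proof is correct. Both directions are carried out cleanly: the clopen-partition argument in the ``if'' direction is the natural elementary proof (using that a totally disconnected compact Hausdorff space is zero-dimensional), and the ``only if'' direction correctly exploits that any $m\in C(X,\mathbb{Z})$ is constant on a nontrivial connected component, forcing $\|[ng]\|_Q\ge n/2$.

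Your route differs substantively from the paper's. The paper does not really argue from Proposition~\ref{prop:abeliancel} at all: for the unbounded direction it simply cites Phillips' \cite[Corollary~3.3]{Ph95}, and for the bounded direction it invokes the fact that $C(X)$ is AF when $X$ is totally disconnected, together with Phillips' continuity result $\cel(\varinjlim A_n)=\lim_n\cel(A_n)$ \cite[Lemma~3.8]{Ph95} and the computation $\cel(A)=\pi$ for finite-dimensional $A$. Your argument is entirely self-contained and stays within the formula $\cel u=2\pi\|\Omega(u)\|_Q$, avoiding both the AF machinery and the external reference. The paper's approach has the advantage of yielding the sharp value $\cel(C(X))=\pi$ in the totally disconnected case (your bound $\cel\le\pi$ matches this from above, but you do not argue equality); on the other hand, your proof is more transparent, requires no $C^*$-algebraic structure theory, and makes explicit the topological mechanism --- clopen refinements versus nontrivial connected components --- behind each implication.
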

As another consequence using that $\mathrm{cel}$ is a maximal compatible length function, we recover the recent result of Ando and Matsuzawa \cite[Proposition 4.6]{AM20}.
\begin{corollary}
Let $X$ be a compact Hausdorff space and let $A=C(X)$. Then $\U_0(A)$ is bounded if and only if $X$ is totally disconnected.
\end{corollary}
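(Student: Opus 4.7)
The proof I propose is essentially a one-line deduction combining two facts already established in the excerpt. The plan is to observe that $\U_0(A)$ is a connected Banach-Lie group with Banach-Lie algebra the self-adjoint part $A_{\mathrm{sa}}$, so by Theorem~\ref{thm:elismaxmin} the exponential length $\bel{\U_0(A)}$ is a maximal compatible length function on $\U_0(A)$ and, in this setting, it agrees with the classical $C^*$-exponential length $\mathrm{cel}$ (this is precisely the convention adopted in Subsection~\ref{subsection:C-star-el-examples}). Since a topological group is coarsely bounded if and only if every (hence some) maximal compatible length function on it is bounded as a function, we obtain
\[
\U_0(A)\text{ is bounded }\iff\; \sup_{u\in\U_0(A)}\cel{u}<\infty.
\]

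The second, and final, step is to invoke Corollary~\ref{cor:totallydisconnectedbounded}, which asserts exactly that $\cel{}$ is bounded on $\U_0(C(X))$ if and only if $X$ is totally disconnected. Chaining these equivalences yields the desired statement.

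I do not expect any serious obstacle: the content of the corollary is entirely a translation of the previously proved fact about the $C^*$-exponential length into the coarse-geometric language made available by Theorem~\ref{thm:elismaxmin}. The only point that deserves a line of comment is the justification that $\U_0(A)$ is indeed a connected Banach-Lie group to which Theorem~\ref{thm:elismaxmin} applies, which follows from the fact (recalled in Subsection~\ref{subsection:C-star-el-examples}) that $\U(A)$ is a Banach-Lie group whose connected component of the identity is precisely $\U_0(A)$, together with the identification of its Lie algebra and exponential map that makes $\bel{\U_0(A)}$ coincide with $\cel{}$.
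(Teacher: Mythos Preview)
Your proposal is correct and follows essentially the same route as the paper: the paper also deduces the corollary by noting that $\mathrm{cel}$ is a maximal compatible length function on $\U_0(A)$ and then invoking Corollary~\ref{cor:totallydisconnectedbounded}.
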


As being abelian and unitary, the groups $\U_0(C(X))$ seem to be candidates for groups with the Haagerup property. The following theorem disproves this hope in the non-trivial case when these groups are unbounded.
\begin{theorem}\label{thm:Haagerupunitaryabelian}
Let $X$ be a compact Hausdorff space and let $A=C(X)$. Then $\U_0(A)$, resp. $\U(A)$  are quasi-isometrically, resp. coarsely universal (for separable metric spaces) if and only if $X$ is not totally disconnected. Analogously, $\U_0(A)$ and $\U(A)$ admit coarse embedding into Hilbert space as well as the Haagerup property if and only if $X$ is totally disconnected.
\end{theorem}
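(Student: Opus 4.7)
The plan is to establish each ``iff'' in both directions. First, if $X$ is totally disconnected then Corollary \ref{cor:totallydisconnectedbounded} gives that $\U_0(A)$ is bounded, and moreover $\Gamma_A = [X, \T]$ vanishes: any continuous $f : X \to \T$ is uniformly approximable by a locally constant map (since $X$ is zero-dimensional), and any locally constant map into the path-connected group $\T$ is null-homotopic by choosing paths component-wise. Combined with Fact \ref{fact:splitabeliangrps} this yields $\U(A) = \U_0(A)$, which is bounded. A bounded group is coarsely equivalent to a point, hence trivially has the Haagerup property and coarsely embeds into Hilbert space, while it cannot be coarsely universal because $(\Int, |\cdot|)$ does not coarsely embed into a bounded metric space.

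For the converse, suppose $X$ is not totally disconnected and fix a non-trivial connected component $C \subset X$ with distinct points $a, b \in C$. Urysohn provides $\tilde\psi : X \to [0, 1]$ with $\tilde\psi(a) = 0$ and $\tilde\psi(b) = 1$, and by the intermediate value theorem $\psi := \tilde\psi|_C : C \twoheadrightarrow [0, 1]$ is surjective (hence so is $\tilde\psi$). Applying Theorem \ref{thm:generalabelianliegrps} with $\Gamma = C(X, \Int) \subset C(X, \Rea)$, one identifies the closed real span $Y = \overline{\Rea\text{-span}(C(X, \Int))}$ with the subspace of $C(X, \Rea)$ consisting of functions that are constant on each connected component of $X$: such functions factor through the Stone component quotient $X/{\sim}$, where continuous real-valued functions are uniform limits of locally constant ones. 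A rounding argument then shows $C(X, \Int)$ is cobounded in $Y$, so by Theorem \ref{thm:generalabelianliegrps} the torsion closure in $\U_0(A)$ is bounded and $\U_0(A)$ is quasi-isometric to the Banach space $C(X, \Rea)/Y$.

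I would then prove that the pullback $h \mapsto h \circ \tilde\psi$ descends to an isometric linear embedding $\bar T : C([0, 1], \Rea)/\Rea \hookrightarrow C(X, \Rea)/Y$. Since $\Rea \subseteq Y$, for any $c \in \Rea$ one has $\|h \circ \tilde\psi - c\|_\infty = \|h - c\|_\infty$ by surjectivity of $\tilde\psi$, giving the upper bound on the quotient norm of $\bar T(h)$ by $\inf_{c \in \Rea} \|h - c\|_\infty$; conversely, any $y \in Y$ restricts to a constant $c_C$ on $C$, and surjectivity of $\psi$ forces $\|h \circ \tilde\psi + y\|_\infty \geq \|h + c_C\|_\infty \geq \inf_{c \in \Rea} \|h - c\|_\infty$. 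Universality of the target is then standard: every separable metric space $(M, d)$ embeds isometrically into $C([0, 1], \Rea)$ by Kuratowski-Fr\'echet plus Banach-Mazur, and after arranging the image to vanish at a fixed point of $[0, 1]$ (by a tapering rescaling trick) the projection to $C([0, 1], \Rea)/\Rea$ is $2$-bi-Lipschitz, since $\max f - \min f \geq \|f\|_\infty$ whenever $f$ vanishes at a point. Composing these embeddings gives a quasi-isometric embedding of $(M, d)$ into $\U_0(A)$. The analogous statement for $\U(A)$ then follows from Corollary \ref{cor:G0coarseinG}, which provides the coarse embedding $\U_0(A) \hookrightarrow \U(A)$.

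Finally, coarse universality is incompatible with coarse embedding into Hilbert space since any sequence of expander graphs produces a separable metric space that does not coarsely embed into Hilbert space; and the Haagerup property would produce such an embedding via the proper cocycle of the defining affine isometric action. Hence neither $\U_0(A)$ nor $\U(A)$ has the Haagerup property or coarsely embeds into Hilbert space. The main technical obstacle I foresee is identifying $Y$ as the space of component-constant functions together with verifying that $C(X, \Int)$ is cobounded in $Y$; once this is in place, the isometric computation for $\bar T$ and the universality of $C([0, 1])$ complete the argument.
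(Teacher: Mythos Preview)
Your proposal is correct and follows essentially the same strategy as the paper: both identify $\U_0(A)$ as quasi-isometric to $C(X,\Rea)$ modulo the closure of the locally constant (equivalently, finite-range) functions, then isometrically embed $C([0,1],\Rea)/\Rea$ into this quotient via a surjection from a connected piece of $X$ onto $[0,1]$, and finally invoke the bi-Lipschitz universality of $C([0,1])/\Rea$. The only differences are cosmetic---you reach the quotient Banach space via Theorem~\ref{thm:generalabelianliegrps} (checking coboundedness of $C(X,\Int)$ in $Y$) whereas the paper argues directly that the subgroup of finite-range maps is bounded; also, your appeal to Corollary~\ref{cor:G0coarseinG} for the inclusion $\U_0(A)\hookrightarrow\U(A)$ carries a countability hypothesis not assumed in the theorem, so use instead the splitting $\U(A)\simeq\U_0(A)\oplus\Gamma_A$ from Fact~\ref{fact:splitabeliangrps}, which makes the coarse embedding immediate.
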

\begin{proof}
If $X$ is totally disconnected, then $\U_0(A)$ is bounded, therefore it is not coarsely universal (nor quasi-isometrically universal); on the other hand, it trivially coarsely embeds into Hilbert space and has the Haagerup property. Moreover, since in this case $\U(A)=\U_0(A)$, we get the same conclusion for $\U(A)$ as well.\medskip

We now assume that $X$ is not totally disconnected. To simplify the notation, in the following we identify $\U_0(A)=C(X,\T)_0$ with $C(X,\Rea/\Int)_0$ which is canonically isometrically isomorphic with $C(X,\Rea)/C(X,\Int)$. We shall denote $C(X,\Rea/\Int)_0$ by $G_0$. Let $H$ be the closed subgroup of $G_0$ obtained by taking closure of the subgroup $\{f\in C(X,\Rea/\Int)\mid \text{the range of }f\text{ is finite}\}$. A straightforward computation shows that
\begin{gather*}
G_0/H\simeq \Big(C(X,\Rea)/C(X,\Int\Big)/\overline{\{f\in C(X,\Rea)\mid \text{the range of }f\text{ is finite}\}}\simeq\\ C(X,\Rea)/\overline{\{f\in C(X,\Rea)\mid \text{the range of }f\text{ is finite}\}}.
\end{gather*}
It follows that $G_0/H$ is isomorphic to a Banach space, since $\overline{\{f\in C(X,\Rea)\mid \text{the range of }f\text{ is finite}\}}$ is a Banach subspace of $C(X,\Rea)$. Denote this Banach space isomorphic to $G_0/H$ by $Z$.

Let $Q:G_0\rightarrow G_0/H$ be the projection. We claim that it is a quasi-isometry. Since $Q$ is onto, it suffices to show that $H$ is bounded. However, $H$ is isomorphic to the group\\ $\overline{\{f\in C(X,\Rea)\mid \text{the range of }f\text{ is finite}\}}/C(X,\Int)$. Since the group\\ $\{f\in C(X,\Rea)\mid \text{the range of }f\text{ is finite}\}/C(X,\Int)$ is clearly bounded, its closure is bounded as well.

\begin{lemma}\label{lem:cotypelemma}
$Z$ is bi-Lipschitz universal, i.e. every separable metric space embeds into $Z$ by a bi-Lipschitz embedding.
\end{lemma}
Assume for a moment that we have proved the lemma. Then since $\U_0(A)$ is isomorphic to $G_0$, which is quasi-isometric to $Z$, we get that $\U_0(A)$ is quasi-isometrically universal. Further, since $\U_0(A)$ coarsely embeds into $\U(A)$, we get that $\U(A)$ is coarsely universal. It follows that none of them can coarsely embeds into Hilbert space or have the Haagerup property. Indeed, the latter implies the former, and the former is in contradiction with coarse universality since Hilbert space is known not to be coarsely universal (see \cite{DGLY02}). It remains to prove the lemma.\medskip

\begin{proof}[Proof of Lemma~\ref{lem:cotypelemma}]
Since $X$ is not totally disconnected, it contains an infinite closed connected subset $K\subseteq X$. By the Urysohn lemma, we can find a continuous onto map $\phi':K\rightarrow [0,1]$ and by the Tietze extension theorem, we can extend it to a continuous function $\phi: X\rightarrow [0,1]$. Set $Y$ to be the Banach space $C([0,1],\Rea)/\{f\in C([0,1],\Rea)\mid f\text{ is constant}\}$.

Now it clearly suffices to prove the following two claims.\\

\noindent {\bf Claim 1.} $Y$ linearly isometrically embeds into $Z$.\\

\noindent {\bf Claim 2.} $Y$ is bi-Lipschitz universal.\\

We start with {\bf Claim 1}. First, we set $\psi_0:C([0,1],\Rea)\rightarrow C(X,\Rea)$ to be the dual map to $\phi$; that is, $\psi_0(f):=f\circ \phi$, for all $f\in C([0,1],\Rea)$. Second, we define a map $\psi:Y\rightarrow Z$ as follows. Let $[f]_Y$ be an element of $Y$ viewed as an equivalence class of some $C([0,1],\Rea)$ in the quotient $C([0,1],\Rea)/\{f\in C([0,1],\Rea)\mid f\text{ is constant}\}$. We set \[\psi([f]_Y):=[\psi_0(f)]_Z,\] where $[\psi_0(f)]_Z$ is the equivalence class of the function $\psi_0(f)=f\circ \phi: X\rightarrow \Rea$ in the quotient\\ $Z=C(X,\Rea)/\overline{\{f\in C(X,\Rea)\mid \text{the range of }f\text{ is finite}\}}$.\\ \\
\textbf{Step 1. The map $\psi$ is well defined.} To prove this, we must verify that $[\psi_0(f)]_Z=[\psi_0(f')]_Z$ for all $f,f'\in C([0,1],\Rea)$ such that $[f]_Y=[f']_Y$. However, $[f]_Y=[f']_Y$ implies that $f-f'\in \{f\in C([0,1],\Rea)\mid f\text{ is constant}\}$. Since clearly, \[\psi_0[\{f\in C([0,1],\Rea)\mid f\text{ is constant}\}]\subseteq \overline{\{f\in C(X,\Rea)\mid \text{the range of }f\text{ is finite}\}},\] we get that $\psi_0(f-f')\in \overline{\{f\in C(X,\Rea)\mid \text{the range of }f\text{ is finite}\}}$, so $[\psi_0(f)]_Z=[\psi_0(f')]_Z$, so $[\psi([f]_Y)]_Z$ is well defined.\\ \\
    \textbf{Step 2. The map $\psi$ is linear.} This is straightforward and left to the reader.\\ \\
    \textbf{Step 3. The map $\psi$ is isometric.} Pick $f\in C([0,1],\Rea)$. Since
    \begin{itemize}
        \item $\psi_0:C([0,1],\Rea)\rightarrow C(X,\Rea)$ is isometric,
        \item $\|[f]_Y\|=\inf\{\|f+c\|\mid c:[0,1]\rightarrow \Rea\text{ is constant}\}$,
        \item $\|\psi([f]_Y)\|=\inf\{\|\psi_0(f)+c\|\mid c\in \overline{\{f\in C(X,\Rea)\mid \text{the range of }f\text{ is finite}\}}\}$,
        \item $\psi_0[\{f\in C([0,1],\Rea)\mid f\text{ is constant}\}]\subseteq \overline{\{f\in C(X,\Rea)\mid \text{the range of }f\text{ is finite}\}}$,
    \end{itemize}
    we get that $\|\psi([f]_Y)\|\leq \|[f]_Y\|$.
    
    To prove the reverse inequality, set $R=\|\psi([f]_Y)\|$ and fix $\varepsilon>0$. By definition, we can find $c'\in \overline{\{f\in C(X,\Rea)\mid \text{the range of }f\text{ is finite}\}}$ such that $R\leq \|\psi_0(f)+c'\|<R+\varepsilon/2$ and then, by further approximation, we can find $c\in C(X,\Rea)$ with finite range such that $R\leq \|\psi_0(f)+c\|<R+\varepsilon$. Since $c$ has finite range, say $\{r_1,\ldots,r_n\}\subseteq \Rea$, by setting $K_i:=c^{-1}(r_i)$, for $i\leq n$, we obtain a partition $(K_i)_{i=1}^n$ of $X$ consisting of clopen sets. Since $K$ is connected, there exists $i\leq n$ such that $K\subseteq K_i$. It follows that $c\upharpoonright K$ is constant with value $r_i$. Let $d\in C([0,1],\Rea)$ be a constant function with the same value $r_i$. It follows that \[\|\psi([f]_Y)\|+\varepsilon\geq\|\psi_0(f)+c\|\geq \sup_{x\in K} |\psi_0(f)(x)+c(x)|=\sup_{x\in K} |f\circ \phi(x)+c(x)|=\sup_{y\in [0,1]} |f(y)+d(y)|\geq \|[f]_Y\|.\] Since $\varepsilon>0$ was arbitrary, we are done. This finishes the proof of the first claim.\\
    
    Now we prove {\bf Claim 2}. $Y$ is a quotient of $C([0,1],\Rea)$ by a one-dimensional subspace of constant functions. It is easy to check that it is linearly isomorphic, thus bi-Lipschitz equivalent, with the subspace of $C([0,1],\Rea)$ of functions with zero integral with respect to the Lebesgue measure: by the map $[f]\to f-\big(\int_0^1 f d\lambda\big)\cdot \mathbbm{1}$, where $\mathbbm{1}$ is the constant function with value $1$ and $[f]$ is an equivalence class of some $f\in C([0,1],\Rea)$. We therefore identify $Y$ with such a space and prove an isometric universality for this space, by abusing the notation still denoted by $Y$. Let $M$ be an arbitrary separable metric space. It is well known that $C([0,1],\Rea)$ is isometrically universal, i.e. there exists an isometric embedding $\varphi: M\rightarrow C([0,1],\Rea)$. We produce an isometric embedding $\phi:M\rightarrow Y$. For every $m\in M$ we define \[\phi(m)(x):=\begin{cases} \varphi(m)(3x) & 0\leq x\leq 1/3,\\
    -\varphi(m)(3-3x) & 2/3\leq x\leq 1,\end{cases}\] and we extend $\phi(m)$ on the interval $[1/3,2/3]$ linearly. It is now straightforward to check that for every $m,m'\in M$
    \begin{itemize}
        \item $\int_0^1 \phi(m)d\lambda=0$, thus $\phi(m)\in Y$,
        \item $\|\phi(m)-\phi(m')\|=\|\varphi(m)-\varphi(m')\|=d_M(m,m'),$
    \end{itemize}
    so $\phi$ is the desired isometric embedding. This finishes the proof of the second claim, and so also the proof of Lemma~\ref{lem:cotypelemma} and the proof of Theorem~\ref{thm:Haagerupunitaryabelian}.
\end{proof}

\end{proof}

In the special case when $X$ has just finitely many connected components, we obtain a stronger result.

First we identify the fundamental group $\pi_1(\U_0(A))$, for an abelian unital $C^*$-algebra $A$, in the following lemma which is likely well known.
\begin{lemma}\label{lem:fundamentalgrpabelianunitary}
Let $X$ be a compact Hausdorff space and $A=C(X)$. Then the fundamental group $\pi_1(\U_0(A))$ is equal to $C(X,\Int)$.

In particular, $\pi_1(\U_0(A))$ is finitely generated if and only if $X$ has only finitely many connected components, or equivalently, when the algebra of clopen subsets of $X$ is atomic. In that case, $\pi_1(\U_0(A))\simeq \Int^n$, where $n$ is the number of connected components of $X$.
\end{lemma}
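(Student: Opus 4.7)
The plan is to apply Theorem~\ref{thm:generalabelianliegrps} to $G=\U_0(A)$, whose Banach-Lie algebra, under the convention fixed in Subsection~\ref{subsection:C-star-el-examples}, is $A_{\mathrm{sa}}=C(X,\mathbb{R})$ with exponential map $f\mapsto e^{if}$. The kernel of this map consists of those $f\in C(X,\mathbb{R})$ with $f(x)\in 2\pi\mathbb{Z}$ for every $x\in X$, which, by continuity of $f$, is precisely $2\pi\,C(X,\mathbb{Z})$; this is a discrete subgroup of $C(X,\mathbb{R})$, canonically isomorphic as an abstract group to $C(X,\mathbb{Z})$. Theorem~\ref{thm:generalabelianliegrps} then yields $\pi_1(\U_0(A))\cong C(X,\mathbb{Z})$.

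For the finite-generation characterization, suppose first that $X$ has $n<\infty$ connected components $X_1,\ldots,X_n$. Each $X_i$ is clopen (its complement being a finite union of closed components), and any $f\in C(X,\mathbb{Z})$ is constant on each $X_i$ since $X_i$ is connected and $\mathbb{Z}$ is discrete. The restriction map $f\mapsto(f|_{X_i})_{i=1}^n$ is therefore an isomorphism $C(X,\mathbb{Z})\cong\mathbb{Z}^n$. Conversely, assume $X$ has infinitely many connected components, and let $q\colon X\to Y:=X/{\sim}$ be the quotient by the connectedness relation. For compact Hausdorff $X$ the space $Y$ is again compact Hausdorff, totally disconnected, and in our case infinite. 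Since every continuous $f\colon X\to\mathbb{Z}$ is constant on each component, it factors uniquely through $q$, giving $C(X,\mathbb{Z})\cong C(Y,\mathbb{Z})$; it suffices to show the latter is not finitely generated. As $Y$ is totally disconnected, clopen sets form a basis of its topology, so for any $k\in\mathbb{N}$ we may pick $k$ distinct points and separate them by pairwise disjoint clopen neighborhoods $V_1,\ldots,V_k$. The characteristic functions $\mathbbm{1}_{V_1},\ldots,\mathbbm{1}_{V_k}$ are $\mathbb{Z}$-linearly independent in $C(Y,\mathbb{Z})$, so this torsion-free abelian group has infinite rank and cannot be finitely generated.

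Finally, the equivalence with atomicity of the clopen Boolean algebra of $X$, interpreted as: every clopen subset is a finite join of atoms, follows from the same analysis: in a compact Hausdorff space the atoms of the clopen algebra are exactly the clopen connected components, and $X$ itself decomposes as a finite join of such atoms precisely when it has finitely many connected components. The whole argument contains no substantial obstacle; the only step that requires care is the initial identification of the Lie algebra and exponential map for $\U_0(C(X))$, after which the remainder is a routine application of Theorem~\ref{thm:generalabelianliegrps} together with elementary compact-Hausdorff topology.
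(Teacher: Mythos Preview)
Your proof is correct and follows the same approach as the paper: both invoke Theorem~\ref{thm:generalabelianliegrps} to identify $\pi_1(\U_0(A))$ with the kernel of the exponential map $C(X,\Rea)\to\U_0(A)$, which is $C(X,\Int)$ (up to the harmless $2\pi$ scaling you note). The paper's proof dismisses the finite-generation characterization as ``easy to verify'', whereas you spell it out via the quotient onto the totally disconnected space of components; this is a reasonable and correct way to fill in that gap, though a slightly shorter route is to observe directly that if $C(X,\Int)$ were generated by $f_1,\ldots,f_k$, then the finitely many clopen fibers of $(f_1,\ldots,f_k)\colon X\to\Int^k$ would have to be the components of $X$.
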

\begin{proof}
By Theorem~\ref{thm:generalabelianliegrps}, $\pi_1(\U_0(A))=\rm{Ker}(\exp)$, for $\exp: C(X,\Rea)\rightarrow \U_0(A)$. This has been already identified with $C(X,\Int)$. The latter statements are then easy to verify.
\end{proof}
In the following we show a strong distinction between the equivalence relations of topological isomorphism and quasi-isometry, within the class of separable abelian unitary groups. We restrict our attention to the separable abelian unitary groups with finite rank fundamental groups. We leave the general case for some future investigation.

We start with classification of this class up to topological isomorphism.

\begin{theorem}\label{thm:classificationabelianunitarygrps}
Let $\U_0(A)$ and $\U_0(B)$ be connected components of separable infinite-dimensional abelian unitary groups with finitely generated fundamental groups. Then $\U_0(A)$ and $\U_0(B)$ are topologically isomorphic  if and only if $\pi_1(\U_0(A))$ is isomorphic to $\pi_1(\U_0(B))$, i.e. the fundamental group is a complete invariant for topological isomorphism.
\end{theorem}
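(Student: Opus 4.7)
The ``only if'' direction is immediate from Lemma~\ref{lem:fundamentalgrpabelianunitary}: any topological group isomorphism $\U_0(A)\to\U_0(B)$ induces an isomorphism of fundamental groups, which by that lemma reads as $\Z^{n_A}\cong\Z^{n_B}$, forcing $n_A=n_B$, where $n_A$ (resp.\ $n_B$) is the number of connected components of the Gelfand spectrum of $A$ (resp.\ $B$).

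For the ``if'' direction, I will write $A=C(X)$ with $X$ compact metrizable having exactly $n$ components $X_1,\ldots,X_n$. Separability of $A$ forces $X$ to be metrizable, and infinite-dimensionality of $A$ forces at least one $X_i$ to be infinite and hence (being a connected compact metric space with more than one point, so perfect) uncountable. The goal will be to show that $\U_0(C(X))$ is topologically isomorphic to a canonical group $\T^n\oplus W$ that depends only on $n$.

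First I will invoke Theorem~\ref{thm:generalabelianliegrps} with $G=\U_0(C(X))$, whose Banach-Lie algebra is $C(X,\R)$ and whose discrete kernel $\Gamma\cong\Z^n$ consists (up to a $2\pi$ factor) of continuous $\Z$-valued functions, equivalently the functions constant on each component. The closed real span $Y:=\overline{\R\cdot\Gamma}\subseteq C(X,\R)$ is then the $n$-dimensional subspace of functions that are constant on each component. Since $Y$ is finite-dimensional it is topologically complemented in $C(X,\R)$; fixing a closed complement $Z$, one has $C(X,\R)=Y\oplus Z$, and because $\Gamma\subseteq Y$ this descends to the topological group isomorphism
\[\U_0(C(X))\;=\;C(X,\R)/\Gamma\;\cong\;(Y/\Gamma)\oplus Z\;=\;\T^n\oplus Z,\]
with $Z$ isomorphic as a Banach space to $C(X,\R)/Y\cong\bigoplus_{i=1}^n C(X_i,\R)/\R$, where in each summand the quotient is by the subspace of constants.

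The crucial ingredient is then Milyutin's classical theorem: for every uncountable compact metrizable space $K$, the Banach space $C(K,\R)$ is linearly isomorphic to $C([0,1],\R)$. Applied to each infinite $X_i$ (singleton components contribute the zero space to $Z$) and combined with the standard facts that $C([0,1])$ is isomorphic both to its complemented $1$-codimensional subspaces and to its finite direct sums with itself (both following from Milyutin and the Pe\l czy\'nski decomposition method), this will identify $Z$ with a single separable infinite-dimensional Banach space $W$ independent of the particular $X$ (as long as at least one component is infinite). Hence $\U_0(C(X))\cong\T^n\oplus W$ will depend only on $n$, finishing the argument. The main obstacle is this final Banach-space identification: the isomorphism $C(X_i)\cong C([0,1])$ from Milyutin does not in general send constants to constants, so I will also need that any two complemented $1$-codimensional subspaces of $C([0,1])$ yield isomorphic quotients, and that $W\oplus W\cong W$, in order to absorb a varying number of infinite components into one summand.
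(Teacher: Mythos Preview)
Your argument is correct, and it takes a genuinely different route from the paper's. Both approaches rest on Miljutin's theorem, but they deploy it differently. You decompose $\U_0(C(X))$ structurally as $\T^n\oplus Z$ by splitting off the finite-dimensional span $Y$ of $\Gamma$, then identify $Z\cong\bigoplus_{i:\,|X_i|=\infty} C(X_i,\R)/\R$ with the canonical model $C([0,1],\R)$ using Miljutin together with the auxiliary facts that $C([0,1])$ is isomorphic to its hyperplanes and to its finite powers. The paper instead works directly: given $K$ and $L$ with the same number $n$ of components, it takes any Miljutin isomorphism $\phi:C(K,\R)\to C(L,\R)$ and performs a finite-dimensional linear-algebra modification so that $\phi$ carries the generators $\chi_{K_i}$ of $C(K,\Z)$ to $\chi_{L_i}$, whence $\phi$ descends to the quotients $C(K,\R)/C(K,\Z)\to C(L,\R)/C(L,\Z)$.

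Your approach buys a clean canonical form $\U_0(C(X))\cong\T^n\oplus C([0,1],\R)$ that makes the classification transparent, at the cost of importing two extra Banach-space facts. The paper's approach avoids those facts entirely, needing only Miljutin plus an explicit (if somewhat fiddly) adjustment on a finite-dimensional subspace; on the other hand it does not yield a normal form. Both are perfectly legitimate, and your outline handles the one subtlety you flag (that the Miljutin isomorphism need not respect constants) correctly via the hyperplane isomorphism.
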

\begin{proof}
Fix $\U_0(A)$ and $\U_0(B)$ satisfying the requirements of the theorem. Let $K$ and $L$ be the Gelfand spaces of $A$ and $B$ respectively (necessarily uncountable and metrizable). If $\U_0(A)$ and $\U_0(B)$ are topologically isomorphic, then clearly their fundamental groups are isomorphic.

So we prove the converse and we now assume that $\pi_1(\U_0(A))=\pi_1(\U_0(B))$. Let $n$ be the common rank of these fundamental groups. It follows that we can write $K$ as a disjoint sum of clopen connected components $K_1,\ldots,K_n$ and similarly $L$ as the disjoint sum of clopen connected components $L_1,\ldots,L_n$. Each component is either a singleton or uncountable metrizable. 
For $i\leq n$, we set $e_i:=\chi_{K_i}\in C(K,\Int)$, and $f_i:=\chi_{L_i}\in C(L,\Int)$, the characteristic functions of the corresponding sets. Clearly, $e_1,\ldots,e_n$ generate $C(K,\Int)$ and an analogous statement for $L$ holds. Since $\U_0(A)$ can be identified with $C(K,\Rea)/C(K,\Int)$ we have $\U_0(A)=C(K,\Rea)/\langle e_1,\ldots,e_n\rangle$, and analogously $\U_0(B)=C(L,\Rea)/\langle f_1,\ldots,f_n\rangle$. Since $K$ and $L$ are uncountable and metrizable, by the Miljutin theorem \cite[Theorem 4.4.8]{Albiacbook} there exists a linear isomorphism $\phi: C(K,\Rea)\rightarrow C(L,\Rea)$. If for every $i\leq n$ we have $\phi(e_i)=f_i$, then $\phi[C(K,\Int)]=C(L,\Int)$ and so $\phi$ also induces a topological isomorphism between $\U_0(A)$ and $\U_0(B)$. This is not the case in general, but we prove that we can find a linear isomorphism between $C(K,\Rea)$ and $C(L,\Rea)$ with this property.

We write $C(K,\Rea)$ as a Banach space sum $X'\oplus V'$, where $V'=\Span\{e_1,\ldots,e_n\}$ and $X'$ is the complement of $V'$ in $C(K,\Rea)$. Analogously, we write $C(L,\Rea)$ as a Banach space sum $Y'\oplus W'$. For every $i\leq n$, we have $\phi(e_i)=\big(\sum_{j=1}^n \alpha_j^i f_i\big)+y_i$, where $y_i\in Y'$, and we have $\phi^{-1}(f_i)=\big(\sum_{j=1}^n \beta_j^i e_i\big)+x_i$, where $x_i\in X'$. Set now $V:=\Span\{e_1,x_1,\ldots,e_n,x_n\}$, $W:=\Span\{f_1,y_1,\ldots,f_n,y_n\}$ and let $X$, resp. $Y$ be their complements in $C(K,\Rea)$, resp. $C(L,\Rea)$. It follows that for every $i\leq n$ we have $\phi(x_i)=f_i+\sum_{j=1}^n \beta_j^i\phi(e_i)\in W$. Analogously, for every $i\leq n$, $\phi^{-1}(y_i)\in V$. We obtain that the restriction of $\phi$ to $V$ induces an isomorphism between $V$ and $W$. Without loss of generality, we may also assume that $\phi$ induces an isomorphism between $X$ and $Y$. Indeed, let $P_X$ be the linear projection from $C(K,\Rea)$ onto $X$; analogously, we define $P_Y$. Set $\phi_0:=P_Y\circ \phi\upharpoonright X$. Then we claim that $\phi_0$ is a linear isomorphism between $X$ and $Y$.

\begin{itemize}
    \item It is injective since if form $z\in X$ we have $\phi_0(z)=0$, then by definition $\phi(z)\in W$, thus $z\in V\cap X$, as $\phi$ induces an isomorphism between $V$ and $W$, so $z=0$.
    \item It is surjective. Pick $z\in Y$. Then there is $q'\in C(K,\Rea)$ such that $\phi(q)=z$. Set $q=P_X(q')$. We have that $q'=q+s$, where $s\in V$, thus \[z=\phi_0(q')=\phi_0(q)+\phi_0(s)=\phi_0(q),\] which shows the surjectivity.
\end{itemize}
Since $\phi_0$ is also continuous, by the bounded inverse theorem it is a linear isomorphism. In the sequel, we may therefore assume that $\phi_0=\phi\upharpoonright X$.

We define a new linear operator $\psi:C(K,\Rea)\rightarrow C(L,\Rea)$ as follows:
\begin{itemize}
    \item We set $\psi\upharpoonright X=\phi\upharpoonright X$;
    \item For $i\leq n$, we set $\phi(e_i)=f_i$;
    \item For $i\leq n$, we set $\psi(x_i)=y_i$.
\end{itemize}
Therefore we have got a linear isomorphism between $C(K,\Rea)$ and $C(L,\Rea)$ satisfying $\psi[C(K,\Int)]=C(L,\Int)$, which thus induces a topological isomorphism between $\U_0(A)$ and $\U_0(B)$. This concludes the proof.

\end{proof}

We show that the classification up to topological isomorphism is in stark contrast to the classification up to quasi-isometry.
\begin{theorem}
All identity components of separable infinite-dimensional abelian unitary groups with finitely generated fundamental group are quasi-isometric. In fact, they are also quasi-isometric to the Banach space $C([0,1])$.
\end{theorem}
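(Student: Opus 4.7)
The proof reduces, via Theorem~\ref{thm:generalabelianliegrps}, to a purely Banach space question that follows from Miljutin's theorem together with the classical fact that $C([0,1])$ is isomorphic to each of its finite-codimensional complemented subspaces.

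Fix $A=C(X)$ with $X$ compact metrizable and infinite (because $A$ is infinite-dimensional), and write $X=K_1\sqcup\cdots\sqcup K_n$ as the disjoint union of its clopen connected components, finitely many by Lemma~\ref{lem:fundamentalgrpabelianunitary}. The Banach-Lie algebra of $\U_0(A)$ is $C(X,\R)$, and the kernel of $\exp\colon f\mapsto e^{if}$ is the lattice $\Gamma=2\pi C(X,\Int)$, generated by the rescaled characteristic functions $2\pi e_i:=2\pi\mathbbm{1}_{K_i}$. Thus $Y:=\overline{\R\text{-span}(\Gamma)}=\Span(e_1,\dots,e_n)$ is finite-dimensional, $Y/\Gamma$ is a compact torus, and $\Gamma$ is cobounded in $Y$. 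By the last assertion of Theorem~\ref{thm:generalabelianliegrps}, $\U_0(A)$ is therefore quasi-isometric to the Banach space
\[ E_X\;:=\;C(X,\R)\big/\Span(e_1,\dots,e_n). \]

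It remains to prove that $E_X$ is linearly, hence bi-Lipschitz, hence quasi-isometrically isomorphic to $C([0,1])$. Since the $K_i$ are pairwise disjoint clopen sets, we have a Banach-space splitting $C(X,\R)=\bigoplus_{i=1}^n C(K_i,\R)$ with each $e_i$ the constant function in the $i$-th factor, so
\[ E_X\;\cong\;\bigoplus_{i=1}^n \bigl(C(K_i,\R)\big/\R\cdot\mathbbm{1}_{K_i}\bigr). \]
Singleton components give zero summands; for any other $K_i$ (necessarily uncountable compact metric, and at least one such $K_i$ exists because $A$ is infinite-dimensional), Miljutin's theorem yields $C(K_i,\R)\cong C([0,1],\R)$. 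Invoking Pelczynski's classical result that $C([0,1])$ is isomorphic to each of its finite-codimensional complemented subspaces---a consequence of $C([0,1])\cong C([0,1])\oplus C([0,1])$ (Miljutin applied to $[0,1]\sqcup[0,1]$) plus the Pelczynski decomposition method---each nonzero summand is isomorphic to $C([0,1])$. Finally, a finite direct sum of copies of $C([0,1])$ is again $C([0,1])$ by one more application of Miljutin (to a finite disjoint union of copies of $[0,1]$), giving $E_X\cong C([0,1])$.

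The only nonroutine ingredient is Pelczynski's invariance of $C([0,1])$ under finite-dimensional complemented perturbations, which is a classical Banach-space fact I would simply cite; all other steps are direct applications of the tools assembled in Section~\ref{section:explength} and the current section, together with Miljutin's theorem.
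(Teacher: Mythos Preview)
Your proof is correct and follows essentially the same route as the paper: reduce via Theorem~\ref{thm:generalabelianliegrps} to showing that $C(X,\Rea)$ modulo the finite-dimensional span of the characteristic functions of the components is isomorphic to $C([0,1])$, and then conclude using Miljutin's theorem together with the fact that $C(K)$ is isomorphic to its finite-codimensional subspaces. The only cosmetic difference is that the paper applies Miljutin once to the whole space $C(K,\Rea)$ and then invokes the hyperplane result (cited from \cite[Proposition~4.4.1]{Albiacbook}) iteratively, whereas you first split along components, apply Miljutin to each, quotient by the constants there, and reassemble; both arguments rest on the same ingredients.
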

\begin{proof}
Let $\U_0(A)$ be a separable infinite-dimensional abelian unitary group with finitely generated fundamental group, where $A=C(K)$. By Lemma~\ref{lem:fundamentalgrpabelianunitary}, the fundamental group of $\U_0(A)$ is isomorphic to $C(K,\Int)$. Since it is finitely generated, $X:=\mathrm{Span}[C(K,\Int)]=\{f\in C(K,\Rea)\mid f\text{ has finite range}\}$, and so $C(K,\Int)$ is clearly cobounded in $X$. By Theorem~\ref{thm:generalabelianliegrps}, $\U_0(A)$ is quasi-isometric to the Banach space $C(K,\Rea)/X$. Since all hyperplanes in $C(K,\Rea)$ are isomorphic to $C(K,\Rea)$ itself (see \cite[Proposition 4.4.1]{Albiacbook}) and $X$ is finite-dimensional, we get that $C(K,\Rea)\simeq C(K,\Rea)\oplus X$, and thus $C(K,\Rea)/X\simeq C(K,\Rea)$. It follows that $\U_0(A)$ is quasi-isometric to $C(K,\Rea)$ and that is in turn linearly isomorphic, and so quasi-isometric, to $C([0,1])$ by the Miljutin theorem. This finishes the proof.
\end{proof}
\section{The $p$-unitary group $\mathcal{U}_p(M,\tau)$}\label{section:Up}
In this section we study the $p$-unitary groups (see \cite{AM12} where these groups have been introduced) associated with a semifinite von Neumann algebra. 
\begin{definition}\label{def: p-unitary group} Let $(M,\tau)$ be a semifinite von Neumann algebra and a normal faithful semifinite trace on it. Let $1\le p<\infty$. The the {\it $p$-unitary group} of $M$, denoted by $\mathcal{U}_p(M,\tau)$ is the group of all unitaries $u\in \mathcal{U}(M)$ such that $u-1\in L^p(M,\tau)$. The group is endowed with the metric topology given by the following bi-invariant metric 
\[d(u,v):=\|u-v\|_{p},\ \ u,v\in \mathcal{U}_p(M,\tau).\]
In the case $M=\mathbb{B}(\Hil)$ for a Hilbert space $\Hil$ and $\tau$ is the usual trace, the group is called the {\it Schatten $p$-unitary group}  and is denoted by $\mathcal{U}_p(\Hil)$. The group $\mathcal{U}_{\infty}(\Hil)=\{u\in \mathcal{U}(\Hil)\mid u-1\in \mathbb{K}(\Hil)\}$ is sometimes called the {\it Fredholm unitary group}. If $M$ is finite, then $\mathcal{U}_p(M,\tau)=\mathcal{U}(M)_s$ is independent of the choise of $\tau$ and $p$. 
\end{definition}

\begin{fact}\label{fact: U_pM not BanachLie}
We remark that if $M$ is a factor, then $\mathcal{U}_p(M,\tau)$ is a Banach-Lie group if and only if $M$ is of type I.
\end{fact}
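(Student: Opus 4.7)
The ``if'' direction is essentially classical. For $M=\mathbb{B}(\Hil)$ the Schatten $p$-ideal $S_p(\Hil)=L^p(\mathbb{B}(\Hil),\mathrm{Tr})$ is a Banach $*$-ideal with $\|\cdot\|_\infty\le\|\cdot\|_p$, hence the set $\{1+a\mid a\in S_p(\Hil),\,1+a\in\GL(\mathbb{B}(\Hil))\}$ is an open subset of the affine Banach space $1+S_p(\Hil)$ and a Banach-Lie group with Banach-Lie algebra $S_p(\Hil)$. The subgroup $\U_p(\Hil)$ is cut out by the algebraic relation $x^*x=1$, so by the Harris--Kaup theorem \cite{HK77} it is a Banach-Lie subgroup with Lie algebra the skew-adjoint Schatten-$p$ operators. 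Type I$_n$ factors are finite-dimensional, so the claim is trivial there.

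For the ``only if'' direction, suppose $M$ is a semifinite factor not of type I, hence of type II, and assume toward contradiction that $G:=\U_p(M,\tau)$ is a Banach-Lie group with Banach-Lie algebra $\mathfrak{g}$ and exponential $\exp_G\colon\mathfrak{g}\to G$. The key geometric input from $M$ is the existence of a sequence of non-zero projections $(e_n)\subset M$ with $\tau(e_n)\to 0$. Setting $a_n:=2\pi\ri\,e_n\in L^p(M,\tau)$, functional calculus gives
\[
e^{a_n}=(1-e_n)+e^{2\pi\ri}e_n=1,\qquad \|a_n\|_p=2\pi\,\tau(e_n)^{1/p}\to 0,
\]
so I get non-zero skew-adjoint elements of $L^p(M,\tau)$ with vanishing $p$-norm whose exponential is the identity. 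The plan is to transport these elements to $\mathfrak{g}$ and derive a failure of local injectivity of $\exp_G$.

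To do this I construct a continuous linear injection $\xi\colon L^p(M,\tau)_{\mathrm{skew}}\hookrightarrow \mathfrak{g}$ as follows. For each skew-adjoint $a\in L^p(M,\tau)$, the map $t\mapsto e^{ta}$ is a continuous 1-parameter subgroup of $G$, thanks to the spectral estimate $\|e^{ta}-1\|_p\le|t|\|a\|_p$ (valid for skew-adjoint operators affiliated with $M$). By the standard theory of Banach-Lie groups, each such subgroup equals $t\mapsto\exp_G(t\xi(a))$ for a unique $\xi(a)\in\mathfrak{g}$, and uniqueness makes $\xi$ linear and injective: $\xi(a)=0$ forces $e^{ta}\equiv 1$ and hence $a=0$. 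Continuity of $\xi$ is then immediate from the closed graph theorem---if $a_n\to 0$ in $L^p_{\mathrm{skew}}$ and $\xi(a_n)\to\eta$ in $\mathfrak{g}$, then for each fixed $t$, $\exp_G(t\eta)=\lim_n\exp_G(t\xi(a_n))=\lim_n e^{ta_n}=1$, which by local injectivity of $\exp_G$ at $0$ forces $\eta=0$.

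Applying $\xi$ to the sequence $a_n=2\pi\ri\,e_n$ yields $\xi(a_n)\to 0$ in $\mathfrak{g}$ with $\exp_G(\xi(a_n))=1$; for $n$ sufficiently large $\xi(a_n)$ then lies in a neighborhood of $0$ on which $\exp_G$ is a diffeomorphism, forcing $\xi(a_n)=0$ and hence $a_n=0$ by injectivity of $\xi$, contradicting $e_n\ne 0$. The main technical obstacle is the identification of $\mathfrak{g}$ with a Banach space continuously containing $L^p(M,\tau)_{\mathrm{skew}}$ via $\xi$; once the correspondence between continuous 1-parameter subgroups of a Banach-Lie group and its Lie algebra is in hand, together with the functional-calculus estimate for possibly unbounded skew-adjoint operators affiliated with $M$, the closed graph argument and the final contradiction are routine.
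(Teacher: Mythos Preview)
Your approach is correct in spirit and in fact more self-contained than the paper's, but there is a small gap in the execution. The paper argues via the \emph{no small subgroups} (NSS) property: for type~II it exhibits, inside $\mathcal{U}_p(M,\tau)$, a copy of the strong-topology unitary group $\mathcal{U}(eMe)$ of a II$_1$ corner, and then cites an earlier result that the latter has small subgroups. Your route---showing directly that a Banach-Lie exponential cannot be locally injective in the presence of projections $e_n$ with $\tau(e_n)\to 0$---avoids that citation and is arguably cleaner.

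The gap is the linearity of your map $\xi$, which you need for the closed graph theorem. Homogeneity is clear, but additivity $\xi(a+b)=\xi(a)+\xi(b)$ amounts to the Trotter formula $e^{t(a+b)}=\lim_n(e^{ta/n}e^{tb/n})^n$ holding in the $\|\cdot\|_p$-topology. For type~II$_\infty$ with $\tau(1)=\infty$ this does not follow from operator-norm Trotter, and you do not justify it. Fortunately the closed graph machinery is unnecessary: you only need $\xi(a_n)\to 0$ for your specific sequence $a_n=2\pi\ri e_n$, and this follows directly. Pick $r>0$ so that $\exp_G$ is a diffeomorphism on the ball $B_{2r}\subset\mathfrak{g}$; if $\|\xi(a_n)\|\ge\delta>0$ along a subsequence, set $t_n=r/\|\xi(a_n)\|\le r/\delta$, so $\|t_n\xi(a_n)\|=r$ while $\exp_G(t_n\xi(a_n))=e^{t_na_n}\to 1$, forcing $t_n\xi(a_n)=\log_G(e^{t_na_n})\to 0$, a contradiction. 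Even simpler: your construction already produces small subgroups---the circles $\{(1-e_n)+\lambda e_n:|\lambda|=1\}$ have $p$-diameter $2\tau(e_n)^{1/p}\to 0$---so invoking NSS for Banach-Lie groups finishes in one line, and this is exactly the mechanism the paper uses with a larger subgroup.
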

\begin{proof}
If $M$ is of type I, then it is well-known (see. e.g., \cite{dlHbook}) that $\mathcal{U}_p(M,\tau)$ is a Banach-Lie group. On the other hand, suppose that $M$ is a type II factor and $\tau$ is a normal faithful semifinite trace on it. 
Then the group $\mathcal{U}_p(M,\tau)$ has small subgroups, which implies that $\mathcal{U}_p(M,\tau)$ is not a Banach-Lie group. Indeed, for the type II$_1$ case it has small subgroups by \cite[Corollary 3.12]{AM20}, so assume that $M$ is of type II$_{\infty}$. Let $e\in M$ be a nonzero finite projection. Then the group $G_e:=\{u+e^{\perp}\mid u\in \mathcal{U}(eMe)\}$ is a closed subgroup of $\mathcal{U}_p(M,\tau)$. Morevoer, the norm $\|\cdot\|_p$ induces the strong operator topology on $G_e$ (cf. \cite[Lemm 4.18]{AM20}) and $\mathcal{U}(eMe)\ni u\mapsto u+e^{\perp}\in G_e$ is a topological group isomorphism. Again by \cite[Corollary 3.12]{AM20}, $\mathcal{U}(eMe)$ has small subgroups, whence so does $\mathcal{U}_p(M,\tau)$. 
\end{proof}
\begin{remark}
Even though $\mathcal{U}_p(M,\tau)$ is not necessarily a Banach-Lie group, it is easy to check, using Lemma~\ref{lem: Lpunitary}, that for every $u\in \mathcal{U}_p(M,\tau)$ the formula \[\bel{M}(u):=\inf\left \{\sum_{i=1}^n \|a_i\|_p\,\middle|\, (a_i)_{i\leq n}\subseteq L^p(M,\tau)_{sa}\cap M, u=e^{\ri a_1}\cdots e^{\ri a_n}\right \},\] defines a compatible maximal length function as for Banach-Lie groups.
However, because it is more convenient in this case to use the $p$-norm distance, we shall use the distance $d$. Note also that although $\bel{G}$ is both maximal and minimal for a general Banach-Lie group $G$ by Theorem \ref{thm:elismaxmin}, the metric $\bel{M}$ can never be minimal because $\mathcal{U}_p(M,\tau)$ has small subgroups and a topological group with small subgroups does not admit a minimal metric (see \cite[$\S$4]{Ro18-2}).
\end{remark}
\subsection{Haagerup property}
It is shown in \cite[Theorem 4.17 and Theorem 4.28]{AM20} that if $M$ is a type II$_{\infty}$ factor, then $\mathcal{U}_p(M)$ does not have Property (FH). We show here that it actually has the Haagerup property. 

\begin{theorem}\label{thm: properaction of U_p(M)}Let $(M,\tau)$ be as in Definition \ref{def: p-unitary group} and $1\le p<\infty$. The following statements hold. 
\begin{list}{}{}
\item[{\rm{(i)}}] The compatible bi-invariant metric on $\mathcal{U}_p(M,\tau)$ given by $d(u,v):=\|u-v\|_p\ (u,v\in \mathcal{U}_p(M,\tau))$ is a maximal metric. It is unbounded if and only if $M$ has infinite direct summand.    
\item[{\rm{(ii)}}] 
The group $\mathcal{U}_p(M,\tau)$ admits the following continuous coarsely proper affine isometric action on $L^p(M,\tau)$ given by  
\[u\cdot x:=ux+(u-1),\ \ u\in \mathcal{U}_p(M,\tau),\,\,x\in L^p(M,\tau).\]
In particular, the group $\mathcal{U}_2(M,\tau)$ has the Haagerup property.
\end{list} 
\end{theorem}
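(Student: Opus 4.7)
The strategy for part (i) is to leverage the fact, asserted in the remark preceding the theorem, that the exponential length $\bel{M}$ is a compatible maximal length function on $\mathcal{U}_p(M,\tau)$, and to show that $d$ is bi-Lipschitz equivalent to $\bel{M}$ (hence also maximal). Bi-invariance of $d$ is immediate from unitary invariance of $\|\cdot\|_p$. For the comparison, I would apply Borel functional calculus: for each $u\in \mathcal{U}_p(M,\tau)$ choose a measurable branch $f\colon \T\to[-\pi,\pi]$ of the argument with $e^{\ri f(z)}=z$ and set $h:=f(u)\in M_{\mathrm{sa}}$. The elementary estimates $\tfrac{2}{\pi}|t|\leq |e^{\ri t}-1|\leq |t|$ for $|t|\leq \pi$, combined with the spectral theorem (cf.\ Lemma~\ref{lem: Lpunitary}), yield $\|u-1\|_p \leq \|h\|_p \leq \tfrac{\pi}{2}\|u-1\|_p$; in particular $h\in L^p(M,\tau)\cap M$, whence $\bel{M}(u)\leq \|h\|_p\leq \tfrac{\pi}{2}d(u,1)$. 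Conversely, iterating the triangle inequality $\|u-1\|_p \leq \|e^{\ri a_1}-1\|_p + \|e^{\ri a_2}\cdots e^{\ri a_n}-1\|_p$ together with $\|e^{\ri a}-1\|_p\leq \|a\|_p$ gives $d(u,1)\leq \bel{M}(u)$.

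The boundedness dichotomy in (i) I would settle by explicit construction: if $M$ has an infinite direct summand, there exist finite $\tau$-projections $p_n$ with $\tau(p_n)\to \infty$, and the unitaries $u_n:=1-2p_n$ satisfy $d(u_n,1)=2\tau(p_n)^{1/p}\to \infty$; in the contrary case $\tau(1)<\infty$, whence $\|u-v\|_p\leq 2\tau(1)^{1/p}$ is uniformly bounded.

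For part (ii), I would first verify that $u\cdot x:= ux+(u-1)$ defines an affine isometric action: the linear part $x\mapsto ux$ is an isometry of $L^p(M,\tau)$ by unitary invariance, while the translation $b(u):=u-1$ satisfies the cocycle identity $b(uv)=u\cdot b(v)+b(u)$ since $uv-1=u(v-1)+(u-1)$. Joint continuity reduces to strong continuity of left multiplication on $L^p(M,\tau)$: for $u_n\to u$ in $\|\cdot\|_p$ and $x\in L^p(M,\tau)$, a standard $\varepsilon/3$ argument approximating $x$ by bounded $x_k\in M\cap L^p$ and using $\|(u_n-u)x_k\|_p\leq \|u_n-u\|_p\|x_k\|_\infty$ does the job. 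Coarse properness of the action is then essentially tautological: the cocycle satisfies $\|b(u)\|_p=\|u-1\|_p=d(u,1)$, which tends to infinity whenever $u$ leaves bounded subsets of $\mathcal{U}_p(M,\tau)$, since $d$ is maximal (hence coarsely proper) by (i). For $p=2$, $L^2(M,\tau)$ is a Hilbert space, so this furnishes a continuous coarsely proper affine isometric action on a Hilbert space, establishing the Haagerup property for $\mathcal{U}_2(M,\tau)$.

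The principal obstacle is the functional-calculus comparison in part (i): one must verify that $h=f(u)$ genuinely lies in $L^p(M,\tau)$ with the claimed norm bound, which requires the pointwise inequalities on $\T$ to transfer to $p$-norms via the spectral theorem. This is precisely the content of Lemma~\ref{lem: Lpunitary} as referenced in the remark, and without that input one would have to redo the spectral estimate by hand.
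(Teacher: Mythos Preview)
Your proposal is correct, and part (ii) follows the paper almost verbatim (the paper is even terser: it simply notes that $\|b(u)-b(1)\|_p=d(u,1)$ and declares the rest straightforward).

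For part (i), however, you take a genuinely different route. The paper does \emph{not} invoke the remark on $\bel{M}$; instead it verifies Rosendal's criterion directly for $d$: writing $u=e^{\ri a}$ via Lemma~\ref{lem: Lpunitary}, it shows that the discrete path $u_j=e^{\ri ja/k}$ witnesses both coarse properness and large-scale geodesicity (with constant $K=2$), using the same spectral inequality $\tfrac{1}{2}\|a\|_p\le\|e^{\ri a}-1\|_p\le\|a\|_p$ that you use. Your approach---proving $d$ is bi-Lipschitz to $\bel{M}$ and inheriting maximality from the remark---is cleaner conceptually, but note that it defers the real work: the remark's assertion that $\bel{M}$ is maximal is stated without proof, and any honest verification of it (e.g.\ checking that $\bel{M}$ induces a length metric, or checking Rosendal's criterion for $\bel{M}$) requires essentially the same discrete-path argument the paper gives for $d$. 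So your proof is shorter on the page but not in total; the paper's version is more self-contained. Your treatment of the unboundedness dichotomy via explicit projections $p_n$ with $\tau(p_n)\to\infty$ is more concrete than the paper's one-line ``$d$ unbounded iff $\tau$ not finite iff $M$ has infinite direct summand''.
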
 
\begin{lemma}\label{lem: Lpunitary}
Let $M$ be a semifinite von Neumann algebra with a faithful normal semifinite trace $\tau$. Then every $u\in \mathcal{U}_p(M,\tau)\ (1\le p<\infty)$ is of the form $u=e^{\ri a}$, where $a\in L^p(M,\tau)_{\rm{sa}}\cap M$. Moreover, the following inequality holds. 
\begin{equation}
\tfrac{1}{2}\|a\|_p\le \|e^{\ri a}-1\|_p\le \|a\|_p,\,\,a\in L^p(M,\tau)_{\rm{sa}}\label{eq: exp length and p-norm are bilipschitz}
\end{equation}
\end{lemma}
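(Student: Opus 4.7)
The plan is to define $a$ via Borel functional calculus and reduce the $p$-norm inequality to an elementary scalar inequality.

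First, I would apply the Borel functional calculus for $u \in \mathcal{U}_p(M,\tau) \subseteq \mathcal{U}(M)$. Choose the branch $\log \colon \mathbb{T} \to \ri[-\pi,\pi]$ given by $\log(e^{\ri\theta}) = \ri\theta$ for $\theta \in [-\pi,\pi]$ (with a fixed convention at $-1$); this is a bounded Borel function on the spectrum of $u$. Set $a := -\ri \log(u) \in M$. Then $a$ is self-adjoint with $\|a\|_\infty \leq \pi$, and by the multiplicative property of the Borel calculus, $e^{\ri a} = u$. Since $M \subseteq L^0(M,\tau)$ (the space of $\tau$-measurable operators), $a$ is already $\tau$-measurable; we still have to check that $a \in L^p(M,\tau)$, which will follow from the norm inequality below.

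Next, I would establish the scalar estimate: for every $\theta \in [-\pi,\pi]$,
\[
\tfrac{2}{\pi}|\theta| \;\leq\; |e^{\ri\theta}-1| \;=\; 2\bigl|\sin(\theta/2)\bigr| \;\leq\; |\theta|,
\]
using the standard facts $|\sin x| \leq |x|$ and $|\sin x| \geq \tfrac{2}{\pi}|x|$ on $[-\pi/2,\pi/2]$. Applying Borel functional calculus to $a$ (whose spectrum lies in $[-\pi,\pi]$) turns this scalar inequality into the operator inequality
\[
\tfrac{2}{\pi}|a| \;\leq\; |e^{\ri a}-1| \;=\; |u-1| \;\leq\; |a|
\]
as positive $\tau$-measurable operators.

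Finally, I would invoke monotonicity of the non-commutative $L^p$-norm on positive operators (equivalently, monotonicity of generalized singular numbers in the operator order, which is standard; see e.g.\ Nelson \cite{Nelson74} or \cite[Ch.~IX.2]{Takesakibook2}). This yields
\[
\tfrac{2}{\pi}\|a\|_p \;\leq\; \|u-1\|_p \;\leq\; \|a\|_p.
\]
In particular, $\|a\|_p \leq \tfrac{\pi}{2}\|u-1\|_p < \infty$, so $a \in L^p(M,\tau)_{\mathrm{sa}}\cap M$. Since $\tfrac{2}{\pi} > \tfrac{1}{2}$, this implies the stated bilipschitz bound $\tfrac{1}{2}\|a\|_p \leq \|e^{\ri a}-1\|_p \leq \|a\|_p$.

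The argument is essentially routine, and I do not foresee a real obstacle; the only point requiring a little care is the application of functional calculus through the singular point $-1 \in \mathbb{T}$, which is handled cleanly by using the \emph{Borel} calculus (rather than the continuous calculus) so that the spectral projection of $u$ at $\{-1\}$ is absorbed without issue into the definition of $a$.
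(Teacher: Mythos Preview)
Your argument is essentially the same as the paper's: define $a$ by (Borel) functional calculus with $\|a\|_\infty\le\pi$, and reduce the $p$-norm comparison to the scalar inequality on $[-\pi,\pi]$. The paper uses $\tfrac14\theta^2\le 2(1-\cos\theta)\le\theta^2$ where you use Jordan's inequality $\tfrac{2}{\pi}|\theta|\le 2|\sin(\theta/2)|\le|\theta|$; your constant $\tfrac{2}{\pi}$ is in fact sharper than the paper's $\tfrac12$, but otherwise the proofs are identical in spirit.

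One remark: you establish \eqref{eq: exp length and p-norm are bilipschitz} only for the specific $a$ with $\|a\|_\infty\le\pi$, whereas the lemma asserts it for \emph{all} $a\in L^p(M,\tau)_{\mathrm{sa}}$. The paper handles this by a one-line density appeal, but note that the lower bound is actually \emph{false} without the spectral restriction (take $a=2\pi e$ for any nonzero $\tau$-finite projection $e$: then $e^{\ri a}=1$ while $\|a\|_p>0$). The upper bound does hold for all $a\in L^p(M,\tau)_{\mathrm{sa}}$ since $|e^{\ri\theta}-1|\le|\theta|$ on all of $\mathbb{R}$, but the two-sided estimate is only valid under $\|a\|_\infty\le\pi$, which is the case that you prove and the only case used later in the paper.
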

\begin{proof}
Let $u\in \mathcal{U}_p(M,\tau)$. Then by spectral theory $u=e^{\ri a}$ for some $a=a^*\in M$ with $\|a\|_{\infty}\le \pi$. Let $a=\int_{[-\pi,\pi]}\lambda\,de(\lambda)$ be the spectral resolution of $a$. Then 
\[\|u-1\|_p^p=\tau(|e^{\ri a}-1|^p)=\int_{[-\pi,\pi]}\left (2(1-\cos \lambda)\right )^{\frac{p}{2}}\,d\tau(e(\lambda)).\]
By the elementary inequality 
\begin{equation}
\tfrac{1}{4}x^2\le 2(1-\cos x)\le x^2\ (|x|\le \pi)\label{eq: very elementary ineq},
\end{equation}
it follows that 
\[2^{-p}\|a\|_p^p=\int_{[-\pi,\pi]}(\tfrac{\lambda^2}{4})^{\frac{p}{2}}\,d\tau(e(\lambda))\le \|u-1\|_p^p<\infty,\]
which shows that $a\in L^p(M,\tau)$ and $\tfrac{1}{2}\|a\|_p\le \|e^{\ri a}-1\|_p$. 
By (\ref{eq: very elementary ineq}) again, we see that $\tfrac{1}{2}\|a\|_p\le \|e^{\ri a}-1\|_p\le \|a\|_p$ holds for every $a\in M\cap L^p(M,\tau)_{\rm{sa}}$, whence by density for every $a\in L^p(M,\tau)_{\rm{sa}}$.  
\end{proof}
\begin{proof}[Proof of Theorem \ref{thm: properaction of U_p(M)}]

(i) We use Proposition~\ref{prop:maximalmetric}: $d$ is maximal if and only if $(\mathcal{U}_p(M,\tau),d)$ is coarsely proper and large scale geodesic (recall the definitions from Definitions~\ref{def:coarselyproper} and~\ref{def:largescalegeodesic}, respectively). 
First we show that $d$ is coarsely proper.
Let $\Delta,\delta>0$, and $u\in \mathcal{U}_p(M,\tau)$ with $\|u-1\|_p<\Delta$ be given. Fix $k\in \mathbb{N}$ such that $\max(\frac{\pi}{\delta},\frac{2\Delta}{\delta})<k$ holds. By Lemma \ref{lem: Lpunitary}, $u$ is of the form $u=e^{\ri a}$ where $a=a^*\in L^p(M,\tau)\cap M_{{\rm sa}}$ and the inequality (\ref{eq: exp length and p-norm are bilipschitz}) in Lemma \ref{lem: Lpunitary} holds. 

Define $u_j=e^{\ri \frac{ja}{k}}\ (0\le j\le k)$. Then $u_0=1,\,u_k=u$ and for each $0\le j\le k-1$, 
$$d(y_{j+1},y_j)=\|e^{\ri \frac{a}{k}}-1\|_p\le \tfrac{1}{k}\|a\|_p\le \tfrac{2}{k}\|e^{\ri a}-1\|_p<\delta.$$
Therefore $d$ is coarsely proper. 

Next we check that $d$ is large scale geodesic. Again we may check this condition only for $1$ and $u$ arbitrary. Let $u\in \mathcal{U}_p(M,\tau)$ and write $u=e^{\ri a}$ where $a=a^*\in L^p(M,\tau)\cap M_{\rm{sa}}$ and $a$ satisfies (\ref{eq: exp length and p-norm are bilipschitz}). We will show that $K=2$ works and that $n$ is chosen to satisfy $n\le 2d(x,y)$.  
If $\|a\|_p\le 1$, then $\|u-1\|_p\le \|a\|_p\le 2$. Assume next that $\|a\|_p\ge 1$. Then choose $n\in \mathbb{N}$ so large that $\|a\|_2\le 2n\le 2\|a\|_2$. 
Note that $n\le \|a\|_p\le 2\|u-1\|_p$ holds. Then let $u_j=e^{\ri \frac{j}{n}a}\ (0\le j\le n)$, so that $u_0=1,\,u_n=u$ and for each $0\le j\le n-1$, 
$$\|u_{j+1}-u_j\|_p=\|e^{\ri \frac{a}{n}}-1\|_p\le \tfrac{1}{n}\|a\|_p\le 2.$$
 Moreover, 
$$\sum_{j=0}^{n-1}\|u_{j+1}-u_j\|_p=n\|e^{\ri \frac{a}{n}}-1\|_p\le \|a\|_p\le 2\|e^{\ri a }-1\|_p=2\|u-1\|_p.$$
This shows that $d$ is large scale geodesic with constant $K=2$. Therefore, $d$ is a maximal metric. The last statement is immediate, because $d$ is unbounded if and only if $\tau$ is not finite, if and only if $M$ has nonzero infinite direct summand. 

(ii) It is straightforward to see that the action is affine isometric and continuous. Note that the associated 1-cocycle $b(u)=u-1\ (u\in \mathcal{U}_p(M,\tau))$ defines a coarse embedding of $\mathcal{U}_p(M,\tau)$ into $L^p(M,\tau)$ because of the equality $\|b(u)-b(1)\|_p=d(u,1)\,(u\in \mathcal{U}_p(M,\tau))$.  
\if0
for a sequence $(u_n)_{n=1}^{\infty}$, we have the equality
\[d(u_n,1)=\|u_n-1\|_p= \|b(u_n)-b(1)\|_p\stackrel{n\to \infty}{\to}\infty.\]
\fi 
This finishes the proof.
\end{proof}

\subsection{Amenability} 
Recall that a topological group $G$ is \textit{amenable} if any continuous affine action of $G$ on a non-empty compact convex subset of a locally convex topological vector space has a fixed point (see \cite[Appendix G]{BdHV} for equivalent definitions and basic properties).  
It is known that locally compact amenable Polish groups have the Haagerup property (see \cite[Examples 2.4 (2)]{Jol00}). The same implication is no longer true for general Polish groups. In \cite[Theorem 3]{Ros17}, Rosendal showed that an amenable Polish group has the Haagerup property if and only if it coarsely embeds into a Hilbert space. We show that the groups $\mathcal{U}_p(M,\tau)$ are typically non-amenable.

\begin{theorem}\label{thm: U_p(M) amenable iff M amenable}
Let $M$ be a ${\rm II}_{\infty}$ factor acting on a separable Hilbert space and $\tau$ a normal faithful semifinite trace on $M$.The following three conditions are equivalent. 
\begin{itemize}
    \item[{\rm{(i)}}] $\U_p(M,\tau)$ is amenable for some $1\le p<\infty$.
    \item[{\rm{(ii)}}] $\U_p(M,\tau)$ is amenable for every $1\le p<\infty$.
    \item[{\rm{(iii)}}] $M$ is hyperfinite. 
\end{itemize}
\end{theorem}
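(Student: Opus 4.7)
The plan is to establish the chain (ii)$\Rightarrow$(i), (iii)$\Rightarrow$(ii), (i)$\Rightarrow$(iii), with the first implication being trivial. The two substantive directions both rest on Connes' theorem (Theorem~\ref{thm: Connes}) relating hyperfiniteness to Schwartz's property P.

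For (iii)$\Rightarrow$(ii), I plan to realize $\U_p(M,\tau)$ as the closure of an increasing union of amenable subgroups. Fix an increasing sequence $(p_n)_{n=1}^{\infty}$ of nonzero finite projections in $M$ with $p_n\to 1$ strongly. Since $M$ is hyperfinite, so is each corner $p_nMp_n$; being a factor with finite trace, it is then either finite-dimensional or a hyperfinite $\mathrm{II}_1$ factor, so its unitary group equipped with SOT is amenable by de la Harpe's classical theorem. The map $u\mapsto u+p_n^\perp$ embeds $\U(p_nMp_n)$ as a subgroup $H_n\subseteq \U_p(M,\tau)$, and on $H_n$ the $p$-norm topology coincides with SOT because $\tau|_{p_nMp_n}$ is finite. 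Density of $\bigcup_n H_n$ in $\U_p(M,\tau)$ will follow by writing $u=e^{\ri a}$ as in Lemma~\ref{lem: Lpunitary}, setting $a_n:=p_nap_n$, noting that $a_n\to a$ in $p$-norm by a standard noncommutative dominated-convergence argument, and applying the Lipschitz estimate
\[\|e^{\ri a_n}-e^{\ri a}\|_p\le \|a_n-a\|_p,\]
which I derive from the identity $e^{\ri a}-e^{\ri b}=\ri\int_0^1 e^{\ri sa}(a-b)e^{\ri(1-s)b}\,ds$ (using $a$-commutation with $e^{\ri sa}$ and that unitaries preserve the $p$-norm). Amenability then transfers from $\bigcup_n H_n$ to its closure via the finite-intersection property: for any continuous affine $\U_p(M,\tau)$-action on a compact convex $K$, the decreasing chain of nonempty closed $H_n$-fixed-point sets has nonempty intersection, and any such common fixed point is automatically $\U_p(M,\tau)$-fixed by continuity of the action and density.

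For (i)$\Rightarrow$(iii), assume $G:=\U_p(M,\tau)$ is amenable for some $p\in[1,\infty)$. I will verify Schwartz's property P for $M$. Given $x\in \mathbb{B}(\Hil)$, the set $K_x:=\overline{\mathrm{co}}^{\mathrm{WOT}}\{uxu^*\mid u\in G\}$ is WOT-compact convex in the $\|x\|$-ball. The conjugation action $G\curvearrowright K_x$ is WOT-continuous: $p$-norm convergence of unitaries in $M$ implies convergence in measure (Nelson) and hence SOT-convergence on $\Hil$ on the bounded set of unitaries, whence $u\mapsto \langle xu^*\xi,u^*\eta\rangle$ is continuous for all $\xi,\eta\in\Hil$, using that $u\mapsto u^*$ preserves the $p$-norm. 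Amenability then provides a fixed point $y\in K_x\cap G'$. To conclude $y\in M'$, I will establish $G''=M$: for each nonzero finite projection $q\in M$, $G$ contains the subgroup $\U(qMq)+q^\perp$, so $G''\supseteq qMq$, and semifiniteness of $\tau$ yields $M=\bigvee_{q\text{ finite}}qMq\subseteq G''$. Thus $K_x\cap M'\ne\emptyset$ for every $x\in \mathbb{B}(\Hil)$, which is Schwartz's property P for $M$, and Connes' theorem forces $M$ to be hyperfinite.

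The main obstacle I foresee lies in the technical approximation steps for (iii)$\Rightarrow$(ii): the $p$-norm convergence $p_nap_n\to a$ for $a\in L^p(M,\tau)\cap M_{\mathrm{sa}}$, and the exponential Lipschitz estimate. Both are standard in noncommutative integration but warrant careful justification in the semifinite setting. A secondary delicacy is ensuring that Nelson's passage from $p$-norm to SOT convergence on bounded sets transfers from the standard representation $L^2(M,\tau)$ to the given normal representation on $\Hil$, which is needed for the WOT-continuity of the conjugation action in (i)$\Rightarrow$(iii).
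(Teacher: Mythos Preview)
Your proposal is correct. For (i)$\Rightarrow$(iii) you follow essentially the paper's argument: the paper takes the closed convex hull over $u\in\U(M)$ rather than $u\in G$, and verifies $y\in M'$ via the symmetries $1-2q$ for finite projections $q$ instead of via $G''=M$, but these are cosmetic differences.

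For (iii)$\Rightarrow$(ii) you take a genuinely different route. The paper works inside the concrete realization $M=R\,\overline{\otimes}\,\mathbb{B}(\Hil)$, builds explicit finite-dimensional subalgebras $M_n=R_n\otimes f_n\mathbb{B}(\Hil)f_n\oplus\C1\otimes f_n^{\perp}$, and uses the \emph{compact} subgroups $G_n=\U(M_n)\cap\U_p(M,\tau)$. Density of $\bigcup_n G_n$ then requires a fairly intricate approximation scheme (Lemmas~\ref{lem: approx by finspec}, \ref{lem: p-continuity of exp}, \ref{lem: continuity of Lprep} and Proposition~\ref{prop: p-approx}) passing through finite-spectrum elements and Murray--von Neumann comparison. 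Your argument instead uses the amenable but non-compact corner subgroups $H_n=\U(p_nMp_n)+p_n^{\perp}$ for finite projections $p_n\nearrow 1$, with density coming from $p_nap_n\to a$ in $L^p$ together with the one-line Lipschitz bound $\|e^{\ri a}-e^{\ri b}\|_p\le\|a-b\|_p$ obtained from the Duhamel integral $e^{\ri a}-e^{\ri b}=\ri\int_0^1 e^{\ri sa}(a-b)e^{\ri(1-s)b}\,ds$. This is more streamlined, does not rely on the specific hyperfinite $\mathrm{II}_\infty$ model, and sidesteps the uniform $\|\cdot\|_\infty$-boundedness hypothesis the paper needs in Lemma~\ref{lem: p-continuity of exp}. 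The price is that amenability of $H_n$ is not automatic: you invoke de~la~Harpe's theorem that $\U(N)_s$ is amenable for a hyperfinite $\mathrm{II}_1$ factor $N$, whereas the paper's compact $G_n$ are amenable for free.
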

Thus $\mathcal{U}_p(M,\tau)$ is nonamenable if $M$ is not hyperfinite. The proof of Theorem \ref{thm: U_p(M) amenable iff M amenable} is done by standard approximation arguments. However, because $M$ is not of finite type, a care must be taken of the subtle relationship among the strong operator topology,  the topologies given by the $\|\cdot\|_p$-norms and the operator norm. We denote the operator norm by $\|\cdot\|_{\infty}$.  

\begin{lemma}\label{lem: approx by finspec}
Let $M$ be a semifinite von Neumann algebra with a normal faithful semifinite trace $\tau$ and let $1\le p<\infty$. 
Let $a\in L^p(M,\tau)_{\rm{sa}}$ and $\varepsilon>0$. Then there exists $a_0\in M_{\rm sa}\cap L^p(M,\tau)$ with finite spectrum such that $\|a-a_0\|_p<\varepsilon$ holds.  If moreover $a\in M$, then $a_0$ can be chosen to satisfy $\|a_0\|_{\infty}\le \|a\|_{\infty}$ and $a_0a=aa_0$.
\end{lemma}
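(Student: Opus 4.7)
The plan is to reduce everything to spectral calculus applied to the self-adjoint decomposition $a=\int_{\mathbb{R}}\lambda\,de(\lambda)$ and to approximate $a$ by a spectral simple function $a_0=\sum_{k}\lambda_k\,e(I_k)$, i.e.\ a finite linear combination of orthogonal spectral projections of $a$. Such an $a_0$ automatically lies in the abelian von Neumann subalgebra generated by $a$, so the commutation $a_0a=aa_0$ and the bound $\|a_0\|_\infty\le\|a\|_\infty$ will come for free from the construction, and the entire game will be to make the $p$-norm error small.

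First I would treat the bounded case $a\in M\cap L^p(M,\tau)_{\rm sa}$, writing $N:=\|a\|_\infty$ so that the spectrum of $a$ lies in $[-N,N]$. The main obstacle here will be that $\tau(e([-\delta,\delta]))$ can be infinite: in a non-finite semifinite trace the spectral projection around $0$ need not be trace-finite, so one cannot naively partition $[-N,N]$ into finitely many pieces whose spectral projections are in the domain of $\tau$. The remedy will be to excise a neighborhood of $0$. Since $\|a\|_p^p=\int_{\mathbb{R}}|\lambda|^p\,d\tau(e(\lambda))<\infty$, monotone convergence yields $\|a\,\mathbbm{1}_{(-\delta,\delta)}(a)\|_p\to 0$ as $\delta\downarrow 0$, so I would choose $\delta>0$ so small that this error is below $\varepsilon/2$. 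On the complement, a Chebyshev-type inequality $\delta^p\,\tau(\mathbbm{1}_{[-N,-\delta]\cup[\delta,N]}(a))\le\|a\|_p^p$ guarantees finite trace, allowing a partition of $[-N,-\delta]\cup[\delta,N]$ into finitely many half-open intervals $I_1,\dots,I_n$ of length at most $\eta$, none containing $0$. Picking any $\lambda_k\in I_k$ and setting $a_0:=\sum_k\lambda_k\,e(I_k)$, the standard spectral $L^p$ bound controls the remaining error by $\eta\,\tau(\mathbbm{1}_{[-N,-\delta]\cup[\delta,N]}(a))^{1/p}$, which drops below $\varepsilon/2$ for small $\eta$. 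Since each $\lambda_k$ lies in $[-N,N]$ one gets $\|a_0\|_\infty\le\|a\|_\infty$, and commutation with $a$ is automatic from the Borel functional calculus.

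For the general case $a\in L^p(M,\tau)_{\rm sa}$, possibly unbounded, I would first truncate far from the origin: since $\|a\,\mathbbm{1}_{\{|\lambda|>N\}}(a)\|_p^p=\int_{\{|\lambda|>N\}}|\lambda|^p\,d\tau(e(\lambda))\to 0$ as $N\to\infty$, one picks $N$ so that this tail error is below $\varepsilon/2$, and then applies the bounded case to $a\,\mathbbm{1}_{[-N,N]}(a)\in M\cap L^p(M,\tau)_{\rm sa}$ to approximate it within $\varepsilon/2$ by an element with finite spectrum. The triangle inequality closes the argument. The only genuine technical subtlety throughout is the one isolated above, namely handling spectral mass near $0$; everything else is routine non-commutative $L^p$ functional calculus combined with ordinary measure-theoretic approximation.
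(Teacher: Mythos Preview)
Your proposal is correct and follows essentially the same approach as the paper: approximate $a$ by a finite linear combination of its own spectral projections, ensuring that the coefficient on the interval containing $0$ is zero so that the potentially infinite trace of $e((-\delta,\delta))$ never enters the $p$-norm estimate. The paper encodes this in a single explicit dyadic sequence $a_n=\sum_{k=1}^{n2^n}\tfrac{k-1}{2^n}\big(e(I_{n,k})-e(J_{n,k})\big)$ on $(-n,n)$ (so truncation at infinity and mesh refinement happen simultaneously, and the coefficient on the intervals touching $0$ is automatically $0$), whereas you split the argument into an explicit excision near $0$, a Chebyshev bound on the complement, and a separate truncation at infinity for the unbounded case---but the underlying mechanism is identical.
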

\begin{proof}
Let $\disp a=\int_{\R}\lambda\,{\rm d}e(\lambda)$ be the spectral resolution of $a$. For each $n\in \N$ and $k=1,\dots, n2^n$, let $I_{n,k}=\left [\tfrac{k-1}{2^n},\tfrac{k}{2^n}\right )$. Also let $J_{n,k}=\left (-\tfrac{k}{2^n},-\tfrac{k-1}{2^n}\right ]$, for $k=2,\dots, n2^n$, and $J_{n,1}=\left (-\tfrac{1}{2^n},0\right )$. Then $\{I_{n,k},J_{n,k}\}_{k=1}^{n2^n}$ is a measurable partition of $(-n,n)$. Define $a_n=\sum_{k=1}^{n2^n}\frac{k-1}{2^n}(1_{I_{n,k}}-1_{J_{n,k}})\in M_{\rm sa}$, which has finite spectrum and $a_n\in L^p(M,\tau)$ by 
$$\|a_n\|_p^p\le \int_{(-n,n)}\lambda^p\,{\rm d}\tau(e(\lambda))\le \|a\|_p^p<\infty.$$
It is also clear that when $a$ is bounded, then $aa_n=a_na$ and $\|a_n\|_{\infty}\le \|a\|_{\infty}$ hold for every $n$. 
By a standard argument using spectral theory, one can then show that  $\disp \lim_{n\to \infty}\|a-a_n\|_p=0$. This implies the claim.
\end{proof}
\if0
\begin{lemma}\label{lem: finite spec}
\if0 2-ver
Let $M$ be a semifinite von Neumann algebra with a normal faithful semifinite trace $\tau$. 
Let $a\in M_{\rm sa}\cap L^2(M,\tau)$ and $\varepsilon>0$. Then there exists $a_0\in M_{\rm sa}\cap L^2(M,\tau)$ with finite spectrum such that $\|a-a_0\|_2<\varepsilon$, $\|a_0\|_{\infty}\le \|a\|_{\infty}$ and $a_0a=aa_0$ hold. 
\fi 
Let $M$ be a semifinite von Neumann algebra with a normal faithful semifinite trace $\tau$ and let $1\le p<\infty$. 
Let $a\in M_{\rm sa}\cap L^p(M,\tau)$ and $\varepsilon>0$. Then there exists $a_0\in M_{\rm sa}\cap L^p(M,\tau)$ with finite spectrum such that $\|a-a_0\|_p<\varepsilon$, $\|a_0\|_{\infty}\le \|a\|_{\infty}$ and $a_0a=aa_0$ hold. 
 \end{lemma}
\begin{proof}
\if0 2-ver
Let $\disp a=\int_{\R}\lambda\,{\rm d}e(\lambda)$ be the spectral resolution of $a$. For each $n\in \N$ and $k=1,\dots, n2^n$, let $I_{n,k}=\left [\tfrac{k-1}{2^n},\tfrac{k}{2^n}\right )$. Also let $J_{n,k}=\left (-\tfrac{k}{2^n},-\tfrac{k-1}{2^n}\right ]$, for $k=2,\dots, n2^n$, and $J_{n,1}=\left (-\tfrac{1}{2^n},0\right )$. Then $\{I_{n,k},J_{n,k}\}_{k=1}^{n2^n}$ is a measurable partition of $(-n,n)$. Define $a_n=\sum_{k=1}^{n2^n}\frac{k-1}{2^n}(1_{I_{n,k}}-1_{J_{n,k}})\in M_{\rm sa}$, which has finite spectrum and $a_n\in L^2(M,\tau)$ by 
$$\|a_n\|_2^2\le \int_{(-n,n)}\lambda^2\,{\rm d}\tau(e(\lambda))\le \|a\|_2^2<\infty.$$
It is also clear that $aa_n=a_na$ and $\|a_n\|_{\infty}\le \|a\|_{\infty}$ hold for every $n$. 
By standard argument using spectral theory, one can then show that  $\disp \lim_{n\to \infty}\|a-a_n\|_2=0$. This implies the claim.
\fi 
Let $\disp a=\int_{\R}\lambda\,{\rm d}e(\lambda)$ be the spectral resolution of $a$. For each $n\in \N$ and $k=1,\dots, n2^n$, let $I_{n,k}=\left [\tfrac{k-1}{2^n},\tfrac{k}{2^n}\right )$. Also let $J_{n,k}=\left (-\tfrac{k}{2^n},-\tfrac{k-1}{2^n}\right ]$, for $k=2,\dots, n2^n$, and $J_{n,1}=\left (-\tfrac{1}{2^n},0\right )$. Then $\{I_{n,k},J_{n,k}\}_{k=1}^{n2^n}$ is a measurable partition of $(-n,n)$. Define $a_n=\sum_{k=1}^{n2^n}\frac{k-1}{2^n}(1_{I_{n,k}}-1_{J_{n,k}})\in M_{\rm sa}$, which has finite spectrum and $a_n\in L^p(M,\tau)$ by 
$$\|a_n\|_p^p\le \int_{(-n,n)}\lambda^p\,{\rm d}\tau(e(\lambda))\le \|a\|_p^p<\infty.$$
It is also clear that $aa_n=a_na$ and $\|a_n\|_{\infty}\le \|a\|_{\infty}$ hold for every $n$. 
By standard argument using spectral theory, one can then show that  $\disp \lim_{n\to \infty}\|a-a_n\|_p=0$. This implies the claim.
\if0 omit proof
For each $n$, we have  
\eqa{
\|a-a_n\|_2^2&=\sum_{k=1}^{n2^n}\left \{\int_{I_{n,k}}\left |\lambda-\frac{k-1}{2^n}\right |^2\,{\rm d}\tau(e(\lambda))+\int_{J_{n,k}}\left |\lambda+\frac{k-1}{2^n}\right |^2\,{\rm d}\tau(e(\lambda))\right \}+\varepsilon_n\\
&\le \sum_{k=1}^{n2^n}\frac{1}{4^n}\tau(e(I_{n,k}\cup J_{n,k}))+\varepsilon_n,
}
where $\varepsilon_n=\int_{|\lambda|\ge n}\lambda^2\,{\rm d}\tau(e(\lambda))$ tends to $0$ as $n\to \infty$. Fix $N\in \N$. Let $n\in \N$ be such that $n2^n\ge N$. 
Then 
\eqa{
\sum_{k=1}^N\frac{1}{4^n}\tau(e(I_{n,k}\cup J_{n,k}))&\le \sum_{k=1}^N\frac{(k-1)^2}{4^n}\tau(e(I_{n,k}\cup J_{n,k}))\\
&\le \sum_{k=1}^N\int_{I_{n,k}\cup J_{n,k}}\lambda^2\,{\rm d}\tau(e(\lambda))\\
&=\int_{|\lambda|<\frac{N}{2^n}}\lambda^2\,{\rm d}\tau(e(\lambda))\\
&\stackrel{n\to \infty}{\to}0.
}
On the other hand, by $|\lambda|\ge \frac{N}{2^n}$ for every $\lambda\in I_{n,k}\cup J_{n,k}\,(k=N+1,\dots,n2^n)$, we have 
\eqa{
\sum_{k=N+1}^{n2^n}\frac{1}{4^n}\tau(e(I_{n,k}\cup J_{n,k}))&\le \sum_{k=N+1}^{n2^n}\int_{I_{n,k}\cup J_{n,k}}\frac{\lambda^2}{N^2}\,{\rm d}\tau(e(\lambda))\\
&\le \frac{1}{N^2}\|a\|_2^2.
}
It follows that $\disp \limsup_{n\to \infty}\|a-a_n\|_2\le \tfrac{1}{N}\|a\|_2$. Since $N$ is arbitrary, we obtain $\disp \lim_{n\to \infty}\|a-a_n\|_2=0$ and the lemma is proved. 
\fi 
\end{proof}
\fi 
It is not clear to us whether the map $L^p(M,\tau)_{\rm{sa}}\ni a\mapsto e^{\ri a}\in \mathcal{U}_p(M,\tau)$ is continuous. However, the next lemma is sufficient for our purpose. 
\begin{lemma}\label{lem: p-continuity of exp}
\if0 2-ver Let $M$ be a semifinite von Neumann algebra with a normal faithful semifinite trace $\tau$. Let $a\in M_{\rm sa}\cap L^2(M,\tau)$ and $(a_n)_{n=1}^{\infty}$ be a sequence of elements in $M_{\rm sa}\cap L^2(M,\tau)$ such that $\sup_n\|a_n\|_{\infty}<\infty$ and $\disp \lim_{n\to \infty}\|a_n-a\|_2=0$. Then 
\begin{list}{}{}
\item[{\rm (i)}] $\disp \lim_{n\to \infty}\left \|e^{\ri a}-\sum_{k=0}^n\frac{\ri^ka^k}{k!}\right \|_2=0.$
\item[{\rm (ii)}]$\disp \lim_{n\to \infty}\|e^{\ri a_n}-e^{\ri a}\|_2=0$. 
\end{list}
\fi 
Let $M$ be a semifinite von Neumann algebra with a normal faithful semifinite trace $\tau$, and let $1\le p<\infty$. Let $a\in M_{\rm sa}\cap L^p(M,\tau)$ and $(a_n)_{n=1}^{\infty}$ be a sequence of elements in $M_{\rm sa}\cap L^p(M,\tau)$ such that $\sup_n\|a_n\|_{\infty}<\infty$ and $\disp \lim_{n\to \infty}\|a_n-a\|_p=0$. Then 
\begin{list}{}{}
\item[{\rm (i)}] $\disp \lim_{n\to \infty}\left \|e^{\ri a}-\sum_{k=0}^n\frac{\ri^ka^k}{k!}\right \|_p=0.$
\item[{\rm (ii)}]$\disp \lim_{n\to \infty}\|e^{\ri a_n}-e^{\ri a}\|_p=0$. 
\end{list}
\end{lemma}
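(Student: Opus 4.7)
I would argue via continuous functional calculus. Writing $R := \|a\|_\infty$ and $r_n(\lambda) := e^{\ri\lambda}-\sum_{k=0}^n\frac{(\ri\lambda)^k}{k!}$, the standard remainder estimate gives
\[
|r_n(\lambda)| \;\le\; \sum_{k=n+1}^{\infty}\frac{|\lambda|^k}{k!}\;\le\; \frac{|\lambda|^{n+1}\,e^{|\lambda|}}{(n+1)!}\;\le\; \frac{e^R\,|\lambda|^{n+1}}{(n+1)!}
\]
for every $\lambda \in \mathrm{spec}(a)\subseteq [-R,R]$. Since $e^{\ri a}-\sum_{k=0}^{n}\frac{(\ri a)^k}{k!} = r_n(a)$ by functional calculus, and since for any Borel $f$ one has $\|f(a)\|_p^p=\int |f(\lambda)|^p\,d(\tau\circ e_a)(\lambda)$, we obtain
\[
\left\|e^{\ri a}-\sum_{k=0}^{n}\frac{(\ri a)^k}{k!}\right\|_p^{p}\;\le\; \frac{e^{pR}}{((n+1)!)^{p}}\,\tau(|a|^{p(n+1)})\;\le\; \frac{e^{pR}R^{pn}\,\|a\|_p^{p}}{((n+1)!)^{p}},
\]
where the last inequality uses $|a|^{p(n+1)}\le R^{pn}|a|^p$. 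Taking $p$-th roots and letting $n\to\infty$ proves (i).

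\textbf{Plan for part (ii).} Set $M_0 := \max\{\|a\|_\infty,\;\sup_n\|a_n\|_\infty\}$, which is finite by hypothesis. The telescoping identity
\[
a_n^{k}-a^{k}\;=\;\sum_{j=0}^{k-1}a_n^{j}(a_n-a)\,a^{k-1-j}
\]
combined with the non-commutative H\"older inequality $\|xyz\|_p\le \|x\|_\infty\|y\|_p\|z\|_\infty$ gives the uniform bound
\[
\|a_n^{k}-a^{k}\|_p\;\le\; k\,M_0^{k-1}\,\|a_n-a\|_p.
\]
Applying (i) to both $a_n$ and $a$ and subtracting the two $L^p$-convergent power-series representations yields
\[
e^{\ri a_n}-e^{\ri a}\;=\;\sum_{k=1}^{\infty}\frac{\ri^{k}}{k!}\bigl(a_n^{k}-a^{k}\bigr)
\]
as an identity in $L^p(M,\tau)$; the triangle inequality then gives
\[
\|e^{\ri a_n}-e^{\ri a}\|_p\;\le\;\sum_{k=1}^{\infty}\frac{k\,M_0^{k-1}}{k!}\,\|a_n-a\|_p\;=\;e^{M_0}\,\|a_n-a\|_p\;\xrightarrow[n\to\infty]{}\;0.
\]

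\textbf{Main obstacle.} The delicate point is part (i): although the partial sums $\sum_{k=0}^{n}\frac{(\ri a)^k}{k!}$ themselves need not lie in $L^p(M,\tau)$ when $\tau$ is infinite (because of the constant term $1$), their differences with $e^{\ri a}$ do, by Lemma \ref{lem: Lpunitary}. The functional-calculus estimate therefore has to be carried out through the pointwise bound $|r_n(\lambda)|\le e^R|\lambda|^{n+1}/(n+1)!$, which vanishes at $\lambda=0$ and lets the $p$-norm be controlled by the integrable quantity $\tau(|a|^{p(n+1)})\le R^{pn}\|a\|_p^{p}$, rather than by a constant over the whole spectrum. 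Once (i) is secured, (ii) is a routine termwise comparison; the role of the uniform bound $\sup_n\|a_n\|_\infty<\infty$ is precisely to make the series of $L^p$-differences absolutely summable with an exponential in $M_0$ independent of $n$.
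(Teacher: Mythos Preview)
Your argument is correct and follows essentially the same route as the paper: part (i) via the spectral/functional-calculus estimate $|r_n(\lambda)|\lesssim |\lambda|^{n+1}/(n+1)!$ bounding $\|r_n(a)\|_p$ by a constant times $\|a\|_\infty^{n}\|a\|_p/(n+1)!$, and part (ii) via the telescoping identity for $a_n^k-a^k$ combined with the series representation from (i). The only difference is that for (ii) the paper first normalises to the contraction case $\|a_n\|_\infty,\|a\|_\infty\le 1$ and then reduces the general case by writing $e^{\ri a}=(e^{\ri a/m})^m$, whereas you carry the constant $M_0$ directly through the series estimate---your version is slightly more streamlined but otherwise identical in substance.
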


\begin{proof}
\if0 2-ver 
Let $a=\int_{\R}\lambda\,{\rm d}e(\lambda)$ be the spectral resolution of $a$. Then for each $n\in \N$, $\sum_{k=1}^n\frac{\ri^ka^k}{k!}\in L^2(M,\tau)$, and 
\eqa{
\left \|e^{\ri a}-\sum_{k=0}^n\frac{\ri^ka^k}{k!}\right \|_2^2&=\int_{\R}\left |e^{\ri \lambda}-\sum_{k=0}^n\frac{\ri^k\lambda^k}{k!}\right |^2\,{\rm d}\tau(e(\lambda))\cdots (\star).
}
By the Taylor's theorem, there exists $\mu\in \R$ between 0 and $\lambda$ such that 
\[e^{\ri \lambda}-\sum_{k=0}^n\frac{\ri^k\lambda^k}{k!}=\frac{\ri^{n+1}e^{\ri \mu}}{(n+1)!}\lambda^{n+1}.\]
Thus, because $\tau(e(\cdot))$ is supported on $[-\|a\|_{\infty},\|a\|_{\infty}]$, we have

\eqa{
(\star)&\le \int_{[-\|a\|_{\infty},\|a\|_{\infty}]}\left \{\frac{1}{(n+1)!}\|a\|_{\infty}^n|\lambda|\right \}^2\,{\rm d}\tau(e(\lambda))\\
&=\left \{\frac{1}{(n+1)!}\|a\|_{\infty}^n\|a\|_2\right \}^2\stackrel{n\to \infty}{\to}0.
}
This proves (i).\\
(ii) First, we assume that $\sup_n\|a_n\|_{\infty}\le 1$ and $\|a\|_{\infty}\le 1$. 
By (i), for each $n\in \N$, 
\eqa{
\|e^{\ri a_n}-e^{\ri a}\|_2&=\lim_{N\to \infty}\left \|\sum_{k=0}^N\frac{\ri^k}{k!}(a_n^k-a^k)\right \|_2.
}
Then 
\eqa{
\left \|\sum_{k=0}^N\frac{\ri^k}{k!}(a_n^k-a^k)\right \|_2&\le \|a_n-a\|_2+\sum_{k=2}^N\frac{1}{k!}\|a_n^k-a^k\|_2\cdots (\star \star).
}
For each $N\ge 2$ and $k=2,\dots, N$, 
\eqa{
\|a_n^k-a^k\|_2&\le \|(a_n-a)a_n^{k-1}\|_2+\|a(a_n^{k-1}-a^{k-1})\|_2\\
&\le \|a\|_{\infty}^{k-1}\|a_n-a\|_2+\|a\|_{\infty}\|a_n^{k-1}-a^{k-1}\|_2\\&\le \|a_n-a\|_2+\|a_n^{k-1}-a^{k-1}\|_2\le \dots\\
&\le k\|a_n-a\|_2.
}
Therefore 
\[(\star \star)\le \|a_n-a\|_2\left (1+\sum_{k=2}^{\infty}\frac{1}{(k-1)!}\right )\stackrel{n\to \infty}{\to}0.\]
This shows (ii) in the case where all $a_n, a$ are contractions.\\
Now we consider the general case. Take $m\in \N$ such that $\|a_n\|_{\infty},\|a\|_{\infty}\le m\,(n\in \N)$ holds. Then $\tfrac{1}{m}a_n,\tfrac{1}{m}a\,(n\in \N)$ are contractions. Thus by the above argument, 
\eqa{
\|e^{\ri a_n}-e^{\ri a}\|_2&=\|e^{\ri \tfrac{1}{m}a_n}\cdots e^{\ri \tfrac{1}{m}a_n}-e^{\ri \tfrac{1}{m}a}\cdots e^{\ri \tfrac{1}{m}a}\|_2\\
&\le \|(e^{\ri \tfrac{1}{m}a_n}-e^{\ri \tfrac{1}{m}a})e^{\ri \tfrac{1}{m}a_n}\cdots e^{\ri \tfrac{1}{m}a_n}\|_2+\|e^{\ri \tfrac{1}{m}a}(e^{\ri \tfrac{1}{m}a_n}-e^{\ri \tfrac{1}{m}a})e^{\ri \tfrac{1}{m}a_n}\cdots e^{\ri \tfrac{1}{m}a_n}\|_2+\cdots \\
&\hspace{0.5cm}+\|e^{\ri \tfrac{1}{m}a}\cdots e^{\ri \tfrac{1}{m}a}(e^{\ri \tfrac{1}{m}a_n}-e^{\ri \tfrac{1}{m}a})\|_2\\
&=m\|e^{\ri \tfrac{1}{m}a_n}-e^{\ri \tfrac{1}{m}a}\|_2\stackrel{n\to \infty}{\to}0.
}
This proves (ii). 
\fi 
Let $a=\int_{\R}\lambda\,{\rm d}e(\lambda)$ be the spectral resolution of $a$. Then for each $n\in \N$, $\sum_{k=1}^n\frac{\ri^ka^k}{k!}\in L^p(M,\tau)$, and 
\eqa{
\left \|e^{\ri a}-\sum_{k=0}^n\frac{\ri^ka^k}{k!}\right \|_p^p&=\int_{\R}\left |e^{\ri \lambda}-\sum_{k=0}^n\frac{\ri^k\lambda^k}{k!}\right |^p\,{\rm d}\tau(e(\lambda))\cdots (\star).
}
By the Taylor's theorem, there exists $\mu\in \R$ between 0 and $\lambda$ such that 
\[e^{\ri \lambda}-\sum_{k=0}^n\frac{\ri^k\lambda^k}{k!}=\frac{\ri^{n+1}e^{\ri \mu}}{(n+1)!}\lambda^{n+1}.\]

Thus, because $\tau(e(\cdot))$ is supported on $[-\|a\|_{\infty},\|a\|_{\infty}]$, we have

\eqa{
(\star)&\le \int_{[-\|a\|_{\infty},\|a\|_{\infty}]}\left \{\frac{1}{(n+1)!}\|a\|_{\infty}^n|\lambda|\right \}^p\,{\rm d}\tau(e(\lambda))\\
&=\left \{\frac{1}{(n+1)!}\|a\|_{\infty}^n\|a\|_p\right \}^p\stackrel{n\to \infty}{\to}0.
}
This proves (i).\\
(ii) First, we assume that $\sup_n\|a_n\|_{\infty}\le 1$ and $\|a\|_{\infty}\le 1$. 
By (i), for each $n\in \N$, 
\eqa{
\|e^{\ri a_n}-e^{\ri a}\|_p&=\lim_{N\to \infty}\left \|\sum_{k=0}^N\frac{\ri^k}{k!}(a_n^k-a^k)\right \|_p.
}
Then 
\eqa{
\left \|\sum_{k=0}^N\frac{\ri^k}{k!}(a_n^k-a^k)\right \|_p&\le \|a_n-a\|_p+\sum_{k=2}^N\frac{1}{k!}\|a_n^k-a^k\|_p\cdots (\star \star).
}
For each $N\ge 2$ and $k=2,\dots, N$, 
\eqa{
\|a_n^k-a^k\|_p&\le \|(a_n-a)a_n^{k-1}\|_p+\|a(a_n^{k-1}-a^{k-1})\|_p\\
&\le \|a\|_{\infty}^{k-1}\|a_n-a\|_p+\|a\|_{\infty}\|a_n^{k-1}-a^{k-1}\|_p\\&\le \|a_n-a\|_p+\|a_n^{k-1}-a^{k-1}\|_p\le \dots\\
&\le k\|a_n-a\|_p.
}
Therefore 
\[(\star \star)\le \|a_n-a\|_p\left (1+\sum_{k=2}^{\infty}\frac{1}{(k-1)!}\right )\stackrel{n\to \infty}{\to}0.\]
This shows (ii) in the case where all $a_n, a$ are contractions.\\
Now we consider the general case. Take $m\in \N$ such that $\|a_n\|_{\infty},\|a\|_{\infty}\le m\,(n\in \N)$ holds. Then $\tfrac{1}{m}a_n,\tfrac{1}{m}a\,(n\in \N)$ are contractions. Thus by the above argument, 
\eqa{
\|e^{\ri a_n}-e^{\ri a}\|_p&=\|e^{\ri \tfrac{1}{m}a_n}\cdots e^{\ri \tfrac{1}{m}a_n}-e^{\ri \tfrac{1}{m}a}\cdots e^{\ri \tfrac{1}{m}a}\|_p\\
&\le \|(e^{\ri \tfrac{1}{m}a_n}-e^{\ri \tfrac{1}{m}a})e^{\ri \tfrac{1}{m}a_n}\cdots e^{\ri \tfrac{1}{m}a_n}\|_p+\|e^{\ri \tfrac{1}{m}a}(e^{\ri \tfrac{1}{m}a_n}-e^{\ri \tfrac{1}{m}a})e^{\ri \tfrac{1}{m}a_n}\cdots e^{\ri \tfrac{1}{m}a_n}\|_p+\cdots \\
&\hspace{0.5cm}+\|e^{\ri \tfrac{1}{m}a}\cdots e^{\ri \tfrac{1}{m}a}(e^{\ri \tfrac{1}{m}a_n}-e^{\ri \tfrac{1}{m}a})\|_p\\
&=m\|e^{\ri \tfrac{1}{m}a_n}-e^{\ri \tfrac{1}{m}a}\|_p\stackrel{n\to \infty}{\to}0.
}
Thish proves (ii). 
\end{proof}

Until the end of this section, we fix notation, which will be used in the proof of (ii)$\Rightarrow$(i) in Theorem \ref{thm: U_p(M) amenable iff M amenable}. Fix $1\le p<\infty$ and a separable infinite-dimensional Hilbert space $\Hil$ with an orthonormal basis $(\xi_n)_{n=1}^{\infty}$. For each $n\in \N$, let $f_n$ be the projection onto the subspace spanned by $\{\xi_1,\dots,\xi_n\}$. Let $R_n:=M_2(\C)^{\otimes n}\cong M_{2^n}(\C)$ and 
\begin{equation}M_n:=R_n\otimes f_n\mathbb{B}(\Hil)f_n\oplus \C 1_{R_n}\otimes f_n^{\perp}\subset R\overline{\otimes}\mathbb{B}(\Hil).
\end{equation}
Then $(M_n)_{n=1}^{\infty}$ is an increasing sequence of finite-dimensional $*$-subalgebras of the hyperfinite II$_{\infty}$ factor $M:=R\overline{\otimes}\mathbb{B}(\Hil)$ endowed with the normal faithful semifinite trace $\tau:=\tau_0\otimes {\rm Tr}$ ($\tau_0$ is the normal faithful tracial state on the hyperfinite II$_1$ factor $R$ and Tr denotes the operator trace). We say that an element $a\in M\cap L^p(M,\tau)$ is {\it p-approximable} if there exists a sequence $(a_n)_{n=1}^{\infty}$ of elements in $M_0\cap L^p(M,\tau)$, where $M_{0}:=\bigcup_{k=1}^{\infty}M_k$, such that $\sup_n\|a_n\|_{\infty}\le \|a\|_{\infty}$ and $\disp \lim_{n\to \infty}\|a-a_n\|_p=0$ hold. Note that $M_0$ is a $*$-strongly dense unital subalgebra of $M$. 
\begin{proposition}\label{prop: p-approx}
Any $a\in M_{\rm{sa}}\cap L^p(M,\tau)$ is $p$-approximable. 
\end{proposition}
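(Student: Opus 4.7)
My plan is to approximate $a$ in two stages: first compress $a$ into the finite-trace corner $p_N M p_N$ by the projection $p_N := 1_R \otimes f_N$, and then apply a trace-preserving conditional expectation pushing this compression into the finite-dimensional subalgebra appearing in $M_n$. Concretely, set $b_N := p_N\, a\, p_N$ and $c_{N,n} := (E_n \otimes {\rm id})(b_N)$, where $E_n \colon R \to R_n$ denotes the unique trace-preserving conditional expectation on the hyperfinite ${\rm II}_1$ factor, tensored with the identity on $f_N \mathbb{B}(\Hil) f_N$.

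For the first stage, $b_N$ is self-adjoint with $\|b_N\|_{\infty} \leq \|a\|_{\infty}$ and lies in the type ${\rm II}_1$ factor $N_f := R \overline{\otimes} f_N \mathbb{B}(\Hil) f_N = p_N M p_N$. Writing $q_N := 1 - p_N$, we have $a - b_N = q_N a + p_N a q_N$, and since $q_N \searrow 0$ in SOT and $a \in L^p(M,\tau)$, the standard fact from non-commutative integration that left or right multiplication by a decreasing SOT-null sequence of projections sends any $L^p$-element to zero in $\|\cdot\|_p$ gives $\|a - b_N\|_p \to 0$. For the second stage, with $N$ fixed, $E_n \otimes {\rm id}$ is a $\tau$-preserving conditional expectation of $N_f$ onto $R_n \otimes f_N \mathbb{B}(\Hil) f_N$, and as $n \to \infty$ these subalgebras increase SOT-densely in $N_f$. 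The non-commutative martingale convergence theorem, applied in the \emph{finite} von Neumann algebra $N_f$, then yields $c_{N,n} \to b_N$ in $L^p(N_f) \subset L^p(M,\tau)$ for every $1 \leq p < \infty$. Moreover $c_{N,n}$ is self-adjoint, $\|c_{N,n}\|_{\infty} \leq \|a\|_{\infty}$, and lies in $R_n \otimes f_N \mathbb{B}(\Hil) f_N \subset M_{\max(n,N)} \subset M_0$, using the nesting $R_k \subset R_{\ell}$ and $f_k \leq f_{\ell}$ for $k \leq \ell$.

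A diagonal choice $N_k \to \infty$ followed by $n_k \geq N_k$, chosen so that the two $\|\cdot\|_p$-errors each fall below $1/(2k)$, then produces a sequence $(c_{N_k, n_k}) \subset M_0 \cap L^p(M,\tau)$ with $\sup_k \|c_{N_k, n_k}\|_{\infty} \leq \|a\|_{\infty}$ and $\|c_{N_k, n_k} - a\|_p \to 0$, establishing $p$-approximability. I expect the main technical point to be the $L^p$-convergence used in the first stage; it is standard but requires care in the semifinite setting. For $p \leq 2$ it follows directly from operator monotonicity of $t \mapsto t^{p/2}$ and normality of $\tau$ on the decreasing sequence $(a^* q_N a)^{p/2}$ dominated by $|a|^p \in L^1(M,\tau)$, and the case $p > 2$ reduces to this via truncation of $a$ by its spectral projections combined with interpolation of the $\|\cdot\|_p$-norm between $\|\cdot\|_2$ and $\|\cdot\|_{\infty}$.
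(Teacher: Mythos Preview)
Your proof is correct but follows a genuinely different route from the paper's. The paper proceeds in three steps: (1) show that unitary conjugation preserves $p$-approximability of elements with finite spectrum (this is where the paper uses the analytic ingredient you also need, namely $\|(x_i-x)a\|_p\to 0$ for bounded strongly convergent nets, proved as Lemma~\ref{lem: continuity of Lprep}); (2) for a self-adjoint element with finite spectrum $a=\sum_i\lambda_ip_i$, explicitly construct projections $q_i\in M_0$ with $\tau(q_i)$ close to $\tau(p_i)$, use Murray--von Neumann equivalence in the II$_\infty$ factor to find a unitary $u$ with $uq_iu^*\le p_i$, and apply step (1); (3) reduce the general case to finite spectrum via Lemma~\ref{lem: approx by finspec}. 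Your approach bypasses all of this structure-theoretic work by compressing to the finite corner $p_NMp_N$ and then invoking the non-commutative martingale convergence theorem for the tower $R_n\otimes f_N\mathbb{B}(\Hil)f_N\nearrow R\overline{\otimes}f_N\mathbb{B}(\Hil)f_N$ inside a \emph{finite} factor. This is cleaner and more conceptual, and your approximants are automatically self-adjoint with operator norm at most $\|a\|_\infty$, exactly as needed in the application to Theorem~\ref{thm: U_p(M) amenable iff M amenable}. The paper's argument, on the other hand, is entirely self-contained and avoids importing the martingale machinery. One minor point: your sketch of $\|q_Na\|_p\to 0$ for $p>2$ via ``truncation by spectral projections combined with interpolation'' is correct but terse --- since $a\in M\cap L^p$ need not lie in $L^2$ when $p>2$, the truncation (to $\{|a|>1/m\}$, say) is genuinely needed before the interpolation inequality $\|x\|_p\le\|x\|_\infty^{1-2/p}\|x\|_2^{2/p}$ can be applied; this is exactly how the paper handles it in Lemma~\ref{lem: continuity of Lprep}.
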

The following lemma may be known. However, because we could not find a reference, we add a proof.
\begin{lemma}\label{lem: continuity of Lprep}
Let $M$ be a semifinite von Neumann algebra with a normal faithful semifinite trace $\tau$, and let $1\le p<\infty$. Let $(x_i)_{i\in I}$ be a bounded net in $M$ converging strongly to $x\in M$. Then for every $a\in L^p(M,\tau)$, $\displaystyle \lim_{i\to \infty}\|(x_i-x)a\|_p=0$ holds.  
\end{lemma}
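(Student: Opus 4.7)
The plan is to combine an approximation argument in $L^p(M,\tau)$ with an auxiliary lemma that handles right-multiplication by a projection of finite trace. Set $y_i := x_i - x$, so $(y_i) \subset M$ is a bounded net converging strongly to $0$; let $C := \sup_i \|y_i\|_\infty < \infty$. The goal is to prove $\|y_i a\|_p \to 0$.

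The key lemma asserts: \emph{for every projection $e \in M$ with $\tau(e) < \infty$ and every $1 \le p < \infty$, $\|y_i e\|_p \to 0$.} I would first establish the case $p = 2$: using cyclicity of $\tau$,
\[\|y_i e\|_2^2 \;=\; \tau(e\, y_i^* y_i\, e) \;=\; \tau(y_i^* y_i\, e) \;=\; \|y_i \hat e\|_{L^2(M,\tau)}^2,\]
where $\hat e$ denotes the square-integrable projection $e$ viewed as a vector in the GNS Hilbert space. Since $(y_i)$ is norm-bounded and strongly null on the ambient Hilbert space, it is $\sigma$-strongly null; as the $\sigma$-strong topology is intrinsic to $M$ and transfers to every normal representation, $y_i \hat e \to 0$ in $L^2(M,\tau)$. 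For general $p$ I would reduce to the $p=2$ case by analyzing $S_i := e\, y_i^* y_i\, e$, a positive operator supported under $e$ with $\|S_i\|_\infty \le C^2$. When $p \ge 2$, the operator inequality $S_i^{p/2} \le C^{p-2} S_i$ gives $\|y_i e\|_p^p \le C^{p-2}\,\|y_i e\|_2^2$. When $1 \le p < 2$, noncommutative H\"older applied to $\tau(S_i^{p/2}) = \tau(S_i^{p/2} \cdot e)$ with conjugate exponents $2/p$ and $2/(2-p)$ yields $\|y_i e\|_p \le \|y_i e\|_2\, \tau(e)^{1/p - 1/2}$.

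To conclude, I would reduce a general $a \in L^p(M,\tau)$ to the lemma via a spectral cutoff of its polar decomposition $a = u|a|$. Let $f_n := 1_{[1/n,\,n]}(|a|)$, a spectral projection of $|a|$ satisfying $\tau(f_n) \le n^p \|a\|_p^p < \infty$, and set $a_n := u|a|\,f_n$. Then $a_n \in M$ with $\|a_n\|_\infty \le n$, and $a_n \to a$ in $L^p$ by dominated convergence. The crucial observation is the identity $a_n = q_n a$, where $q_n := u f_n u^*$ is a projection with $\tau(q_n) = \tau(f_n) < \infty$; this uses that $u^*u$ is the support of $|a|$ and dominates $f_n$, together with $|a|f_n = f_n|a|$. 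Consequently,
\[\|y_i a_n\|_p \;=\; \|(y_i q_n)\, a_n\|_p \;\le\; \|y_i q_n\|_p\, \|a_n\|_\infty \;\longrightarrow\; 0 \quad (i \to \infty),\]
by the lemma. Combining this with the uniform bound $\|y_i(a - a_n)\|_p \le C\,\|a - a_n\|_p$ and letting first $i$, then $n$, go to infinity proves the theorem. The main obstacle I anticipate is the lemma in the cases $p \ne 2$: the $p/2$-power manipulations require genuinely different arguments above and below the $L^2$ threshold, and the H\"older inequality for the trace must be invoked with care to accommodate the semifinite (rather than finite) structure of $\tau$.
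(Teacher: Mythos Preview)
Your proof is correct and follows essentially the same strategy as the paper: reduce to $\tau$-finite projections, settle $p=2$ via the GNS representation, treat $p\ge 2$ and $p<2$ separately by reducing to the $p=2$ case (the paper uses the interpolation inequality $\|x\|_p\le \|x\|_\infty^{1-2/p}\|x\|_2^{2/p}$ and the generalized H\"older inequality $\|(x_i-x)a\cdot a\|_p\le\|(x_i-x)a\|_2\|a\|_r$ with $\tfrac1p=\tfrac12+\tfrac1r$, which are equivalent to your operator inequality and tracial H\"older estimate), and then approximate a general $a$. The only cosmetic difference is that the paper first reduces to self-adjoint $a$ and invokes its earlier approximation lemma by finite-spectrum elements, whereas you handle arbitrary $a$ directly via the polar decomposition and the spectral cutoff $f_n$, making your argument slightly more self-contained.
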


\begin{proof}
Since $L^p(M)$ is spanned by self-adjoint elements, we may assume that $a$ is self-adjoint. We may also assume that $x_i,x$ are contractions.\\ \\
\textbf{Step 1.} Consider the case that $a$ has finite spectrum (thus $a\in L^q(M,\tau)$ for every $1\le q\le \infty$).\\
Suppose first that $p\ge 2$. Then we use the following inequality
\begin{equation}
\|x\|_p\le \|x\|_{\infty}^{1-\frac{2}{p}}\|x\|_2^{\frac{2}{p}},\,\,x\in M\cap L^2(M,\tau)(\subset L^p(M,\tau)).\label{eq: p and 2 norm}
\end{equation}
By (\ref{eq: p and 2 norm}) and the fact that $M$ is represented on $L^2(M,\tau)$ by left multiplications so that the strong convergence of $(x_i)_{i\in I}$ is the pointwise 2-norm convergence in $L^2(M,\tau)$, we have 
\eqa{
\|(x_i-x)a\|_p&\le \|(x_i-x)a\|_{\infty}^{1-\frac{2}{p}}\|(x_i-x)a\|_2^{\frac{2}{p}}\\
&\le 2^{1-\frac{2}{p}}\|a\|_{\infty}^{1-\frac{2}{p}}\|(x_i-x)a\|_2^{\frac{2}{p}}\\
&\stackrel{i\to \infty}{\to}0.
}
Next, suppose $1\le p<2$.  Since $a$ is self-adjoint and has finite spectrum, it is a real linear combination of $\tau$-finite projections. To show that $\|(x_i-x)a\|_p$ tends to 0, by triangle inequality we may assume that $a$ itself is a $\tau$-finite projection. Let $2\le r<\infty$ be such that 
$\tfrac{1}{p}=\tfrac{1}{2}+\tfrac{1}{r}$. By $a=a^2$, we have 
by the generalized non-commutative H\"older's inequality(\cite[Corollaire 3]{Dix53}. Cf.\cite[Lemma 5.9]{PiXu03}), 
\eqa{
\|(x_i-x)a\|_p&=\|(x_i-x)a\cdot a\|_p\le \|(x_i-x)a\|_2\|a\|_r\\
&\stackrel{n\to \infty}{\to}0. 
}
\textbf{Step 2.} Consider the general case. Let $\varepsilon>0$. 
By Lemma \ref{lem: approx by finspec}, there exists $a_0\in M_{\rm{sa}}\cap L^p(M,\tau)$ with finite spectrum such that $\|a-a_0\|_p<\frac{\varepsilon}{4}$ holds. By Step 1, $\disp \lim_{i\to \infty}\|(x_i-x)a_0\|_p=0$ holds. Thus there exists $i_0\in I$ such that for every $i\ge i_0$, $\|(x_i-x)a_0\|_p<\frac{\varepsilon}{2}$ holds. 
Then for every $i\ge i_0$, 
\eqa{
\|(x_i-x)a\|_p&\le \|(x_i-x)(a-a_0)\|_p+\|(x_i-x)a_0\|_p\\
&<\|x_i-x\|_{\infty}\|a-a_0\|_p+\frac{\varepsilon}{2}\\
&<\varepsilon.
}
Since $\varepsilon$ is arbitrary, the claim follows. 
\end{proof}
\begin{proof}[Proof of Proposition \ref{prop: p-approx}]
\if0 2-ver 
Let $\tilde{M}$ be the set of all $a\in M\cap L^2(M,\tau)$ which are $2$-approximable. 
\\ \\
\textbf{Step 1.} We first show that if $a\in \tilde{M}$ and $u\in \mathcal{U}(M)$, then $uau^*\in \tilde{M}$. Indeed, by $a\in \tilde{M}$, there exists a sequence $(a_n)_{n=1}^{\infty}$ in $M_0\cap L^2(M,\tau)$ with $\sup_n\|a_n\|_{\infty}\le \|a\|_{\infty}$ such that $\disp \lim_{n\to \infty}\|a_n-a\|_2=0$. There exists a sequence $(u_n)_{n=1}^{\infty}$ in $\mathcal{U}(M_0)$ converging $*$-strongly to $u$. Then $(u_na_nu_n^*)_{n=1}^{\infty}$ is a sequence in $M_0\cap L^2(M,\tau)$ with $\sup_n\|u_na_nu_n^*\|_{\infty}\le \|a\|_{\infty}$, and (we use $\|x\|_2=\|x^*\|_2$ and $\|axb\|_2\le \|a\|_{\infty}\|x\|_2\|b\|_{\infty}$ for $a,b\in M$ and $x\in M\cap L^2(M,\tau)$)
\eqa{
\|u_na_nu_n^*-uau^*\|_2&\le \|(u_n-u)a_nu_n^*\|_2+\|u(a_n-a)u_n^*\|_2+\|ua(u_n^*-u^*)\|_2\\
&\le \|(u_n-u)a_n\|_2+\|a_n-a\|_2+\|(u_n-u)a^*u^*\|_2\\
&\le \|(u_n-u)(a_n-a)\|_2+\|(u_n-u)a\|_2+\|a_n-a\|_2+\|(u_n-u)a^*\|_2\\
&\le 2\|a_n-a\|_2+\|(u_n-u)a\|_2+\|a_n-a\|_2+\|(u_n-u)a^*\|_2\\
&\stackrel{n\to \infty}{\to}0.
}
This shows that $uau^*\in \tilde{M}$ as we wanted.\\ \\
\textbf{Step 2.} Next, we show that $\tilde{M}$ contains all elements in $M_{\rm{sa}}\cap L^2(M,\tau)$ which have finite spectrum. To show this, let $a\in M_{\rm{sa}}\cap L^2(M,\tau)$ be an element with finite spectrum and let $\varepsilon>0$. There exist nonzero $\tau$-finite projections $p_1,\dots,p_m\in M$ and $\lambda_1,\dots,\lambda_m\in \R$ such that $a=\sum_{i=1}^m\lambda_ip_i$. For each $i=1,\dots,m$, write $\tau(p_i)=d_i+s_i$, where $d_i\in \mathbb{Z}_{\ge 0}$ and $0\le s_i<1$. 
Take large enough $n\in \N$ so that $n>\sum_{i=1}^m(d_i+1)$ and for each $i=1,\dots,m$, there exists $k_i\in \{0,\dots,2^n-1\}$ such that if we set $t_i=d_i+\frac{k_i}{2^n}$, then $t_i\leq\tau(p_i)$ and the following inequality holds: 
\[\sum_{i=1}^m|\lambda_i|^2(\tau(p_i)-t_i)<(\tfrac{\varepsilon}{2})^2.\]

Let $e_1,\dots,e_{2^n}$ be pairwise orthogonal minimal projections in $R_n\cong M_{2^n}(\C)$ with $\sum_{i=1}^{2^n}e_i=1$. For each $n\in \N$ let $r_n$ be the rank one projection onto $\C\xi_n$. 
Set 
\[q_i:=1_{R_n}\otimes \sum_{j=d_i'+1}^{d_i'+d_i}r_j+\sum_{\ell=1}^{k_i}e_{\ell}\otimes r_{d_i'+d_i+1},\,i=1,\dots,m,\]
where $d_1':=0$ and $d_i':=\sum_{j=1}^{i-1}(d_j+1)\,(i=2,\dots,m)$. Note that some terms in the above sum might be disregarded when $d_i=0$ (in this case the first sum is disregarded) or $k_i=0$ (in this case the second sum is disregarded). 
Note that $q_i\in R_n\otimes f_n\mathbb{B}(\Hil)f_n\subset M_n\,(1\le i\le m)$. 
Then $\{q_i\}_{i=1}^m$ are mutually orthogonal projections in $M_0$ and $\tau(q_i)=d_i+\tfrac{k_i}{2^n}=t_i\le \tau(p_i)\,(1\le i\le m)$ holds. Let $1\le i\le m$. Because $M$ is a type II$_{\infty}$ factor, there exists a projection $p_i'\le p_i$ such that $p_i'\sim q_i$ in $M$ holds (here, $\sim$ stands for the Murray-von Neumann equivalence of projections in $M$). Let $u_i\in M$ be a partial isometry satisfying 
\[u_i^*u_i=q_i,\,\,u_iu_i^*=p_i'.\]
Also, $1-\sum_{i=1}^mq_i\sim 1-\sum_{i=1}^mp_i'$ (both being infinite projections in $M$), we may find a partial isometry $u_0\in M$ satisfying 
\[u_0^*u_0=1-\sum_{i=1}^mq_i,\,\,\,u_0u_0^*= 1-\sum_{i=1}^mp_i'.\]
Then $u=\sum_{i=0}^{m}u_i$ is a unitary in $M$ satisfying $uq_iu^*=p_i'\,(1\le i\le m)$. This shows that $ua_0u^*=a'$, where $a_0\in M_0\cap L^2(M,\tau)$ and $a'\in M\cap L^2(M,\tau)$ are self-adjoint elements defined by
\[a_0:=\sum_{i=1}^m\lambda_iq_i,\,\,a':=\sum_{i=1}^m\lambda_ip_i'.\]
Moreover, $\|a-a'\|_2=\left (\sum_{i=1}^m|\lambda_i|^2(\tau(p_i)-t_i)\right )^{\frac{1}{2}}<\tfrac{\varepsilon}{2}$, and by $a_0\in M_0\cap L^2(M,\tau)$, and Step 1, $a'\in \tilde{M}$ holds. Thus, there exists $a''\in M_0\cap L^2(M,\tau)$ with $\|a''\|_{\infty}\le \|a'\|_{\infty}\le \|a\|_{\infty}$ such that $\|a'-a''\|_2<\tfrac{\varepsilon}{2}$ holds. Therefore we have $\|a-a''\|_2<\varepsilon$. Since $\varepsilon$ is arbitrary, we obtain $a\in \tilde{M}$.    
\\ \\
\textbf{Step 3.} Let $a\in M_{\rm sa}\cap L^2(M,\tau)$ and $\varepsilon>0$. Then by Lemma \ref{lem: approx by finspec}, there exists $a_0\in M_{\rm sa}\cap L^2(M,\tau)$ with finite spectrum such that $\|a_0\|_{\infty}\le \|a\|_{\infty}$ and $\|a-a_0\|_2<\tfrac{\varepsilon}{2}$ hold. Since $a_0\in \tilde{M}$ by Step 2, there exists $a_1\in M_0\cap L^2(M,\tau)$ such that $\|a_1\|_{\infty}\le \|a_0\|_{\infty}$ and $\|a_0-a_1\|_2<\tfrac{\varepsilon}{2}$ hold. Thus $\|a_1\|_{\infty}\le \|a\|_{\infty}$ and $\|a-a_1\|_2<\varepsilon$. Since $\varepsilon$ is arbitrary, we obtain $a\in \tilde{M}$. This concludes the proof. 
\fi 
Let $\tilde{M}$ be the set of all $a\in M\cap L^p(M,\tau)$ which are $p$-approximable. 
\\ \\
\textbf{Step 1.} We first show that if $a\in \tilde{M}$ is a self-adjoint element with finite spectrum and if $u\in \mathcal{U}(M)$, then $uau^*\in \tilde{M}$. Indeed, by $a\in \tilde{M}$, there exists a sequence $(a_n)_{n=1}^{\infty}$ in $M_0\cap L^p(M,\tau)$ with $\sup_n\|a_n\|_{\infty}\le \|a\|_{\infty}$ such that $\disp \lim_{n\to \infty}\|a_n-a\|_p=0$. There exists a sequence $(u_n)_{n=1}^{\infty}$ in $\mathcal{U}(M_0)$ converging $*$-strongly to $u$. Then $(u_na_nu_n^*)_{n=1}^{\infty}$ is a sequence in $M_0\cap L^p(M,\tau)$ with $\sup_n\|u_na_nu_n^*\|_{\infty}\le \|a\|_{\infty}$, and (we use $\|x\|_p=\|x^*\|_p$ and $\|axb\|_p\le \|a\|_{\infty}\|x\|_p\|b\|_{\infty}$ for $a,b\in M$ and $x\in M\cap L^p(M,\tau)$)
\eqa{
\|u_na_nu_n^*-uau^*\|_p&\le \|(u_n-u)a_nu_n^*\|_p+\|u(a_n-a)u_n^*\|_p+\|ua(u_n^*-u^*)\|_p\\
&\le \|(u_n-u)a_n\|_p+\|a_n-a\|_p+\|(u_n-u)a^*u^*\|_p\\
&\le \|(u_n-u)(a_n-a)\|_p+\|(u_n-u)a\|_p+\|a_n-a\|_p+\|(u_n-u)a^*\|_p\\
&\le 2\|a_n-a\|_p+2\|(u_n-u)a\|_p+\|a_n-a\|_p.
}
The last term tends to 0 as $n\to \infty$ because  $\displaystyle \lim_{n\to \infty}\|(u_n-u)a\|_p=0$ thanks to Lemma \ref{lem: continuity of Lprep}. 
\if0
, whose proof will be divided to $1\le p<2$ case and $p\ge 2$ case.\\
Suppose first that $p\ge 2$. Then we use the following inequality
\begin{equation}
\|x\|_p\le \|x\|_{\infty}^{1-\frac{2}{p}}\|x\|_2^{\frac{2}{p}},\,\,x\in M\cap L^2(M,\tau).\label{eq: p and 2 norm}
\end{equation}
By (\ref{eq: p and 2 norm}) and the fact that $M$ is represented on $L^2(M,\tau)$ by left multiplications so that the strong convergence is the pointwise 2-norm convergence in $L^2(M,\tau)$, we have 
\eqa{
\|(u_n-u)a\|_p&\le \|(u_n-u)a\|_{\infty}^{1-\frac{2}{p}}\|(u_n-u)a\|_2^{\frac{2}{p}}\\
&\le 2^{1-\frac{2}{p}}\|a\|_{\infty}^{1-\frac{2}{p}}\|(u_n-u)a\|_2^{\frac{2}{p}}\\
&\stackrel{n\to \infty}{\to}0.
}
Next, suppose that $1\le p<2$. Since $a$ is self-adjoint and has finite spectrum, it is a real linear combination of $\tau$-finite projections. To show that $\|(u_n-u)a\|_p$ tends to 0, by triangle inequality we may assume that $a$ itself is a $\tau$-finite projection. Let $2\le r<\infty$ be such that 
$\tfrac{1}{p}=\tfrac{1}{2}+\tfrac{1}{r}$. By $a=a^2$, we have 
by the generalized non-commutative H\"older's inequality(\textcolor{blue}{Reference}), 
\eqa{
\|(u_n-1)a\|_p&=\|(u_n-1)a\cdot a\|_p\le \|(u_n-1)a\|_2\|a\|_r\\
&\stackrel{n\to \infty}{\to}0. 
}
\fi 
Thus $\displaystyle \lim_{n\to \infty}\|u_na_nu_n^*-uau^*\|_p=0$, which  shows that $uau^*\in \tilde{M}$ as we wanted.\\ \\
\textbf{Step 2.} Next, we show that $\tilde{M}$ contains all elements in $M_{\rm{sa}}\cap L^p(M,\tau)$ which have finite spectrum. To show this, let $a\in M_{\rm{sa}}\cap L^p(M,\tau)$ be an element with finite spectrum and let $\varepsilon>0$. There exist nonzero $\tau$-finite projections $p_1,\dots,p_m\in M$ and $\lambda_1,\dots,\lambda_m\in \R$ such that $a=\sum_{i=1}^m\lambda_ip_i$. For each $i=1,\dots,m$, write $\tau(p_i)=d_i+s_i$, where $d_i\in \mathbb{Z}_{\ge 0}$ and $0\le s_i<1$. 
Take large enough $n\in \N$ so that $n>\sum_{i=1}^m(d_i+1)$ and for each $i=1,\dots,m$, there exists $k_i\in \{0,\dots,2^n-1\}$ such that if we set $t_i=d_i+\frac{k_i}{2^n}$, then $t_i\leq\tau(p_i)$ and the following inequality holds: 
\[\sum_{i=1}^m|\lambda_i|^p(\tau(p_i)-t_i)<(\tfrac{\varepsilon}{2})^p.\]

Let $e_1,\dots,e_{2^n}$ be pairwise orthogonal minimal projections in $R_n\cong M_{2^n}(\C)$ with $\sum_{i=1}^{2^n}e_i=1$. For each $n\in \N$ let $r_n$ be the rank one projection onto $\C\xi_n$. 
Set 
\[q_i:=1_{R_n}\otimes \sum_{j=d_i'+1}^{d_i'+d_i}r_j+\sum_{\ell=1}^{k_i}e_{\ell}\otimes r_{d_i'+d_i+1},\,i=1,\dots,m,\]
where $d_1':=0$ and $d_i':=\sum_{j=1}^{i-1}(d_j+1)\,(i=2,\dots,m)$. Note that some terms in the above sum might be disregarded when $d_i=0$ (in this case the first sum is disregarded) or $k_i=0$ (in this case the second sum is disregarded). 
Note that $q_i\in R_n\otimes f_n\mathbb{B}(\Hil)f_n\subset M_n\,(1\le i\le m)$. 
Then $\{q_i\}_{i=1}^m$ are mutually orthogonal projections in $M_0$ and $\tau(q_i)=d_i+\tfrac{k_i}{2^n}=t_i\le \tau(p_i)\,(1\le i\le m)$ holds. Let $1\le i\le m$. Because $M$ is a type II$_{\infty}$ factor, there exists a projection $p_i'\le p_i$ such that $p_i'\sim q_i$ in $M$ holds (here, $\sim$ stands for the Murray-von Neumann equivalence of projections in $M$). Let $u_i\in M$ be a partial isometry satisfying 
\[u_i^*u_i=q_i,\,\,u_iu_i^*=p_i'.\]
Also, $1-\sum_{i=1}^mq_i\sim 1-\sum_{i=1}^mp_i'$ (both being infinite projections in $M$), we may find a partial isometry $u_0\in M$ satisfying 
\[u_0^*u_0=1-\sum_{i=1}^mq_i,\,\,\,u_0u_0^*= 1-\sum_{i=1}^mp_i'.\]
Then $u=\sum_{i=0}^{m}u_i$ is a unitary in $M$ satisfying $uq_iu^*=p_i'\,(1\le i\le m)$. This shows that $ua_0u^*=a'$, where $a_0\in M_0\cap L^p(M,\tau)$ and $a'\in M\cap L^p(M,\tau)$ are self-adjoint elements defined by
\[a_0:=\sum_{i=1}^m\lambda_iq_i,\,\,a':=\sum_{i=1}^m\lambda_ip_i'.\]
Moreover, $\|a-a'\|_p=\left (\sum_{i=1}^m|\lambda_i|^p(\tau(p_i)-t_i)\right )^{\frac{1}{p}}<\tfrac{\varepsilon}{2}$, and by $a_0\in M_0\cap L^p(M,\tau)$, and Step 1, $a'\in \tilde{M}$ holds. Thus, there exists $a''\in M_0\cap L^p(M,\tau)$ with $\|a''\|_{\infty}\le \|a'\|_{\infty}\le \|a\|_{\infty}$ such that $\|a'-a''\|_p<\tfrac{\varepsilon}{2}$ holds. Therefore we have $\|a-a''\|_p<\varepsilon$. Since $\varepsilon$ is arbitrary, we obtain $a\in \tilde{M}$.    
\\ \\
\textbf{Step 3.} Let $a\in M_{\rm sa}\cap L^p(M,\tau)$ and $\varepsilon>0$. Then by Lemma \ref{lem: approx by finspec}, there exists $a_0\in M_{\rm sa}\cap L^p(M,\tau)$ with finite spectrum such that $\|a_0\|_{\infty}\le \|a\|_{\infty}$ and $\|a-a_0\|_p<\tfrac{\varepsilon}{2}$ hold. Since $a_0\in \tilde{M}$ by Step 2, there exists $a_1\in M_0\cap L^p(M,\tau)$ such that $\|a_1\|_{\infty}\le \|a_0\|_{\infty}$ and $\|a_0-a_1\|_p<\tfrac{\varepsilon}{2}$ hold. Thus $\|a_1\|_{\infty}\le \|a\|_{\infty}$ and $\|a-a_1\|_p<\varepsilon$. Since $\varepsilon$ is arbitrary, we obtain $a\in \tilde{M}$. This concludes the proof. 
\end{proof}
\begin{proof}[Proof of Theorem \ref{thm: U_p(M) amenable iff M amenable}]
(ii)$\Rightarrow$(i) is clear. 
(i)$\Rightarrow$(iii) Fix $1\le p<\infty$ such that $\mathcal{U}_p(M,\tau)$ is amenable. 
Let $T\in \mathbb{B}(L^2(M,\tau))$ be arbitrary.
We set $K:=\overline{\textrm{co}}\{uTu^* \mid\ u\in \mathcal{U}(M)\}$, where the closure is with respect to the weak operator topology.
Then $K$ is a compact convex subset, and $\mathcal{U}_p(M,\tau)$ acts on it by conjugation. Clearly the action is continuous and affine. 
Since $\mathcal{U}_p(M,\tau)$ is amenable, we can find an element $y$ in $K$ so that $uyu^*=y$ holds for any $u\in\mathcal{U}_p(M,\tau)$.
For any finite projection $p$ in $M$, the operator $1-2p$ is in $\mathcal{U}_p(M,\tau)$.
Thus $y$ commutes with all finite projections in $M$.
Since $M$ is generated by the set of finite projections, $y$ is in $M'$.
Therefore $M$ fulfills the Schwartz' property P, which implies that $M$ is hyperfinite by Theorem \ref{thm: Connes}.\medskip

(iii)$\Rightarrow$(ii) Let $1\le p<\infty$. 
For each $n\in \N$, define $G_n:=\mathcal{U}(M_n)\cap \mathcal{U}_p(M,\tau)$\footnote{
An element in $\mathcal{U}(M_n)$ is of the form $u_0\oplus e^{\ri \theta}1_{R_n}\otimes f_n^{\perp}$, which is in $\mathcal{U}_p(M,\tau)$ if and only if $e^{\ri \theta}=1$ (recall that $f_n^{\perp}$ is infinite-rank). 

} with the $\|\cdot\|_p$-metric with respect to $\tau:=\tau_0\otimes {\rm Tr}$. Note that an element $u\in G_n$ is of the form $u_0\oplus 1\otimes f_n^{\perp}$, where $u_0$ is a unitary in $A_n:=M_{2^n}(\C)\otimes f_n\mathbb{B}(\Hil)f_n$. Thus $u-1=u_0-f_n\in L^p(M,\tau)$. Then $G_1\subset G_2\subset \cdots $ is an increasing sequence of compact subgroups of $\mathcal{U}_p(M,\tau)$. We show that $G_0:=\bigcup_{n=1}^{\infty}G_n$ is $\|\cdot\|_p$-dense in $\mathcal{U}_p(M,\tau)$, which will imply that $\mathcal{U}_p(M,\tau)$ is amenable \cite[Proposition G.2.2 (iii)]{BdHV}.      
Let $u\in \mathcal{U}_p(M,\tau)$ and $\varepsilon>0$. Then there exists $a\in M_{\rm sa}\cap L^p(M,\tau)$ such that $u=e^{\ri a}$. By (the proof of) Proposition \ref{prop: p-approx}, there exists a sequence $(a_n)_{n=1}^{\infty}$ in $M_{0,{\rm sa}}\cap L^p(M,\tau)$ such that $\sup_{n}\|a_n\|_{\infty}<\infty$ and $\disp \lim_{n\to \infty}\|a_n-a\|_p=0$. Then $u_n:=e^{\ri a_n}\in G_0\, (n\in \N)$ and by Lemma \ref{lem: p-continuity of exp}, we have $\disp \lim_{n\to \infty}\|u-u_n\|_p=0$. This shows that $G_0$ is $\|\cdot\|_p$-dense in $\mathcal{U}_p(M,\tau)$. 
\end{proof}

\section{The groups $\E_n(A)$ and $\SL(n,A)$}\label{section:En}
The special linear groups $\SL(n,K)$, $K\in\{\Rea,\Com\}$, are undoubtedly one of the most important Lie groups. In the case $n\geq 3$ they are the prominent examples of groups with Property (T) and in the case $n=2$ they are the prominent examples of groups with the Haagerup property. The groups $\SL(n,R)$, and their relatives elementary groups $\E_n(R)$, have been heavily investigated with regard to Property (T) for much more general rings $R$ (see e.g. \cite{Kas07} and \cite{ErJZ10}). Here we consider their variants which are unbounded topological groups, and in many cases they are Banach-Lie groups.

\subsection{The structure of $\E_n(A)$}
Let $A$ be a unital Banach algebra over the complex or real numbers. Let $n\geq 2$ be a natural number and let $M_n(A)$ denote the unital Banach algebra of all $n\times n$ matrices with coefficients from $A$. By $\GL(n,A)$, we shall denote the group $\Inv(M_n(A))$, i.e. the topological group of all invertible matrices with matrix entries in $A$. This is a Banach-Lie group with $M_n(A)$ as a Banach-Lie algebra (see \cite[Proposition IV.9]{Neeb04}).

For any $i\neq j\leq n$, let $E_{i,j}(a)$ be the elementary matrix which has the units on the diagonal and the element $a\in A$ on the $(i,j)$-th coordinate. By $E_{i,j}(A)$ we denote the closed subgroup $\{E_{i,j}(a)\mid a\in A\}$ of $\GL(n,A)$, and by $\E_n(A)$ we denote the closed subgroup of $\GL(n,A)$ generated by all the subgroups $E_{i,j}(A)$, where $i\neq j\leq n$. We note that we do not know whether the subgroup algebraically generated by the collection of subgroups $\{E_{i,j}(A)\mid i\neq j\leq n\}$ is always closed in $\GL(n,A)$. Therefore $\E_n(A)$ is meant to be the closure of $\langle E_{i,j}(A)\mid i\neq j\leq n\rangle$. The normal subgroup structure of these groups was studied in \cite{Va86}.

Notice that $\E_n(A)$ is connected as its dense subgroup consisting of products of elementary matrices is connected. In case when $A$ is commutative, we denote by $\SL(n,A)$ the subgroup of $\GL(n,A)$ (and supergroup of $\E_n(A)$) of matrices of determinant $1$, the unit of $A$. If $A$ is separable, $\E_n(A)$, resp. $\SL(n,A)$ are Polish groups.

Additionally, if $A$ is an abelian unital Banach algebra (real or complex), then $\SL(n,A)$ is a Banach-Lie group with Banach-Lie algebra $\mathfrak{sl}(n,A)$, where the latter is the set of all trace-less matrices from $M_n(A)$ (with the Lie bracket $[X,Y]=XY-YX$). This is clear when $A$ is a unital abelian $C^*$-algebra. Since if in this case $X$ is the Gelfand spectrum of $A$, i.e. $A=C(X)$, then $\SL(n,A)$ is canonically isomorphic to the group $C(X,\SL(n,\Com))$, resp. $C(X,\SL(n,\Rea))$ if $A=C(X,\Rea)$ is the real algebra. The latter is a Banach-Lie group with Banach-Lie algebra $C(X,\mathfrak{sl}(n,\Com))$, resp. $C(X,\mathfrak{sl}(n,\Rea))$, which are again canonically isomorphic to $\mathfrak{sl}(n,A)$; see \cite[Section I.2]{Neeb05}.

For general $A$, we can use the fact that $\SL(n,A)$ is an `algebraic subgroup' of $\GL(n,A)$, thus a Banach-Lie subgroup. We refer to \cite[Theorem 1]{HK77}.\\

This has the effect that we can say something also about $\E_n(A)$, for abelian algebras $A$. We record it in the following lemma.
\begin{lemma}\label{lem:EnInSln}
Let $A$ be an abelian unital Banach algebra over the real or complex numbers. Let $n\geq 2$. Then $\E_n(A)$ is the connected component of the identity in $\SL(n,A)$. In particular, it is a Banach-Lie group.
\end{lemma}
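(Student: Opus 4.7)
The plan is to establish the two inclusions $\E_n(A) \subseteq \SL(n,A)_0$ and $\SL(n,A)_0 \subseteq \E_n(A)$ separately. The first is immediate: since $\det \colon \GL(n,A) \to A^{\times}$ is a continuous homomorphism (using that $A$ is abelian) and $\det E_{i,j}(a) = 1$ for all $a \in A$ and $i \neq j$, the closed subgroup $\E_n(A)$ is contained in $\SL(n,A)$; moreover it is connected, being the closure of the path-connected subgroup generated by the $E_{i,j}(A)$'s (each $E_{i,j}(a)$ is joined to $I$ by the continuous path $t \mapsto E_{i,j}(ta)$).

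For the reverse inclusion I will show that $\E_n(A)$ contains an open neighbourhood of $I$ in $\SL(n,A)$. Combined with the facts that $\E_n(A)$ is closed in $\GL(n,A)$ by definition and connected, this forces $\E_n(A) = \SL(n,A)_0$: a clopen connected subgroup of $\SL(n,A)$ containing the identity must be the identity component. As an open subgroup of the Banach-Lie group $\SL(n,A)$, it then automatically inherits the structure of a Banach-Lie group, giving the ``in particular'' clause.

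The key step is thus a uniform Gaussian elimination argument. For $g \in \SL(n,A)$ close enough to $I$, the entry $g_{1,1}$ is close to $1$ and hence invertible in $A$; left-multiplying $g$ by $\prod_{i=2}^{n} E_{i,1}(-g_{i,1}g_{1,1}^{-1})$ clears the first column below $g_{1,1}$, and right-multiplying the result by $\prod_{j=2}^{n} E_{1,j}(-g_{1,1}^{-1}g_{1,j})$ clears the first row to the right of $g_{1,1}$. The outcome is block-diagonal $\mathrm{diag}(g_{1,1},g')$ with $g' \in \GL(n-1,A)$ still close to $I_{n-1}$. Iterating reduces $g$, modulo left and right multiplication by elementary matrices, to a diagonal matrix $\mathrm{diag}(d_1,\dots,d_n)$ with $d_1\cdots d_n = 1$ and each $d_i$ close to $1$, in particular invertible.

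It remains to exhibit such a diagonal as a product of elementary matrices. Setting $a_k := d_1\cdots d_k$ (so that $a_n = 1$), one has
\[\mathrm{diag}(d_1,\dots,d_n) = \prod_{k=1}^{n-1} \mathrm{diag}(1,\dots,1, a_k, a_k^{-1}, 1, \dots,1),\]
where the non-trivial entries of the $k$-th factor lie in positions $k$ and $k+1$. It therefore suffices to show that $\mathrm{diag}(b,b^{-1})$ embedded into the $(k,k+1)$-block belongs to $\E_n(A)$ for every $b \in \Inv(A)$, and this is the classical Whitehead identity
\[\mathrm{diag}(b,b^{-1}) = E_{1,2}(b) E_{2,1}(-b^{-1}) E_{1,2}(b) \cdot E_{1,2}(-1) E_{2,1}(1) E_{1,2}(-1),\]
valid in $\SL(2,A)$ for any unital ring $A$, as a short direct check confirms. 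The only genuine obstacle in the whole argument is keeping the Gaussian reduction uniform on a neighbourhood of $I$ so that all the successive pivots remain simultaneously invertible; this is automatic from the continuity of inversion in the Banach algebra $A$.
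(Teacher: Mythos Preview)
Your argument is correct, and it proceeds along a genuinely different line from the paper's proof. The paper works entirely through the Banach-Lie algebra: it considers the closed Lie subalgebra $\mathfrak{g}=\{X\in\mathfrak{sl}(n,A)\mid \exp(tX)\in\E_n(A)\text{ for all }t\}$, observes that it contains all $e_{i,j}(a)$ and hence all $f_{i,j}(a)=[e_{i,j}(a),e_{j,i}(1)]$, concludes that $\mathfrak{g}=\mathfrak{sl}(n,A)$, and then uses that the identity component of a Banach-Lie group is generated by the image of the exponential map. Your route is purely algebraic/topological: uniform Gaussian elimination plus the Whitehead identity show directly that a neighbourhood of $I$ in $\SL(n,A)$ lies in the algebraic subgroup generated by elementary matrices, so $\E_n(A)$ is open, hence clopen, hence equal to $\SL(n,A)_0$. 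Your approach is more elementary in that it avoids Banach-Lie machinery altogether, and as a bonus it yields the extra information that the group \emph{algebraically} generated by elementary matrices is already open (so the closure in the definition of $\E_n(A)$ is not needed in the abelian case). The paper's approach, by contrast, fits more naturally into the Lie-theoretic framework used later in the section, where the same Lie-algebra computation is refined for non-commutative $A$.
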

\begin{proof}
Clearly, $\E_n(A)$ is a closed subgroup of $\SL(n,A)$. Although we do not a priori know that $\E_n(A)$ is a Banach-Lie group, since it is a closed subgroup of a Banach-Lie group, $\LA:=\{X\in\mathfrak{sl}(n,A)\mid \forall t\geq 0\; (\exp(tX)\in \E_n(A)\}$ is a well-defined Banach-Lie subalgebra of $\mathfrak{sl}(n,A)$ (see \cite[Corollary IV.3]{Neeb04}). For $i\neq j\leq n$ and $a\in A$ denote by $e_{i,j}(a)$ the matrix $E_{i,j}(a)-\mathrm{Id}\in\mathfrak{sl}(n,A)$, and by $f_{i,j}(a)\in \mathfrak{sl}(n,A)$ the matrix containing $a$ at the $(i,i)$-th entry, $-a$ at the $(j,j)$-th entry, and zeros elsewhere. Matrices of the form $e_{i,j}(a)$ and $f_{i,j}(a)$ span the Banach-Lie algebra $\mathfrak{sl}(n,A)$. The verification is the same as in the finite-dimensional case.

Since $\E_n(A)$ contains all elementary matrices $E_{i,j}(a)$, for $i\neq j\leq n$, $a\in A$, $\LA$ contains all matrices $e_{i,j}(a)$. Since $f_{i,j}(a)=[e_{i,j}(a),e_{j,i}(1)]$, we get that $\LA$ contains the basis of $\mathfrak{sl}(n,A)$, and thus $\LA=\mathfrak{sl}(n,A)$. By the Lie theory, the connected component of the identity of $\SL(n,A)$ is equal to $\{\exp(X_1)\cdots\exp(X_n)\mid X_1,\ldots,X_n\in\mathfrak{sl}(n,A)=\LA\}$. The latter set, denoted by $G$, is a dense subgroup of $\E_n(A)$. Indeed, by definition of $\LA$, $G\subseteq \E_n(A)$, and since $\E_n(A)$ is by definition the closure of the subgroup generated by matrices $E_{i,j}(a)$, which is contained in $G$, we get that $G$ is dense in $\E_n(A)$. Since $G$ is closed, these groups are equal.
\end{proof}

For $A$ non-commutative, the situation is more delicate and we do not know whether in general $\E_n(A)$ is a Banach-Lie group. We do know it under certain $K$-theoretic assumption on $A$.

For the rest of this subsection, we shall assume a mild familiarity with the $K_0$ group of Banach algebras. We refer to \cite{RLLbook} for any unexplained notion. Specifically, we want to recall that for any Banach algebra $A$ and any tracial linear operator $\tau: A\rightarrow E$, where $E$ is a Banach space, i.e. a linear map satisfying $\tau(xy-yx)=0$, we have a unique associated group homomorphism $\tau_*: K_0(A)\rightarrow E$; see \cite[Section 3.3.1]{RLLbook} for details. We shall also work with the \emph{de la Harpe-Skandalis determinant}. Its definition will be recalled later, we refer the reader to \cite{dlHS84} and \cite{dlH13} for more information.\\

Fix now $n\geq 2$ and $A$. Let $\LA:=\{X\in M_n(A)\mid \forall t\geq 0\; (\exp(tX)\in \E_n(A))\}$. It is a closed Lie subalgebra of $M_n(A)$.

Let $\tau:A\rightarrow A/[A,A]=: E$ be the universal tracial operator, i.e. the projection from $A$ onto $E:=A/[A,A]$, where $[A,A]$ is the closed linear span of commutators. Let $\tau_n: M_n(A)\rightarrow E$ be $\tau\circ \mathrm{Tr}$, i.e. $\tau_n((x_{i,j})_{i,j\leq n})=\tau(\sum_{i\leq n} x_{ii})$. Set $L:=\{X\in M_n(A)\mid \tau_n(X)=0\}$.
\begin{proposition}
 $L$ is a Banach-Lie algebra and we have $L\subseteq \LA$. If $K_0(A)$ is trivial, we have equality and $\E_n(A)$ is a Banach-Lie group equal to the (connected component of the identity of the) Banach-Lie subgroup $G_{HS}$ of $\mathrm{GL}(n,A)$ of elements with vanishing de la Harpe-Skandalis determinant. If $K_0(A)$ is finitely generated, then $\E_n(A)$ is still a Banach-Lie group, subgroup of $G_{HS}$.
\end{proposition}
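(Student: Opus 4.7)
The first assertion, that $L$ is a Banach-Lie algebra, is immediate: since $[A,A]$ is closed by definition, $\tau_n = \tau \circ \mathrm{Tr}$ is a continuous linear map, so $L = \ker(\tau_n)$ is a closed subspace; and the identity $\mathrm{Tr}([X,Y]) = \sum_{i,j}(X_{ij}Y_{ji} - Y_{ji}X_{ij}) \in \overline{[A,A]}$ forces $\tau_n([X,Y]) = 0$, so $L$ is closed under the bracket.

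For the inclusion $L \subseteq \LA$, the plan is to compare $L$ with the Lie subalgebra $S_0 \subseteq M_n(A)$ generated by the elementary Lie-algebra elements $\{e_{i,j}(a): i \neq j,\ a \in A\}$. Each generator exponentiates into $E_{i,j}(A) \subseteq \E_n(A)$, so $S_0 \subseteq \LA$; closedness of $\LA$ gives $\overline{S_0} \subseteq \LA$. The key step is to verify $\overline{S_0} = L$ via two explicit bracket computations: the identity $f_{k,k+1}(a) = [e_{k,k+1}(a),e_{k+1,k}(1)]$ implies $S_0$ contains every diagonal matrix of ordinary trace zero; and subtracting the brackets $[e_{i,j}(ab),e_{j,i}(1)]$ and $[e_{i,j}(a),e_{j,i}(b)]$ produces a matrix supported on the single diagonal entry $(j,j)$ with value $[b,a]$, so $\overline{S_0}$ contains any matrix with a single diagonal entry from $\overline{[A,A]}$. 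An arbitrary $X \in L$ then decomposes modulo $\overline{S_0}$ as a matrix supported on $(n,n)$ with value $\sum_i X_{i,i} \in \overline{[A,A]}$, which itself lies in $\overline{S_0}$.

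For the case $K_0(A) = 0$, I would invoke the de la Harpe-Skandalis construction \cite{dlHS84,dlH13}: the trace $\tau_n$ induces a continuous homomorphism $\det_\tau \colon \GL(n,A)_0 \to E/F$, where $F \subseteq E$ is a countable subgroup that factors through $K_0(A)$, and on one-parameter subgroups $\det_\tau(\exp(X)) = \tau_n(X) + F$. Under $K_0(A) = 0$, we have $F = 0$, so $\det_\tau$ becomes a continuous homomorphism into the Banach space $E$; its kernel $G_{HS}$ is a Banach-Lie subgroup of $\GL(n,A)_0$ with Lie algebra precisely $L$. Elementary matrices have $\det_\tau$-image zero (since $\tau_n(e_{i,j}(a)) = 0$ for $i \neq j$), hence $\E_n(A) \subseteq G_{HS,0}$; conversely, for $X \in \LA$, $\exp(tX) \in G_{HS}$ for all $t$ forces $t\tau_n(X) = 0$ in $E$, giving $\LA \subseteq L$. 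Combined with the previous paragraph, $\LA = L$, so $\E_n(A)$ and $G_{HS,0}$ are connected Banach-Lie groups with the same Lie algebra, and thus coincide.

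The case of finitely generated $K_0(A)$ is the main obstacle; the plan is to reduce it to the previous one. Now $F \subseteq E$ is merely a finitely generated abelian group, so it lies in the finite-dimensional $\Rea$-subspace $V := \Rea \cdot F$ of $E$. Composing $\det_\tau$ with $E/F \twoheadrightarrow E/V$ gives a continuous homomorphism $\delta \colon \GL(n,A)_0 \to E/V$ into the Banach space $E/V$; its kernel is a Banach-Lie subgroup of $\GL(n,A)_0$ with Lie algebra $\tau_n^{-1}(V)$, containing $\E_n(A)$. Restricting $\det_\tau$ to this kernel produces a homomorphism into the finite-dimensional Lie group $V/F$, whose own kernel is $G_{HS}$, whose identity component is in turn a Banach-Lie subgroup with Lie algebra $L$ (using that $L$ is closed and has a closed complement inside $\tau_n^{-1}(V)$ via any lift of $V$). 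The Lie-algebra argument of the previous paragraph still applies, using that $F$ is countable: the continuous curve $t \mapsto t\tau_n(X) \in E$ landing in $F$ for all $t$ forces $\tau_n(X) = 0$. Hence $\LA = L$, and $\E_n(A)$, being a connected subgroup of $G_{HS}$, equals the identity component of $G_{HS}$ and is therefore Banach-Lie.
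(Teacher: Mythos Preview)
Your treatment of the first two assertions and of the $K_0(A)=0$ case follows the paper's approach and is essentially correct, with one piece of looseness: you conclude by saying ``$\E_n(A)$ and $G_{HS,0}$ are connected Banach-Lie groups with the same Lie algebra, and thus coincide,'' but at that point you have not yet shown $\E_n(A)$ is Banach-Lie---that is precisely what you are trying to prove. The argument should run in the other direction (as the paper makes explicit): since $G_{HS,0}$ is a connected Banach-Lie group with Lie algebra $\tilde L = L = \LA$, every element of $G_{HS,0}$ is a finite product of exponentials of elements of $\LA$, hence lies in $\E_n(A)$; together with your inclusion $\E_n(A)\subseteq G_{HS,0}$ this gives equality, and the Banach-Lie structure on $\E_n(A)$ is inherited from $G_{HS,0}$.

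The genuine gap is in the finitely generated case. Your argument for $\LA=L$ needs $t\,\tau_n(X)\in F$ for all $t$ whenever $X\in\LA$; for this you need the \emph{raw} determinant $\det_\tau\colon \GL(n,A)_0\to E/F$ (target not closed out) to vanish on all of $\E_n(A)$. It certainly vanishes on finite products of elementary matrices, but $\E_n(A)$ is by definition the \emph{closure} of that algebraic subgroup, and when $F$ is not discrete in $V$---for instance when $\tau_*[K_0(A)]=\Int+\sqrt{2}\,\Int\subseteq\Rea$---the quotient $E/F$ is non-Hausdorff, $\det_\tau$ into $E/F$ is not continuous, and its kernel need not be closed. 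The same example shows your assertion that $V/F$ is a ``finite-dimensional Lie group'' is false, so the construction of $G_{HS}$ as the kernel of a Lie-group homomorphism into $V/F$ breaks down. The paper avoids this entirely: it works only with the continuous determinant into the Hausdorff quotient $E/\overline{\tau_*[K_0(A)]}$, accepts that this yields only a chain $L\subseteq\LA\subseteq\tilde L\subseteq \tau_n^{-1}(F')$ (with $F'$ the closed real span of $\tau_*[K_0(A)]$) in which each inclusion has finite codimension, and does \emph{not} claim $\LA=L$ or $\E_n(A)=G_{HS,0}$ in this case. It then appeals to a lemma of Omori to conclude that a closed subgroup of a Banach-Lie group whose associated Lie algebra has finite codimension is itself a Banach-Lie subgroup.
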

\begin{proof}
Since $\tau_n$ is a continuous linear map and for every $X,Y\in M_n(A)$ (not necessarily from $L$), $\tau_n([X,Y])=0$, it follows that $L$ is a closed Lie subalgebra of $M_n(A)$.\\ \\
\textbf{Step 1.} We claim that $L$ is spanned by the following elements:
\begin{itemize}
    \item $e_{i,j}(a)$, for $i\neq j\leq n$ and $a\in A$, which contains $a$ at the $(i,j)$-th entry and zeros elsewhere;
    \item $f_{i,j}(a)$, for $i\neq j\leq n$ and $a\in A$, which contains $a$ at the $(i,i)$-th entry, $-a$ at the $(j,j)$-th entry, and zeros elsewhere;
    \item $g(a)$, where $a\in [A,A]$, and $g(a)$ is a matrix containg $a$ at the $(n,n)$-th entry and having zeros elsewhere.
\end{itemize}
Indeed, let $(x_{i,j})_{i,j\leq n}\in L$. By subtracting elements of the form $e_{i,j}(x_{i,j})$, for $i\neq j\leq n$, we may suppose that $x_{i,j}=0$ if $i\neq j$. Analogously, by subtracting the element of the form $f_{1,2}(x_{1,1})$, we may suppose that $x_{1,1}=0$. Continuing in the same fashion, we are reduced to the case when $x_{i,j}\neq 0$ if and only if $i=j=n$. Then we must have $\tau(x_{n,n})=0$, so $x_{n,n}\in [A,A]$, and we are done.\\ \\
\textbf{Step 2.} We claim that $L\subseteq \LA$. It suffices to show that all elements of the form $e_{i,j}(a)$, $f_{i,j}(a)$, and $g(b)$ are present in $\LA$. It is clear that all elements $e_{i,j}(a)$ belong to $\LA$. Since $f_{i,j}(a)=[e_{i,j}(a),e_{j,i}(1)]$, also all elements of the form $f_{i,j}(a)$ belong to $\LA$. Finally, for any $a,b\in A$, we have $g(ab-ba)=[e_{n,1}(a),e_{1,n}(b)]-f_{1,n}(ba)$, thus $g(ab-ba)\in\LA$. Since $\LA$ is closed, $g(c)\in\LA$ for all $c\in [A,A]$.\\ \\
\textbf{Step 3.} Set $G=\mathrm{GL}(n,A)$, $E=A/[A,A]$, let $\tau:A\rightarrow E$ be the projection, the universal tracial linear operator. Let $\tau_*: K_0(A)\rightarrow E$ be the associated homomorphism. Let $\Delta_\tau: G_0\rightarrow E/\overline{\tau_*[K_0(A)]}$ be the de la Harpe-Skandalis determinant associated to $\tau$. That is, for any $g=\exp(X_1)\cdots\exp(X_n)\in G_0$, for $X_1,\ldots,X_n\in M_n(A)$, we have \[\Delta_\tau(g)=\sum_{i=1}^n P(\tau_n(X_i)),\] where $P:E\rightarrow E/\overline{\tau_*[K_0(A)]}$ is the projection. The definition does not depend on the decomposition $\exp(X_1)\cdots\exp(X_n)$ (see again \cite{dlHS84} and \cite{dlH13} for more details).

We set $G_{HS}=\mathrm{ker}(\Delta_\tau)$. Since $\Delta_\tau$ is a continuous group homomorphisms between Banach-Lie groups, by \cite[Proposition IV.3.4]{Neeb06}, $G_{HS}$ is a Banach-Lie subgroup of the Banach-Lie group $G$. Since for $i\neq j\leq n$ and $a\in A$, we have $\Delta_\tau(E_{i,j}(a))=0$, we have that $\E_n(A)\subseteq G_{HS}$. Let $\tilde{L}$ be the Banach-Lie algebra of $G_{HS}$. It follows from the inclusion $\E_n(A)\subseteq G_{HS}$ that we have also $\LA\subseteq \tilde{L}$.\\ \\
\textbf{Step 4.} Let $F$ be the closed real linear span of $\tau_*[K_0(A)]$ and set $Z:=E/F$. Let $P_Z: E\rightarrow Z$ be the projection. We set $\hat{L}:=\{X\in M_n(A)\mid P_Z\circ\tau_n(X)=0\}$. We claim that $\tilde{L}\subseteq \hat{L}$. If $X\in M_n(A)$ does not belong to $\hat{L}$, then $P_Z\circ\tau_n(X)\neq 0$. So $\Delta_\tau(\exp(X))=P(\tau_n(X))\neq 0$, thus $X\notin\tilde{L}$.\\ \\
\textbf{Step 5a.} Suppose now that $K_0(A)=\{0\}$. Then we claim that $\hat{L}\subseteq L$ which will immediately imply that $\hat{L}=\tilde{L}=L=\LA$. Indeed, let $X\in\hat{L}$. Then since $P$ and $P_Z$ are trivial, $\tau_n(X)=0$. It follows that $X\in L$.

Finally, we claim that in this case $\E_n(A)=G_{HS,0}$, where $G_{HS,0}$ is the connected component of the identity in $G_{HS}$. So in particular, $\E_n(A)$ is a Banach-Lie group with Banach-Lie algebra $L$. We have already shown that $\E_n(A)\subseteq G_{HS}$. Since $\E_n(A)$ is connected, we have actually $\E_n(A)\subseteq G_{HS,0}$. Conversely, any element $g\in G_{HS,0}$ is of the form $\exp(X_1)\cdots\exp(X_n)$, for some $X_1,\ldots,X_n\in \tilde{L}$. Since $\tilde{L}=\LA$, for each $i$, $\exp(X_i)\in \E_n(A)$, thus $g\in \E_n(A)$, and we are done.\\ \\
\textbf{Step 5b.} Suppose that $K_0(A)$ is finitely generated. Since $K_0(A)$, and thus also $\tau_*[K_0(A)]$, are finitely generated, $F$, the closed real linear span of $\tau_*[K_0(A)]$, is a finite dimensional subspace. It follows that $L$ has a finite codimension in $\hat{L}$, and since $L\subseteq\LA\subseteq\tilde{L}\subseteq \hat{L}$, also $\LA$ has a finite codimension in $\tilde{L}$. It follows from \cite[Lemma 1.4]{Omori} that $\E_n(A)$ is a Banach-Lie subgroup of the Banach-Lie group $G_{HS}$.

\end{proof}

Finally we address the question whether the groups $\E_n(A)$ are unbounded. We show this to be the case for all $n\geq 2$ and $A$ regardless whether $\E_n(A)$ is Banach-Lie, or not.

For the rest of this subsection, we choose and fix $(A,\|\cdot\|_A)$, a unital Banach algebra, and $n\geq 2$. Let $(A^n,\|\cdot\|_n)$ be the direct sum of $n$ copies of $A$ with the $\ell^1$-sum norm. We consider two norms on the algebra $M_n(A)$. For $X=(X_{i,j})_{i,j\leq n}$, set \[\|X\|_\infty:=\max_{i,j\leq n} \|X_{i,j}\|_A\] and set \[\|X\|:=\sup_{\{\xi\in A^n\mid \|\xi\|_n\leq 1\}} \|X\xi\|_n.\] The latter is a Banach algebra norm on $M_n(A)$. However, we clearly have the following.
\begin{fact}\label{fact:twonorms}
The norms $\|\cdot\|$ and $\|\cdot\|_\infty$ are equivalent.
\end{fact}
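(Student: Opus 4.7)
The plan is straightforward: I will establish the two-sided inequality $\|X\|_{\infty}\le \|X\|\le n\|X\|_{\infty}$ for every $X=(X_{i,j})_{i,j\le n}\in M_n(A)$ by direct manipulation of the definitions, using only that $\|\cdot\|_n$ is the $\ell^1$-sum norm on $A^n$ and that $\|\cdot\|_A$ is submultiplicative.

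For the lower bound $\|X\|_{\infty}\le \|X\|$, I would fix indices $i_0,j_0\le n$ and test the operator against the standard unit vector $\xi^{(j_0)}=(0,\dots,1,\dots,0)\in A^n$ having the unit of $A$ in the $j_0$-th slot and zeros elsewhere; clearly $\|\xi^{(j_0)}\|_n=1$. Then $X\xi^{(j_0)}$ is the $j_0$-th column of $X$, so
\[
\|X\|\ge \|X\xi^{(j_0)}\|_n=\sum_{i=1}^n \|X_{i,j_0}\|_A\ge \|X_{i_0,j_0}\|_A.
\]
Taking the maximum over $i_0,j_0$ yields $\|X\|\ge \|X\|_{\infty}$.

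For the upper bound $\|X\|\le n\|X\|_{\infty}$, I would take an arbitrary $\xi=(\xi_1,\dots,\xi_n)\in A^n$ with $\|\xi\|_n=\sum_j\|\xi_j\|_A\le 1$ and compute coordinate-wise: the $i$-th entry of $X\xi$ is $\sum_{j=1}^n X_{i,j}\xi_j$, hence by submultiplicativity of $\|\cdot\|_A$,
\[
\|(X\xi)_i\|_A\le \sum_{j=1}^n \|X_{i,j}\|_A\,\|\xi_j\|_A\le \|X\|_{\infty}\sum_{j=1}^n\|\xi_j\|_A\le \|X\|_{\infty}.
\]
Summing over $i$ gives $\|X\xi\|_n\le n\|X\|_{\infty}$, and taking the supremum over $\xi$ with $\|\xi\|_n\le 1$ yields $\|X\|\le n\|X\|_{\infty}$.

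There is no real obstacle here; the statement is an entirely elementary comparison of matrix norms that works uniformly over all Banach algebras $A$. The only thing worth flagging is the mild asymmetry of the constants ($1$ versus $n$), which reflects the choice of the $\ell^1$ norm on $A^n$; this is however harmless for all subsequent uses, since only the quasi-isometric (indeed bi-Lipschitz) equivalence class of these norms is needed.
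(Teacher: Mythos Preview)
Your proof is correct. The paper itself gives no proof of this fact, simply stating ``we clearly have the following'' before the statement; your elementary verification of the two-sided estimate $\|X\|_\infty\le\|X\|\le n\|X\|_\infty$ is precisely the routine argument the paper leaves to the reader.
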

We prove the following more general criterion which could be useful also elsewhere.
\begin{proposition}\label{prop:unboundedEnA}
Let $G$ be a closed subgroup of a connected component of the identity of $\GL(n,A)$. If there exists a sequence $(X_m)_{m=1}^{\infty}\subseteq G$ such that $\|X_m\|_\infty\to\infty$, then $G$ is unbounded.
\end{proposition}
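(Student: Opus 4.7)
The plan is to produce a single continuous left-invariant pseudometric on the ambient group $\GL(n,A)$ whose restriction to $G$ is already unbounded on the sequence $(X_m)$; by definition of coarse boundedness, this will force $G$ to be coarsely unbounded. The natural choice is to use the Banach algebra (operator) norm $\|\cdot\|$ on $M_n(A)$ and set
\[\ell(g) := \log \max\bigl(\|g\|,\|g^{-1}\|\bigr), \qquad g \in \GL(n,A).\]

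The first step is to verify that $\ell$ is a continuous pseudo-length function on $\GL(n,A)$. Non-negativity is immediate from submultiplicativity, since $\|g\|\cdot\|g^{-1}\|\geq\|gg^{-1}\|=1$ forces at least one of $\|g\|,\|g^{-1}\|$ to be $\geq 1$, and $\ell(1_G)=0$ because $\|1\|=1$. Symmetry $\ell(g)=\ell(g^{-1})$ is obvious. Subadditivity follows from $\|gh\|\leq\|g\|\|h\|$ together with $\|(gh)^{-1}\|=\|h^{-1}g^{-1}\|\leq\|h^{-1}\|\|g^{-1}\|$, which gives
\[\max\bigl(\|gh\|,\|(gh)^{-1}\|\bigr)\leq \max\bigl(\|g\|,\|g^{-1}\|\bigr)\cdot \max\bigl(\|h\|,\|h^{-1}\|\bigr),\]
and taking logarithms yields $\ell(gh)\leq\ell(g)+\ell(h)$. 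Continuity is immediate from continuity of inversion and of the norm on $\GL(n,A)$. Thus $d(g,h):=\ell(g^{-1}h)$ is a continuous left-invariant pseudometric on $\GL(n,A)$, and its restriction to the closed subgroup $G$ remains a continuous left-invariant pseudometric on $G$.

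The second and final step invokes Fact~\ref{fact:twonorms}: the norms $\|\cdot\|$ and $\|\cdot\|_\infty$ are equivalent on $M_n(A)$, so the hypothesis $\|X_m\|_\infty\to\infty$ yields $\|X_m\|\to\infty$, and hence $\ell(X_m)\geq\log\|X_m\|\to\infty$. Therefore $(X_m)\subseteq G$ is unbounded in the continuous left-invariant pseudometric $d$ restricted to $G$, which by definition of coarse unboundedness finishes the proof.

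There is no serious obstacle here; the only thing to get right is the choice of pseudometric. Using the quantity $\log\max(\|g\|,\|g^{-1}\|)$ rather than something more naive (such as $\|g-1\|$) is what simultaneously delivers symmetry, subadditivity, and vanishing at the identity — all three are essentially built into submultiplicativity of the Banach algebra norm and the identity $(gh)^{-1}=h^{-1}g^{-1}$.
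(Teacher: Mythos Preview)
Your proof is correct and takes a slightly different, more elementary route than the paper. The paper restricts the exponential length $\bel{H}$ of $H=\GL(n,A)$ to $G$ and then, writing $X_m=\prod_k\exp(x_k^m)$, uses the submultiplicative bound $\|e^x\|\le e^{\|x\|}$ to deduce $\bel{H}(X_m)\ge\log\|X_m\|-1$. You instead construct the pseudo-length $\ell(g)=\log\max(\|g\|,\|g^{-1}\|)$ directly from the Banach algebra norm and verify its axioms from submultiplicativity alone. Both arguments end at the same place---$\log\|X_m\|\to\infty$---but yours avoids invoking the exponential-length machinery developed earlier in the paper, and in fact works for \emph{any} closed subgroup of $\GL(n,A)$, not just one contained in the identity component (the paper needs that hypothesis so that $\bel{H}$, which is only defined on the connected component, makes sense). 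What the paper's approach buys is a tighter link to the exponential length, which is the paper's central object; what yours buys is self-containment and a marginally stronger statement.
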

\begin{proof}
Let $G$ and $(X_m)_{m=1}^{\infty}\subseteq G$ be as in the statement. Denote $\GL(n,A)$ by $H$. It suffices to find a continuous length function $l$ on $G$ such that $l(X_m)\to \infty$. Since $H$ is a Banach-Lie group, $\bel{H}$ is a compatible maximal length function on $H$. The restriction of $\bel{H}$ to $G$ is clearly a continuous (even compatible) length function on $G$, so it suffices to show that $\bel{H}(X_m)\to \infty$. For each $m$, find $x^m_1,\ldots,x^m_{k_m}\in M_n(A)$ such that $X_m=\prod_{k\leq k_m} \exp(x^m_k)$ and $\bel{H}(X_m)\geq \sum_{k=1}^{k_m} \|x^m_k\|-1$. Then we have \[\begin{split}\bel{H}(X_m)\geq \sum_{k=1}^{k_m} \|x^m_k\|-1=\log\Big(\exp\big(\sum_{k=1}^{k_m} \|x^m_k\|\big)\Big)-1\geq\\ \log\Big(\|\big(\prod_{k=1}^{k_m} \exp(x^m_k)\big)\|\Big)-1= \log\big(\|X_m\|\big)-1, 
\end{split}\]
where the second inequality follows from
\[\|\left(\prod_{k=1}^{k_m} \exp(x^m_k)\right)\|_A\leq \exp\left(\sum_{k=1}^{k_m} \|x^m_k\|_A\right).\]

Since by Fact~\ref{fact:twonorms}, $\|X_m\|_\infty\to\infty$ implies $\|X_m\|\to\infty$, which in turn implies $\log\big(\|X_m\|\big)-1\to\infty$, the previous inequalities show that $\bel{H}(X_m)\to\infty$. This finishes the proof.
\end{proof}

\begin{corollary}
For every unital Banach algebra $A$ and $n\geq 2$, the group $\E_n(A)$ is unbounded.
\end{corollary}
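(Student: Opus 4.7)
The plan is to apply Proposition~\ref{prop:unboundedEnA} directly, which reduces the task to producing a sequence $(X_m)_{m=1}^\infty \subseteq \E_n(A)$ whose $\|\cdot\|_\infty$-norms tend to infinity. Since $\E_n(A)$ is defined as the closure of the subgroup generated by elementary matrices, and such a generating subgroup is path-connected (via the paths $t \mapsto E_{i,j}(ta)$), $\E_n(A)$ is a connected closed subgroup of $\GL(n,A)$, hence contained in the identity component $\GL(n,A)_0$; so the hypothesis of Proposition~\ref{prop:unboundedEnA} on $G=\E_n(A)$ is met as soon as we exhibit the desired sequence.

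For the sequence, I would take $X_m := E_{1,2}(m\cdot 1_A)$ for $m \in \N$. By definition of the elementary matrices, $X_m \in \E_n(A)$. Its $(1,2)$-entry equals $m\cdot 1_A$, whose $A$-norm is $m\|1_A\|_A = m$, while the remaining entries are $0$ or $1_A$. Therefore $\|X_m\|_\infty = m$ for all sufficiently large $m$, and in particular $\|X_m\|_\infty \to \infty$.

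Applying Proposition~\ref{prop:unboundedEnA} to $G = \E_n(A)$ with this sequence immediately yields that $\E_n(A)$ is unbounded, which concludes the proof. No significant obstacle is expected: everything reduces to the trivial observation that $E_{1,2}(m\cdot 1_A)$ has arbitrarily large entry-norm, with the real work already done in the preceding proposition.
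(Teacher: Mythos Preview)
Your proof is correct and essentially identical to the paper's: both take $X_m=E_{1,2}(m\cdot 1_A)$, observe $\|X_m\|_\infty\to\infty$, and invoke Proposition~\ref{prop:unboundedEnA}. Your version is slightly more explicit in checking the connectedness hypothesis and computing the norm, but the argument is the same.
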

\begin{proof}
The group $\E_n(A)$ contains the sequence $X_m:=E_{1,2}(m1)$. Since we have $\|X_m\|_\infty\to\infty$, we are done by Proposition~\ref{prop:unboundedEnA}.
\end{proof}
\subsection{Property (T) of groups $\E_n(A)$ and $\SL(n,A)$}
This subsection contains our main results concerning Property (T) for the groups of the type $\E_n(A)$ and $\SL(n,A)$. Partial results in this direction have been obtained, as discussed earlier in Subsection~\ref{subsection:prelim-T}, by Shalom in \cite{Sha99} and Cornulier in \cite{Cor06} using the bounded generation property. Here we show that much more can be said using just the techniques of the proof of Property (T) for $\SL(n,\Com)$, resp. $\SL(n,\Rea)$, and the Mautner phenomenon. In particular, instead of the special case of the ring, or rather algebra, $C(X)$, we consider arbitrary unital Banach algebras, not necessarily abelian or separable - when dealing with groups $\E_n(A)$. At the end of the subsection,  we also discuss Property (T) for $\SL(n,A)$, when $A=C(X)$ for $X$ compact Hausdorff. The main result is the following.
\begin{theorem}\label{thm:propertyT}
Let $A$ be a unital Banach algebra over the real or complex numbers and let $n\geq 3$. Then the group $\E_n(A)$ has Property (T), and therefore also Property (FH). 
\end{theorem}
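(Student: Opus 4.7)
The plan is to adapt the classical proof of Kazhdan's Property~(T) for $\SL(n,\mathbb{R})$, $n\ge 3$, to the Banach-algebra setting. Two ingredients will carry the argument: relative Property~(T) for an auxiliary abelian ``root'' subgroup of $\E_n(A)$, and Mautner-type propagation of invariance throughout $\E_n(A)$ via the commutator identities among elementary matrices.

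Fix $n\ge 3$ and introduce the closed abelian subgroup $U:=\overline{\langle E_{i,n}(A)\mid 1\le i<n\rangle}\cong (A^{n-1},+)$ together with its normalizing Levi subgroup $L:=\overline{\langle E_{i,j}(A)\mid 1\le i,j<n,\,i\ne j\rangle}\cong\E_{n-1}(A)$, acting on $U$ by matrix multiplication so that $L\ltimes U\hookrightarrow\E_n(A)$. The core step is to establish the relative Property~(T) of the pair $(L\ltimes U,\,U)$: any strongly continuous unitary representation $\pi$ of $\E_n(A)$ that almost has invariant vectors in the bounded sense (see Subsection~\ref{subsection:prelim-T}) admits a non-zero $U$-invariant vector $\xi$. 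This is the direct analogue of the Burger/Kazhdan theorem for the pair $(\SL(2,\mathbb R)\ltimes\mathbb R^2,\mathbb R^2)$.

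Given such a $\xi$, I next apply the same relative Property~(T) reasoning to a conjugate pair under a Weyl-element permutation $w\in\E_n(\Z)\subseteq\E_n(A)$ (a product of elementary matrices realizing the transposition of coordinate $n$ with some $j<n$), together with a Mautner step on the subspace $\mathcal H^U$ of $U$-invariants (which carries a natural action of the normalizer of $U$), to produce a vector simultaneously invariant under $U$ and under the opposite unipotent $V:=\overline{\langle E_{n,j}(A)\mid 1\le j<n\rangle}$. The commutator identity
$$[E_{i,n}(a),E_{n,j}(b)]=E_{i,j}(ab),\qquad 1\le i\ne j<n,\;a,b\in A,$$
combined with the fact that $A$ is unital (so the products $a\cdot 1$ range over all of $A$), then forces the invariance of such a vector under every $E_{i,j}(A)$ with $i,j<n$; together with $U$- and $V$-invariance this yields invariance under every $E_{i,j}(A)$, hence under all of $\E_n(A)$. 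Property~(FH) then follows at once from Property~(T) by the Delorme--Guichardet theorem (\cite[Theorem 2.12.4]{BdHV}).

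The main obstacle is the first step, relative Property~(T) for $(L\ltimes U,U)$. In the finite-dimensional Lie case this rests on Pontryagin duality for the locally compact abelian group $\mathbb R^{n-1}$: an $L$-quasi-invariant probability measure on $\widehat{\mathbb R^{n-1}}\setminus\{0\}$ is ruled out because $L$-orbits are norm-expansive, whence a well-known spectral argument forces invariance under $\mathbb R^{n-1}$. In the Banach-algebra setting $U\cong A^{n-1}$ is not locally compact, so the dual side is unavailable and this machinery must be replaced. The substitute I expect to use is a direct quantitative analysis of the matrix coefficients of $\pi|_U$, exploiting that bounded subsets of $L$ contain elementary products whose action on any nonzero Banach-direction of $U$ is arbitrarily norm-expanding; any non-trivial spectral content of $\pi|_U$ would then be forced to spread unboundedly under conjugation by a bounded subset of $L$, contradicting almost-invariance on a bounded Kazhdan subset of $\E_n(A)$. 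Making this expansion argument rigorous without local compactness is the technical heart of the proof.
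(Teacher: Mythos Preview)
Your overall strategy---relative Property~(T) followed by Mautner propagation---is the same as the paper's, but you have placed the relative step at the wrong (and much harder) spot. You want to prove relative~(T) for the pair $(L\ltimes U,U)$ with $U\cong A^{n-1}$, and you correctly identify that this is where the locally compact machinery breaks down; you then defer the issue to a ``direct quantitative analysis'' that is not actually carried out. This is a genuine gap: relative~(T) for non-locally-compact abelian normal subgroups is not a routine matter, and the vague expansion heuristic you describe does not obviously survive the loss of Pontryagin duality and spectral measures.

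The paper's proof sidesteps this difficulty entirely. Since $A$ is unital, $\SL(n,\Com)$ (or $\SL(n,\Rea)$) embeds in $\E_n(A)$ via scalar matrices, and so does the locally compact semidirect product $\Com^{n-1}\rtimes\SL(n-1,\Com)$. One then invokes the \emph{classical} relative~(T) for the pair $(\Com^{n-1}\rtimes\SL(n-1,\Com),\,\Com^{n-1})$ to obtain a vector $\xi$ invariant under the scalar subgroups $E_{i,n}(\Com)$. The Mautner step is then done with the \emph{scalar} torus elements $\mathrm{diag}(\lambda,1,\ldots,1,\lambda^{-1})$, etc.: these already fix $\xi$ (a short computation), and conjugation by them contracts $E_{i,j}(a)$ for arbitrary $a\in A$ to the identity, forcing $\pi(E_{i,j}(a))\xi=\xi$ for every $a\in A$. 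In other words, the passage from ``scalar'' to ``Banach-algebra'' coefficients happens in the Mautner stage, not in the relative-(T) stage, and the entire argument stays within the locally compact world where it is needed. Your commutator identity $[E_{i,n}(a),E_{n,j}(b)]=E_{i,j}(ab)$ is correct and is used at the end, but it is not what carries the weight.
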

\begin{proof}
Fix a unital Banach algebra $A$. We shall only prove the theorem for $\E(3,A)$. For the other groups from the statement, the proof is analogous.\medskip

Notice that we can naturally identify $\GL(n,\Com)$, resp. $\SL(n,\Com)$ with a subgroup of $\GL(n,A)$, resp. $\E_n(A)$. Indeed, since $A$ is unital, a matrix $(a_{i,j})_{i,j}\in\GL(n,\Com)$, where each $a_{i,j}\in\Com$ can be identified with a matrix $(a_{i,j}\cdot 1)_{i,j}\in\GL(n,A)$, where $1$ is the unit of $A$. Since $\SL(n,\Com)$ is generated by the groups of elementary matrices $E_{i,j}(\Com)$, it follows it can be identified with a subgroup of $\E_n(A)$.

Let $\pi: \E_3(A)\rightarrow \U(\Hil)$ be a continuous unitary representation that almost has invariant vectors. There is a standard embedding of the semi-direct product $\Com^2\rtimes \SL(2,\Com)$ into $\SL(3,\Com)$ which sends an element $\Big(\begin{pmatrix} a_1\\
a_2
\end{pmatrix}, \begin{pmatrix}b_1 & b_2\\
b_3 & b_4
\end{pmatrix}\Big)\in \Com^2\rtimes \SL(2,\Com)$ to the element $\begin{pmatrix} b_1 & b_2 & a_1\\
b_3 & b_4 & a_2\\
0 & 0 & 1
\end{pmatrix}\in \SL(3,\Com)$, which is further sent, or identified, with the corresponding element $\begin{pmatrix} b_1\cdot 1 & b_2\cdot 1 & a_1\cdot 1\\
b_3\cdot 1 & b_4\cdot 1 & a_2\cdot 1\\
0 & 0 & 1
\end{pmatrix}\in \E_3(A)$. Restricting the representation $\pi$ to this copy of $\Com^2\rtimes \SL(2,\Com)$, which therefore also almost has invariant vectors, and using the result (see e.g. \cite[Corollary 1.4.13]{BdHV}) that the pair $(\Com^2\rtimes \SL(2,\Com),\Com^2)$ has the relative Property (T) (recall the definition from \cite[Definition 1.4.3]{BdHV}), we get that there is an nonzero $\Com^2$-invariant vector $\xi\in\Hil$ (with respect to this embedding of $\Com^2$ into $\E_3(A)$). Notice that $\xi$ is therefore $E_{1,3}(\Com)$-invariant and $E_{2,3}(\Com)$-invariant. Consider the following subgroups of $\E_3(A)$: $A_1:=\{a_1(\lambda)=\begin{pmatrix} \lambda\cdot 1 & 0 & 0\\
0 & 1 & 0\\
0 & 0 & \lambda^{-1}\cdot1\end{pmatrix}\mid \lambda\in\Com\setminus\{0\}\}$ and $A_2:=\{a_2(\lambda)=\begin{pmatrix}  1 & 0 & 0\\
0 & \lambda\cdot 1 & 0\\
0 & 0 & \lambda^{-1}\cdot 1\end{pmatrix}\mid \lambda\in\Com\setminus\{0\}\}$. 

Let us show that $\xi$ is also $A_1$-invariant and $A_2$-invariant. First we do the former. Let $\lambda\in\Com\setminus\{0\}$ be arbitrary. Fix a sequence $(\lambda_i)_i\subseteq\Com\setminus \{0\}$ with $|\lambda_i|\to 0$. We have \[\begin{pmatrix}1 & 0 & \lambda\lambda_i^{-1}\cdot 1\\
0 & 1 & 0\\
0 & 0 & 1\end{pmatrix}
\begin{pmatrix}0 & 0 & -\lambda^{-1}_i\cdot 1\\
0 & 1 & 0\\
\lambda_i\cdot 1 & 0 & 0\end{pmatrix}
\begin{pmatrix}1 & 0 & (\lambda\lambda_i)^{-1}\cdot 1\\
0 & 1 & 0\\
0 & 0 & 1\end{pmatrix}=\]
\[\begin{pmatrix}\lambda\cdot 1 & 0 & 0\\
0 & 1 & 0\\
\lambda_i\cdot 1 & 0 & \lambda^{-1}\cdot 1\end{pmatrix}\to
\begin{pmatrix}\lambda\cdot 1 & 0 & 0\\
0 & 1 & 0\\
0 & 0 & \lambda^{-1}\cdot 1\end{pmatrix}.\]
\medskip

Denoting the matrix $\begin{pmatrix}0 & 0 & -\lambda^{-1}_i\\
0 & 1 & 0\\
\lambda_i & 0 & 0\end{pmatrix}$ by $u_i$ (notice that $u_i\in\SL(3,\Com)$ and therefore $u_i\in \E_3(A)$), by the continuity of $\pi$ we get that \[\|\pi(a_1(\lambda))\xi-\xi\|=\]
\[\lim_i \| \pi(E_{1,3}(\lambda\lambda^{-1}_i\cdot 1))\pi(u_i)\pi(E_{1,3}((\lambda\lambda_i)^{-1}\cdot 1))\xi-\xi\|=\]
\[\lim_i \|\pi(u_i)\xi-\xi\|,\]

where the last equality follows from the fact that for $g,h,f\in \E_3(A)$ with $\|\pi(g^{\pm})\xi-\xi\|=\|\pi(f^{\pm})\xi-\xi\|=0$ we have
\begin{equation}\label{(T)-eq}
    \|\pi(h)\xi-\xi\|\leq \|\pi(g^{-1})\xi-\xi\|+\|\pi(g)\pi(h)\pi(f)\xi-\xi\|+\|\pi(f^{-1})\xi-\xi\|=\|\pi(g)\pi(h)\pi(f)\xi-\xi\|,
\end{equation}
and symetrically also $\|\pi(g)\pi(h)\pi(f)\xi-\xi\|\leq \|\pi(h)\xi-\xi\|$.

Since the sequence $(u_i)_i$ does not depend on $\lambda$, we get \[\lim_i\|\pi(u_i)\xi-\xi\|=\|\pi(a_1(1))\xi-\xi\|=0,\] thus for every $\lambda$ we have $\|\pi(a_1(\lambda))\xi-\xi\|=0$.

An analogous computation using $E_{2,3}(\lambda\lambda^{-1}_i\cdot 1)$, resp. $E_{2,3}((\lambda\lambda_i)^{-1}\cdot 1)$ instead of $E_{1,3}(\lambda\lambda^{-1}_i\cdot 1)$, resp. $E_{1,3}((\lambda\lambda_i)^{-1}\cdot 1)$, and $\begin{pmatrix}1 & 0 & 0\\
0 & 0 & -\lambda^{-1}_i\cdot 1\\
0 & \lambda_i\cdot 1 & 0\end{pmatrix}$ instead of $u_i$ shows that $\pi(a_2(\lambda))\xi=\xi$, for every $\lambda\in\Com\setminus\{0\}$.
\medskip

Now we show that for every $a\in A$, $\pi(E_{1,3}(a))\xi=\pi(E_{2,3}(a))\xi=\pi(E_{3,1}(a))\xi=\pi(E_{3,2}(a))\xi=\xi$. Fix $a\in A$. Since 

\[\lim_i a_1(\lambda_i)E_{1,3}(a)a_1(\lambda^{-1}_i)=\lim_i \begin{pmatrix}1 & 0 & \lambda_i^2 a\\
0 & 1 & 0\\
0 & 0 & 1\end{pmatrix}=\mathrm{Id},\]

\[\lim_i a_2(\lambda_i)E_{2,3}(a)a_2(\lambda^{-1}_i)=\lim_i \begin{pmatrix}1 & 0 & 0\\
0 & 1 & \lambda_i^2 a\\
0 & 0 & 1\end{pmatrix}=\mathrm{Id},\]

\[\lim_i a_2(\lambda^{-1}_i)E_{3,2}(a)a_2(\lambda_i)=\lim_i \begin{pmatrix}1 & 0 & 0\\
0 & 1 & 0\\
0 & \lambda_i^2 a & 1\end{pmatrix}=\mathrm{Id}\]
and
\[\lim_i a_1(\lambda^{-1}_i)E_{3,1}(a)a_1(\lambda_i)=\lim_i \begin{pmatrix}1 & 0 & 0\\
0 & 1 & 0\\
\lambda_i^2 a & 0 & 1\end{pmatrix}=\mathrm{Id},\]

we obtain \[0=\lim_i \|\pi(a_1(\lambda_i)E_{1,3}(a)a_1(\lambda^{-1}_i))\xi-\xi\|=\|\pi(E_{1,3}(a))\xi-\xi\|,\] where for the last equality we again used \eqref{(T)-eq}, and analogously that $\|\pi(E_{3,1}(a))\xi-\xi\|=\|\pi(E_{2,3}(a))\xi-\xi\|=\|\pi(E_{3,2}(a))\xi-\xi\|=0$.
\medskip

Finally, since the groups $E_{2,1}(A)$, resp. $E_{1,2}(A)$ is a subgroup of a group generated by $E_{2,3}(A)\cup E_{3,1}(A)$, resp. of a group generated by $E_{1,3}(A)\cup E_{3,2}(A)$, we conclude that $E_{i,j}(A)\xi=\xi$, for all $i\neq j\leq 3$. Since $E_3(A)$ is generated by $\bigcup_{i\neq j\leq 3} E_{i,j}(A)$, we obtain that $\xi$ is an invariant vector of $\E_3(A)$ under the representation $\pi$.
\end{proof}

Now we consider the groups $\SL(n,A)$, when $A$ is an abelian unital Banach algebra. The following corollary generalizes \cite[Corollary 2]{Cor06}.
\begin{corollary}
Let $A$ be an abelian unital Banach algebra. Then $\SL(n,A)$ has Property (T) if and only if the discrete group $\SL(n,A)/(\SL(n,A))_0$ does.
\end{corollary}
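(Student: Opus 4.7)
The plan is to combine the two prerequisites supplied by the paper: by Lemma~\ref{lem:EnInSln} we have $\E_n(A)=(\SL(n,A))_0$, and by Theorem~\ref{thm:propertyT} (which requires $n\geq 3$) the connected component $N:=\E_n(A)$ already has Property (T). Setting $G:=\SL(n,A)$ and writing $q\colon G\to\Gamma:=G/N$ for the quotient map, the corollary reduces to the standard stability of Property (T) under quotients and under extensions by (T) normal subgroups, with the simplification that $\Gamma$ is discrete.

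For the forward direction I would argue directly: given any strongly continuous unitary representation $\rho\colon\Gamma\to\Uni(\Hil)$ with almost invariant vectors, pull it back to $\pi:=\rho\circ q$ on $G$. Since $\Gamma$ is discrete, compact subsets of $\Gamma$ are finite and are therefore automatically images under $q$ of finite (hence compact) subsets of $G$; almost invariance transfers to $\pi$, and Property (T) of $G$ produces a $\pi$-invariant, equivalently $\rho$-invariant, vector.

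The converse is where the actual work lies. Given $\pi\colon G\to\Uni(\Hil)$ with almost invariant vectors, the idea is to restrict to $N$, extract the nonzero subspace $\Hil^N$ of $N$-fixed vectors using Property (T) of $N$, observe that $\Hil^N$ is $G$-invariant by normality so that $\pi|_{\Hil^N}$ factors through a strongly continuous representation $\bar\pi\colon\Gamma\to\Uni(\Hil^N)$, and then invoke Property (T) of $\Gamma$ to obtain a $\bar\pi$-fixed, hence $\pi$-fixed, vector. The step that requires an actual argument is showing that $\bar\pi$ itself has almost invariant vectors. For this I would decompose $\pi|_N=\pi_0\oplus\pi_1$, with $\pi_0=\pi|_{\Hil^N}$ and $\pi_1$ acting on $(\Hil^N)^\perp$; since $\pi_1$ has no nonzero $N$-invariant vectors, Property (T) of $N$ applied contrapositively yields a compact $Q_0\subseteq N$ and $\varepsilon_0>0$ such that no unit vector in $(\Hil^N)^\perp$ is $(Q_0,\varepsilon_0)$-invariant under $\pi_1$. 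A standard orthogonal-projection estimate then shows that any $(Q_0,\varepsilon_0/2)$-invariant unit vector $\xi\in\Hil$ for $\pi$ satisfies $\|P_{\Hil^N}\xi\|\geq \tfrac{\sqrt{3}}{2}$, and that for any finite subset $\tilde F\subseteq G$ the vector $P_{\Hil^N}\xi$ inherits approximate $\tilde F$-invariance from $\xi$ because $\Hil^N$ and $(\Hil^N)^\perp$ are both $G$-stable. Choosing $\xi$ to be $(Q_0\cup\tilde F,\delta)$-invariant for $\delta$ small enough and then normalizing produces $(q(\tilde F),\varepsilon)$-invariant vectors for $\bar\pi$, with $\varepsilon\to 0$ as $\delta\to 0$.

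The one nontrivial ingredient is the projection estimate above, which is precisely the core tool in the proof that Property (T) is closed under extensions in the locally compact setting (cf.\ \cite[Theorem 1.7.1]{BdHV}). The main thing to verify is that this argument goes through verbatim in our non-locally-compact framework; because $\Gamma$ is discrete, no subtleties arise with lifting compact subsets from the quotient back to $G$, so I expect no serious obstruction beyond writing the estimate out carefully.
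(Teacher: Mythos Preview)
Your proposal is correct and follows essentially the same strategy as the paper. The paper's proof is much terser: for the forward direction it simply invokes that Property (T) passes to quotients, and for the converse it cites the extension result \cite[Proposition 1.7.6]{BdHV} (together with \cite[Remark 1.7.9]{BdHV}, which notes that the proof goes through for completely metrizable groups), whereas you unpack the proof of that extension result explicitly via the projection estimate on $\Hil^N$.
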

\begin{proof}
By Lemma~\ref{lem:EnInSln}, the connected component of the identity $(\SL(n,A))_0$ is equal to $\E_n(A)$, which by Theorem~\ref{thm:propertyT} has Property (T). So if the quotient group $\SL(n,A)/(\SL(n,A))_0$ has Property (T), then so does $\SL(n,A)$ since (T) is preserved under (certain) group extensions; see \cite[Proposition 1.7.6]{BdHV} (notice that the proposition is stated only for locally compact groups, but works as well for completely metrizable groups as mentioned in \cite[Remark 1.7.9]{BdHV}). Conversely, if $\SL(n,A)$ has Property (T), then so does every quotient of it, in particular $\SL(n,A)/(\SL(n,A))_0$.
\end{proof}

At the end, we specialize to abelian unital $C^*$-algebras. Let $A$ be such an algebra and let $X$ be its Gelfand spectrum, i.e. $A=C(X)$. As mentioned earlier, in this case we can identify $\SL(n,A)$ with $C(X,\SL(n,\Com))$ (analogously for the real case which is left to the reader). Notice that two elements of $\SL(n,A)$ lie in the same connected component if and only if they lie in the same path connected component if and only if the corresponding continuous functions from $X$ are homotopic. It follows that $\SL(n,A)/(\SL(n,A))_0=\SL(n,A)/\E_n(A)=[X,\SL_n(\Com)]=[X,\mathrm{SU}(n)]$. Recall that for topological spaces $Y$ and $Z$, the symbol $[Y,Z]$ denotes the set of homotopy classes of continuous maps from $Y$ to $Z$. For the last equality, recall that topologically, applying the polar decomposition, $\SL(n,\Com)$ is homeomorphic to a direct product of $\mathrm{SU}(n)$ and a contractible space. 

Summarizing the discussion, we obtain the following.
\begin{proposition}\label{prop:TforSLnA}
 Let $A$ be an abelian unital $C^*$-algebra and let $X$ be its Gelfand spectrum. Then $\SL(n,A)$ has Property (T) if and only if $[X,\mathrm{SU}(n)]$ does.
\end{proposition}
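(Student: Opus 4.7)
The plan is to deduce this as an essentially immediate consequence of the preceding corollary, by identifying the discrete quotient $\SL(n,A)/(\SL(n,A))_0$ with the group $[X,\mathrm{SU}(n)]$. The preceding corollary, combined with Lemma~\ref{lem:EnInSln}, tells us that $(\SL(n,A))_0 = \E_n(A)$ and that $\SL(n,A)$ has Property (T) if and only if the discrete group $\SL(n,A)/\E_n(A)$ does. Since $A = C(X)$, I would begin by recording the canonical topological group isomorphism $\SL(n,A) \cong C(X,\SL(n,\Com))$ with the norm topology (or equivalently, the topology of uniform convergence).

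The next step is to identify the set of connected components $\pi_0(C(X,\SL(n,\Com)))$ with $[X,\SL(n,\Com)]$. For this, I would use that $C(X,\SL(n,\Com))$ is locally path-connected (indeed, by the Banach-Lie group structure, a sufficiently small neighborhood of any map is path-connected), so connected components coincide with path components. Two maps lie in the same path component of $C(X,\SL(n,\Com))$ if and only if there is a continuous path between them, and by the standard exponential-law argument (using compactness of $X$ and the metric topology on $C(X,\SL(n,\Com))$) such a path is the same data as a homotopy $X \times [0,1] \to \SL(n,\Com)$. Therefore $\SL(n,A)/\E_n(A) \cong [X,\SL(n,\Com)]$ as groups, the group structure on the right coming from pointwise multiplication of representatives.

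Finally, I would replace $\SL(n,\Com)$ by its maximal compact subgroup $\mathrm{SU}(n)$ at the homotopy level. The polar decomposition provides a homeomorphism $\SL(n,\Com) \cong \mathrm{SU}(n) \times P$, where $P$ is the set of positive definite Hermitian matrices of determinant $1$, which is contractible (being homeomorphic via the matrix logarithm to the real vector space of traceless Hermitian matrices). Hence the inclusion $\mathrm{SU}(n) \hookrightarrow \SL(n,\Com)$ is a homotopy equivalence, inducing a bijection $[X,\mathrm{SU}(n)] \to [X,\SL(n,\Com)]$, which is moreover a group isomorphism because both group structures are defined pointwise and the retraction onto $\mathrm{SU}(n)$ coming from the polar decomposition is compatible with multiplication up to homotopy (equivalently, one can observe directly that the homotopy inverse can be chosen to respect the pointwise group structure). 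Combining these identifications with the preceding corollary yields the claim.

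I do not expect any serious obstacle; the only point requiring mild care is the identification of path components of $C(X,\SL(n,\Com))$ with $[X,\SL(n,\Com)]$ as \emph{groups}, but since both structures are induced pointwise from $\SL(n,\Com)$ this is routine.
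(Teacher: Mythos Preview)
Your proposal is correct and follows essentially the same route as the paper: identify $\SL(n,A)\cong C(X,\SL(n,\Com))$, observe that connected components coincide with path components and hence with homotopy classes, reduce $[X,\SL(n,\Com)]$ to $[X,\mathrm{SU}(n)]$ via the polar decomposition, and invoke the preceding corollary. If anything you are slightly more careful than the paper, which does not explicitly discuss why the identifications respect the group structure.
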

The previous proposition can be used to decide Property (T) of $\SL(n,C(X))$ for a substantial class of compact Hausdorff spaces based on their homology/cohomology properties.

First we mention, as already suggested in \cite[Exercise 4.4.10]{BdHV}, that for all $n\geq 3$, $\SL(n,C(S^3))$ does not have Property (T). Indeed, in this case we have \[\SL(n,C(S^3))/\E_n(C(S^3))=[S^3,\mathrm{SU}(n)]=\pi_3(\mathrm{SU}(n))=\Int,\] where we refer to \cite[Appendix A (VII)]{Encyclopedia} for the last equality.

On the other hand, if $X$ is a `nice' compact space, then Property (T) for $\SL(n,C(X))$ is related to Betti numbers of $X$. In the following, in order to illustrate this idea, we consider for simplicity just the case $n=3$. The proof gives a hint how to analogously proceed for higher $n$.
\begin{corollary}
Let $X$ be a compact Hausdorff space of dimension at most $15$ which is homotopically equivalent to a finite CW complex. Suppose that
\begin{itemize}
    \item either the cohomology groups $H^k(X,\Int)$ are finite for all $k\leq \mathrm{dim}(X)$,
    \item or $H^k(X,\Int)=0$, for $k\in \{3,5,6,8-15\}\cap \{1,\ldots,\mathrm{dim}(X)\}$.
\end{itemize}
Then $\SL(n,C(X))$ has Property (T).
\end{corollary}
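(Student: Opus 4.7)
The plan is to reduce via Proposition~\ref{prop:TforSLnA} to showing that the discrete group $[X,\mathrm{SU}(3)]$ has Property (T). Since every finite group trivially satisfies Property (T), it suffices to show that $[X,\mathrm{SU}(3)]$ is finite under either hypothesis.

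First I would invoke the Postnikov tower of $\mathrm{SU}(3)$: since $\pi_1(\mathrm{SU}(3))=\pi_2(\mathrm{SU}(3))=0$, it is a tower of principal fibrations $\cdots\to Y_{k+1}\to Y_k\to\cdots\to Y_2=\mathrm{pt}$ with fiber $K(\pi_{k+1}(\mathrm{SU}(3)),k+1)$ at the $(k+1)$-st stage. Because $\dim X\le 15$ and the fiber of $\mathrm{SU}(3)\to Y_{15}$ is $15$-connected, standard obstruction theory (or the fact that $X$ is homotopy equivalent to a finite CW complex of the same dimension) yields $[X,\mathrm{SU}(3)]=[X,Y_{15}]$. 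It therefore suffices to bound $|[X,Y_{15}]|$.

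Next I would apply the classification for each principal fibration: for every $k$, the map $[X,Y_{k+1}]\to[X,Y_k]$ has non-empty fibers which are torsors over $H^{k+1}(X,\pi_{k+1}(\mathrm{SU}(3)))$. Iterating starting from $|[X,Y_2]|=1$ gives
\[
|[X,Y_{15}]|\;\le\;\prod_{k=3}^{15}\bigl|H^{k}\bigl(X,\pi_{k}(\mathrm{SU}(3))\bigr)\bigr|.
\]
Recalling that within the range $k\le 15$ one has $\pi_k(\mathrm{SU}(3))=0$ exactly for $k\in\{1,2,4,7\}$, with $\pi_3=\pi_5=\Int$ and $\pi_k$ finite for $k\in\{6,8,9,10,11,12,13,14,15\}$, only the factors with $k\in\{3,5,6,8,\ldots,15\}$ are non-trivial. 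So it remains to argue that each such factor is finite.

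Here the two hypotheses enter. In the first case, finiteness of $H^k(X,\Int)$ for all $k\le\dim X$ forces, via universal coefficients, $H_k(X,\Int)$ to be finite for all such $k$ (its free part is detected by $\mathrm{Hom}(\cdot,\Int)$); a second application of universal coefficients then gives $H^k(X,A)$ finite for every finitely generated abelian group $A$, in particular for $A=\pi_k(\mathrm{SU}(3))$. In the second case, the assumption $H^k(X,\Int)=0$ for $k\in\{3,5,6,8,\ldots,15\}$ combined with universal coefficients forces $H_k(X,\Int)$ to be finite (torsion) and $H_{k-1}(X,\Int)$ to be free of finite rank, so $H^k(X,A)=\mathrm{Hom}(H_k(X,\Int),A)\oplus\mathrm{Ext}(H_{k-1}(X,\Int),A)$ is again finite for any finitely generated $A$. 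In either case the product above is finite, whence $[X,\mathrm{SU}(3)]$ is finite, hence has Property (T).

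The main obstacle is essentially bookkeeping: lining up the Postnikov stages, the non-vanishing of $\pi_k(\mathrm{SU}(3))$, and the hypotheses on $H^k(X,\Int)$ so that exactly the right cohomology groups show up in exactly the right degrees. The dimension bound $15$ and the specific list $\{3,5,6,8,\ldots,15\}$ are designed to match the low-dimensional homotopy of $\mathrm{SU}(3)$, and once the Postnikov/obstruction argument is set up, the universal-coefficient step is routine.
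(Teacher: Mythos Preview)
Your proposal is correct and follows essentially the same approach as the paper: both reduce via Proposition~\ref{prop:TforSLnA} to showing that $[X,\mathrm{SU}(3)]$ is finite, using the low-dimensional homotopy of $\mathrm{SU}(3)$ (vanishing in degrees $1,2,4,7$ and finitely generated elsewhere up to degree $15$). The only difference is packaging: the paper cites \cite[Proposition~8.2.4]{Ark} (and the remark following its proof) as a black box, whereas you unpack that proposition by running the Postnikov/obstruction argument and universal coefficients by hand; one minor quibble is that the nonempty fibers of $[X,Y_{k+1}]\to[X,Y_k]$ are not literally torsors (the $H^{k+1}$-action is transitive but need not be free), but the cardinality bound you derive from this is unaffected.
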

\begin{proof}
In both cases we apply \cite[Proposition 8.2.4]{Ark} to $[X,Y]=[X,\mathrm{SU}(3)]$ to get that $[X,\mathrm{SU}(3)]$ is finite, thus it has Property (T), and we are done by Proposition~\ref{prop:TforSLnA}.

The condition on $Y=\mathrm{SU}(3)$ is satisfied since $\mathrm{SU}(3)$ is simply connected and $\pi_k(\mathrm{SU}(3))$ is finitely generated for $k\leq 15$ (see again \cite[Appendix A (VII)]{Encyclopedia}).

If $H^k(X,\Int)=0$, for $k\in \{3,5,6,8-15\}$, then the assumption of the whole \cite[Proposition 8.2.4]{Ark} is satisfied since $\pi_2(\mathrm{SU}(3))=\pi_4(\mathrm{SU}(3))=\pi_7(\mathrm{SU}(3))=0$ (see again \cite[Appendix A (VII)]{Encyclopedia}).

If the cohomology groups $H^k(X,\Int)$ are finite, for all $k\leq \mathrm{dim}(X)$, then the conclusion again holds by the remark following the proof of \cite[Proposition 8.2.4]{Ark}.
\end{proof}

We remark that we do not know whether it can ever happen that the discrete group $[X,\mathrm{SU}(n)]$ is infinite and has Property (T), although it can happen that this group is non-abelian. If there were $X$ with $[X,\mathrm{SU}(n)]$ infinite and having Property (T), it would have to be rather wild since, as pointed out to us by V. Pestov, by the Whitehead theorem, if $X$ is of finite category, in particular a finite-dimensional CW complex, then $[X,\mathrm{SU}(n)]$ is nilpotent (see \cite[Theorem 8.4.9]{Ark}).
\subsection{The groups $\E_2(A)$ and $\SL(2,A)$}
While the groups $\SL(n,K)=\E_n(K)$, for $n\geq 3$ and $K\in\{\Rea,\Com\}$, have Property (T), the group $\SL(2,K)=\E_2(K)$ has the opposite Haagerup property. Since the proof of Property (T) passes to the groups $\E_n(A)$, for $n\geq 3$, it is natural to consider the groups $\E_2(A)$ and $\SL(2,A)$ as candidates for groups with the Haagerup property. The aim of this subsection is to show that these groups actually fail the Haagerup property.
We will use the following result. 
\begin{theorem}\label{thm: SUR for Banach}
Let $X$ be a separable Banach space. Then the following four conditions are equivalent. 
\begin{list}{}{}
\item[{\rm (i)}] $X$ is strongly unitarily representable {\rm (SUR)}. Namely, $X$ is isomorphic as a topological group to a closed subgroup of the unitary group on a Hilbert space. 
\item[{\rm (ii)}] There exists a probability space $(\Omega,\mu)$ such that $X$ is linearly isomorphic to a subspace of $L^0(\Omega,\mu)$, where the latter is equipped with the topology of convergence in measure.
\item[{\rm (iii)}] $X$ admits a uniform embedding into a Hilbert space. 
\item[{\rm (iv)}] $X$ admits a coarse embedding into a Hilbert space. 
\end{list}
If moreover $X$ is of the form $C(Y)$, where $Y$ is a compact metric space, then the above four conditions are equivalent to $Y$ being finite. 
\end{theorem}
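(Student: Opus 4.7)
The plan is to assemble the four equivalences from results already in the literature, since each implication is either a short direct verification or a known theorem.

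For (ii) $\Rightarrow$ (i), I will realize $L^0(\Omega,\mu)$ as a closed subgroup of $\U(L^2(\Omega,\mu))$ via the multiplication representation $f \mapsto M_{e^{\ri f}}$, checking that convergence in measure corresponds to the strong operator topology under this map; any closed linear subspace of $L^0$ then inherits the SUR property. The converse (i) $\Rightarrow$ (ii) is the well-known characterization of separable SUR topological groups as (closed subgroups of) groups of the form $L^0(\Omega,\mu)$, due to Megrelishvili--Schr\"oder--Uspenskij and Gal\'e.

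The implication (ii) $\Rightarrow$ (iii) follows from Schoenberg-type positive definite functions on $L^0(\Omega,\mu)$, e.g.\ $f \mapsto \exp(-\int (1-\cos f)\, d\mu)$ and their dilates, which yield a uniform embedding into a Hilbert space; (ii) $\Rightarrow$ (iv) is immediate since a linear embedding is automatically coarse. The equivalence (iii) $\Leftrightarrow$ (iv) for Banach spaces is a standard fact, both conditions being controlled by appropriate families of normalized continuous positive definite functions (as in Proposition~\ref{prop: coarse Haagerup}). The deepest implication, (iv) $\Rightarrow$ (ii), is the Aharoni--Maurey--Mityagin theorem: a separable Banach space that coarsely embeds into a Hilbert space is linearly isomorphic to a subspace of some $L^0(\Omega,\mu)$. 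I will invoke this rather than reprove it; it is the main obstacle of the first half of the statement.

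For the final addendum about $X = C(Y)$, if $Y$ is finite then $C(Y)$ is finite-dimensional and all four conditions hold trivially. Conversely, if $Y$ is an infinite compact metric space, $Y$ contains a convergent sequence of pairwise distinct points $y_n \to y_\infty$; selecting pairwise disjoint open neighborhoods of the $y_n$ and applying Urysohn's lemma (together with Tietze extension) produces a sequence of pairwise disjointly supported unit norm functions in $C(Y)$ spanning a closed linear copy of $c_0$. The key non-trivial ingredient is then Kalton's theorem, asserting that $c_0$ does not coarsely embed into any reflexive Banach space; in particular it does not coarsely embed into a Hilbert space, so $C(Y)$ fails condition (iv). Hence $Y$ must be finite, completing the proof.
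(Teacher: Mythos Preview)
Your overall plan---assembling the equivalences from the literature and citing them---is exactly what the paper does. However, you have misidentified which implications are easy and which are hard, and misattributed the key results.

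The equivalence (i)$\Leftrightarrow$(ii) and (ii)$\Leftrightarrow$(iii) are all due to Aharoni--Maurey--Mityagin \cite{AMM85}; in particular (i)$\Rightarrow$(ii) is not a general fact about Polish groups (the names you cite are not the right references here) but a theorem specific to Banach spaces, proved in that paper. More seriously, you claim that (iii)$\Leftrightarrow$(iv) is ``a standard fact'' and that (iv)$\Rightarrow$(ii) is the Aharoni--Maurey--Mityagin theorem. This is backwards. AMM proved only the \emph{uniform} case (iii)$\Leftrightarrow$(ii); passing from coarse embeddability (iv) to the other conditions is the genuinely non-trivial later contribution of Randrianarivony \cite{Ra06}, building on Johnson--Randrianarivony \cite{JR06}. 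Your appeal to Proposition~\ref{prop: coarse Haagerup} does not help: that proposition characterizes the Haagerup property via positive definite functions, but it does not upgrade a coarse embedding of a Banach space into a Hilbert space to a uniform one. So as written your argument has a gap at precisely the step where the real content lies.

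For the addendum on $C(Y)$, your route is correct and genuinely different from the paper's. The paper simply cites \cite[Theorem 3.3]{AM20}. Your argument---$Y$ infinite implies $c_0\hookrightarrow C(Y)$ isometrically, and $c_0$ does not coarsely embed into a Hilbert space by Kalton's theorem---is a clean self-contained alternative. This part is fine.
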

\begin{proof}
The equivalence (i)$\Leftrightarrow$(ii) is proved by Aharoni--Maurey--Mityagin in Proposition 3.2 and Corollary 3.6 of \cite{AMM85}, and (ii)$\Leftrightarrow$(iii) is proved in Theorem 4.1 of the same paper. That these conditions are equivalent to (iv) is due to Randrianarivony \cite{Ra06} using the work of Johnson--Randrianarivony \cite{JR06}. The last claim is proved in \cite[Theorem 3.3]{AM20} in a recent work of two of the authors.
\end{proof}
\begin{theorem}\label{thm: SL(2,A)}
Let $A$ be a unital infinite-dimensional ${\rm C}^*$-algebra. 
Then the group $\E_2(A)$ does not have the Haagerup property. 
\end{theorem}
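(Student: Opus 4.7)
The plan is to reduce to the commutative case by exhibiting a closed subgroup of $\E_2(A)$ that is topologically and coarsely a copy of an infinite-dimensional commutative $C^*$-algebra of the form $C(Y)$ with $Y$ infinite compact metrizable; such an algebra cannot coarsely embed into a Hilbert space by the last clause of Theorem~\ref{thm: SUR for Banach}, which will contradict the Haagerup property of $\E_2(A)$.

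First I would extract, using only the infinite-dimensionality of $A$, a self-adjoint element $a \in A$ whose spectrum $Y := \sigma(a)$ is infinite; existence follows by a standard Baire-category argument, since the set of self-adjoint elements with spectrum of cardinality at most $n$ is nowhere dense in $A_{\mathrm{sa}}$. By the continuous functional calculus, $B := C^*(a, 1_A) \cong C(Y)$ is a separable infinite-dimensional commutative unital $C^*$-subalgebra of $A$ with $Y$ an infinite compact metrizable space. I then consider the closed subgroup $H := \{E_{1,2}(b) : b \in B\} \subseteq \E_2(A)$, which is topologically isomorphic to $(B,+)$ via $b \mapsto E_{1,2}(b)$.

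Assuming that $\E_2(A)$ has the Haagerup property, fix a continuous metrically proper affine isometric action $\pi : \E_2(A) \curvearrowright \mathcal{H}$ with associated $1$-cocycle $c$, and define $\tilde c : B \to \mathcal{H}$ by $\tilde c(b) := c(E_{1,2}(b))$. The claim is that $\tilde c$ is a continuous coarse embedding of $(B, \|\cdot\|)$ into $\mathcal{H}$. The upper bound is routine: $b \mapsto E_{1,2}(b)$ is a continuous homomorphism from the Banach space $(B,+)$, whose maximal metric is its norm, so norm-bounded sets are sent to coarsely bounded subsets of $\E_2(A)$, on which the continuous cocycle $c$ is bounded. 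The lower bound is the delicate ingredient: following the proof of Proposition~\ref{prop:unboundedEnA}, one has
\[ \bel{\GL(2,A)}(E_{1,2}(b)) \geq \log\|E_{1,2}(b)\| - 1, \]
and by the equivalence of $\|\cdot\|$ with $\|\cdot\|_\infty$ on $M_2(A)$ (Fact~\ref{fact:twonorms}), $\log\|E_{1,2}(b)\| \geq \log\|b\| - C$ for all sufficiently large $\|b\|$. Since the restriction of $\bel{\GL(2,A)}$ is a continuous left-invariant pseudometric on $\E_2(A)$, this shows that $\|b_n\| \to \infty$ forces $E_{1,2}(b_n)$ to leave every coarsely bounded subset of $\E_2(A)$, so metric properness of $\pi$ yields $\|\tilde c(b_n)\| \to \infty$.

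Combining the two bounds produces a coarse embedding of the Banach space $C(Y)$ into $\mathcal{H}$, contradicting Theorem~\ref{thm: SUR for Banach} because $Y$ is an infinite compact metrizable space. The main technical point to verify is the lower bound on the exponential length of $E_{1,2}(b)$; fortunately, for a coarse embedding it suffices that the modulus tend to infinity with $\|b\|$, and the merely logarithmic estimate from Proposition~\ref{prop:unboundedEnA} is just enough.
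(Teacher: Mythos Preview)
Your proof is correct and takes a genuinely different route from the paper's. The paper embeds $B_{\rm sa}$ (for a separable infinite-dimensional commutative unital $C^*$-subalgebra $B\subseteq A$) via the \emph{diagonal} subgroup $D_+ = \{\mathrm{diag}(e^h, e^{-h}) : h \in B_{\rm sa}\}$, and establishes the exact equality $\bel{\GL(2,A)}(\mathrm{diag}(e^h, e^{-h})) = \|h\|$ (Lemma~\ref{lem: D_+}), so that the inclusion $D_+ \hookrightarrow \E_2(A)$ is even an isometric coarse embedding with respect to these length functions. You instead embed the full complex algebra $B$ via the \emph{unipotent} subgroup $E_{1,2}(B)$ and rely only on the logarithmic estimate $\bel{\GL(2,A)}(E_{1,2}(b)) \geq \log\|b\| - C$ extracted from the proof of Proposition~\ref{prop:unboundedEnA}; as you correctly observe, for a coarse embedding the lower modulus need only tend to infinity, so this suffices.

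Your approach has the advantage of bypassing the exact computation in Lemma~\ref{lem: D_+} entirely: the soft logarithmic bound from Proposition~\ref{prop:unboundedEnA} is already available and does the job. The paper's approach, on the other hand, yields a sharper quantitative statement (the restriction of $\bel{\GL(2,A)}$ to $D_+$ coincides with the intrinsic $\bel{D_+}$), and works directly with the real Banach space $B_{\rm sa}\cong C(Y,\R)$, which is what the cited result from \cite{AM20} is literally formulated for. Your application of Theorem~\ref{thm: SUR for Banach} to the complex space $B\cong C(Y)$ is fine since $C(Y,\R)$ sits isometrically inside, but if one wanted to be pedantic one could simply restrict $\tilde c$ to $B_{\rm sa}$.
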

For the proof we need the following lemma. 
\begin{lemma}\label{lem: D_+} Let $A$ be as in Theorem \ref{thm: SL(2,A)}. 
Suppose $B$ is a separable infinite-dimensional unital abelian {\rm C}$^*$-subalgebra of $A$ and define the abelian subgroup $D_+ :=\{{\rm diag}(d,d^{-1})\mid d\in \GL(B)_+\}$ of $\E_2(A)$. Then the following statements hold. 
\begin{list}{}{}
    \item[{\rm{(i)}}] For each $a\in D_+$, there exists a unique $c\in B_{\rm{sa}}$ such that $a=\exp (h), h={\rm diag}(c,-c)$. 
    \item[{\rm{(ii)}}] In {\rm (i)}, $\bel{\GL(2,A)}(a)=\|\log a\|=\bel{D_+}(a)$ holds. In particular, the inclusion $D_+\subset \E_2(A)$ is a coarse embedding of topological groups. 
\end{list}
 
\end{lemma}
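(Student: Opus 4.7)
The plan is to split the proof according to the two assertions, with part (ii) reducing to the key identity $\bel{\GL(2,A)}(a)=\|\log a\|$ whose nontrivial direction uses submultiplicativity of the exponential.

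For part (i), I would invoke continuous functional calculus in the abelian $C^*$-algebra $B$. Since $d\in\GL(B)_+$ is positive and invertible, its spectrum lies in a compact subset of $(0,\infty)$, so $c:=\log d\in B_{\rm sa}$ is well defined. Setting $h:=\mathrm{diag}(c,-c)\in M_2(A)$, a direct computation gives $\exp(h)=\mathrm{diag}(e^c,e^{-c})=\mathrm{diag}(d,d^{-1})=a$. For uniqueness, any self-adjoint $c'\in B_{\rm sa}$ with $\exp(c')=d$ must agree with $c$ by Gelfand theory, since on the spectrum of $B$ the equation $e^{c'(x)}=d(x)$ admits a unique real solution.

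For the upper bound in part (ii), the single-factor decomposition $a=\exp(h)$ with $h\in\mathrm{Lie}(D_+)$ gives $\bel{D_+}(a)\le\|h\|=\|c\|$, and the inclusion $\mathrm{Lie}(D_+)\subseteq M_2(A)$ allows the same decomposition in $\GL(2,A)$, yielding $\bel{\GL(2,A)}(a)\le\bel{D_+}(a)\le\|c\|$. The main step is the matching lower bound $\bel{\GL(2,A)}(a)\ge\|c\|$. For any decomposition $a=\exp(X_1)\cdots\exp(X_n)$ with $X_i\in M_2(A)$, the inequality $\|\exp(X)\|\le e^{\|X\|}$ (valid in any Banach algebra) combined with submultiplicativity of the $C^*$-norm on $M_2(A)$ gives
\[
\|a\|\le\prod_{i=1}^n\|\exp(X_i)\|\le e^{\sum_{i=1}^n\|X_i\|},
\]
so $\sum_i\|X_i\|\ge\log\|a\|$. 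It then remains to compute $\|a\|=\max(\|d\|,\|d^{-1}\|)=e^{\|c\|}$, using the spectral identities $\|e^c\|=e^{\sup\sigma(c)}$ and $\|e^{-c}\|=e^{-\inf\sigma(c)}$ together with $\|c\|=\max(\sup\sigma(c),-\inf\sigma(c))$ for self-adjoint $c$. Taking the infimum over all decompositions gives $\bel{\GL(2,A)}(a)\ge\log\|a\|=\|c\|=\|\log a\|$, and the three quantities coincide.

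For the coarse embedding conclusion, the identification $D_+\cong(B_{\rm sa},+)$ via $\mathrm{diag}(e^c,e^{-c})\leftrightarrow c$ makes $\bel{D_+}$ correspond to the Banach space norm on $B_{\rm sa}$, which is a maximal (and continuous) length function on $D_+$. Since $\bel{\GL(2,A)}$ is a continuous length function on $\GL(2,A)$ whose restriction to $\E_2(A)$ is still a continuous length function, and since $\bel{\GL(2,A)}|_{D_+}=\bel{D_+}$ by the identity just established, a subset of $D_+$ is bounded in $D_+$ iff it is $\bel{\GL(2,A)}$-bounded iff it is bounded in $\E_2(A)$. This gives the coarse embedding $D_+\hookrightarrow\E_2(A)$.

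The main obstacle is the lower bound in part (ii): one must convert a product of general exponentials in $M_2(A)$ into a bound on $\|c\|$, and the crucial observation is that the operator-norm computation $\|a\|=e^{\|c\|}$ on the diagonal element is sharp enough to match the trivial upper bound.
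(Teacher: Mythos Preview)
Your proposal is correct and follows essentially the same approach as the paper: functional calculus for (i), and for (ii) the submultiplicativity estimate $\|a\|\le e^{\sum_i\|X_i\|}$ combined with the spectral identity $\|a\|=e^{\|c\|}$, then the coarse embedding via the restriction of $\bel{\GL(2,A)}$ to $\E_2(A)$. The paper phrases the key step as $e^{\|\log a\|}=\|e^{\log a}\|$, which is exactly your computation $\log\|a\|=\|c\|=\|\log a\|$ spelled out more explicitly.
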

\begin{proof}
(i) Suppose $a=\text{diag}(d,d^{-1}),\,d\in \GL(B)_+$. Let $\alpha=\inf \sigma(d)>0$. Regard $\log \mid [\alpha,\|d\|]\to \mathbb{R}$. Then $c=\log (d)\in B_{\rm{sa}}$ and $d=\exp (c)$. Such $c$ is unique by functional calculus. Moreover, $a=\exp (h)$ and $h={\rm diag}(\log d, -\log d)$ holds.\medskip

(ii) Notice first that $D_+$ is a Banach-Lie group. Indeed, since $B$ is unital, it is of the form $C(X)$, for some compact Hausdorff space $X$. It follows that $D_+$ is isomorphic to a connected component of the identity of the group $\GL(B_{\rm{sa}})=\GL(C(X,\Rea))$, which is Banach-Lie by \cite[Proposition IV.9]{Neeb04}. The same reference also implies that $\GL(2,A)$ is Banach-Lie.

Since $a=e^{\log a}$, it is clear that $\bel{\GL(2,A)}(a)\le \bel{D_+}(a)\le \|\log a\|$ holds. Suppose that $x_1,\dots, x_n$ are elements in the Lie algebra $M_2(A)$ of $\GL(2,A)$ which satisfy $a=e^{x_1}\cdots e^{x_n}$. 
Then by $\|e^x\|\le e^{\|x\|}\,(x\in M_2(A))$, we have 
\[e^{\|\log a\|}=\|e^{\log a}\|=\|e^{x_1}\cdots e^{x_n}\|\le e^{\|x_1\|}\cdots e^{\|x_n\|}.\]
Therefore $\|\log a\|\le \sum_{k=1}^n\|x_k\|$. Since this holds for arbitrary expression of $a$ as the products of exponentials in $\GL(2,A)$, we obtain $\|\log a\|\le \bel{\GL(2,A)}(a)$. Therefore $\bel{\GL(2,A)}(a)=\bel{D_+}(a)=\|\log a\|$ holds.\\
Since the exponential length is a maximal compatible metric (Proposition \ref{prop:bel} (iii)), it follows that the inclusion $D_+\subset \GL(2,A)$ is a coarse embedding. This further implies that the inclusion $D_+\subset \E_2(A)$ is a coarse embedding. Indeed, this is witnessed by the restriction of $\bel{\GL(2,A)}$ to $\E_2(A)$.
\end{proof}
\begin{proof}[Proof of Theorem \ref{thm: SL(2,A)}]
Assume by contradiction that $\E_2(A)$ has the Haagerup property. We can find a separable infinite-dimensional unital abelian subalgebra $B$ of $A$. Indeed, since $A$ is infinite-dimensional, it contains an infinite-dimensional separable unital $C^*$-subalgebra. We can then take as $B$ its maximal abelian $C^*$-subalgebra; see e.g. \cite[Proposition 4.40]{AM20}.  
By Lemma \ref{lem: D_+}, $D_+=\{{\rm diag}(d,d^{-1})\mid d\in {\rm GL}_+(B)\}$ is a coaserly embedded closed subgroup of $\E_2(A)$. Therefore the restriction of a coarsely proper affine isometric action of $\E_2(A)$ on a Hilbert space to $D_+$ witnesses the Haagerup property of $D_+$. In this case, $D_+$ is coarsely embeddable into a Hilbert space. The map $B_{\rm sa}\ni h\mapsto {\rm diag}(e^h, e^{-h})\in D_+$ gives a topological group isomorphism, where $B_{\rm sa}$ is regarded as an additive Polish group in the norm topology. This implies that the real Banach space $B_{\rm sa}$ coarsely embeds into a Hilbert space. By Theorem~\ref{thm: SUR for Banach}, this implies that the Gelfand spectrum ${\rm Spec}(B)$ of $B$ must be finite. This is a contradiction since $B$ is infinite-dimensional. Therefore the conclusion follows.
\end{proof}

\begin{corollary}
Let $A$ be a separable infinite-dimensional unital abelian $C^*$-algebra. Then $\SL(2,A)$ does not have the Haagerup property.
\end{corollary}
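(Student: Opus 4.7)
The plan is to derive this from Theorem~\ref{thm: SL(2,A)} by showing that $\E_2(A)$ sits inside $\SL(2,A)$ as a closed, coarsely embedded subgroup, and that the Haagerup property passes from $\SL(2,A)$ down to such a subgroup.

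First I would observe two structural facts about $\SL(2,A)$. Since $A$ is a separable unital Banach algebra, $\SL(2,A)$ is a Polish Banach-Lie group; in particular it is separable. Since $A$ is abelian, Lemma~\ref{lem:EnInSln} identifies $\E_2(A)$ with the connected component of the identity $(\SL(2,A))_0$. Therefore Corollary~\ref{cor:G0coarseinG}, applied to the separable Banach-Lie group $\SL(2,A)$, tells me that the inclusion $\E_2(A)\hookrightarrow \SL(2,A)$ is a coarse embedding of topological groups.

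Next I would argue that the Haagerup property is inherited by coarsely embedded closed subgroups in the sense used throughout the paper. Suppose toward contradiction that $\SL(2,A)$ has the Haagerup property, and let $\alpha$ be a continuous affine isometric action of $\SL(2,A)$ on a Hilbert space $\Hil$ with associated $1$-cocycle $b\colon \SL(2,A)\to \Hil$ such that $b$ is coarsely proper (that is, $\|b(g)\|\to\infty$ whenever $g\to\infty$ in $\SL(2,A)$). Restricting $\alpha$ to the closed subgroup $\E_2(A)$ yields a continuous affine isometric action with cocycle $b\restriction_{\E_2(A)}$. If $(h_n)\subseteq \E_2(A)$ leaves every bounded set of $\E_2(A)$, then by the coarse embedding established in the previous paragraph $(h_n)$ also leaves every bounded set of $\SL(2,A)$, whence $\|b(h_n)\|\to\infty$. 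Thus the restricted action is metrically proper, and $\E_2(A)$ inherits the Haagerup property.

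Finally, since $A$ is infinite-dimensional, Theorem~\ref{thm: SL(2,A)} says exactly that $\E_2(A)$ does \emph{not} have the Haagerup property. This contradiction completes the argument. The only point that required any content beyond quoting previously established results is verifying that the coarse embedding $\E_2(A)\hookrightarrow \SL(2,A)$ ensures inheritance of metric properness of the cocycle; that is essentially immediate from the definitions once one has Corollary~\ref{cor:G0coarseinG} available, so I do not expect any serious obstacle.
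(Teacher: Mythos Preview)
Your proof is correct and follows essentially the same route as the paper: use Lemma~\ref{lem:EnInSln} to identify $\E_2(A)$ with the identity component, invoke Corollary~\ref{cor:G0coarseinG} (via separability) to get that the inclusion is a coarse embedding, and then observe that a coarsely proper affine isometric action restricts to a coarsely proper one on a coarsely embedded subgroup, contradicting Theorem~\ref{thm: SL(2,A)}. The only cosmetic difference is that the paper deduces countability of $\SL(2,A)/\E_2(A)$ from separability of $A$, while you invoke the ``in particular'' clause of Corollary~\ref{cor:G0coarseinG} using separability of $\SL(2,A)$ directly; these are equivalent.
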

\begin{proof}
By Lemma~\ref{lem:EnInSln}, $\E_2(A)$ is the connected component of the identity of $\SL(2,A)$. Since $A$ is separable, the discrete quotient $\SL(2,A)/\E_2(A)$ is at most countable, and therefore the inclusion $\E_2(A)\hookrightarrow \SL(2,A)$ is a coarse embedding by Corollary~\ref{cor:G0coarseinG}. It follows that if $\SL(2,A)$ admitted a continuous coarsely proper action on a Hilbert space, the restriction of the action to the subgroup $\E_2(A)$ would be coarsely proper, which would contradict Theorem~\ref{thm: SL(2,A)}.
\end{proof}

\noindent{\bf Acknowledgements.} We would like to thank to Gabriel Larotonda for answering our questions on the compatibility of the Finsler metrics on Banach-Lie groups. We are also grateful to Bruno Duchesne, Vladimir Pestov, and Christian Rosendal for their comments on an earlier version of the paper, and especially to Rosendal for sharing his question on the existence of minimal metrics whose answer resulted in a part of Theorem A. 
\bibliographystyle{siam}
\bibliography{references}
\end{document}